\title{Norm estimates}
\theoremstyle{plain}% default
\newtheorem{theorem}{Theorem}[section]
\newtheorem{lemma}[theorem]{Lemma}
\newtheorem{proposition}[theorem]{Proposition}
\newtheorem{corollary}{Corollary}
\theoremstyle{definition}
\newtheorem{definition}{Definition}[section]
\theoremstyle{definition}
\newtheorem{remark}{Remark}[section]
\newtheorem{assumption}{Assumption}[section]
\theoremstyle{remark}
\newcommand{\N}{\mathbb{N}}
\newcommand{\Z}{\mathbb{Z}}
\newcommand{\R}{\mathbb{R}}
\newcommand{\C}{\mathbb{C}}
\newcommand{\A}{\mathcal{A}}
\newcommand{\simgrad}{\sym\nabla}
\newcommand{\eps}{{\varepsilon}}
\DeclareMathOperator{\sym}{sym}
\newcommand{\vect}[1]{\boldsymbol #1}
\newcommand{\RRR}{\color{black}}
\newcommand{\CCC}{\color{black}} 
\let\oldsqrt\sqrt
\def\sqrt{\mathpalette\DHLhksqrt}
\def\DHLhksqrt#1#2{%
\setbox0=\hbox{$#1\oldsqrt{#2\,}$}\dimen0=\ht0
\advance\dimen0-0.2\ht0
\setbox2=\hbox{\vrule height\ht0 depth -\dimen0}%
{\box0\lower0.4pt\box2}}
\DeclareRobustCommand\widecheck[1]{{\mathpalette\@widecheck{#1}}}
\def\@widecheck#1#2{%
    \setbox\z@\hbox{\m@th$#1#2$}%
    \setbox\tw@\hbox{\m@th$#1%
       \widehat{%
          \vrule\@width\z@\@height\ht\z@
          \vrule\@height\z@\@width\wd\z@}$}%
    \dp\tw@-\ht\z@
    \@tempdima\ht\z@ \advance\@tempdima2\ht\tw@ \divide\@tempdima\thr@@
    \setbox\tw@\hbox{%
       \raise\@tempdima\hbox{\scalebox{1}[-1]{\lower\@tempdima\box
\tw@}}}%
    {\ooalign{\box\tw@ \cr \box\z@}}}
\begin{document}

%\maketitle

\title{\sc Operator-norm resolvent estimates for thin elastic periodically heterogeneous rods in moderate contrast}
%\title{\sc Operator-norm long-wave asymptotics for elastic plates with rapidly oscillating periodic properties}

\def\correspondingauthor{\footnote{Corresponding author: k.cherednichenko@bath.ac.uk}}

\author[1]{Kirill Cherednichenko\correspondingauthor{}}
\author[2]{Igor Vel\v{c}i\'{c}\,}
\author[2]{Josip Žubrinić}
\affil[1]{Department of Mathematical Sciences, University of Bath, Claverton Down, Bath,\qquad\qquad  BA2 7AY, United Kingdom}
	%% (Email: k.cherednichenko@bath.ac.uk)}
\affil[2]{Faculty of Electrical Engineering and Computing, University of Zagreb, Unska 3,\qquad\qquad 10000 Zagreb, Croatia}
%% (Email: igor.velcic@fer.hr)}
%%%\affil[3]{Faculty of Electrical Engineering and Computing, University of Zagreb, Unska 3,\qquad\qquad 10000 Zagreb, Croatia (Email: josip.zubrinic@fer.hr)}
\maketitle

\begin{abstract}
  We provide resolvent asymptotics as well as various operator-norm estimates  for the system of linear partial differential equations describing thin infinite elastic rods with material coefficients that rapidly oscillate along the rod. The resolvent asymptotics is derived simultaneously with respect to the rod thickness and the period of material oscillations, which are taken to be of the same order. The analysis is carried out separately on two invariant subspaces pertaining to the out-of-line and in-line displacements, under the assumption on material symmetries as well as in the general case when these two types of displacements are intertwined. 
 
 \vskip 0.5cm

{\bf Keywords} Homogenisation $\cdot$ Dimension reduction  $\cdot$ Resolvent asymptotics $\cdot$  Elastic rods $\cdot$ Composite media

\vskip 0.5cm

{\bf Mathematics Subject Classification (2020):}
 35P15, 35C20, 74B05, 74Q05, 74K10, 

\end{abstract}

\section{Introduction}

\label{intro_section}

In this paper we analyse the asymptotic behaviour of solutions 
%%derive sharp operator-norm estimates and provide the resolvent asymptotics for the 
to linear systems of partial differential equations governing the motion of thin infinite  heterogeneous  rods, when the rod thickness and the typical size of heterogeneities are small relative to some fixed length, for example, one representing the spatial scale on which the applied forces vary between their maxima and minima. We assume that the heterogeneity of the rod appears in a periodic manner along the rod and that it is of moderate contrast, namely, that the tensor of material coefficients is uniformly positive definite. Our goal is to construct an operator-norm asymptotic approximation of the solution operators (``norm-resolvent asymptotics") 
%%norm-resolvent asymptotics is performed 
with respect to a small parameter that simultaneously plays the roles of the rod thickness and of the period of material oscillations (by setting the macroscopic length-scale to unity.) We first focus on the case when certain material symmetries are assumed, which yields a separation of the full problem into the two simpler problems that are mutually orthogonal in some sense. The setup for which we assume material symmetries serves as a motivation for tackling the general case.
The two mentioned orthogonal problems pertain to describing the in-line and out-of-line displacements, which in the general case intertwine. The norm-resolvent estimates are obtained in various operator norms, for which one can see interesting new non-standard corrector terms appearing in the approximation,  see Remark \ref{remnonstandcorr}.
Recently there is a growing interest in obtaining higher-order approximations for homogenisation problems and in modelling so-called dispersive effects, see e.g. \cite{ABV} and references therein.  Dispersive effects are important if one wants to do quantitative analysis of evolution problems in long time.

The operator-theoretic approach to homogenisation theory was initiated in \cite{BirmanSuslina, BirmanSuslina_corrector, BirmanSuslina_hyperbolic}, where  spectral analysis is used for the derivation of norm-resolvent convergence estimates (see also \cite{Sevostianova, Zhi89} for earlier examples of its use in homogenisation.) The technique, which was initially used in the whole-space setting, was subsequently developed to obtain operator-norm estimates on bounded domains \cite{Suslina_Dirichlet, Suslina_Neumann} and has proved useful for obtaining operator-norm and energy estimates for a number of related contexts:  boundary-value operators \cite{Suslina_Dirichlet, Suslina_Neumann}, parabolic semigroups \cite{Suslina_parabolic, Suslina_parabolic_corrector, Meshkova_Suslina}, hyperbolic groups \cite{BirmanSuslina_hyperbolic, Meshkova_hyperbolic_Math_Notes, Meshkova_hyperbolic_full}, perforated domains \cite{Suslina_perforated}. The key technical milestones for this progress are boundary-layer analysis for bounded domains (as in \cite{Suslina_Dirichlet, Suslina_Neumann}) and two-parametric operator-norm estimates \cite{Suslina_two_parametric}. It seems natural to conjecture that similar developments could be pursued in the context of thin plates and rods, both infinite and bounded, by taking either the spectral approach or the one we use in \cite{cherednichenkovelcic} and here (see, however, Section \ref{sectionobjasnjenje} for a comparison of the two.)  

An overview of the existing approaches to obtaining operator-norm estimates would not be complete without mentioning also the works \cite{Griso_2006, ZhikovPastukhova, Kenig}, whose methods could also be considered in the context of thin structures. However, here we refrain from pursuing the related discussion.

In order to gain error bounds that are uniform with respect to the data
%the control of the constants in the convergence estimates uniformly with respect to the problem data, 
(and hence obtain a sharp quantitative description of the asymptotic behaviour of the spectrum), one has to replace the usual series in macro \CCC and  micro-variable by a family of power-series expansions parametrised by the ``quasimomentum'' $\chi,$ which represents variations over intermediate scales of the length of several periods, see  \cite{ChCoARMA, Quasi_Cooper}. The corresponding asymptotic procedure can be viewed as a combination of the classical perturbation theory with ``matched asymptotics'' on the domain of the quasimomentum. In this approach, the control of the resolvent in the sense of the operator norm is obtained by means of a careful analysis of the remainder estimates for the power series, taking advantage of the related Poincar\'{e}-type inequalities (or Korn-type inequalities for vector problems) that bound the $L^2$-norm of the solution by its energy norm.  Importantly, in order to provide the required uniform estimates, such inequalities must reflect the fact that the lowest eigenvalue of the 
$\chi$-parametrised ``fibre" operator tends to zero as $\vert\chi\vert\to0,$ and hence the $L^2$-norm of the corresponding eigenfunction with unit energy blows up.
% near one of the edges of each spectral band of the direct fibre integral.

In the present work we tackle a problem where uniform estimates of the above kind need to be controlled with respect to an additional length-scale parameter, which represents one of the overall dimensions of the medium, namely the thickness of a thin rod in our case.  In \cite{cherednichenkovelcic} we analysed the behaviour of thin plates, and the present paper develops an extension of the new method  introduced in that paper
%% \cite{cherednichenkovelcic}
 to obtaining norm-resolvent estimates for rods. The relation of the present work and the technique used here to existing approaches is given in Section \ref{sectionobjasnjenje}. We emphasise the fact that our method, which can also be applied to the problems studied in \cite{BirmanSuslina, BirmanSuslina_corrector}, is particularly convenient for application to thin structures in elasticity, where the limit equations are necessarily dispersive, being of fourth order in a part of the limiting deformation field and of second order in the complementary part.    

 Interesting results on the norm-resolvent asymptotics  in high-contrast  homogenisation  are obtained in \cite{ChCoARMA,CherErshKis}. 
  The resemblance of problems with high contrast to those for moderate-contrast thin structures comes from the fact that in the analysis of high-contrast problems one also needs to take into account the fact that eigenvalues having different orders with respect to quasimomentum contribute to the leading-order approximation of the operator, cf. Section \ref{spec_analysis_sec} below. This is also the reason why the approach of \cite{BirmanSuslina,BirmanSuslina_corrector,BirmanSuslina_hyperbolic} is not applicable to these kind of problems.  
 In order to obtain the first-order approximation with respect to $\varepsilon,$ the authors of \cite{ChCoARMA},  which was an inspiration for \cite{cherednichenkovelcic}, used $\varepsilon$-dependent asymptotics for each fixed value of the quasimomentum $\chi.$ In contrast, in \cite{cherednichenkovelcic} and the present paper, a natural $\chi$-scaling of the problem has enabled us to develop an asymptotics with respect to $\chi$ and, as a consequence, go further in the $\varepsilon$-expansion by identifying the correctors that yield higher-order precision.  Furthermore, it enabled us to express the approximation results depending on two additional parameters $\gamma$ and $\delta$ scaling the operator (hence, the spectrum) and the applied loads, respectively. We find that introducing these two parameters is an important part of the result since it is well known that in time evolution of thin elastic structures the "bending" and "stretching" waves propagate on different time scales, see, e.g., \cite{BCVZ} and Remark \ref{remparam} below.

Rigorous study of thin elastic structures  is an old subject, 
see  \cite{ciarlet}  and references therein for the linear theory.
Derivation of various linear rod models in static and evolution case can be found in \cite{juraktutek, juraktambaca1, juraktambaca2, tambaca}. 
 These works, however, do not contain error estimates and consequently do not introduce higher-order correctors. 
Spectral analysis for the case of finite plates, including a derivation of estimates on the eigenvalues,  is carried out in \cite{Dauge} for the case of a homogeneous, isotropic material. In the case of small-frequency spectrum (namely, the spectrum of order $h^2$, where $h$ is the thickness), the constant in the estimate blows up on any fixed compact interval as $h$ goes go zero,  
 We believe that our work opens the possibility for quantitative analysis of time evolution for thin elastic structures in the context of linear elasticity on finite and infinite domain in different time scales.  

The derivation of different non-linear models of rods, starting from $\rm 3D$ non-linear elasticity, is carried out in \cite{mora1, mora2, scardia1, scardia2} by means of $\Gamma$-convergence. 
 Non-linear problems are challenging and one also has to deal with non-uniqueness of the solution, which is one of the obstacles for the quantitative analysis.  

 Next we outline some works  dealing with  simultaneous homogenisation and dimension reduction.  
In  \cite{Caill} the author derives a limit plate model, where the material is assumed to be isotropic and the oscillations are periodic. In \cite{Dam}, the authors  also do the simultaneous homogenisation and dimension reduction in the case of plates without the assumption on periodicity and using material (planar) symmetries of the elasticity tensor, by introducing the notion of $H$-convergence adapted to dimension reduction. 
Derivation of the non-linear plate model in von K\'{a}rm\'{a}n regime by simultaneous homogenisation and dimension reduction is obtained  in \cite{NV_vK}.
In \cite{BV} the authors derive limit plate models by doing simultaneous homogenisation and dimension reduction in the general case (without the assumption on periodicity and material symmetries) by means of $\Gamma$-convergence (the analysis presented there also covers some non-linear models).  The derivation of the model of the non-linear rod in the bending regime by doing simultaneous homogenisation and dimension reduction and without the assumption on periodicity is  given in \cite{Marohnicvelcic}.   However, these works do not provide any error estimates. 

For an extensive overview of models of composite structures, one can consider the book by Panasenko \cite{Panasenko_book} in which one can find thorough exposure of asymptotic expansions for the models of thin heterogeneous elastic structures (with periodically oscillating material), where the full asymptotics with error estimates and boundary layer analysis is given. However, the constants in the error estimates obtained there in the case of heterogeneous plates and rods with oscillating material depend non-linearly on the loads, which makes these estimates not useful for the spectral analysis.

We next briefly outline the structure of the paper. In Section \ref{section2} we explain the problem, introduce the methods and state the main results and we explain the strategy of the proofs.  In Section \ref{section3} we provide apriori estimates necessary for the asymptotic expansions of the resolvents, as well as spectral estimates which serve as the motivation for different problem scalings. In Section \ref{section4} we establish the resolvent asymptotics with respect to the parameter of quasimomentum in the case of additional assumptions on the material symmetries. In Section \ref{section5} we combine the obtained results into the norm-resolvent estimates in the real domain, but only in the case of additional material symmetries. In Section \ref{section6} we finally are able to repeat the procedure and derive the norm-resolvent estimates for the case of general tensor.

We conclude this section by making some remarks concerning the notation used in the paper. In bounds and estimates, we denote by $C,$ $C_1, C_2$ constants that are independent of the data of interest; they may depend on some parameters of the problem under study, in which case we will make this clear. Throughout the paper, the symbol $\perp$ stands for the property of orthogonality of subspaces of a larger space and also, as a superscript, denotes the orthogonal complement with respect to a specified inner product. 

We use boldface for vector fields and normal type for independent variables. For the components of a vector-valued function, we interchangeably use normal type with an index indicating the component and boldface with parenthesis with a similar index. For example, $u_2$ and $({\vect u})_2$ both denote the second component of a vector function ${\vect u}.$

 For 3-component vector functions and points in a 3-dimensional Euclidean space, we use $\,\widehat{\phantom{a}}\,$  above the vector of the first two components or above the point in a 2-dimensional Euclidean space generated by the first two coordinates.   
 %%%The same symbol \ $\,\widehat{\phantom{a}}\,$ is used for the Fourier transform of a function, which should not cause  confusion.  
 
 Inner products in finite-dimensional vector spaces are denoted by $\,\cdot\,$ and, in the particular case of a linear space of matrices, by $\,:\,$ (semicolon). Additionally, for finite-dimensional spaces over $\mathbb C,$ we write a bar $\,\overline{\phantom{a}}\,$ over the argument with respect to which the inner product is anti-linear. For inner products in infinite-dimensional spaces, we use the brackets $\,\langle\ ,\ \rangle\,$, usually with a subscripts indicating the relevant space. For a matrix $M,$ we use the notation $\sym M=(M+M^\top)/2.$ The symbol $\sym$ is also used on its own to denote the operation itself; for example $\simgrad$ stands for the operation of taking the symmetrised gradient of a function and $(\simgrad)^*$ stands for the corresponding adjoint operation.
 
 The notation for spaces of vector functions usually shows the destination space (such as ${\mathbb R}^2$ or ${\mathbb C}^3$), for example $H^1({\mathbb R}; {\mathbb C}^3)$ is the Sobolev space $H^1$ of vector functions defined on ${\mathbb R}$ and taking values in ${\mathbb C}^3.$ The destination space of scalar functions (i.e. $\mathbb R$ or $\mathbb C$) is normally omitted; so, for example $L^2(\omega\times Y)$ is, depending on the context, the space of ${\mathbb R}$-valued or ${\mathbb C}$-valued functions defined on $\omega\times{\mathbb R}.$ 
 %%Finally $H^1_\#(Y; H^1(\omega; \C^3))$ and  $[H^1_\#(Y;H^1(\omega;\C^3))]^*$ denote the Sobolev space of  $Y$-periodic $H^1$ functions taking values in ${\mathbb C}^3$ and its dual, respectively. The notation for other function spaces should either be clear from the context or will be introduced explicitly.  

Finally, for linear spaces  $X_1,$ $X_2,$   we write  $X_1<X_2$ whenever $X_1$ is a subspace of $X_2.$ In estimates, we will denote by $C$ the generic constant whose value is not important but can be inferred if necessary.

\section{Setting and main results}
\label{section2}

\subsection{Elastic heterogeneous rod}

\begin{figure}[htb]
	\begin{center}
		\includegraphics[width = 12cm]{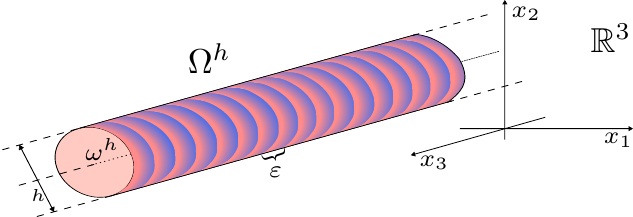}
		\caption{Rod with cross-section of width $h$ and oscillations of material properties of period $\varepsilon$.}
	\end{center}
\end{figure}

We begin by defining the spatial region that will henceforth represent an infinite thin rod. Fix $h>0$ (the width of the rod), a bounded Lipschitz domain $\omega\subset \mathbb{R}^2,$ and denote  by  $\omega^h$ the  homothetic  contraction of $\omega$ such that $|\omega^h|=h^2|\omega|.$ 
%%(For example: $\omega = I\times I$ for an interval $I \subset \R,$ and $\omega^h=hI \times hI.$) 

Without loss of generality, one has
\begin{equation}
    \label{coordinatesymmetries}
    \int_\omega x_1 = 0, \quad \int_\omega x_2 =0,\quad \int_\omega x_1x_2 = 0,
\end{equation}
which can be achieved by a suitable translation and rotation of $\omega.$
%%%Additionally, the original Euclidean coordinates, which we temporarily denote by $(\tilde{x}_1, \tilde{x}_2),$ can be rotated through the angle 
%%%\begin{equation*}
%%%	\varphi=-\frac{1}{2} \arctan\dfrac{2\int_\omega\tilde{x}_1 \tilde{x}_2}{\int_{\omega} (\tilde{x}_1^2 - \tilde{x}_2^2)},
%%%\end{equation*}
%%%by setting
%%%\[
%%%(x_1, x_2) = (\tilde{x}_1 \cos \varphi + \tilde{x}_2 \sin \varphi, - \tilde{x}_1 \sin \varphi + \tilde{x}_2 \cos \varphi),\qquad (\tilde{x}_1, \tilde{x}_2)\in{\mathbb R}^2,
%%%\] 
%%%so as to ensure that in the new coordinates 
%%$(\tilde{x}_1, \tilde{x}_2)$ %%which we continue labelling by $x_1, x_2,$ 
%%%one has 
%\begin{equation}
%\int_\omega x_1x_2 = 0,
%\label{x1x2cond}
%\end{equation}
%%%which we will assume satisfied henceforth.
Again without loss of generality, by utilising an appropriate scaling, we can assume that $|\omega|=1$ and define the constants
\begin{equation}
\label{crossectionconstants}
    {\mathfrak c}_1(\omega):= \int_\omega x_1^2, \qquad {\mathfrak c}_2(\omega):=\int_\omega x_2^2 .
\end{equation}
The thin infinite rod that we consider in what follows is represented by the region $\Omega^h:= \omega^h\times \mathbb{R}.$ The material heterogeneity of the rod is introduced as follows. Fix $\varepsilon>0$ (the period of the oscillations of material properties) and consider a ``unit cell"  $Y:=[-1/2,1/2] \subset \R.$
%, decomposed as $Y=Y_0 \cup Y_1$. Here, we demand that $Y_0$ is strictily contained in $Y$ with regular boundary. The domain $\Omega^h$ is decomposed as follows: 
%$$\Omega_0^\epsh = \omega^h \times \varepsilon(Y_0 + \mathbb{Z}), \quad  \Omega_1^\epsh = \omega^h \times \varepsilon(Y_1 + \mathbb{Z}),\quad \Omega^h = \Omega_0^\epsh \cup \Omega_1^\epsh.$$
%Note that $\Omega_1^\epsh$, $\Omega_0^\epsh$ are disjoint unions of disconnected sets with regular boundary. \\
The elastic properties of the heterogeneous material filling the region $\Omega^h$ are given by the elasticity tensor  $\mathbb{A}^h$. 
In order to define it, we introduce the elasticity tensor

$$
 L^\infty(Y;\R^{3\times 3\times 3\times 3})\ni{\mathbb A}: Y\to \R^{3\times 3\times 3\times 3}, 
 %\quad {\mathbb A}\in),
$$ 
defined on the unit cell
and then extended to  $\mathbb{R}$  by $Y$-periodicity. We assume that  ${\mathbb A}$ is uniformly positive definite on symmetric matrices, namely there exists $\nu>0$ such that
\begin{equation}
    \label{pointwisecoercivity}
    \nu|\xi|^2 \leq {\mathbb A}(y)\xi : \xi \leq\nu^{-1}|\xi|^2, \qquad {\rm a.e.\ } y\in Y,\quad \forall \xi \in \R^{3\times 3},\quad \xi^\top = \xi.
\end{equation}
In addition, we impose the following restrictions on the material coefficients,  which are the consequence of symmetry of Cauchy stress tensor and frame indifference: 
\begin{equation}
    {\mathbb A}_{ijkl}(y)={\mathbb A}_{jikl}(y)={\mathbb A}_{klij}(y),\qquad {\rm a.e.\ } y\in Y, \quad  \forall i,j,k,l\in\left\{1,2,3\right\}.
    \label{test1}
\end{equation}
For any point  $(x_1^h,x_2^h,x_3^h)\in \Omega^h,$ the elasticity tensor $\mathbb{A}^h$ of the $h$-problem  is now given by ${\mathbb A}^h(x_1^h,x_2^h,x_3^h):={\mathbb A}(x_3^h/\varepsilon)$.  
We introduce the following assumption, which yields a substantial simplification of the analysis, as we shall see later. Note that it is satisfied, e.g., by isotropic materials but also covers more general material setups. We carry out the analysis first with and then without this assumption, to showcase the different phenomena occurring in the rod dynamics. 
\begin{assumption}
\label{matsym} 
\begin{enumerate}
\item The cross-section $\omega$ is centrally symmetric with respect to the origin.
	
\item  The elasticity tensor satisfies the following material symmetries:
\begin{equation}
\label{materialsym}
{\mathbb A}_{ijk3}(y) = 0,\  {\mathbb A}_{i333}(y) = 0 \qquad {\rm a.e.\ } y\in Y, \quad  \forall i,j,k \in \left\{1,2\right\}.
\end{equation}
\end{enumerate} 
\end{assumption}

In the present work, we are interested in the regime where the period of material oscillations is of the same order as the thickness of the rod, and 
%%. While working in this scaling regime, 
we assume for simplicity that $\varepsilon = h$.
We study the system of resolvent equations for the operator of three-dimensional linear elasticity 
%%on the domain $\Omega^\varepsilon$ 
defined via the bilinear form
\begin{equation*}
    H^1(\omega^\varepsilon\times \R;\R^3) \times H^1(\omega^\varepsilon\times \R;\R^3) \ni (\vect u, \vect v) \to \int_{\Omega^\varepsilon} {\mathbb A}\left(\frac{x_3^\varepsilon}{\varepsilon}\right) \simgrad \vect u : \simgrad \vect v dx.
\end{equation*}
As is standard in the context of dimension reduction, we transform the problem onto the ``canonical" domain $\omega\times{\mathbb R}:$ 
\begin{equation*}
    \omega^\varepsilon \times \R \ni (x^\varepsilon_1,x^\varepsilon_2,x^\varepsilon_3) = x^\varepsilon \to x = (x_1,x_2,x_3) = (\varepsilon^{-1}x^\varepsilon_1,\varepsilon^{-1}x^\varepsilon_2,x^\varepsilon_3) \in  \omega \times \R.
\end{equation*}
This change of coordinates allows us to work on a fixed, i.e. $\varepsilon$-independent domain. We now consider the following bilinear form: 
\begin{equation}
	\label{bilinear_form}
 {\mathfrak a}_\varepsilon:H^1(\omega\times \R;\R^3) \times H^1(\omega\times \R;\R^3)\to \mathbb{R},   \quad {\mathfrak a}_\varepsilon(\vect u, \vect v)= \int_{\omega \times \R} {\mathbb A}\left(\frac{x_3}{\varepsilon}\right) \simgrad_\varepsilon \vect u : \simgrad_\varepsilon \vect v dx,
\end{equation}
where the scaled gradient $\nabla_\varepsilon$ is defined by
\begin{equation}
    \nabla_\varepsilon \vect u(x) := \begin{bmatrix}
       %\dfrac{1}{\varepsilon}
       \varepsilon^{-1} 
       \partial_1 u_1 & %\dfrac{1}{\varepsilon}
       \varepsilon^{-1}\partial_2 u_1 & \partial_3 u_1 \\[0.45em]
        %%\dfrac{1}{\varepsilon}
        \varepsilon^{-1}\partial_1 u_2 & %\dfrac{1}{\varepsilon}
        \varepsilon^{-1}\partial_2 u_2 & \partial_3 u_2 \\[0.45em]
        %\dfrac{1}{\varepsilon}
        \varepsilon^{-1}
        \partial_1 {u_3} & 
        %\dfrac{1}{\varepsilon}
        \varepsilon^{-1}
        \partial_2 {u_3} & \partial_3 {u_3} 
    \end{bmatrix}.
\end{equation}
The associated operator $\mathcal{A}_\varepsilon : \mathcal{D}(\mathcal{A}_\varepsilon)\to L^2(\omega\times \R;\R^3)$ is closed, densely defined in $L^2(\omega\times \R;\R^3),$ and self-adjoint.

\begin{figure}[htb]
	\begin{center}
		\includegraphics[width = 15cm]{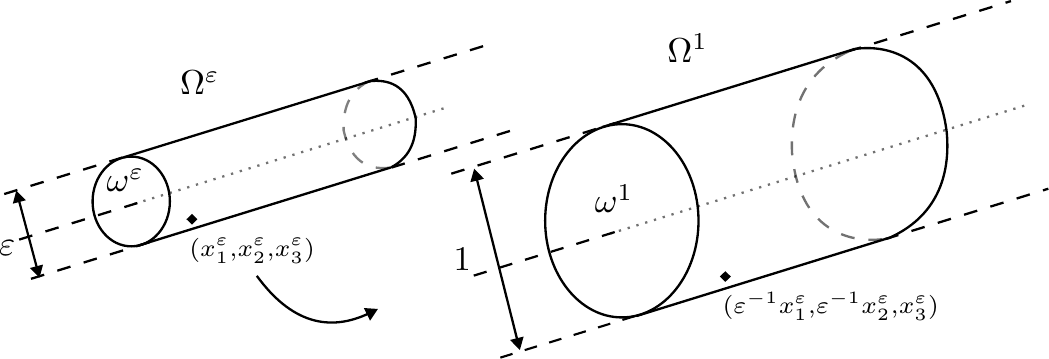}
		\caption{Rescaling to the rod of width $1$.}
	\end{center}
\end{figure}

In our analysis, we will make use of the orthogonal decomposition of the space $L^2(\omega \times \R;\R^3)$ into two subspaces $L^2_{\rm bend}$ and $L^2_{\rm stretch}$ defined by %%\eqref{invariant_subspaces_real}:
\begin{equation}
\label{invariant_subspaces_real}
\begin{aligned}
    %%&\begin{aligned}
        L^2_{\rm bend}\equiv L^2_{\rm bend}(\omega\times \R;\R^3)
        %%\\[0.3em]
        %%&
        &:=\left\{\vect u \in L^2(\omega\times \R;\R^3):\ \widehat{u}\bigl(-\widehat{x}\bigr) = \widehat{u}(\widehat{x}), \quad {u_3}\bigl(-\widehat{x}\bigr) = -{u_3}(\widehat{x})\right\},
      %%\end{aligned}   
  \\[0.35em]
        %%L^2_{\rm stretch}(\omega\times \R;\R^3)
  %%&\begin{aligned}
        L^2_{\rm stretch}\equiv L^2_{\rm stretch}(\omega\times \R;\R^3)
        %%\\[0.3em]
        %%&
        &:=\left\{\vect u \in L^2(\omega\times \R;\R^3):\  \widehat{u}\bigl(-\widehat{x}\bigr) = \vect -\widehat{u}(\widehat{x}), \quad {u_3}\bigl(-\widehat{x}\bigr) = {u_3}(\widehat{x})\right\}.  
     %%\end{aligned} 
 \end{aligned} 
\end{equation}
We often refer to the force densities belonging to $L^2_{\rm bend}$ and $L^2_{\rm stretch}$ as out-of-line and in-line forces, respectively. 
 The reason why Assumption \ref{matsym} simplifies the analysis is that when it is satisfied the 
%%  under Assumption \ref{matsym}, these
 subspaces $L^2_{\rm bend}$ and $L^2_{\rm stretch}$ are invariant under the elasticity operator,  i.e.,  whenever the density of applied forces is an element of one of these two subspaces, the corresponding deformation belongs to the same subspace; 
%%% are some additional assumptions on the symmetry of the material response tensor, cause the rod {\color{red}to deform in the same way.} 
we refer to such deformations as bending and stretching deformations, respectively.

Although we do not assume the dependence of the elasticity tensor on the coordinates $x_1$, $x_2,$ it is possible to obtain the results even for such generality. For simplicity of exposition we do not do that. 

\subsection{Homogenised operators}
\label{hom_op_sec}

In order to define homogenised limit operators,  see \eqref{bend_stretch}--\eqref{3domains} below,  we make use of the following matrices defined for each ${\vect m} = (m_1,m_2,m_3,m_4) \in \R^4$ (or $\C^4$), 
$\widehat{x}:=(x_1, x_2)\in \omega:$
%%\begin{equation}
\begin{align}
 \mathcal{J}^{\rm bend}_{m_1,m_2}(\widehat{x})&= \begin{bmatrix}
     0 & 0 & 0 \\[0.3em]
     0 & 0 & 0 \\[0.3em]
     0 & 0 &   -x_1 m_1 - x_2 m_2
\end{bmatrix}, \quad  \mathcal{J}_{m_3,m_4}^{\rm stretch}(\widehat{x}) = \begin{bmatrix}
     0 & 0 & \dfrac{x_2 m_3}{2} \\[0.7em]
     0 & 0 & \dfrac{-x_1 m_3}{2} \\[0.7em]
     \dfrac{x_2 m_3}{2} & \dfrac{-x_1 m_3}{2} &  m_4 
\end{bmatrix}\label{I_bend_stretch} \\
        \mathcal{J}^{\rm rod}_{\vect m}(\widehat{x})&=  \mathcal{J}^{\rm bend}_{m_1,m_2}(\widehat{x})+\mathcal{J}_{m_3,m_4}^{\rm stretch}(\widehat{x}). 
\nonumber
\end{align}
%%\end{equation}
 We denote by \CCC $H_{\#}^1( Y;H^1(\omega;\C^3))$ and  $H_{\#}^1( Y;H^1(\omega;\R^3))$  the Sobolev spaces of, respectively, the complex-valued and  real-valued  $H^1$ functions of $(x_1, x_2, y)\in\omega\times Y$ that are $Y$-periodic in the variable $y.$ 
  Similarly, we define $L_{\#}^2( Y;H^1(\omega;\C^3))$ and  $L_{\#}^2( Y;H^1(\omega;\R^3)),$ whose elements are assumed to be extended $Y$-periodically to whole of $\mathbb{R}$.

 The homogenised tensor ${\mathbb A}^{\rm rod}$ describing the material properties of homogeneous rod is defined as follows. %%For $m,d \in \R^4$ 
We consider the bilinear form
\begin{equation}
    {\mathfrak a}^{\rm rod}({\vect m}, {\vect d}):= \int_{\omega \times Y} {\mathbb A}(y) \left( \mathcal{J}^{\rm rod}_{\vect m}(\widehat{x}) + \simgrad{\vect u}_{\vect m}(\widehat{x}, y)\right):\mathcal{J}^{\rm rod}_{\vect d}(\widehat{x}) d\widehat{x}dy,\qquad{\vect m}, {\vect d}\in{\mathbb R}^4,
\end{equation}
where ${\vect u}_{\vect m} \in H_{\#}^1( Y;H^1(\omega;\R^3))$ is the unique solution of the integral identity  (i.e., of the ``cell problem"):
\begin{equation}
\label{correctordefinition}
\int_{\omega \times Y} {\mathbb A}(y) \left( \mathcal{J}^{\rm rod}_{\vect m}(\widehat{x}) + \simgrad {\vect u}_{\vect m}(\widehat{x}, y)\right) 
 :  \simgrad \vect v(\widehat{x},y)\,d\widehat{x}dy = 0 \qquad \forall \vect v \in  H_{\#}^1\bigl(Y;H^1(\omega;\R^3)\bigr).
\end{equation}
 The following proposition implies that the limit operators are non-degenerate. We provide its proof in Section \ref{appproof}. 

\begin{proposition}
	\label{proposition21}
The form ${\mathfrak a}^{\rm rod}$ is 
%%is a positive on $\R^4\times \R^4$, 
uniquely represented by a \CCC symmetric  tensor ${\mathbb A}^{\rm rod}\in \R^{4\times 4},$ in the sense that
\begin{equation}
    {\mathfrak a}^{\rm rod}({\vect m},{\vect d}) =  {\mathbb A}^{\rm rod}{\vect m}\cdot{\vect d}\qquad \forall {\vect m}, {\vect d}\in{\mathbb R}^4. 
\end{equation}
The tensor ${\mathbb A}^{\rm rod}$ is positive definite, so there exists $\eta>0$ such that
\CCC
\begin{equation}
\eta |{\vect m}|^2 \leq {\mathbb A}^{\rm rod}{\vect m}\cdot {\vect m}\leq \eta^{-1} |{\vect m}|^2 \qquad\forall {\vect m}\in{\mathbb R}^4.
\label{coerc}
\end{equation}

\end{proposition}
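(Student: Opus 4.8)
The plan is to establish Proposition \ref{proposition21} in three stages: bilinearity and symmetry of the form (giving the tensor representation), the upper bound in \eqref{coerc}, and finally the lower bound, which is the substantive point. First I would note that the cell problem \eqref{correctordefinition} is uniquely solvable: the bilinear form $(\vect u,\vect v)\mapsto\int_{\omega\times Y}{\mathbb A}(y)\simgrad\vect u:\simgrad\vect v$ is coercive on the quotient of $H^1_{\#}(Y;H^1(\omega;\R^3))$ by the space of rigid-body motions that are $Y$-periodic in $y$ (here a Korn–Poincar\'e inequality on $\omega\times Y$ with periodic boundary conditions in $y$ is used), and since $\mathcal{J}^{\rm rod}_{\vect m}$ is symmetric and bounded, the right-hand side is a bounded functional on that quotient; the solution $\vect u_{\vect m}$ depends linearly on $\vect m$. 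Substituting $\vect v=\vect u_{\vect d}$ into \eqref{correctordefinition} and using the symmetry \eqref{test1} of ${\mathbb A}$, one gets the familiar symmetric alternative expression
\begin{equation*}
{\mathfrak a}^{\rm rod}({\vect m},{\vect d})=\int_{\omega\times Y}{\mathbb A}(y)\bigl(\mathcal{J}^{\rm rod}_{\vect m}+\simgrad\vect u_{\vect m}\bigr):\bigl(\mathcal{J}^{\rm rod}_{\vect d}+\simgrad\vect u_{\vect d}\bigr)\,d\widehat{x}dy,
\end{equation*}
which is manifestly bilinear and symmetric in $({\vect m},{\vect d})$; hence ${\mathfrak a}^{\rm rod}({\vect m},{\vect d})={\mathbb A}^{\rm rod}{\vect m}\cdot{\vect d}$ for a symmetric ${\mathbb A}^{\rm rod}\in\R^{4\times4}$. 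The upper bound is immediate from this representation together with the upper bound in \eqref{pointwisecoercivity}, the energy minimality $\|\simgrad\vect u_{\vect m}\|\le C|{\vect m}|$ (take $\vect v=\vect u_{\vect m}$ in \eqref{correctordefinition} and use Cauchy–Schwarz and coercivity), and $\|\mathcal{J}^{\rm rod}_{\vect m}\|\le C|{\vect m}|$.

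For the lower bound — the main obstacle — the point is that ${\mathbb A}^{\rm rod}{\vect m}\cdot{\vect m}=\int_{\omega\times Y}{\mathbb A}(y)E_{\vect m}:E_{\vect m}$ with $E_{\vect m}:=\mathcal{J}^{\rm rod}_{\vect m}+\simgrad\vect u_{\vect m}\ge\nu\|E_{\vect m}\|_{L^2(\omega\times Y)}^2$ by the lower bound in \eqref{pointwisecoercivity}, so it suffices to show that $\|\mathcal{J}^{\rm rod}_{\vect m}+\simgrad\vect u_{\vect m}\|_{L^2}\ge c|{\vect m}|$ for some $c>0$ independent of ${\vect m}$. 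Suppose not: then there is a sequence ${\vect m}^{(n)}$ with $|{\vect m}^{(n)}|=1$ and $\|\mathcal{J}^{\rm rod}_{{\vect m}^{(n)}}+\simgrad\vect u_{{\vect m}^{(n)}}\|_{L^2}\to0$; passing to a subsequence, ${\vect m}^{(n)}\to{\vect m}$ with $|{\vect m}|=1$, and by linearity and the energy bound $\vect u_{{\vect m}^{(n)}}\to\vect u_{\vect m}$ in $H^1$ modulo rigid motions, so $\mathcal{J}^{\rm rod}_{\vect m}+\simgrad\vect u_{\vect m}=0$ a.e.\ on $\omega\times Y$. The task then reduces to a purely algebraic/structural claim: the only way a matrix field of the special form $\mathcal{J}^{\rm rod}_{\vect m}(\widehat x)$ can be the symmetrised gradient (with the opposite sign) of an $H^1$ field on $\omega\times Y$ that is $Y$-periodic in $y$ is ${\vect m}=0$. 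I would argue this by integrating $\mathcal{J}^{\rm rod}_{\vect m}=-\simgrad\vect u_{\vect m}$ against suitable test fields: integrating the $(3,3)$-entry over $\omega\times Y$ and using $Y$-periodicity kills $\partial_3 (u_{\vect m})_3$ and, combined with the first-moment normalisations \eqref{coordinatesymmetries}–\eqref{crossectionconstants}, forces $m_4=0$ and (after multiplying by $x_1$ and $x_2$ and using the nondegeneracy constants ${\mathfrak c}_1(\omega),{\mathfrak c}_2(\omega)>0$) $m_1=m_2=0$; the off-diagonal $(1,3)$ and $(2,3)$ entries encode the torsion component $m_3$, and testing against $x_2$ resp.\ $-x_1$ and using $\int_\omega(x_1^2+x_2^2)>0$ forces $m_3=0$. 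This contradicts $|{\vect m}|=1$, proving the lower bound.

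The step I expect to be genuinely delicate is this last structural argument: one must be careful that $\vect u_{\vect m}$ is only $H^1$ (not smooth), so all manipulations have to be done in the weak/integral sense, and the $Y$-periodicity in $y$ together with the free (Neumann-type) behaviour in $\widehat x\in\omega$ must be used exactly where the boundary terms would otherwise obstruct the conclusion. It is also worth recording explicitly which Korn-type inequality on $\omega\times Y$ with $y$-periodic boundary conditions underlies both the solvability of \eqref{correctordefinition} and the compactness argument above; this is standard but should be cited. Once the vanishing of $E_{\vect m}$ implies ${\vect m}=0$, the coercivity constant $\eta$ is obtained, completing the proof; the details of this argument are given in Section \ref{appproof}.
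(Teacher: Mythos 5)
Your overall strategy differs from the paper's in the coercivity part: the paper gives a \emph{direct} quantitative lower bound (Section~\ref{appproof}), whereas you argue by compactness and contradiction, reducing to the structural claim that $\mathcal{J}^{\rm rod}_{\vect m}+\simgrad {\vect u}_{\vect m}=0$ a.e.\ forces $\vect m=0$. That is a legitimate alternative route, and your treatment of the $(3,3)$-entry (killing $\partial_y(u_{\vect m})_3$ by $Y$-periodicity, then using \eqref{coordinatesymmetries} and the positivity of ${\mathfrak c}_1,{\mathfrak c}_2$ to get $m_4=m_1=m_2=0$) matches the paper's computation for those components.

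However, your argument for $m_3=0$ has a genuine gap. Testing the $(1,3)$- and $(2,3)$-entries against $x_2$ and $-x_1$ and adding gives
\begin{equation*}
m_3\int_{\omega\times Y}(x_1^2+x_2^2)\; +\; \int_{\omega\times Y}(x_2,-x_1)^\top\!\cdot\nabla_{\widehat x}(u_{\vect m})_3\; +\; \int_{\omega\times Y}(x_2,-x_1)^\top\!\cdot\partial_y \widehat{\vect u}_{\vect m}\; =\; 0,
\end{equation*}
and while the last integral vanishes by $Y$-periodicity, the middle one does \emph{not}: integrating by parts in $\widehat x$, since ${\rm div}(x_2,-x_1)^\top=0$, it equals $\int_Y\int_{\partial\omega}(u_{\vect m})_3\,(x_2,-x_1)^\top\!\cdot n$, and $(x_2,-x_1)^\top\!\cdot n$ does not vanish on $\partial\omega$ for a general cross-section (only when $\partial\omega$ is a union of circles centred at the origin). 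So the conclusion $m_3=0$ does not follow from this test. The paper's proof avoids this by working with the orthogonal projection $P_G$ in $L^2(\omega;\R^2)$ onto $G=\{\nabla v:\ v\in H^1(\omega)\}$: since $\nabla_{\widehat x}(u_{\vect m})_3\in G$ and $(x_2,-x_1)^\top\notin G$ (its curl is $-2\neq 0$), one gets
$\|m_3(x_2,-x_1)^\top-\partial_y\widehat{\vect u}_{\vect m}-\nabla_{\widehat x}(u_{\vect m})_3\|^2\ge \|(I-P_G)(m_3(x_2,-x_1)^\top-\partial_y\widehat{\vect u}_{\vect m})\|^2$, and the cross term vanishes by $Y$-periodicity, yielding $\gtrsim m_3^2\|(I-P_G)(x_2,-x_1)^\top\|^2>0$. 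You would need exactly this projection step to close your compactness argument; the naive test against $(x_2,-x_1)^\top$ alone fails.

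Two minor remarks: the paper takes the bilinearity/symmetry and the upper bound for granted and focuses only on the coercivity, so your preliminary steps, while correct, are more detail than the paper records; and the compactness argument gives a non-constructive $\eta$, whereas the paper's direct argument is (in principle) quantitative.
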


Next, we define  symmetric matrices ${\mathbb A}^{\rm bend}, $ $ {\mathbb A}^{\rm stretch} \in \R^{2 \times 2}$ as follows:
\begin{equation}  \label{gotovo2}
	\begin{split}
		 {\mathbb A}^{\rm bend} (m_1,m_2)^\top \cdot (d_1,d_2)^\top:={\mathbb A}^{\rm rod} (m_1,m_2,0,0)^\top \cdot (d_1,d_2,0,0)^\top \qquad \forall (m_1, m_2)^\top, (d_1, d_2)^\top\in{\mathbb R}^2, 
		\\[0.35em]
		 {\mathbb A}^{\rm stretch} (m_3,m_4)^\top \cdot (d_3,d_4)^\top:={\mathbb A}^{\rm rod} (0,0,m_3,m_4)^\top \cdot (0,0,d_3,d_4)^\top \qquad \forall (m_3, m_4)^\top, (d_3, d_4)^\top\in{\mathbb R}^2.
	\end{split}
\end{equation}
In the case of the cross-section geometry and material symmetries of Assumption \ref{matsym} one has 
\begin{align*}
	{\mathbb A}^{\rm rod}{\vect m}\cdot {\vect d}= {\mathbb A}^{\rm bend} (m_1,m_2)^\top\cdot (d_1,d_2)^\top&+ {\mathbb A}^{\rm stretch} (m_3,m_4)^\top \cdot (d_3,d_4)^\top\\[0.35em]
	&\forall\,{\vect m}=(m_1, m_2,m_3, m_4)^\top,\ {\vect d}=(d_1, d_2,d_3, d_4)^\top\in{\mathbb R}^4.
\end{align*}

The following estimates are a straightforward consequence of Proposition \ref{proposition21}.
\begin{corollary}
	\label{coerc_corrol}
	The matrices ${\mathbb A}^{\rm bend}$ and ${\mathbb A}^{\rm stretch}$ are uniformly bounded and positive definite, with the related bounds provided by the constant $\eta$ of Proposition \ref{proposition21}: 
\CCC
\begin{equation}
\begin{aligned}
   &\eta  |(m_1,m_2)^\top|^2 \leq {\mathbb A}^{\rm bend} (m_1,m_2)^\top\cdot(m_1,m_2)^\top\leq \eta^{-1}  |(m_1,m_2)^\top|^2\qquad \forall(m_1,m_2)^\top\in \R^2
    \\[0.4em]
   &  \eta |(m_3,m_4)^\top|^2 \leq{\mathbb A}^{\rm stretch} (m_3,m_4)^\top\cdot (m_3,m_4)^\top \leq \eta^{-1} |(m_3,m_4)^\top|^2\qquad \forall(m_3,m_4)^\top\in \R^2.
\end{aligned}
\end{equation}

\end{corollary}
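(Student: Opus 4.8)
The plan is to deduce Corollary \ref{coerc_corrol} from Proposition \ref{proposition21} by evaluating the quadratic form ${\mathbb A}^{\rm rod}{\vect m}\cdot{\vect m}$ on the two coordinate subspaces that appear in the definitions \eqref{gotovo2}. First I would fix $(m_1,m_2)^\top\in{\mathbb R}^2$ and apply \eqref{coerc} with the vector ${\vect m}=(m_1,m_2,0,0)^\top\in{\mathbb R}^4$. Since $|{\vect m}|^2=|(m_1,m_2)^\top|^2$ for this choice, the chain of inequalities \eqref{coerc} reads
\begin{equation*}
\eta\,|(m_1,m_2)^\top|^2 \le {\mathbb A}^{\rm rod}(m_1,m_2,0,0)^\top\cdot(m_1,m_2,0,0)^\top \le \eta^{-1}\,|(m_1,m_2)^\top|^2,
\end{equation*}
and by the first line of \eqref{gotovo2} the middle term equals ${\mathbb A}^{\rm bend}(m_1,m_2)^\top\cdot(m_1,m_2)^\top$, which is exactly the first asserted estimate. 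The argument for ${\mathbb A}^{\rm stretch}$ is identical, using instead ${\vect m}=(0,0,m_3,m_4)^\top$ and the second line of \eqref{gotovo2}, again noting $|{\vect m}|^2=|(m_3,m_4)^\top|^2$ for this choice.

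It remains to record that ${\mathbb A}^{\rm bend}$ and ${\mathbb A}^{\rm stretch}$ are genuinely well-defined symmetric matrices, so that the phrase ``uniformly bounded and positive definite'' is meaningful. This is immediate: \eqref{gotovo2} defines each of them through a restriction of the bilinear form $({\vect m},{\vect d})\mapsto{\mathbb A}^{\rm rod}{\vect m}\cdot{\vect d}$ to a two-dimensional coordinate subspace, and the restriction of a symmetric bilinear form is again symmetric and bilinear, hence represented by a unique symmetric $2\times2$ matrix; symmetry of ${\mathbb A}^{\rm rod}$ is guaranteed by Proposition \ref{proposition21}. Positive definiteness of these $2\times 2$ matrices then follows from the lower bounds just derived, and the upper bounds give the claimed uniform boundedness; note that the constant $\eta$ is the very same one furnished by Proposition \ref{proposition21}, so no new constant needs to be introduced.

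There is essentially no obstacle here: the only thing to be careful about is the bookkeeping of the Euclidean norms, namely the elementary but load-bearing observation that embedding ${\mathbb R}^2$ into ${\mathbb R}^4$ as either the first two or the last two coordinates is an isometry, so that the norm $|{\vect m}|$ appearing in \eqref{coerc} coincides with $|(m_1,m_2)^\top|$ (respectively $|(m_3,m_4)^\top|$) for the vectors we plug in. Once that is said, the corollary is a direct specialisation of \eqref{coerc}. I would therefore present the proof in just a few lines: state the two embeddings, invoke \eqref{coerc} on each, and rewrite the middle terms via \eqref{gotovo2}.

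\begin{proof}
Both claims follow by specialising Proposition \ref{proposition21}. For $(m_1,m_2)^\top\in{\mathbb R}^2$, set ${\vect m}=(m_1,m_2,0,0)^\top\in{\mathbb R}^4$, so that $|{\vect m}|^2=|(m_1,m_2)^\top|^2$. By \eqref{coerc} and the first identity in \eqref{gotovo2},
\begin{equation*}
\eta\,|(m_1,m_2)^\top|^2 \le {\mathbb A}^{\rm rod}{\vect m}\cdot{\vect m} = {\mathbb A}^{\rm bend}(m_1,m_2)^\top\cdot(m_1,m_2)^\top \le \eta^{-1}\,|(m_1,m_2)^\top|^2.
\end{equation*}
Likewise, for $(m_3,m_4)^\top\in{\mathbb R}^2$ set ${\vect m}=(0,0,m_3,m_4)^\top\in{\mathbb R}^4$, so that $|{\vect m}|^2=|(m_3,m_4)^\top|^2$, and \eqref{coerc} together with the second identity in \eqref{gotovo2} gives
\begin{equation*}
\eta\,|(m_3,m_4)^\top|^2 \le {\mathbb A}^{\rm rod}{\vect m}\cdot{\vect m} = {\mathbb A}^{\rm stretch}(m_3,m_4)^\top\cdot(m_3,m_4)^\top \le \eta^{-1}\,|(m_3,m_4)^\top|^2.
\end{equation*}
Since ${\mathbb A}^{\rm rod}$ is symmetric by Proposition \ref{proposition21}, its restrictions to the two coordinate planes are represented by symmetric $2\times2$ matrices, namely ${\mathbb A}^{\rm bend}$ and ${\mathbb A}^{\rm stretch}$; the displayed inequalities show these are positive definite and uniformly bounded, with the stated constant $\eta$.
\end{proof}
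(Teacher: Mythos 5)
Your proof is correct and is exactly the short specialisation argument the paper has in mind; the paper itself gives no explicit proof, stating only that the corollary is a straightforward consequence of Proposition \ref{proposition21}, and your restriction of \eqref{coerc} to the vectors $(m_1,m_2,0,0)^\top$ and $(0,0,m_3,m_4)^\top$ combined with the definitions in \eqref{gotovo2} is precisely that consequence.
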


We conclude this section by defining the homogenised limit differential operators. These correspond to the differential expressions
\begin{align}
%\begin{split}
&\mathcal{A}^{\rm bend} = 
%%\frac{d^2}{dx_3^2} 
{\mathbb A}^{\rm bend}{\frac{d^4}{dx_3^4}}, \quad \mathcal{A}^{\rm stretch} = 
%%-\frac{d}{dx_3} 
-{\mathbb A}^{\rm stretch}{\frac{d^2}{dx_3^2}},\label{bend_stretch}\\
   &\mathcal{A}^{\rm rod}_{\varepsilon}= \left(\varepsilon \frac{d^2}{dx_3^2}, \varepsilon \frac{d^2}{dx_3^2}, -\frac{d}{dx_3}, -\frac{d}{dx_3} \right)^\top{\mathbb A}^{\rm rod} \left(\varepsilon \frac{d^2}{dx_3^2}, \varepsilon \frac{d^2}{dx_3^2}, \frac{d}{dx_3}, \frac{d}{dx_3} \right)^\top\label{rod}
%\end{split}
\end{align}
and  are  considered on the domains 
\begin{equation}
    \mathcal{D}(\mathcal{A}^{\rm bend}):=H^4(\R;\R^2), \quad  \mathcal{D}(\mathcal{A}^{\rm stretch}):=H^2(\R;\R^2), \quad \RRR \mathcal{D}(\mathcal{A}^{\rm rod}_{\varepsilon})\subset H^2(\R;\R^2) \times H^1(\R;\R^2), 
    \label{3domains}
\end{equation}
respectively.\footnote{\RRR The operators \eqref{bend_stretch}, \eqref{rod} with domains \eqref{3domains} can be defined through the associated bilinear forms.  Note that,  due to the coupling of the components, we cannot stipulate the domain of the operator $\mathcal{A}^{\rm rod}_{\varepsilon}$ to be $H^4(\R;\R^2) \times H^2(\R;\R^2)$. } In the expressions \eqref{bend_stretch} the derivatives {$d^4/dx_3^4,$ $d^2/dx_3^2$} are applied to every component of the vector field, while in the expression 
\eqref{rod} the components of the vectors
\[
\left(\varepsilon \frac{d^2}{dx_3^2}, \varepsilon \frac{d^2}{dx_3^2}, \frac{d}{dx_3}, \frac{d}{dx_3} \right), \qquad \left(\varepsilon \frac{d^2}{dx_3^2}, \varepsilon \frac{d^2}{dx_3^2}, -\frac{d}{dx_3}, -\frac{d}{dx_3} \right)
\]
are applied to the respective components of the vector field  (thus resulting in a 4-dimensional vector field.) 

%{\color{red} And what's going to happen next?}

\subsection{Main results}
\label{main_results_sec}
In approximating  ${\mathcal A}_\varepsilon$ by the homogenised operators of Section \ref{hom_op_sec}, the following force-and-momentum operators acting on functions \RRR ${\vect f}\in L^2({\mathbb R}; {\mathbb R}^3)$  prove useful: 
\begin{equation}
\label{forcemomentumrealdomain}
\begin{aligned}
        (\mathcal{M}_\varepsilon^{\rm bend})
         %%\begin{bmatrix} {f_1}\\ {f_2} \\ {f_3} \end{bmatrix}
         {\vect f}(x_3)&:= \int_{\omega}\biggl\{\,\widehat{\!\vect f}(\widehat{x}, x_3)
         %%\begin{bmatrix} {f_1} 
         -\varepsilon\biggl({\dfrac{\partial}{\partial x_3}}{f_3}(\widehat{x}, x_3)\biggr)\widehat{x}\biggr\},\qquad x_3\in  \mathbb{R}  ,\\[0.9em]
         %%x_1\dfrac{d}{dx_3} {f_3} \\[0.9em] {f_2} -\varepsilon x_2\dfrac{d}{dx_3} {f_3}  \\ \end{bmatrix},           
         (\mathcal{M}^{\rm stretch} %%\begin{bmatrix} {f_1}\\ {f_2} \\ {f_3} \end{bmatrix}: = 
        {\vect f})(x_3)&:=\int_{\omega}\begin{bmatrix} x_2 {f_1}(\widehat{x}, x_3)-x_1 {f_2}(\widehat{x}, x_3) \\[0.2em] 
        	{f_3}(\widehat{x}, x_3) \end{bmatrix},\qquad x_3\in  \mathbb{R}  ,
        \\[0.9em]
        (\mathcal{M}_\varepsilon^{\rm rod} 
        %%\begin{bmatrix} {f_1}\\ {f_2} \\ {f_3} \end{bmatrix}
        {\vect f})(x_3)&:= \begin{bmatrix}
           \mathcal{M}_\varepsilon^{\rm bend} \vect f \\[0.3em] \mathcal{M}^{\rm stretch} \vect f
        \end{bmatrix}(x_3) = \int_{\omega}\begin{bmatrix}
          \,\widehat{\!\vect f}(\widehat{x},x_3)-\varepsilon\biggl({\dfrac{\partial}{\partial x_3}}{f_3} (\widehat{x}, x_3)\biggr)\widehat{x}
          %%%{f_1}(\widehat{x},x_3) -\varepsilon x_1\dfrac{d}{dx_3} {f_3} (\widehat{x}, x_3) \\[0.8em] {f_2}(\widehat{x}, x_3) -\varepsilon x_2\dfrac{d}{dx_3} {f_3}(\widehat{x}, x_3)  
          \\[0.8em]
          x_2 {f_1}(\widehat{x}, x_3) - x_1 {f_2}(\widehat{x}, x_3) \\[0.4em] {f_3}(\widehat{x}, x_3)
        \end{bmatrix}d\widehat{x},\qquad x_3\in  \mathbb{R}  .
\end{aligned}
\end{equation}
%%These operators are be viewed as acting on $L^2$ vector functions For example, for $\in L^2(\omega \times \R;\R)$,
The operator $\mathcal{M}^{\rm stretch}$ takes values in $L^2(\R; \R^2),$ while the operators $\mathcal{M}_\varepsilon^{\rm  bend}$ and $\mathcal{M}_\varepsilon^{\rm  rod}$ take values in $H^{-1}(\R; \R^2)$ and $H^{-1}(\R; \R^3),$ respectively. However, in what follows these operators will appear in combination with the smoothing operator $\Xi_\varepsilon$ (see Section \ref{smoothing_op_sec}), and hence  the values of all three of them  will happen to be found in appropriate $L^2$ spaces.

 In order to allow for the possibility of scaling the loads, we introduce the following scaling matrix: 
\begin{equation}
    S_\varepsilon:= \begin{bmatrix}
        1 & 0 & 0 \\[0.25em] 0 & 1 & 0 \\[0.25em] 0 & 0 & \varepsilon^{-1}
    \end{bmatrix}, \quad \varepsilon > 0.
\label{Svarepsilon}
\end{equation}
We label with $P_i:\R^3 \to \R$ the projection on the $i$-th \CCC coordinate.

 %%as well as $\pi_i :\R^2 \to \R$. 
%%{\color{red}We are able to prove the following result.}

We also define the following matrices, which
%%, which appear in the calculations and 
contain information about the cross-section of the domain $\omega:$
\begin{equation}
	\label{cstretchrodbend1}
		\mathfrak{C}^{\rm stretch}(\omega)= 
		\begin{bmatrix}
			{\mathfrak c}_1(\omega)+{\mathfrak c}_2(\omega) & 0 \\[0.3em]
			0 & 1
		\end{bmatrix}, \quad 
		 \mathfrak{C}^{\rm rod} (\omega):= \begin{bmatrix}
			I & 0 \\[0.3em]
			0 & \mathfrak{C}^{\rm stretch}(\omega)
		\end{bmatrix}
\end{equation}
%%For convenience, we also denote $\mathfrak{C}^{\rm rod}_{0}(\omega)=:\mathfrak{C}^{\rm rod}(\omega).$ 
In what follows, we usually drop `$(\omega)$' in the notation.
%, see e.g. \eqref{stretchlimitequation}.

We define a smoothing operator $\Xi_\varepsilon :L^2(\omega \times \R) \to L^2(\omega \times \R)$ by the formula
\begin{equation}
\label{smootheningoperator}
\Xi_\varepsilon f:= \Bigl({\mathcal F}^{-1}\bigl[\mathbbm{1}_{\left[
	\frac{-1}{2\varepsilon},\frac{1}{2\varepsilon}\right]}\bigr] * f\Bigr)(x),\qquad x\in\omega\times{\mathbb R}.
\end{equation}
Here $\mathcal{F}:L^2(\omega \times \R)\to L^2(\omega \times \R)$ denotes Fourier transform with respect to the third variable,  $\mathcal{F}^{-1}$ is its inverse and $*$ is a convolution  with respect to the third variable. We will also apply this operator to vector-valued functions by assuming that it acts component-wise.

The operator $\Xi_\varepsilon$ appears in the definition of the approximating operator, see Theorems \ref{THML2L2}, \ref{l2h1theorem}, \ref{thm_gen_L2L2high}. The purpose of $\Xi_\varepsilon$ is cutting off high-frequency contributions to a function, by removing the Fourier components with frequencies higher than $(2\varepsilon)^{-1}$.

Note that estimates of resolvents that depend on the spectral parameter $z \in \C$ can usually be reduced to a single resolvent estimate where the value of $z$ is fixed, by appropriately modifying the corresponding resolvent problem, see Section \ref{appRes} and Lemma \ref{lemres}. We express our results by fixing $z=-1$.

The following result, which concerns the resolvent estimates in the $L^2\to L^2$ operator norm, is proved in Sections \ref{L2toL2}, \ref{gen_tens_sec} \CCC and is one of the three main theorems  of the paper.   % as a direct consequence of Theorem \ref{stretchingnormresolvent}. (stretching case, see estimate \eqref{stretch23}), Theorem \ref{bend_thm1} (bending case, see estimate \eqref{bend23}) and Theorem \ref{thm_gen_L2L2_generaltensor} (general case, estimate \eqref{genres}.)
  For greater generality, our quantitative result is expressed in terms of two additional scaling exponents, which we denote by $\gamma$ and $\delta$.  The exponent $\gamma$ is used to scale (as $\varepsilon^\gamma$) the operator (in particular, its spectrum), while the parameter $\delta$ is used to scale the loads. On the one hand, since the limit equations are necessarily partially dispersive, it is important to obtain a result depending on $\gamma\ge0$, which would correspond to different time scalings in the analysis of time evolution. On the other hand, scaling of the loads is characteristic of the dimension reduction problems, see Remark \ref{remparam} below. Our error estimates depend on the parameter $\delta$ under Assumption \ref{matsym}, which thereby enables a stronger quantitative result.

\begin{theorem}[$L^2 \to L^2$ norm-resolvent estimate]
\label{THML2L2}
 Let the parameters $\gamma,\delta$ satisfy $\gamma>-2$ and $\delta \geq 0,$ respectively.  There exists $C>0$ such that for every $\varepsilon> 0$ one has
\begin{equation}
	\label{genres}
\left\Vert 
	P_i 
		\left( 
			\left( \frac{1}{\varepsilon^\gamma}\mathcal{A}_\varepsilon + I \right)^{-1} 
- (\mathcal{M}^{\rm rod}_\varepsilon)^*\left(
     \frac{1}{\varepsilon^\gamma} \mathcal{A}^{\rm rod}_{\varepsilon} + \mathfrak{C}^{\rm rod} \right)^{-1}\mathcal{M}^{\rm rod}_\varepsilon \Xi_\varepsilon
		\right) 
 \right\Vert_{L^2 \to L^2} \leq  
 {C}\left\{ \begin{array}{ll}
         \varepsilon^{\tfrac{\gamma + 2}{4}}, & \mbox{ $i = 1,2$};\\[0.7em]
         \varepsilon^{\tfrac{\gamma + 2}{2}}, & \mbox{ $i = 3$}.\end{array} \right. 
\end{equation}
Under the additional assumption of the cross-section geometry and material symmetries (Assumption \ref{matsym}), one has
%\begin{equation}
\begin{align}
&\left\Vert  \left( \frac{1}{\varepsilon^\gamma}\mathcal{A}_\varepsilon + I \right)^{-1}\bigg|_{L^2_{\rm stretch}} - (\mathcal{M}^{\rm stretch})^*\left(\frac{1}{\varepsilon^\gamma}\mathcal{A}^{\rm stretch} + \mathfrak{C}^{\rm stretch} \right)^{-1}\mathcal{M}^{\rm stretch}\Xi_\varepsilon \right\Vert_{L^2 \to L^2} \leq C \varepsilon^{\tfrac{\gamma + 2}{2}} ,\label{stretch23}
\\[1.0em]
&\begin{aligned}
&\left\Vert 
	P_i\left(\left( \frac{1}{\varepsilon^\gamma}\mathcal{A}_\varepsilon + I \right)^{-1}\bigg|_{L^2_{\rm bend}}
- (\mathcal{M}_\varepsilon^{\rm bend})^*\left(\frac{1}{\varepsilon^{\gamma - 2}}\mathcal{A}^{\rm bend} + I \right)^{-1}\mathcal{M}_\varepsilon^{\rm bend} 
	\Xi_\varepsilon	\right)  S_{\!\varepsilon^\delta}
 \right\Vert_{L^2 \to L^2} \\[1.1em]
  &\hspace{+20em}\leq  
  C{\max\Bigl\{\varepsilon^{\tfrac{\gamma + 2}{4}-\delta}, 1\Bigr\}}
\left\{ \begin{array}{ll}
         \varepsilon^{\tfrac{\gamma + 2}{4}},
         %%\max\Bigl\{\varepsilon^{\tfrac{\gamma + 2}{4}-\delta}, 1\Bigr\}, 
         & i = 1,2;\\[0.9em]
         \varepsilon^{\tfrac{\gamma + 2}{2}},
         %%\max\Bigl\{\varepsilon^{\tfrac{\gamma + 2}{4}-\delta}, 1\Bigr\}, 
         & i = 3.\end{array} \right.
\end{aligned} 
\label{bend23}
\end{align}
%%\end{equation}
\end{theorem}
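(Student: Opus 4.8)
The plan is to pass to the Floquet--Bloch (Gelfand) representation in the longitudinal variable $x_3$, reduce the three estimates to a single family of fibre resolvent bounds parametrised by the quasimomentum, establish those uniformly in $\varepsilon$ by perturbation theory in the quasimomentum, and then reassemble.

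First, since $\mathbb{A}(\cdot/\varepsilon)$ is $\varepsilon\mathbb{Z}$-periodic in $x_3$, the scaled Gelfand transform $\mathcal{G}_\varepsilon$ unitarily identifies $L^2(\omega\times\mathbb{R};\mathbb{C}^3)$ with $\int_{[-\pi,\pi)}^{\oplus}L^2(\omega\times Y;\mathbb{C}^3)\,d\phi$ and diagonalises $\mathcal{A}_\varepsilon$ into fibres $\mathcal{A}_\varepsilon(\phi)=\varepsilon^{-2}\mathcal{A}(\phi)$, where $\mathcal{A}(\phi)$ is the $\varepsilon$-independent self-adjoint operator generated by $\int_{\omega\times Y}\mathbb{A}(y)\,\simgrad_\phi\vect u:\overline{\simgrad_\phi\vect v}$ with $\partial_3$ replaced by $\partial_y+\mathrm{i}\phi$ in the symmetrised gradient. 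Under $\mathcal{G}_\varepsilon$ the smoothing operator $\Xi_\varepsilon$ becomes multiplication by $\mathbbm 1_{\{|\phi|\le1/2\}}$, and, after a Fourier transform in $x_3$, each homogenised operator on the right-hand side of \eqref{genres}--\eqref{bend23} becomes multiplication by an explicit Hermitian symbol evaluated at the physical frequency $\xi=\phi/\varepsilon$; for instance $\varepsilon^{-\gamma}\mathcal{A}^{\rm rod}_\varepsilon+\mathfrak C^{\rm rod}$ has symbol $\varepsilon^{-\gamma-2}\Lambda_\phi^*\mathbb{A}^{\rm rod}\Lambda_\phi+\mathfrak C^{\rm rod}$ with $\Lambda_\phi=\mathrm{diag}(-\phi^2,-\phi^2,\mathrm{i}\phi,\mathrm{i}\phi)$ (positive definite for $\phi\neq0$ by Proposition \ref{proposition21}), $\varepsilon^{-(\gamma-2)}\mathcal{A}^{\rm bend}+I$ has symbol $\varepsilon^{-\gamma-2}\mathbb{A}^{\rm bend}\phi^4+I$, and the force--momentum operators $\mathcal{M}$ turn into fibrewise finite-rank maps. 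It then suffices to bound, uniformly in $\phi\in[-\pi,\pi)$ and $\varepsilon>0$, the $L^2(\omega\times Y;\mathbb{C}^3)$-operator norm of the difference of the two fibre objects, composed where needed with $P_i$ and $S_{\varepsilon^\delta}$.

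For $|\phi|$ bounded below by a fixed $c_0>0$ I would use the fibre Korn inequality $\mathcal{A}(\phi)\ge c\phi^4$ on the whole fibre space (its sharpness stemming from the four-dimensional $\ker\mathcal{A}(0)$, spanned by the two transverse rigid displacements, the axial displacement and the rigid rotation about the $x_3$-axis --- the degrees of freedom of the limiting rod, whose induced macroscopic strains are the $\mathcal{J}^{\rm rod}_{\vect m}$ of \eqref{I_bend_stretch}) to get $\|(\varepsilon^{-\gamma-2}\mathcal{A}(\phi)+I)^{-1}\|\le C\varepsilon^{\gamma+2}$, which is dominated by every right-hand side of \eqref{genres}--\eqref{bend23} for $\gamma>-2$; in particular for $|\phi|>1/2$ the approximating operator vanishes and only this small resolvent survives. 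The core is the range $|\phi|\le c_0$, where I would run perturbation theory of $\mathcal{A}(\phi)=\mathcal{A}(0)+\phi\mathcal{A}^{(1)}+\phi^2\mathcal{A}^{(2)}$ about $\ker\mathcal{A}(0)$ to the relevant order ($\phi^4$ for the bending modes, $\phi^2$ for the torsion/axial modes --- this mixed-order structure is exactly what forces the $\varepsilon$-factors in \eqref{rod}): the cell correctors are the $\vect u_{\vect m}$ of \eqref{correctordefinition} together with the higher-order profiles, the effective $4\times4$ symbol $\varepsilon^{-\gamma-2}\Lambda_\phi^*\mathbb{A}^{\rm rod}\Lambda_\phi$ emerges, and the remainder of the approximate fibre resolvent is controlled by the key weighted Korn/Poincar\'e inequalities: the $L^2$-norm of any $\vect u$ in the form domain is bounded by $C|\phi|^{-2}$ (bending-dominated part), resp.\ $C|\phi|^{-1}$ (torsion/axial part), times $\langle\mathcal{A}(\phi)\vect u,\vect u\rangle^{1/2}$, reflecting the $\phi^4$, resp.\ $\phi^2$, vanishing of the lowest fibre eigenvalues. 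Balancing the fibre error against $(\varepsilon^{-\gamma-2}\phi^4+1)^{-1}$ or $(\varepsilon^{-\gamma-2}\phi^2+1)^{-1}$ and taking $\sup_\phi$ gives the rates $\varepsilon^{(\gamma+2)/4}$ and $\varepsilon^{(\gamma+2)/2}$; the improved exponent for $P_3$ follows because the axial component of the bending near-eigenfunction and of the relevant corrector is one power of $\phi$ (equivalently, one factor $\varepsilon$) smaller than the transverse components, while the $\max\{\varepsilon^{(\gamma+2)/4-\delta},1\}$ factor in \eqref{bend23} tracks the extra $\varepsilon^{-\delta}$ that $S_{\varepsilon^\delta}$ puts on the third load channel, which only degrades the bound once $\delta>(\gamma+2)/4$. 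Under Assumption \ref{matsym} the fibre operator preserves the complexified $L^2_{\rm bend}$, $L^2_{\rm stretch}$ and $\mathbb{A}^{\rm rod}$ is block diagonal, so the $\phi^4$ and $\phi^2$ analyses decouple, yielding \eqref{stretch23} and \eqref{bend23} separately; without it one keeps the full four-dimensional near-kernel and the coupling, which gives \eqref{genres}. Reassembling via unitarity of $\mathcal{G}_\varepsilon$ and of the Fourier transform, together with the a priori estimates of Section \ref{section3}, then closes the argument.

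I expect the principal obstacle to be the weighted Korn-type inequalities on the fibre space with the correct explicit $\phi$-dependence and no $\varepsilon$-dependence: they must simultaneously register the fourth-order vanishing of the lowest eigenvalue --- a genuine dimension-reduction effect, the near-kernel being spanned by $\widehat x$-dependent, non-local profiles rather than constants --- and be sharp enough to separate transverse from axial components for the componentwise rates. A second delicate point is organising the perturbation series so that the remainder is controlled uniformly across all of $|\phi|\le c_0$, in particular through the transition zone $|\phi|\sim\varepsilon^{(\gamma+2)/4}$ where the fibre resolvent changes magnitude.
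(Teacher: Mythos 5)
Your proposal tracks the paper's argument essentially step for step: scaled Gelfand transform to fibres, the $\phi$-weighted Korn/Poincar\'e inequalities of Proposition~\ref{L2estim} that register the $\phi^4$ (bending) and $\phi^2$ (torsion/axial) scales of the near-kernel, an iterative $\phi$-asymptotic expansion of the fibre resolvent (the paper's Section~\ref{section4}, a hands-on replacement for the spectral-germ perturbation theory that fails here because the near-kernel mixes two orders), and the decoupling under Assumption~\ref{matsym}. The one mechanism you describe only heuristically as ``balancing the fibre error against $(\varepsilon^{-\gamma-2}\phi^{m}+1)^{-1}$'' is implemented in the paper by the Cauchy integral formula with the analytic weights $g_{\varepsilon,\chi}(z)=(\chi^2 z/\varepsilon^{\gamma+2}+1)^{-1}$, $h_{\varepsilon,\chi}(z)=(\chi^4 z/\varepsilon^{\gamma+2}+1)^{-1}$ integrated around a $\chi$-uniform contour enclosing the low-lying fibre eigenvalues (after which the spectral projection $P_\Gamma$ is dropped at the cost of an $O(\varepsilon^{\gamma+2})$ error); this is what rigorously turns the fixed-$z$, $\chi$-scaled fibre estimates into the $\varepsilon$-dependent bounds, and is worth naming, but it does not change the route.
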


\begin{remark}
\label{newnotationformomentums}
In Theorem \ref{THML2L2}, the operators  $(\mathcal{M}_\varepsilon^{\rm rod})^*$, $(\mathcal{M}_\varepsilon^{\rm bend})^*$ and $(\mathcal{M}^{\rm stretch})^*$ denote the adjoints of the corresponding force-and-momentum operators,  which are discussed next.

Recall that the  classical Bernoulli-Navier ansatz for  { rod displacements} is of the form 
$$ \begin{bmatrix}{u_1+x_2 u_3} \\[0.3em]
u_2-x_1 u_3 \\[0.2em] u_4 -\varepsilon {\dfrac{\partial}{\partial x_3}}(x_1 {u_1} + x_2 {u_2})\end{bmatrix},  $$
where $u_3$ accounts for torsion while $u_1$, $u_2$, $u_4$ are the components of the leading term of the displacement. 
Thus this form of the approximation for the displacement must be contained in the approximating operator on the left-hand side of \eqref{genres}, \eqref{stretch23}, \eqref{bend23}. To see this, we introduce the operators

\begin{equation}
	\RRR\begin{aligned}
& \mathcal{I}^{\rm bend}_\varepsilon:H^1(\R;\R^2) \to L^2_{\rm bend}(\omega\times \R;\R^3), \qquad \mathcal{I}^{\rm bend}_\varepsilon\begin{bmatrix}{u_1} \\ {u_2}\end{bmatrix} = \begin{bmatrix}{u_1} \\[0.3em]
	u_2 \\[0.2em] -\varepsilon{\dfrac{\partial}{\partial x_3}}(x_1 {u_1} + x_2 {u_2})\end{bmatrix},\\[1.0em]
&\mathcal{I}^{\rm stretch}:H^1(\R;\R^2) \to L^2_{\rm stretch}(\omega\times \R;\R^3), \qquad   \mathcal{I}^{\rm stretch}\begin{bmatrix}{u_3} \\ u_4\end{bmatrix} = \begin{bmatrix}x_2 {u_3} \\[0.25em] -x_1 {u_3} \\[0.25em] u_4\end{bmatrix}. 
\end{aligned} 
\end{equation}
Note also that the following duality relations hold for smooth functions ${\vect f}$:
\begin{align}
\RRR
\left\langle \mathcal{I}^{\rm bend}_\varepsilon\begin{bmatrix}{u_1} \\ {u_2}\end{bmatrix}, {\vect f}
%%\begin{bmatrix}{f_1} \\[0.25em] {f_2} \\[0.25em] {f_3}\end{bmatrix}
\right\rangle&= \int_{\omega\times \R} \begin{bmatrix}{u_1} \\[0.3em] {u_2} \\[0.3em] -\varepsilon{\dfrac{\partial}{\partial x_3}}(x_1 {u_1} + x_2 {u_2})\end{bmatrix}\cdot{\vect f}
 %%\begin{bmatrix}{f_1} \\[0.25em] {f_2} \\[0.25em] {f_3}\end{bmatrix}
 \nonumber
  \\[0.9em] 
&=\int_{\R}\begin{bmatrix}{u_1} \\ {u_2}\end{bmatrix}\cdot\int_{\omega}
\begin{bmatrix}
	 \,\widehat{\!\vect f}-\varepsilon\biggl({\dfrac{\partial}{\partial x_3}}{f_3}\biggr)\widehat{x}
	 %%{f_1} - \varepsilon x_1 \dfrac{d}{dx_3} {f_3} \\[0.8em] {f_2} - \varepsilon x_2 \dfrac{d}{dx_3} {f_3}
  \end{bmatrix} = \left\langle \begin{bmatrix}{u_1} \\ {u_2}\end{bmatrix},\mathcal{M}^{\rm bend}_\varepsilon %%\begin{bmatrix} {f_1}\\[0.25em] {f_2} \\[0.25em] {f_3} \end{bmatrix} 
{\vect f}\right\rangle,\label{dual_bend}
\\[0.9em]
\left\langle \mathcal{I}^{\rm stretch}\begin{bmatrix}{u_3} \\ u_4\end{bmatrix}, \vect{f}
%%\begin{bmatrix}{f_1} \\[0.25em] {f_2} \\[0.25em] {f_3}\end{bmatrix}
\right\rangle&= \int_{\omega\times \R} \begin{bmatrix}x_2 {u_3} \\ -x_1 {u_3} \\ u_4\end{bmatrix}\cdot 
%%\begin{bmatrix}{f_1} \\[0.25em] {f_2} \\[0.25em] {f_3}\end{bmatrix} 
{\vect f} = \int_{ \R}\begin{bmatrix}{u_3} \\ u_4\end{bmatrix}\cdot\int_{\omega}\begin{bmatrix} x_2 {f_1} - x_1 {f_2} \\[0.25em] {f_3}  \\ \end{bmatrix} = \left\langle \begin{bmatrix}{u_3} \\ u_4\end{bmatrix},\mathcal{M}^{\rm stretch}{\vect f}
 %%\begin{bmatrix} {f_1}\\[0.25em] {f_2} \\[0.25em] {f_3} \end{bmatrix} 
 \right\rangle.\nonumber
 %%\label{dual_stretch} 
\end{align}
Thus, we  have 
$$
(\mathcal{M}^{\rm bend}_\varepsilon)^*=\mathcal{I}^{\rm bend}_\varepsilon,\qquad
(\mathcal{M}^{\rm stretch})^*=\mathcal{I}^{\rm stretch}, 
$$

where the second relation is rigorous, and the first relation is understood formally in the sense of \eqref{dual_bend}. The latter convention is justified by our earlier observation that in the analysis to follow the operator $\mathcal{M}^{\rm bend}_\varepsilon$  only appears in combination with the smoothing operator $\Xi_\varepsilon.$ Finally, we have
\[\RRR
(\mathcal{M}^{\rm rod}_\varepsilon)^*=\left[\begin{matrix}(\mathcal{M}^{\rm bend}_\varepsilon)^*,\ 
	%%\\[0.3em] 
	(\mathcal{M}^{\rm stretch})^*\end{matrix}\right]=\left[\begin{matrix}\mathcal{I}^{\rm bend}_\varepsilon,\ 
	%%\\[0.3em] 
	\mathcal{I}^{\rm stretch}\end{matrix}\right]. 
\]

\end{remark}
%%%\begin{remark}
%%%The matrices $\mathfrak{C}^{\rm stretch}(\omega)$ and $\mathfrak{C}^{\rm rod}(\omega)$ represent the effects of torsional deformations and take into account the shape of the domain $\omega.$ They will be formally defined in what follows, see \eqref{cstretchrodbend}. 
%%%\end{remark}
%%\begin{remark}
%%Note that in Theorem \ref{THML2L2} the operator $\mathcal{M}_\varepsilon^{\rm bend}$ is composed with the smoothing operator $\Xi_\varepsilon$. Thus, for $\vect f \in L^2(\omega \times \R;\R^3)$, the resulting loads $\mathcal{M}_\varepsilon^{\rm bend}\Xi_\varepsilon\vect f$ belong to $L^2(\omega \times  \R;\R^2)$ rather than $H^{-1}(\omega \times \R; \R^2).$
%%\end{remark}
\begin{figure}[htb]
	\begin{center}
		\includegraphics[width = 15cm]{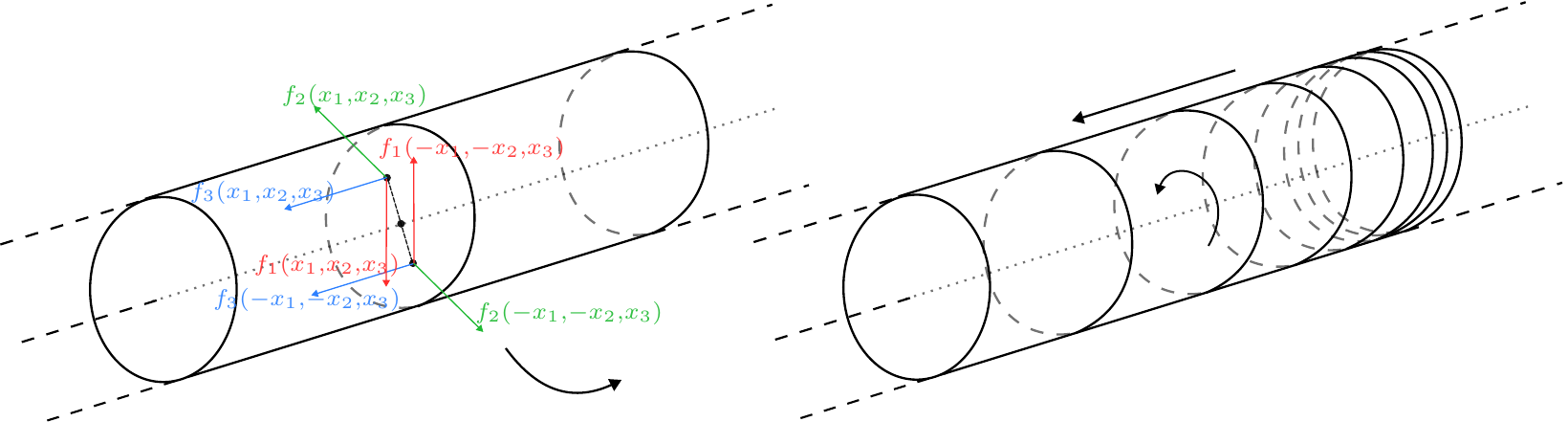}
		\caption{Stretching deformation (with torsion) caused by $\vect f \in L^2_{\rm stretch}$.}
	\end{center}
\end{figure}
\begin{figure}[htb]
	\begin{center}
		\includegraphics[width = 15cm]{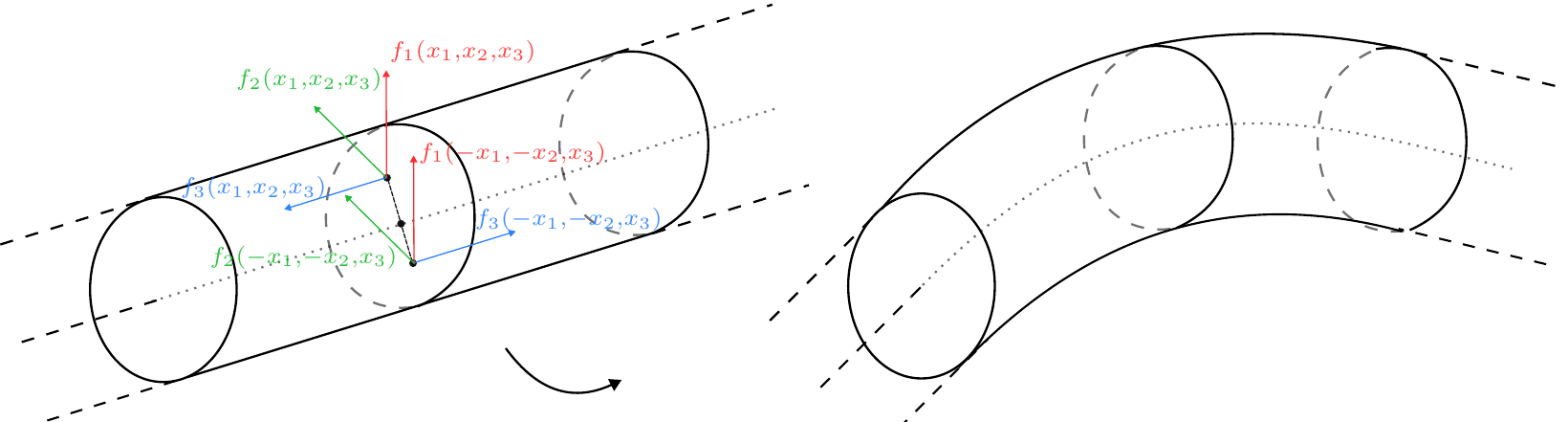}
		\caption{Bending deformation caused by $\vect f \in L^2_{\rm bend}$.}
	\end{center}
\end{figure}

\begin{remark}\label{remparam} 
The role of the parameter $\delta$ in Theorem \ref{THML2L2} is to widen the scope of our analysis, by admitting a variety of force scalings. It is known in the analysis of thin structures (plates and rods) \CCC that,  as a consequence of anisotropy, adopting different load scalings in different directions yields a richer structure of the limiting model, see \cite{ciarlet, Panasenko_book, tambaca}. One could argue that the most interesting cases of parameters $\gamma$, $\delta$  are  $\gamma = 0$, $\delta= 0$ and $\gamma = 2$, $\delta = 1$, since these are the standard regimes  that  emerge when studying thin structures on a finite domain, see \cite{tambaca}  and \cite{BCVZ} for plates in high contrast.  However, in the case of  an  infinite rod (similarly  to the setting  of an infinite plate \cite{cherednichenkovelcic}), there is no natural spectral scaling. The role of the parameter $\gamma$ becomes clear in the time evolution setting, where it serves  for obtaining  the models  for  different time scales. (Note that evolution models of plates and rods are usually analysed with $\gamma=2$, see \cite{Raoult, tambaca}.)
\end{remark}
\begin{remark}
 As shown in Corollary \ref{absenceofforceterms} and Remark \ref{lastremark}, the smoothing operator $\Xi_\varepsilon$ can be removed from the estimates in Theorem \ref{THML2L2} while preserving the order of the estimates. %%%This is shown in the Corollary \ref{absenceofforceterms}. 
%%%One easy consequence of the Theorem \ref{THML2L2} are the estimates on the band gaps in the spectrum. See Corollary \ref{corollaryspectralgaps}. 
\RRR
Also, as shown in Corollary \ref{corr3}, $\mathcal{M}_\varepsilon^{\rm bend}$ can be replaced with $\mathcal{M}_0^{\rm bend}$ at the cost of worsening the estimate \eqref{bend23} in the third component. 
\end{remark}

\begin{remark}
It is possible to obtain the  same estimates as in the Theorem \ref{THML2L2}, even in the case when the ratio $h/\varepsilon$ belongs to  the  interval $[\alpha, \beta]$, where $0<\alpha<\beta$,    but the constant in the estimates will depend on the $\alpha$ and $\beta$ (\CCC see Section 8 in \cite{cherednichenkovelcic}. ) The same is true for Theorem \ref{l2h1theorem} and Theorem \ref{thm_gen_L2L2high}.
\end{remark}

\subsubsection{Higher-precision estimates}

We also establish resolvent estimates in the $L^2 \to H^1$ operator norm.
The proof of the following result is discussed in Section \ref{L2toH1}, (estimates \eqref{precision_stretch}, \eqref{precision_bend}), and in Remark \ref{rem61} (estimate \eqref{genres_L2toH1}) of Section \ref{gen_tens_sec}.

\begin{theorem}[$L^2 \to H^1$ norm-resolvent estimate] 
\label{l2h1theorem}
Suppose that the spectral and force scaling parameters satisfy the conditions $\gamma>-2$ and $\delta \geq 0.$
 %%parameter of spectral scaling. {\color{red}Let $\delta \geq 0$ be the force scaling  parameter.} 
 Then there exists $C>0,$ independent of $\gamma,$ $\delta,$ such that for every $\varepsilon> 0$ one has 
\begin{equation}
\left\Vert 
	P_i\left(\left( \frac{1}{\varepsilon^\gamma}\mathcal{A}_\varepsilon + I \right)^{-1} 
- (\mathcal{M}^{\rm rod}_\varepsilon)^*\left(\frac{1}{\varepsilon^\gamma} \mathcal{A}^{\rm rod}_{\varepsilon} + \mathfrak{C}^{\rm rod} \right)^{-1}\mathcal{M}^{\rm rod}_\varepsilon \Xi_\varepsilon - \mathcal{A}_{\rm rod}^{\rm \rm corr}(\varepsilon)
		\right) 
 \right\Vert_{L^2 \to H^1} \leq  
 { C}\left\{\begin{array}{ll}
         \varepsilon^{\tfrac{\gamma + 2}{4}}, & \mbox{ $i = 1,2$},\\[0.5em]         
         \varepsilon^{\tfrac{\gamma + 2}{2}}, & \mbox{ $i = 3$}.\end{array} \right.
     \label{genres_L2toH1}
\end{equation}
Under the additional assumption (Assumption \ref{matsym}) of the cross-section and material symmetries, one has
%%\begin{equation}
\begin{align}
&\begin{aligned}
&\left\Vert \left( \frac{1}{\varepsilon^{\gamma}}\mathcal{A}_\varepsilon + I \right)^{-1}\bigg|_{L^2_{\rm stretch}} - (\mathcal{M}^{\rm stretch})^*\left(\frac{1}{\varepsilon^{\gamma}}\mathcal{A}^{\rm stretch} + \mathfrak{C}^{\rm stretch} \right)^{-1}\mathcal{M}^{\rm stretch}\Xi_\varepsilon -\mathcal{A}^{\rm \rm corr}_{\rm stretch}(\varepsilon)  \right\Vert_{L^2 \to H^1}\\[0.6em]  
&\hspace{40pt}\leq  C \max\Bigl\{\varepsilon^{\gamma + 1}, \varepsilon^{\tfrac{\gamma + 2}{2}} \Bigr\},
\end{aligned}
\label{precision_stretch}
\\[0.8em]
&\begin{aligned}
&\left\Vert 
	P_i 
		\left( 
			\left( \frac{1}{\varepsilon^\gamma}\mathcal{A}_\varepsilon + I \right)^{-1}\bigg|_{L^2_{\rm bend}} 
- (\mathcal{M}^{\rm bend}_\varepsilon)^*\left(\frac{1}{\varepsilon^{\gamma-2}}\mathcal{A}^{\rm bend} + I \right)^{-1}\mathcal{M}^{\rm bend}_\varepsilon \Xi_\varepsilon
		 -\mathcal{A}^{\rm \rm corr}_{\rm bend}(\varepsilon)\right)S_{\!\varepsilon^\delta}
 \right\Vert_{L^2 \to H^1} \\[0.5em] 
 &\hspace{40pt}\leq 
 {C\max\Bigl\{\varepsilon^{\tfrac{\gamma + 2}{4}- \delta}, 1\Bigr\}} 
 \left\{ \begin{array}{ll}
         \max\Bigl\{\varepsilon^{\tfrac{\gamma + 2}{4}},\varepsilon^{\tfrac{\gamma }{2}}\Bigr\},
         %%\max\Bigl\{\varepsilon^{\tfrac{\gamma + 2}{4} - \delta}, 1\Bigr\}, 
         & \mbox{$i = 1,2$},\\[0.9em]
          \max\Bigl\{\varepsilon^{\tfrac{\gamma + 2}{2}},\varepsilon^{\tfrac{3\gamma + 2}{4}}\Bigr\},
          %%\max\Bigl\{\varepsilon^{\tfrac{\gamma + 2}{4}- \delta}, 1\Bigr\},
           & \mbox{$i = 3$}.\end{array} \right. 
\end{aligned}
\label{precision_bend}
\end{align}
%%\end{equation}
\end{theorem}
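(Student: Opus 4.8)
The plan is to obtain \eqref{genres_L2toH1}, \eqref{precision_stretch} and \eqref{precision_bend} by refining the fibre-wise analysis that already underlies Theorem~\ref{THML2L2}, carried to one extra order in the quasimomentum. The extra order is genuinely needed: the relevant first-order corrector is built from the cell-problem solutions ${\vect u}_{\vect m}$ of \eqref{correctordefinition} and is of order $\varepsilon$ as an $L^2\to L^2$ operator (which is why it is invisible in Theorem~\ref{THML2L2}), but its microscopic gradient is of order one, so after passing to the $H^1$-norm it enters at leading order. The operators $\mathcal A^{\rm corr}_{\rm rod}(\varepsilon)$, $\mathcal A^{\rm corr}_{\rm stretch}(\varepsilon)$, $\mathcal A^{\rm corr}_{\rm bend}(\varepsilon)$ in the statement are exactly this corrector and its restrictions to the invariant subspaces of Assumption~\ref{matsym}.

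First I would pass to the direct-integral (Gelfand) decomposition in the $x_3$-variable: applying the Fourier transform $\mathcal F$ in $x_3$ and setting $\chi:=\varepsilon\xi$, so that the spectral cutoff in $\Xi_\varepsilon$ becomes $|\chi|\le 1/2$, the operator $\varepsilon^{-\gamma}\mathcal A_\varepsilon+I$ becomes a fibred family $\mathcal A_{\varepsilon,\chi}$ on $L^2(\omega\times Y;\C^3)$ with $\partial_3$ replaced by ${\rm i}\chi/\varepsilon$ and oscillatory coefficient $\mathbb A(y)$. On the range $|\chi|\le 1/2$ the $L^2\to H^1$ operator norm in $x_3$ is controlled by $\sup_\chi(1+|\xi|^2)^{1/2}\le C\varepsilon^{-1}$ times the fibre-wise $L^2(\omega\times Y)$-norm of the difference of (corrected) resolvents, while the transverse part of the $H^1$-norm is handled directly on the fibre. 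On the complementary region $|\chi|>1/2$ the smoothing operator $\Xi_\varepsilon$ annihilates the corrected approximation, and the spectral gap together with the a priori resolvent bound of Section~\ref{section3} make $(\varepsilon^{-\gamma}\mathcal A_\varepsilon+I)^{-1}$ itself $O(\varepsilon^{\gamma+2})$-small there, so this region is harmless.

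Next I would record the fibre spectral picture near $\chi=0$: by the a priori and spectral estimates of Section~\ref{section3}, together with a Korn-type inequality on $\omega\times Y$ that is uniform in $\varepsilon$ and carries the sharp dependence on $\chi$, the bottom of the spectrum of $\mathcal A_{\varepsilon,\chi}$ consists, after the $\varepsilon^{-\gamma}$-scaling, of four ``rod'' eigenvalues behaving like $\varepsilon^{-\gamma}\varepsilon^2|\chi|^4$ (the two bending modes) and like $\varepsilon^{-\gamma}|\chi|^2$ (the torsion/stretch modes), the rest of the spectrum being bounded below uniformly in $(\varepsilon,\chi)$; the associated unit-energy eigenfunctions have $L^2$-norms blowing up like $|\chi|^{-2}$, respectively $|\chi|^{-1}$, as $\chi\to0$. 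Projecting onto this four-dimensional rod space and its complement, I expand the fibre resolvent in powers of $\chi$: the zeroth-order term integrates in $\xi$ to the operator $(\mathcal M^{\rm rod}_\varepsilon)^*(\varepsilon^{-\gamma}\mathcal A^{\rm rod}_\varepsilon+\mathfrak C^{\rm rod})^{-1}\mathcal M^{\rm rod}_\varepsilon\Xi_\varepsilon$ of Theorem~\ref{THML2L2} (invertibility here being Proposition~\ref{proposition21} and Corollary~\ref{coerc_corrol}), the first-order term is precisely the corrector $\mathcal A^{\rm corr}_{\rm rod}(\varepsilon)$ (and under Assumption~\ref{matsym}, with $L^2_{\rm bend}$, $L^2_{\rm stretch}$ invariant, $\mathcal A^{\rm corr}_{\rm bend}(\varepsilon)$, $\mathcal A^{\rm corr}_{\rm stretch}(\varepsilon)$), and the remainder is estimated by the Korn/Poincar\'e inequalities of Section~\ref{section3} with the blow-up rates just listed. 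Multiplying the $x_3$-derivative part of the $H^1$-norm by the $\varepsilon^{-1}$ from the decomposition step, tracking the load scaling $S_{\varepsilon^\delta}$ and the Bernoulli--Navier $\pm\varepsilon$-factors of Remark~\ref{newnotationformomentums} (which is what separates the components $i=1,2$ from $i=3$), and treating $L^2_{\rm bend}$ and $L^2_{\rm stretch}$ separately under Assumption~\ref{matsym} and the full coupled rod space otherwise (following Remark~\ref{rem61} of Section~\ref{gen_tens_sec}), yields \eqref{precision_stretch}, \eqref{precision_bend} and \eqref{genres_L2toH1} respectively; the $\max\{\cdot,\cdot\}$ thresholds are produced by the competition between the $\chi^2$- and $\varepsilon^2\chi^4$-scales.

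\textbf{Main obstacle.} The crux is the joint uniformity in $\varepsilon$ and $\chi$ of the remainder in the quasimomentum expansion near $\chi=0$: one needs Korn-type inequalities on $\omega\times Y$ whose constants have explicit, sharp dependence on the cross-section scaling $\varepsilon$ and on $\chi$, so as to absorb the $|\chi|^{-2}$ and $|\chi|^{-1}$ blow-ups of the low-lying eigenfunctions against the higher powers of $\chi$ supplied by the expansion, and one must correctly match the bending scale ($\sim\varepsilon^2\chi^4$) against the stretching scale ($\sim\chi^2$). For the $H^1$-estimate there is the additional delicacy of isolating exactly the right first-order corrector so that the residual gradient is genuinely of higher order; pinning down the $\max\{\cdot,\cdot\}$ exponents in \eqref{precision_stretch}--\eqref{precision_bend} is precisely where this matching is most demanding.
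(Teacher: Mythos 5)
Your overall plan is the one the paper follows: Gelfand-decompose, expand the fibre resolvent in powers of $\chi$ with correctors built from the cell problem \eqref{correctordefinition}, recognise $\mathcal A^{\rm corr}$ as the first-order term, and close the argument by a contour integral around the low-lying part of the spectrum followed by the inverse Gelfand transform. The intuition you record — that $\mathcal A^{\rm corr}$ is $O(\varepsilon)$ as an $L^2\to L^2$ operator but has a gradient of order one, and so appears only at the $L^2\to H^1$ level — is exactly right and is the motivation behind the statement.

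However, the decisive technical step is not correctly captured by your opening estimate, and as written it would not produce the sharp exponents. You claim ``the $L^2\to H^1$ operator norm in $x_3$ is controlled by $\sup_\chi(1+|\xi|^2)^{1/2}\le C\varepsilon^{-1}$ times the fibre-wise $L^2(\omega\times Y)$-norm of the difference.'' Taking a supremum of the weight and a supremum of the fibre $L^2$-error separately — a product of sups — would give, in the stretching case, $\varepsilon^{-1}\cdot\varepsilon^{(\gamma+2)/2}=\varepsilon^{\gamma/2}$, which is strictly worse than the stated $\max\{\varepsilon^{\gamma+1},\varepsilon^{(\gamma+2)/2}\}$ except at the endpoint $\gamma=-2$. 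The paper's argument (Section~\ref{L2toH1}) instead estimates $\partial_y$ of the approximation error on the fibre by a \emph{separate, $|\chi|$-sharper} bound — of order $\chi^2$ rather than $|\chi|$ in the stretching case — and only then applies $\partial_{x_3}=\varepsilon^{-1}(\partial_y+{\rm i}\chi)$ from \eqref{gelfand_y} and takes the supremum over $\chi$ of the whole product, so the extra power of $|\chi|$ compensates part of the $\varepsilon^{-1}$ loss. Related to this, your expansion has to account for the refinement $\vect u_0^{(1)}$ from \eqref{u01_stretch}/\eqref{u01_bend}, which does appear in the $\chi^2$-accurate fibre estimates \eqref{stretch_estimates}, \eqref{bend_estimates}, but is deliberately \emph{absent} from the theorem's corrector $\mathcal A^{\rm corr}$. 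The paper reconciles this by noting that $\vect u_0^{(1)}$ is independent of $y$ — hence invisible to $\partial_y$ — and is itself only $O(|\chi|)$ in $L^2$ and in the transverse derivatives, so it is absorbed into the leading-order $L^2\to L^2$ bound. Without this observation the corrector in the theorem and the corrector your expansion produces do not coincide. Finally, your eigenvalue scalings ($\varepsilon^{-\gamma}\varepsilon^2|\chi|^4$ for bending, $\varepsilon^{-\gamma}|\chi|^2$ for stretching) are inconsistent with your stated convention $\chi=\varepsilon\xi$: with $\chi\in[-\pi,\pi)$ the fibre operator $\varepsilon^{-\gamma-2}\mathcal A_\chi$ has low eigenvalues of order $\varepsilon^{-\gamma-2}\chi^4$ and $\varepsilon^{-\gamma-2}\chi^2$; your expressions only match if $\chi$ in that sentence secretly denotes the physical frequency $\xi$.
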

The operators $\mathcal{A}_{\rm rod}^{\rm \rm corr}(\varepsilon),$ $\mathcal{A}_{\rm bend}^{\rm \rm corr}(\varepsilon),$ and $\mathcal{A}_{\rm stretch}^{\rm \rm corr}(\varepsilon)$ are the standard 
first-order correctors of the theory of homogenisation, defined by the expression \eqref{trans_back_corr1_str} below.

Finally, our asymptotic analysis yields further correctors $\widetilde{\mathcal{A}}_{\rm rod}^{\rm \rm corr}(\varepsilon),$ $\widetilde{\mathcal{A}}_{\rm bend}^{\rm \rm corr}(\varepsilon),$ and $\widetilde{\mathcal{A}}_{\rm stretch}^{\rm \rm corr}(\varepsilon)$ that allow us to calculate $L^2 \to L^2$ norm resolvent estimates with even higher precision. These correctors, which are expressed explicitly as pseudodifferential operators, have been unknown in the theory of heterogeneous elastic rods. They resemble the higher-order correctors appearing in the works of Birman and Suslina in the standard homogenisation setting, see e.g. \cite{BirmanSuslina_corrector}. For the precise definition of the corrector operators in the context of rods, see \eqref{higherorderl2l2correctors} and Remark \ref{rem61}. The proof of the following result is discussed in Section \ref{higher_order_sec} (stretching and bending cases, see estimates \eqref{stretch_higher}, \eqref{bend_higher}) and Remark \ref{rem61} (general case, see estimate \eqref{gen_higher}).

\begin{theorem}[Higher-order $L^2 \to L^2$ norm-resolvent estimate]
\label{thm_gen_L2L2high}
 Suppose that the spectral scaling and force scaling parameters satisfy the conditions $\gamma>-2$ and $\delta\geq 0,$ respectively. 
%%of spectral scaling. Let $\delta\geq 0$ be the parameter of force term scaling. 
Then there exists $C>0,$ independent of $\gamma,$ $\delta,$ such that for every $\varepsilon> 0$ one has 
\begin{equation}
\begin{split}
\left\Vert 
	P_i 
		\left\{
			\left( \frac{1}{\varepsilon^\gamma}\mathcal{A}_\varepsilon + I \right)^{-1} 
- (\mathcal{M}^{\rm rod}_\varepsilon)^*\left(\frac{1}{\varepsilon^\gamma} \mathcal{A}_{\rm rod, \varepsilon}^{\rm hom} + \mathfrak{C}^{\rm rod} \right)^{-1}\mathcal{M}^{\rm rod}_\varepsilon \Xi_\varepsilon - \mathcal{A}_{\rm rod}^{\rm \rm corr}(\varepsilon) - \mathcal{\widetilde{A}}^{\rm \rm corr}_{\rm rod}(\varepsilon)
		\right\}
 \right\Vert_{L^2 \to L^2} \\ \leq  
 {C}\left\{ \begin{array}{ll}
         \varepsilon^{\tfrac{\gamma + 2}{2}}, & \mbox{ $i = 1,2$},\\[0.6em]
         \varepsilon^{\tfrac{3(\gamma + 2)}{4}}, & \mbox{ $i = 3$}.\end{array} \right.
\end{split}
\label{gen_higher}
\end{equation}
Under the additional assumption (Assumption \ref{matsym}) on the cross-section and material symmetries, one has
\begin{align}
%%\begin{split}
&\begin{aligned}
&\left\Vert \left( \frac{1}{\varepsilon^\gamma}\mathcal{A}_\varepsilon + I \right)^{-1}\bigg|_{L^2_{\rm stretch}} - (\mathcal{M}^{\rm stretch})^*\left(\frac{1}{\varepsilon^\gamma} \mathcal{A}^{\rm stretch} + \mathfrak{C}^{\rm stretch} \right)^{-1}\mathcal{M}^{\rm stretch}\Xi_\varepsilon  -\mathcal{A}^{\rm \rm corr}_{\rm stretch}(\varepsilon) -\mathcal{\widetilde{A}}^{\rm \rm corr}_{\rm stretch}(\varepsilon)  \right\Vert_{L^2 \to L^2}\\[0.9em] 
&\hspace{60pt}\leq C \varepsilon^{\gamma + 2},
\end{aligned}\quad\ 
\label{stretch_higher}
\\[0.3em]
%%\end{split}
%%\end{equation*}
%%\begin{align*}
&\begin{aligned}
&\Bigg\|P_i 
		\left\{
			\left( \frac{1}{\varepsilon^\gamma}\mathcal{A}_\varepsilon + I \right)^{-1}\bigg|_{L^2_{\rm bend}} 
-  (\mathcal{M}^{\rm bend}_\varepsilon)^*\left(\frac{1}{\varepsilon^{\gamma-2}} \mathcal{A}^{\rm bend} + I \right)^{-1}\mathcal{M}^{\rm bend}_\varepsilon \Xi_\varepsilon
		 -\mathcal{A}^{\rm \rm corr}_{\rm bend}(\varepsilon)     -\mathcal{\widetilde{A}}^{\rm \rm corr}_{\rm bend}(\varepsilon)\right\} S_{\!\varepsilon^\delta}
 \Bigg\|_{L^2 \to L^2} \\[0.4em] 
&\hspace{60pt}\leq
{C\max\Bigl\{\varepsilon^{\tfrac{\gamma + 2}{4}-\delta}, 1\Bigr\}}\left\{ \begin{array}{ll}
         \varepsilon^{\tfrac{\gamma + 2}{2}},
         %%\max\Bigl\{\varepsilon^{\tfrac{\gamma + 2}{4}-\delta}, 1\Bigr\}, 
         & i = 1,2,\\[0.6em]
         \varepsilon^{\tfrac{3(\gamma + 2)}{4}},
         %%\max\Bigl\{\varepsilon^{\tfrac{\gamma + 2}{4}-\delta}, 1\Bigr\}, 
         & i = 3.\end{array} \right. 
     \end{aligned}
 \label{bend_higher} 
\end{align}

\end{theorem}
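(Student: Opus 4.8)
The plan is to reuse the Gelfand--Floquet machinery already deployed for Theorems~\ref{THML2L2} and~\ref{l2h1theorem}, but to push the fibrewise asymptotic expansion of the resolvent one order further, so that the next term of the expansion is precisely the new corrector $\widetilde{\mathcal{A}}^{\rm corr}(\varepsilon)$ of~\eqref{higherorderl2l2correctors}. As in the rest of the paper, I would treat the two symmetry-reduced problems (stretching and bending) first and deduce the general estimate~\eqref{gen_higher} by the analogous argument, which is the division of labour indicated in Section~\ref{higher_order_sec} and Remark~\ref{rem61}. Concretely, I would apply the Gelfand transform in $x_3$ to write $\varepsilon^{-\gamma}\mathcal{A}_\varepsilon+I$ as a direct integral over the quasimomentum $\chi$ of fibre operators on $\omega\times Y$, rescaling $\chi$ to the natural scale of the period as in Section~\ref{section3}; the smoothing operator $\Xi_\varepsilon$, which cuts the physical Fourier variable dual to $x_3$ at $(2\varepsilon)^{-1}$, confines $\chi$ to a bounded range where the perturbation expansion about the degenerate kernel of the $\chi=0$ fibre is legitimate, and outside which both sides of the claimed estimate vanish.

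Next, on each fibre I would expand the resolvent in powers of $\chi$ around the finite-dimensional zero eigenspace of the $\chi=0$ operator (spanned by the rigid displacements together with the corrector fields), using the spectral gap of that operator. The zeroth term reproduces the homogenised fibre resolvent; the first correction reproduces the standard first-order corrector $\mathcal{A}^{\rm corr}(\varepsilon)$ of~\eqref{trans_back_corr1_str} already subtracted in Theorem~\ref{l2h1theorem}; and the next term --- once the bending scale (eigenvalue of order $\chi^4$, whence the factor $\varepsilon^{\gamma-2}$ multiplying $\mathbb{A}^{\rm bend}$) is separated from the stretching scale (eigenvalue of order $\chi^2$) --- is exactly $\widetilde{\mathcal{A}}^{\rm corr}(\varepsilon)$, which is a pseudodifferential operator because its fibre symbol depends on both the fast variable and the rescaled quasimomentum. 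The analytic input throughout is the family of Korn/Poincaré-type inequalities of Section~\ref{section3} bounding the $L^2$-norm of a fibre deformation by its scaled elastic energy with a constant that correctly degenerates as $|\chi|\to0$.

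Having subtracted the homogenised resolvent, $\mathcal{A}^{\rm corr}(\varepsilon)$ and $\widetilde{\mathcal{A}}^{\rm corr}(\varepsilon)$, the fibre remainder gains one more power of $\chi$ than in Theorems~\ref{THML2L2}--\ref{l2h1theorem}; integrating the fibre bound over the admissible range of $\chi$ and invoking the direct-integral structure converts it into the operator-norm bounds with the improved exponents $\varepsilon^{(\gamma+2)/2}$, $\varepsilon^{3(\gamma+2)/4}$ and $\varepsilon^{\gamma+2}$. The asymmetry between the components $i=1,2$ and $i=3$, and the appearance of $S_{\!\varepsilon^\delta}$ together with the prefactor $\max\{\varepsilon^{(\gamma+2)/4-\delta},1\}$ in~\eqref{bend_higher}, are inherited exactly as in Theorem~\ref{THML2L2}: the third component of $\mathcal{I}^{\rm bend}_\varepsilon$ (Remark~\ref{newnotationformomentums}) carries an extra factor $\varepsilon$, and combining this with the $\varepsilon^\delta$-rescaling of the loads produces both the gain in the $i=3$ row and the stated maximum. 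The inverse Gelfand transform finally turns the fibre symbols into the operators $\widetilde{\mathcal{A}}^{\rm corr}_{\rm rod}(\varepsilon)$, $\widetilde{\mathcal{A}}^{\rm corr}_{\rm bend}(\varepsilon)$ and $\widetilde{\mathcal{A}}^{\rm corr}_{\rm stretch}(\varepsilon)$, completing the three estimates.

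The hard part is the uniformity of the remainder bound as $\chi\to0$: there the fibre operator has a stretching-type eigenvalue of order $\chi^2$ and a bending-type eigenvalue of order $\chi^4$ both collapsing to zero, so the unit-energy eigenfunctions blow up in $L^2$ and crude resolvent bounds diverge. One must track precisely how many powers of $\chi$ the spectral projection onto this slow subspace supplies and balance them against the $\varepsilon^{-\gamma}$ scaling of the operator --- this is what pins down the hypothesis $\gamma>-2$ and makes the bending case (fourth order, genuinely dispersive) strictly harder than the stretching case. A subsidiary nuisance is that $\mathcal{M}^{\rm bend}_\varepsilon$ is only $H^{-1}$-valued, so the identity $(\mathcal{M}^{\rm bend}_\varepsilon)^*=\mathcal{I}^{\rm bend}_\varepsilon$ holds only formally, in the sense of~\eqref{dual_bend}; one has to carry $\Xi_\varepsilon$ through every step to keep all operators acting between honest $L^2$ spaces.
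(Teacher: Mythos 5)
Your plan follows the same architecture as the paper: Gelfand transform onto fibres, separation of the stretching ($\chi^{2}$) and bending ($\chi^{4}$) scales, iterative construction of a corrector hierarchy whose next term after the cell corrector is $\widetilde{\mathcal{A}}^{\rm corr}(\varepsilon)$, and return to the physical domain via the inverse Gelfand transform. The paper indeed organises the proof exactly this way (Sections~\ref{section4} and \ref{higher_order_sec}), with the general case following by combining two contours, one for each scale.

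There is, however, one concrete mathematical slip that you should correct. You write that one ``integrat[es] the fibre bound over the admissible range of $\chi$'' to produce the operator-norm estimate. That is not how a direct integral works: for $\mathfrak{G}_\varepsilon(\varepsilon^{-\gamma}\mathcal{A}_\varepsilon+I)^{-1}\mathfrak{G}_\varepsilon^{-1}=\int^{\oplus}_{[-\pi,\pi)}(\varepsilon^{-\gamma-2}\mathcal{A}_\chi+I)^{-1}\,d\chi$, the $L^2\to L^2$ operator norm equals the \emph{essential supremum} over $\chi$ of the fibre norms, not their integral. The correct mechanism --- and the key step your sketch glosses over --- is the Cauchy integral formula with the scalar weights $g_{\varepsilon,\chi}(z)=(\chi^{2}\varepsilon^{-(\gamma+2)}z+1)^{-1}$ and $h_{\varepsilon,\chi}(z)=(\chi^{4}\varepsilon^{-(\gamma+2)}z+1)^{-1}$ integrated over a contour $\Gamma$ that encloses, uniformly in $\chi$, the two $O(1)$ eigenvalues of the $\chi^{-2}$- or $\chi^{-4}$-scaled fibre operator. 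One bounds the contour integral by the product of the fibre error (of order $\chi^{2}$ or $|\chi|^{3}$, from Propositions~\ref{propbending}, \ref{propbend}) with the bound $\bigl(\max\{\chi^{2}/\varepsilon^{\gamma+2},1\}\bigr)^{-1}$ or $\bigl(\max\{\chi^{4}/\varepsilon^{\gamma+2},1\}\bigr)^{-1}$ on $|g_{\varepsilon,\chi}|$, $|h_{\varepsilon,\chi}|$, and then maximises this product over $\chi$; the maximum is attained near $\chi^{2}\approx\varepsilon^{\gamma+2}$ (stretching) or $\chi^{4}\approx\varepsilon^{\gamma+2}$ (bending), which is what produces the exponents $\varepsilon^{\gamma+2}$, $\varepsilon^{(\gamma+2)/2}$, $\varepsilon^{3(\gamma+2)/4}$. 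A related but smaller point: the spectral projection $P_\Gamma$ does not itself ``supply'' powers of $\chi$ --- what it does is let you discard the high-frequency part of the fibre resolvent at a cost of $O(\varepsilon^{\gamma+2})$, after which the contour integral formula applies cleanly. Finally, $\Xi_\varepsilon$ does not cause both sides of the estimate to vanish outside a bounded $\chi$-range (the Gelfand variable is already confined to $[-\pi,\pi)$); its role is to make $\mathcal{M}^{\rm bend}_\varepsilon\Xi_\varepsilon$ an $L^2$-valued operator and to match the Fourier support of the constant-coefficient homogenised resolvent to the Brillouin zone, and its removal is dealt with separately (Corollary~\ref{absenceofforceterms}, Remark~\ref{lastremark}). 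With these two clarifications made, the rest of your sketch is a faithful description of Section~\ref{higher_order_sec} and Remark~\ref{rem61}.
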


\subsection{Methodology and the relationship with existing literature}
\label{sectionobjasnjenje}
 The approach to norm-resolvent estimates we use in the present paper resembles those adopted in \cite{BirmanSuslina,BirmanSuslina_corrector,ChCoARMA} and is different from those of \cite{Kenig,ZhikovPastukhova}, which do not rely on spectral analysis.
The spectral approach seems to be most convenient for obtaining adequate results for elastic thin structure (specially for the estimates depending on the parameters $\gamma,\delta$). On the other hand, neither the spectral approach of \cite{ABV} nor the two-scale expansion discussed in the same paper seems to be convenient for  obtaining the results of \cite{cherednichenkovelcic} and those of the present paper. First, the error estimates derived in \cite{ABV} are not uniform with respect to the data, and second, when using the spectral approach, the authors of \cite{ABV} assume that a power-series expansion in the quasimomentum is valid for the smallest eigenvalue, which does not apply, in general, to systems. 
We extend the approach introduced in \cite{cherednichenkovelcic} to the case of thin elastic rods, which has special features from the geometric and spectral points of view.

The starting point of our approach consists in applying the scaled Gelfand transform (see Section \ref{Gelfand_sec}) to the resolvent of the operator ${\mathcal A}_\varepsilon$ defined by the bilinear form \eqref{bilinear_form}, which results in the direct integral decomposition \eqref{gelfanddecomposition}.
 %%into a continuum of resolvent problems posed in the $L^2$ space on the unit cell, in the sense of \eqref{gelfanddecomposition}. 
 The next step is to provide an approximation, in the operator-norm topology and uniformly in $\chi \in [-\pi,\pi)$, for resolvent of the ``fibre operator" ${\mathcal A}_\chi,$  see \eqref{nak50} below,  which we scale by the order (with respect to $|\chi|$) of one of its lowest eigenvalues.
 %% for each value of $\chi.$
 %%% in order to provide an approximation, in the operator-norm topology, for the corresponding resolvent operator.
%%% all while estimating the difference between the two in the operator norm topology. 
Note that even though the results we aim for (see Section \ref{main_results_sec}) are estimates with respect to the parameter $\varepsilon$ (playing the role of the thickness of the rod as well as the period of material oscillations), the estimates for the $|\chi|$-scaled resolvent of ${\mathcal A}_\chi$ are expressed in terms of the quasimomentum $\chi.$ These are then translated into the desired norm-resolvent estimates for ${\mathcal A}_\varepsilon$ by the means of the Cauchy integral formula followed by the inverse Gelfand transform. The mentioned estimates for ${\mathcal A}_\chi$ with respect to the quasimomentum $\chi$ are obtained by performing an asymptotic procedure adapted for the fact that the rod is thin and its properties oscillate in one direction only. 
%%{\color{red} Discuss the origins of the approach and compare to the existing literature.}

The above approach is first implemented under Assumption \ref{matsym}, so the problem for the operator ${\mathcal A}_\varepsilon$ (and consequently for ${\mathcal A}_\chi$)   splits  into two simpler problems in the invariant subspaces. The asymptotic procedure can be carried out separately for these two problems and is simpler than a similar procedure for the full problem.

The structure of the paper is as follows:
\begin{itemize}
    \item In Section \ref{section3} we derive the Korn-type inequalities  for quasiperiodic functions  in  the general case as well as for the bending and stretching deformations in the case of material symmetries. Using these inequalities, we estimate the spectrum by the means of Rayleigh quotients. The outcome of the analysis is information about the orders of magnitudes of eigenvalues, hence about convenient scalings of the operators for the next step.   The difference with respect to \cite{BirmanSuslina}, \cite{BirmanSuslina_corrector} comes from the fact that the lowest eigenvalues appear with different orders in $|\chi|$ (two of them of order ${\chi^4}$ and two of them of order ${\chi^2}$). Due to this, the spectral germ technique  used in these works  is not directly applicable, since the usual assumption on regularity of the spectral germ implies that all lowest eigenvalues have the same order ${\chi^2}$. Notably, in the context of elastic plates \cite{cherednichenkovelcic} one is presented with a situation where the set of lowest eigenvalues consists of one eigenvalue of order ${\chi^4}$ and two eigenvalues of order $\RRR \chi^2 $ (naturally giving rise to two different eigenspaces subject to the so-called Kirchhoff-Love ansatz). 
     
    \item In Section \ref{section4} we approximate the ($\chi$-dependent) resolvent operators by performing an iterative asymptotic procedure for defining successive (i.e. with correctors in increasing powers of $|\chi|$) approximations for the solutions of the related problems, for which the error bounds depend explicitly on  the quasimomentum $\chi$ and the norm of the (Gelfand-transformed) loads. Here the analysis is carried out under Assumption \ref{matsym}, separately for the two invariant subspaces. The case of stretching deformations somewhat resembles the ``bulk" homogenisation setup of \cite{BirmanSuslina}, primarily with regard to the order of the operator scaling. Similarly to \cite{cherednichenkovelcic}, we use $\chi$-dependent asympotics, which is a natural choice as a consequence of apriori bounds. \RRR  For this reason, the approach of \cite{cherednichenkovelcic} and the present paper is different to the technique developed in \cite{ChCoARMA}, where $\varepsilon$-dependent asymptotics was considered  and where the first-order approximation was obtained. As already mentioned, scaling of the problem with respect to $\chi$ in the natural way (after apriori analysis), enables us to go further in the asymptotic expansion in a systematic way as well as to express the error with respect to the scaling parameters $\gamma,\delta$.  
    
    \item In Section \ref{section5} we combine the estimates in Section \ref{section4} with the Cauchy integral formula, which yields resolvent estimates for ${\mathcal A}_\chi$ in terms of the ``physical" parameter $\varepsilon$. 
    %We use various parts of the obtained asymptotic expansions to define the corrector operators appearing in Theorem \ref{l2h1theorem} and Theorem \ref{thm_gen_L2L2high}. 
   \RRR We then use these estimates and translate them back onto the original physical domain.  %%Here we also deal only with invariant subspaces under the Assumption \ref{matsym}. 
    
    \item In Section \ref{section6} we repeat the procedure developed in the preceding sections, but this time under no additional assumptions on cross-section geometry or material symmetries. This requires us to perform two simultaneous asymptotic procedures, with different scalings and then combine them together. The final two steps consist, as before, in utilising the Cauchy integral formula and translating the obtained estimates back onto the original physical domain.  
\end{itemize}

We plan to use the results of this paper to obtain quantitative time evolution analysis for thin (heterogeneous) elastic structures, including the higher-order perturbation as in \cite{ABV}.

\subsection{\CCC Gelfand transform }
\label{Gelfand_sec}
The first key step of the analysis consists in decomposing, by means of the so-called Gelfand transform, the original problem on ${\mathbb R}^3$ into a family of problems (parametrised by $\chi \in [-\pi,\pi)$) on bounded domains so the associated operators have compact resolvents, cf.~\eqref{gelfanddecomposition} below. 
In this section we provide the definition of the Gelfand transform as well as its basic properties. We also explain how Gelfand transform changes the bilinear form introduced  in \eqref{bilinear_form}.

\RRR For  every value of the parameter $\chi \in [-\pi,\pi)$, we consider the following Sobolev space of $\chi$-quasiperiodic functions:  
\begin{equation}
H_{\chi}^1\bigl(Y;H^1(\omega;\C^3)\bigr) := \left\{{\rm e}^{{\rm i}\chi y}\vect u(\widehat{x},y),\quad  \vect u \in H_{\#}^1\bigl(Y;H^1(\omega;\C^3)\bigr) \right\},\quad \chi \in [-\pi,\pi).
\end{equation}
 Similarly, we define the space  $L_{\chi}^2( Y;L^2(\omega;\C^3))$. 

\subsubsection{Formulation in terms of quasimomenta $\chi\in[-\pi,\pi)$}
\label{unscaled_sec}

For every $\varepsilon>0,$ we define an operator $\mathfrak{G}_\varepsilon$ on $L^2(\omega\times \R;\R^3)$ by the formula ($\vect u\in L^2(\omega\times \R;\R^3)$)
\begin{equation}
(\mathfrak{G}_\varepsilon \vect u)(\widehat{x},y,\chi):= \sqrt{\frac{\varepsilon}{2\pi}} \sum_{n\in \Z}{\rm e}^{-{\rm i}\chi(y+n)}\vect u\bigl(\widehat{x},\varepsilon(y+n)\bigr), \quad (\widehat{x},y) \in \omega\times \R, \quad \chi\in [-\pi,\pi).
\end{equation}
We refer to this operator as the scaled Gelfand transform. Note that it
%%the scaled Gelfand transform 
%%$\mathfrak{G}_\varepsilon$ 
transforms 
%%functions
into $Y$-periodic functions in variable $y$, namely:
\begin{equation}
(\mathfrak{G}_\varepsilon \vect u)(\widehat{x},y+1,\chi) = (\mathfrak{G}_\varepsilon \vect u)(\widehat{x},y,\chi) \qquad {\rm a.e.}\ \ (\widehat{x},y) \in \omega\times \R, \quad \forall\chi\in [-\pi,\pi).
\end{equation}
%%The scaled Gelfand transform 
The operator 
%%$\mathfrak{G}_\varepsilon$ is unitary 
\begin{equation}
\mathfrak{G}_\varepsilon : L^2(\omega\times \mathbb{R};\mathbb{R}^3) \to L^2\bigl([-\pi,\pi);L_{\#}^2
\bigl(Y; L^2(\omega;\C^3)\bigr)\bigr) = \int_{ [-\pi,\pi)}^\oplus L_{\#}^2\bigl(Y,\chi;L^2(\omega;\C^3)\bigr)d\chi
\end{equation}
is unitary, in the sense that
\begin{equation}
\left\langle \vect u, \vect v \right\rangle_{L^2(\omega\times \R;\R^3)} = \int_{-\pi}^{\pi}\left\langle \mathfrak{G}_\varepsilon \vect u,\mathfrak{G}_\varepsilon \vect v \right\rangle_{L^2_{\#}(\omega \times Y; \C^3)}d\chi \qquad \forall \vect u, \vect v \in L^2(\omega\times \R;\R^3).
\end{equation}
If the Gelfand transform $\mathfrak{G}_\varepsilon \vect u$ is known, the function \RRR $\vect u$  can be reconstructed by the formula
\begin{equation} \label{gotovo1}
\vect  u(\widehat{x},x_3) = \frac{1}{\sqrt{2\pi\varepsilon}} \int_{-\pi}^\pi {\rm e}^{{\rm i}\chi x_3/\varepsilon} (\mathfrak{G}_\varepsilon \vect u)(\widehat{x},x_3/\varepsilon,\chi)d\chi\qquad {\rm a.e.}\ \  (\widehat{x}, x_3)\in\omega\times{\mathbb R}.
\end{equation}
%%{\color{magenta}This formula can be interpreted as decomposing $L^2(\omega\times \R,\R^3)$ into a direct integral 
The above construction can  be  interpreted in the sense that
\begin{equation}
L^2(\omega\times \R;\R^3)\sim\int_{ [-\pi,\pi)}^\oplus L^2_{\#}\bigl(Y,\chi;L^2(\omega; \C^3)\bigr)d\chi,
\end{equation}
where $\sim$ stands for equality up to a unitary transform. 

Furthermore, by noting that the following formulae hold for the compositions of the scaled Gelfand transform with operators of differentiation:
%%{\color {red}commutes with derivatives} in the following way: 
\begin{equation}
\mathfrak{G}_\varepsilon (\partial_{x_\alpha}) \vect u = \partial_{x_\alpha}(\mathfrak{G}_\varepsilon \vect u), \quad \mathfrak{G}_\varepsilon (\partial_{x_3} \vect u ) =\varepsilon^{-1} \bigl(\partial_{y}(\mathfrak{G}_\varepsilon \vect u) + {\rm i}\chi \mathfrak{G}_\varepsilon \vect u\bigr),
\label{gelfand_y}
\end{equation}
one obtains
\begin{equation}
{\mathfrak a}_\varepsilon(\vect u,\vect v)=\varepsilon^{-2}{\mathfrak a}_\chi(\mathfrak{G}_\varepsilon\vect u,\mathfrak{G}_\varepsilon\vect v) \quad \forall \chi \in [-\pi,\pi), 
\end{equation}
where
\begin{equation}
{\mathfrak a}_\chi (\vect u,\vect v) := 	\int_{\omega\times Y} {\mathbb A}(y)(\simgrad+{\rm i}X_\chi)\vect u(\widehat{x},y): \overline{(\simgrad  + {\rm i}X_\chi )\vect v(\widehat{x},y)}\,d\widehat{x}dy, \qquad \vect u,\vect v\in H^1_{\#}\bigl(Y;H^1(\omega;\C^3)\bigr).
\label{a_chi_form}
\end{equation}
The operator  of multiplication  $X_{\chi}$ in (\ref{a_chi_form}), acting on the space $L^2(\omega\times Y; \C^3)$, is defined by
\begin{equation*}
X_{\chi}\vect u  = \begin{bmatrix}
0 & 0 & \dfrac{1}{2}\chi u_1 \\[0.7em]
0 & 0 & \dfrac{1}{2}\chi u_2 \\[0.7em]
\dfrac{1}{2}\chi u_1 & \dfrac{1}{2}\chi u_2&  \chi{u_3}
\end{bmatrix}.
\end{equation*}
For a fixed $\chi \in [-\pi,\pi),$ we define the operator
\begin{equation} \label{nak50}
\mathcal{A}_\chi: \mathcal{D}(\mathcal{A}_\chi) \subset H^1_{\#}\bigl(Y;H^1(\omega;\C^3)\bigr) \to L^2(\omega\times Y,\C^3),
\end{equation}
associated with the differential expression 
\[
(\simgrad+{\rm i}X_\chi)^* {\mathbb A}(y) (\simgrad + {\rm i}X_\chi)
\]
as the one defined by the form \eqref{a_chi_form}.

The result of applying the scaled Gelfand transform to the resolvent of the operator ${\mathcal A}_\varepsilon$ can be represented by the equality  
\begin{equation}
\label{gelfanddecomposition}
\mathfrak{G}_\varepsilon\left(\mathcal{A}_\varepsilon + I \right)^{-1}\mathfrak{G}_\varepsilon^{-1} = \int_{ [-\pi,\pi)}^\oplus \left(\frac{1}{\varepsilon^2}\mathcal{A}_\chi + I \right)^{-1} d\chi.
\end{equation}
In other words, by applying Gelfand transform, we have decomposed the resolvent $\left(\mathcal{A}_\varepsilon + I \right)^{-1}$ into a continuum of resolvents $\left(\varepsilon^{-2}\mathcal{A}_\chi + I \right)^{-1}$ indexed by $\chi\in [-\pi,\pi)$. As we will see, unlike the resolvent of ${\mathcal A}_\varepsilon$, the  operators  $\left(\varepsilon^{-2}\mathcal{A}_\chi + I \right)^{-1}$ are compact and hence for each $\chi$ the operator ${\mathcal A}_\chi$ has purely discrete spectrum. 

Closely related to the scaled Gelfand transform is the scaled Floquet transform defined by 
\begin{equation}
(\mathfrak{F}_\varepsilon\vect u)(\widehat{x},y,\chi):= \sqrt{\frac{\varepsilon}{2\pi}} \sum_{n\in \Z}{\rm e}^{-{\rm i}\chi n}\vect u(\widehat{x},\varepsilon(y+n)), \qquad (\widehat{x},y) \in \omega\times Y, \quad  \chi\in [-\pi,\pi).
\end{equation}
For every $\chi \in [-\pi,\pi)$,  the function $(\mathfrak{F}_\varepsilon\vect u)(\widehat{x},y,\chi),$ $(\widehat{x},y)\in\omega\times Y$ belongs to the space $H_{\chi}^1(Y;H^1(\omega;\C^3))$ of quasiperiodic functions. It is a straightforward observation that the Floquet and Gelfand transforms are linked by the  following simple  identity:
\begin{equation}
(\mathfrak{F}_\varepsilon \vect u)(\widehat{x}, y) = {\rm e}^{{\rm i} \chi y}(\mathfrak{G}_\varepsilon \vect u)(\widehat{x},y),\quad (\widehat{x},y)\in\omega\times Y \qquad  \forall \vect u \in L^2(\omega \times \R;\R^3).
\end{equation}
The scaled Floquet transform is an isometry:
\begin{equation}
\left\langle \vect u,\vect v \right\rangle_{L^2(\omega\times \R;\R^3)} = \int_{-\pi}^\pi\left\langle \mathfrak{F}_\varepsilon\vect u,\mathfrak{F}_\varepsilon\vect v \right\rangle_{L^2(\omega \times Y; \C^3)}d\chi,\qquad \vect u, \vect v\in L^2(\omega \times \R;\R^3)
\end{equation}
Similarly, we have
\begin{equation}
{\mathfrak a}_\varepsilon(\vect u, \vect v) =\varepsilon^{-2}{\mathfrak a}(\mathfrak{F}_\varepsilon\vect u,\mathfrak{F}_\varepsilon\vect v),
\end{equation}
where the sesquilinear form ${\mathfrak a}$ is defined by
\begin{equation}
{\mathfrak a}(\vect u,\vect v) := 	\int_{\omega\times Y} {\mathbb A}(y)\simgrad \vect u(\widehat{x},y):\overline{\simgrad \vect v(\widehat{x},y)}\,d\widehat{x}dy, \qquad \vect u,\vect v\in H^1_{\chi}\bigl(Y;H^1(\omega;\C^3)\bigr).
\end{equation}
This is due to  the formula  
\begin{equation}
\mathfrak{F}_\varepsilon (\partial_{x_\alpha}) \vect u = \partial_{x_\alpha}(\mathfrak{F}_\varepsilon \vect u), \quad \mathfrak{F}_\varepsilon (\partial_{x_3} \vect u ) =\varepsilon^{-1} \partial_{y}(\mathfrak{F}_\varepsilon \vect u).
\end{equation}
 Note that for  quasiperiodic functions $\vect w \in H^1_{\chi}(Y;H^1(\omega;\C^3))$, namely 
\[
\vect w(\widehat{x}, y)= {\rm e}^{{\rm i}\chi y}\vect u(\widehat{x},y),\quad (\widehat{x},y)\in\omega\times Y,
\]
where $\vect u \in H_{\#}^1(Y; H^1(\omega;\C^3)),$ one has
\begin{equation}
\simgrad \vect w(\widehat{x},y) = {\rm e}^{{\rm i}\chi y} \bigl(\simgrad \vect u(\widehat{x},y) + {\rm i} X_\chi \vect u(\widehat{x},y)\bigr),\qquad (\widehat{x},y)\in\omega\times Y.
\end{equation}
 
\subsubsection{Formulation in terms of scaled quasimomenta}
\label{scaled_sec}

Here we note an alternative definition of Gelfand transform in terms of the ``scaled quasimomentum" $\theta:= \chi/\varepsilon$, for which we retain the same notation $\mathfrak{G}_\varepsilon:$
\begin{equation}
(\mathfrak{G}_\varepsilon \vect u)(\widehat{x},y,\theta):= \sqrt{\frac{\varepsilon}{2\pi}} \sum_{n\in \Z}{\rm e}^{-{\rm i}\theta \varepsilon(y+n)}\vect u\bigl(\widehat{x},\varepsilon(y+n)\bigr), \qquad (\widehat{x},y) \in \omega\times \R, \quad \theta \in [-\pi/\varepsilon, \pi/\varepsilon),
\end{equation}
where the inverse is given  by  (cf. \eqref{gotovo1}):
\begin{equation}
\left(\mathfrak{G}_\varepsilon^{-1}\vect  U\right)(\widehat{x},x_3) = \sqrt{\frac{\varepsilon}{2\pi}} \int_{-\pi/\varepsilon}^{\pi/\varepsilon} {\rm e}^{{\rm i}\theta x_3} \vect U(\widehat{x},x_3/\varepsilon,\theta)d\theta, \qquad (\widehat{x}, x_3)\in\omega\times{\mathbb R}.
\end{equation}
It is straightforward to link Gelfand transform with Fourier transform:
\begin{align*}
%\begin{split}
\int_Y (\mathfrak{G}_\varepsilon \vect u)(\widehat{x},y,\theta) dy = & \int_Y \sqrt{\frac{\varepsilon}{2\pi}} \sum_{n\in \Z}{\rm e}^{-{\rm i}\theta \varepsilon(y+n)}\vect u(\widehat{x},\varepsilon(y+n)\bigr)dy \\[0.4em]
=& \sqrt{\frac{\varepsilon}{2\pi}} \sum_{n\in \Z}\int_Y{\rm e}^{-{\rm i}\theta \varepsilon(y+n)}\vect u\bigl(\widehat{x},\varepsilon(y+n)\bigr)dy
%%\\
%%=& 
=\frac{1}{\sqrt{2\pi \varepsilon}} \sum_{n\in \Z}\int_{\varepsilon(Y+ n)} {\rm e}^{-{\rm i}\theta y}\vect u(\widehat{x}, y)dy \\[0.4em]
= & \frac{1}{\sqrt{2\pi \varepsilon}} \int_{\R}{\rm e}^{-{\rm i}\theta y}\vect u(\widehat{x}, y)dy = \frac{1}{\sqrt{2 \pi \varepsilon}} \RRR \mathcal{F}[\vect u] (\widehat{x},\theta/2 \pi),\qquad (\widehat{x}, y)\in\omega\times Y,\quad \theta\in[-\pi/\varepsilon, \pi/\varepsilon).
%\end{split}
\end{align*}
%where \RRR $\mathcal{F}[\vect u]$ denotes the Fourier transform of $\vect u={\vect u}(\widehat{x}, y)$ with respect to the variable $y.$ 

\begin{remark}
	The versions of the Gelfand transform introduced in Sections \ref{unscaled_sec}, \ref{scaled_sec} as well as their key properties are naturally extended to functions with values in ${\mathbb R}$ and ${\mathbb R}^2,$ with the functions $\vect u,$ ${\mathfrak G}_\varepsilon{\vect u}$ replaced by their one- and two-dimensional analogues.
\end{remark}

%%%{ Add a remark about the Gelfand transform on scalar functions and 2-vector-valued functions.}

\subsubsection{Relation with smoothing operator}
\label{smoothing_op_sec}
The smoothing operator  $\Xi_\varepsilon$ defined in \eqref{smootheningoperator} satisfies
\begin{equation}
\label{smootheningoperator1}
\Xi_\varepsilon f= \mathfrak{G}_\varepsilon^{-1} \int_{Y}  (\mathfrak{G}_\varepsilon f)(y) dy,\qquad f\in L^2(\omega \times\R).
\end{equation}
Indeed, one has
\begin{equation}
\begin{split}
 \biggl(\mathfrak{G}_\varepsilon^{-1} \int_{Y}  (\mathfrak{G}_\varepsilon f)(\widehat{x},y,\theta) dy\biggr) & = \left(\mathfrak{G}_\varepsilon^{-1} \left\{\frac{1}{\sqrt{2 \pi \varepsilon}}{\mathcal F}[f](\hat{x},\theta/2 \pi)\right\}\right) \\
&= \frac{1}{2\pi}\int_{-\frac{\pi}{\varepsilon}}^{\frac{\pi}{\varepsilon}} {\rm e}^{i \theta x_3}{\mathcal F}[f](\widehat{x},\theta/2 \pi) d \theta = \int_{-\frac{1}{2\varepsilon}}^{\frac{1}{2\varepsilon}} {\rm e}^{2\pi{\rm i} \theta x_3}{\mathcal F}[f](\hat{x},\theta) d \theta
%\\
%& 
\\ &=\Bigl({\mathcal F}^{-1}\bigl[\mathbbm{1}_{\left[
	\frac{-1}{2\varepsilon},\frac{1}{2\varepsilon}\right]}\bigr] * f\Bigr)(x),\qquad x\in\omega\times{\mathbb R}.
\end{split} 
\end{equation}

\section{Auxiliary results and apriori estimates}
\label{section3}

In order to  render the  proofs  within  this section more elegant, we have decided to present the results with respect to quasiperiodic functions, which are natural for the  Floquet transform. This is obviously equivalent to the approach with periodic functions via the Grelafnd transform, in view of the correspondences
\begin{equation}
	H^1_{\chi}\bigl(Y;H^1(\omega;\C^3)\bigr) \longleftrightarrow H^1_{\#}\bigl(Y;H^1(\omega;\C^3)\bigr), \quad \mathfrak{F}_\varepsilon \longleftrightarrow \mathfrak{G}_\varepsilon, \quad \simgrad \longleftrightarrow \simgrad + {\rm i}X_\chi. 
\end{equation}

%%In this section we provide Korn-type inequalities in before mentioned spaces in order to deduce apriori estimates on the solutions to relevant resolvent problems. In order to formulate these, we recall that the general expression for a rigid-body motion in $\omega \times Y$ is provided by a matrix $A$ and vector $\vect c,$ via 
%%\begin{equation}
%%\label{rigidmotion}
%%    \vect v(x_1,x_2,x_3):= A x + \vect c = 
%%\begin{bmatrix}
%%    0 & d & a \\[0.25em]
%%    -d & 0 & b \\[0.25em]
%%    -a & -b & 0 \\
%%\end{bmatrix} 
%%\begin{bmatrix}
%%    x_1 \\[0.2em]
%%    x_2 \\[0.2em]
%%    x_3
%%\end{bmatrix}
%%+
%%\begin{bmatrix}
%%    c_1 \\[0.2em]
%%    c_2 \\[0.2em]
%%    c_3
%%\end{bmatrix}
%%=
%%\begin{bmatrix}
%%    d x_2 + a x_3 + c_1 \\[0.3em]
%%    -d x_1 + b x_3 + c_2 \\[0.3em]
%%    -a x_1 - b x_2 + c_3
%%\end{bmatrix},\quad a,b,c_1,c_2,c_3,d \in \C.
%%\end{equation}
%%Rigid motions are deformations consisting of only rotations (skew symmetric matrix $A$) and translations (vector $c$). These deformations belong to the kernel of the operator $\simgrad$. 
%%{\color{red}
 The basic tool for apriori estimates in linear elasticity is Korn's inequality, see e.g. \cite{Hor} and Section \ref{appKorn}. Before formulating an appropriate quasiperiodic version of Korn's inequality (which is done in Section \ref{sec31}), we prove an auxiliary Korn-type estimate.

The following lemma, which is based on Proposition \ref{quasiperiodickorn}, provides estimates for the approximating infinitesimal rigid motions of quasiperiodic functions  that appear in Korn's inequality.
\begin{lemma} \label{lemstartKorn} 
There is a constant $C>0$ such that for all $\chi \in  [-\pi,\pi)\setminus\left\{0\right\}$ and $\vect u \in H^1_{\chi}(Y;H^1(\omega;\C^3))$ one has  
\begin{equation}
    \Vert \vect u - \widetilde{\vect w}\Vert _{H^1(\omega\times Y;\C^3)}\leq C \bigl\Vert  \simgrad \vect u \bigr\Vert _{L^2(\omega\times Y;\C^{3\times 3})},
\end{equation}
where  $\widetilde{\vect w}=Ax+{\vect c}$ is the infinitesimal rigid-body motion provided by Proposition \ref{quasiperiodickorn},
%%one has
%%$\vect v$ is a rigid motion defined by \eqref{rigidmotion}
with the  entries  of $A, \vect c$ satisfying the estimates
\begin{eqnarray*}
\max\bigl\{|a|, |b|, |d|, |c_3| \bigr\} \leq|\chi|^{-1}C \bigl\Vert  \simgrad \vect u \bigr\Vert _{L^2(\omega\times Y;\C^{3\times 3})} \\[0.3em]
\bigl|({\rm e}^{{\rm i}\chi} -1 )c_2 - b\bigr| \leq C \bigl\Vert  \simgrad \vect u \bigr\Vert _{L^2(\omega\times Y;\C^{3\times 3})} \\[0.3em]
\bigl|({\rm e}^{{\rm i}\chi} -1 )c_1 - a\bigr| \leq C \bigl\Vert  \simgrad \vect u \bigr\Vert _{L^2(\omega\times Y;\C^{3\times 3})} \\[0.3em]
\max\bigl\{|c_1|, |c_2|\bigr\} \leq \chi^{-2}  C \bigl\Vert  \simgrad \vect u \bigr\Vert _{L^2(\omega\times Y;\C^{3\times 3})}.
\end{eqnarray*}

\end{lemma}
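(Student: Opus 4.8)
The plan is to start from the quasiperiodic Korn inequality of Proposition~\ref{quasiperiodickorn}, which I assume produces, for each $\vect u\in H^1_\chi(Y;H^1(\omega;\C^3))$, an infinitesimal rigid motion $\widetilde{\vect w}(x)=Ax+\vect c$ (with $A$ skew-symmetric, so that $Ax$ has entries built from scalars $a,b,d$ — say $a$ coupling directions $1$--$3$, $b$ coupling $2$--$3$, $d$ coupling $1$--$2$ — and $\vect c=(c_1,c_2,c_3)^\top$) such that $\|\vect u-\widetilde{\vect w}\|_{H^1(\omega\times Y)}\le C\|\simgrad\vect u\|_{L^2}$, with $C$ independent of $\chi$. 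The first displayed inequality of the lemma is then just a restatement of this, so the real content is the four estimates on the entries of $A$ and $\vect c$. The strategy is to exploit the quasiperiodicity of $\vect u$: since $\vect u(\widehat x,y)=\mathrm e^{\mathrm i\chi y}\vect v(\widehat x,y)$ with $\vect v$ $Y$-periodic, the trace of $\vect u$ at $y=1/2$ equals $\mathrm e^{\mathrm i\chi}$ times its trace at $y=-1/2$, whereas the rigid motion $\widetilde{\vect w}$ is a genuine (non-quasiperiodic) affine function. Matching these two incompatible behaviours through the small $H^1$-error forces the coefficients of $\widetilde{\vect w}$ to be controlled by $\|\simgrad\vect u\|_{L^2}$ with the stated negative powers of $|\chi|$.

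Concretely, I would proceed as follows. \emph{Step 1.} Write $g:=\|\simgrad\vect u\|_{L^2(\omega\times Y;\C^{3\times3})}$ and abbreviate $e:=\simgrad\widetilde{\vect w}-\simgrad\vect u$; since $\widetilde{\vect w}$ is a rigid motion $\simgrad\widetilde{\vect w}=0$, so $\|\simgrad(\vect u-\widetilde{\vect w})\|_{L^2}=g$ and, by the Korn bound, $\|\vect u-\widetilde{\vect w}\|_{H^1}\le Cg$. \emph{Step 2.} Compare the $H^1$-norm of $\widetilde{\vect w}$ itself on $\omega\times Y$: by the triangle inequality $\|\widetilde{\vect w}\|_{H^1}\le\|\vect u\|_{H^1}+Cg$, but this only gives an absolute (not $\chi$-sensitive) bound. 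To get the $|\chi|^{-1}$ and $|\chi|^{-2}$ factors I instead use a \emph{scaled} comparison: the full gradient $\nabla\widetilde{\vect w}$ equals the constant matrix $A$, and since on a quasiperiodic function $\partial_{x_3}$ picks up the multiplier $\mathrm i\chi$ at leading order (cf.~the Floquet identity $\mathfrak F_\varepsilon\partial_{x_3}\vect u=\varepsilon^{-1}\partial_y\mathfrak F_\varepsilon\vect u$ and $\simgrad\vect w=\mathrm e^{\mathrm i\chi y}(\simgrad\vect v+\mathrm i X_\chi\vect v)$), the antisymmetric part of $\nabla\vect u$ that is ``seen'' by $\widetilde{\vect w}$ cannot have the $y$-derivative entries be $O(1)$: writing out $\int_{\omega\times Y}\partial_y(\vect u-\widetilde{\vect w})_j\,(\text{test})$ and integrating by parts in $y$, the boundary terms at $y=\pm1/2$ produce a factor $(\mathrm e^{\mathrm i\chi}-1)$ acting on the constants $c_1,c_2$, while the interior term is $O(g)$. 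This is exactly what yields the relations $|(\mathrm e^{\mathrm i\chi}-1)c_\alpha - (\text{corresponding entry of }A)|\le Cg$. \emph{Step 3.} From $|\mathrm e^{\mathrm i\chi}-1|\asymp|\chi|$ for $\chi\in[-\pi,\pi)$, together with a direct estimate of $|a|,|b|,|d|,|c_3|$ — which come from testing $\simgrad(\vect u-\widetilde{\vect w})$ against the rigid-motion generators and using that the corresponding component of $\simgrad\widetilde{\vect w}$, after the quasiperiodic scaling, carries one power of $\chi$ — I get $\max\{|a|,|b|,|d|,|c_3|\}\le C|\chi|^{-1}g$. \emph{Step 4.} Feed this back into the relations of Step~2: $|c_\alpha|\le|\mathrm e^{\mathrm i\chi}-1|^{-1}\big(Cg+|a|\big)\le C|\chi|^{-1}(g+|\chi|^{-1}g)\le C|\chi|^{-2}g$, which is the last estimate.

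The main obstacle I anticipate is \emph{Step 2}: correctly extracting the $(\mathrm e^{\mathrm i\chi}-1)$-weighted identities linking $\vect c$ to $A$. One has to be careful that $\widetilde{\vect w}=Ax+\vect c$ is a fixed affine function on the whole of $\omega\times\R$ (not quasiperiodic), so its values at $y=1/2$ and $y=-1/2$ differ only by $A$ times the unit shift, whereas $\vect u$ picks up the multiplicative factor $\mathrm e^{\mathrm i\chi}$; the cleanest way is probably to test the weak form / use trace estimates on the thin slab $\omega\times Y$ and track exactly which component of $A$ pairs with which $c_\alpha$, making sure the torsional generator $d$ (the $1$--$2$ rotation, which does not interact with the $y$-direction) is handled separately from $a,b$ (the $1$--$3$ and $2$--$3$ rotations, which do). The bookkeeping of indices and the verification that the constants are genuinely $\chi$-independent (uniformity down to $|\chi|\to0^+$, using only $\chi\neq0$) is where the argument needs the most care; everything else is routine Korn-inequality manipulation and the elementary bound $|\mathrm e^{\mathrm i\chi}-1|\ge c|\chi|$ on $[-\pi,\pi)$.
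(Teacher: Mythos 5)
Your overall strategy — use the quasiperiodicity of $\vect u$ on the slab $\omega\times Y$ to match the trace of $\vect u-\widetilde{\vect w}$ at $y=-1/2$ and $y=1/2$, which introduces a factor $({\rm e}^{{\rm i}\chi}-1)$ acting on the rigid-motion constants — is exactly the mechanism the paper uses, and Steps~1, 2 and 4 are sound. But Step~3 as you describe it would fail, and the argument has a second missing ingredient.

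Step~3 proposes to obtain $\max\{|a|,|b|,|d|,|c_3|\}\leq C|\chi|^{-1}\|\simgrad\vect u\|_{L^2}$ by ``testing $\simgrad(\vect u-\widetilde{\vect w})$ against the rigid-motion generators and using that the corresponding component of $\simgrad\widetilde{\vect w}$ \ldots carries one power of $\chi$''. This cannot work: $\widetilde{\vect w}$ is an infinitesimal rigid motion, so $\simgrad\widetilde{\vect w}\equiv 0$. Hence $\simgrad(\vect u-\widetilde{\vect w})=\simgrad\vect u$ carries no information whatsoever about the coefficients $a,b,d,c_1,c_2,c_3$, and there is no nonzero ``component of $\simgrad\widetilde{\vect w}$'' to which a power of $\chi$ could be attached. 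In the paper, the bounds on $|a|,|b|,|d|,|c_3|$ do \emph{not} come from a separate argument: they come from the very same trace-matching identity of your Step~2, applied to \emph{all three} components of $\widetilde{\vect w}(\cdot,1)-{\rm e}^{{\rm i}\chi}\widetilde{\vect w}(\cdot,0)$. Componentwise that mismatch is $({\rm e}^{{\rm i}\chi}-1)(c_1+dx_2)-a$, $({\rm e}^{{\rm i}\chi}-1)(c_2-dx_1)-b$ and $({\rm e}^{{\rm i}\chi}-1)(c_3-ax_1-bx_2)$, each small in $L^2(\omega)$; the bound $|d|\lesssim|\chi|^{-1}g$ is read off from the first two, and $|a|,|b|,|c_3|\lesssim|\chi|^{-1}g$ from the third.

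The second gap is that this decoupling of coefficients in each component requires the cross-section normalisation \eqref{coordinatesymmetries}, i.e.\ $\int_\omega x_1=\int_\omega x_2=\int_\omega x_1x_2=0$, which you do not invoke. Without it, one cannot separate $|({\rm e}^{{\rm i}\chi}-1)c_1-a|^2$ from the $x_2$-dependent term, nor separate $|c_3|^2$, $|a|^2$, $|b|^2$ inside $\int_\omega|({\rm e}^{{\rm i}\chi}-1)(c_3-ax_1-bx_2)|^2$. So the correct route is: trace theorem $+$ Korn (your Step~1) $\Rightarrow$ $L^2(\omega)$-smallness of $\widetilde{\vect w}(\cdot,1)-{\rm e}^{{\rm i}\chi}\widetilde{\vect w}(\cdot,0)$ (your Step~2, best done via direct trace estimates rather than integration by parts); then expand componentwise, use \eqref{coordinatesymmetries} to make the quadratic form diagonal, and use $|{\rm e}^{{\rm i}\chi}-1|\asymp|\chi|$ — this delivers all four coefficient estimates at once, after which your Step~4 (feeding the $|a|$-bound back into $|({\rm e}^{{\rm i}\chi}-1)c_1-a|\leq Cg$) goes through as you wrote.
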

\begin{proof}
Using the trace theorem for functions in $H^1_{\chi}(Y;H^1(\omega;\C^3))$ as well as the Korn's inequality \eqref{second_Korn}, we infer that
\begin{equation}
\begin{split}
        \Vert \vect u- \widetilde{\vect w}\Vert _{L^2(\omega \times \left\{ y = 1\right\})}\leq C \bigl\Vert  \simgrad \vect u \bigr\Vert _{L^2(\omega\times Y;\C^{3\times 3})}, \\[0.3em] \Vert \vect u- \widetilde{\vect w}\Vert _{L^2(\omega \times \left\{ y = 0\right\})} \leq C \bigl\Vert  \simgrad \vect u \bigr\Vert _{L^2(\omega\times Y;\C^{3\times 3})}.
\end{split}
\end{equation}
Furthermore, for smooth quasiperiodic $\vect u$ one has
\begin{equation}
    \vect u(\widehat{x},1)={\rm e}^{{\rm i}\chi} \vect u(\widehat{x},0)\qquad \forall \widehat{x}\in\omega,
\end{equation}
and hence 
\begin{equation}
    \bigl|\widetilde{\vect w}(\widehat{x},1) - {\rm e}^{{\rm i}\chi} \widetilde{\vect w}(\widehat{x},0)\bigr|  \leq   \bigl| \vect u(\widehat{x}, 1) - \widetilde{\vect w}(\widehat{x},1)\bigr| + \bigl|{\rm e}^{{\rm i}\chi}\bigl(\vect u(\widehat{x}, 0) -\widetilde{ \vect w}(\widehat{x},0)\bigr)\bigr|\quad \forall \widehat{x}:=(x_1,x_2)\in \omega. 
\end{equation}
%%for every $x_1,x_2 \in \omega$. 
Extending this estimate to all $\vect u\in H^1_{\chi}(Y;H^1(\omega;\C^3))$ yields
\begin{equation}
    \bigl\Vert  \widetilde{\vect w}(\widehat{x},1) - {\rm e}^{{\rm i}\chi}\widetilde{\vect w}(\widehat{x},0)\bigr\Vert _{L^2(\omega;\C^3)} \leq \Vert \vect u-\widetilde{\vect w}\Vert _{L^2(\omega \times \left\{y=1\right\};\C^3)} + \Vert \vect u- \widetilde{\vect w}\Vert _{L^2(\omega \times \left\{y=0\right\};\C^3)}
\end{equation}
and therefore
$$
\bigl\Vert \widetilde{\vect w} (\widehat{x},1) - {\rm e}^{{\rm i}\chi} \widetilde{\vect w}(\widehat{x},0)\bigr\Vert _{L^2(\omega;\C^3)} \leq C \bigl\Vert  \simgrad \vect u \bigr\Vert _{L^2(\omega \times Y;\C^3)}.
$$
Component-wise, the last estimate reads 
\begin{align*}
\int_{\omega}\bigl| ({\rm e}^{{\rm i}\chi} - 1)(c_1 + d x_2) - a\bigr|^2 d\widehat{x}& \leq C \bigl\Vert \simgrad \vect u \bigr\Vert_{L^2(\omega\times Y;\C^{3\times 3})}^2, \\[0.3em]
\int_{\omega} \bigl| ({\rm e}^{{\rm i}\chi} - 1)(c_2 - d x_1) - b\bigr|^2 d\widehat{x} & \leq C \bigl\Vert \simgrad \vect u \bigr\Vert_{L^2(\omega\times Y;\C^{3\times 3})}^2, \\[0.3em]
\int_{\omega} \bigl| ({\rm e}^{{\rm i}\chi} - 1)(c_3 - a x_1 - b x_2)\bigr|^2 d\widehat{x} & \leq C \bigl\Vert \simgrad \vect u\bigr\Vert_{L^2(\omega\times Y;\C^{3\times 3})}^2. 
\end{align*}
\CCC By using  the Taylor expansion, we note that there exist constants $C_1, C_2>0$ such that for all $\chi \in [- \pi, \pi)$ one has
$$C_1 |\chi | \leq |{\rm e}^{{\rm i}\chi} - 1| \leq C_2 |\chi|.  $$
These observations allow us to infer that \CCC (recall \eqref{coordinatesymmetries}) 
\begin{align*}
\bigl|({\rm e}^{{\rm i}\chi} -1)c_1 - a\bigr|^2 + C {\chi^2} |d|^2&\leq C \int_{\omega}\bigl| ({\rm e}^{{\rm i}\chi} - 1)(c_1 + d x_2) - a\bigr|^2 d\widehat{x},\\[0.3em]
\bigl|({\rm e}^{{\rm i}\chi} -1)c_2 - b\bigr|^2 + C {\chi^2} |d|^2&\leq C \int_{\omega} \bigl| ({\rm e}^{{\rm i}\chi} - 1)(c_2 - d x_1) - b\bigr|^2 d\widehat{x},   \\[0.3em]
{\chi^2}\bigl(|c_3|^2 + |a|^2 + |b|^2\bigr)&\leq C \int_{\omega}\bigl|({\rm e}^{{\rm i}\chi} - 1)(c_3 - a x_1 - b x_2)\bigr|^2 d\widehat{x}, 
\end{align*}
which yields the required estimates
\end{proof}

\subsection{Leading-order term of expansion in $\chi$}
\label{sec31}

The next result is important for the analysis of the asymptotic behaviour of the rod deformations.  It is a version of Korn's inequality for quasiperiodic functions on $\omega \times Y$ and relies on Lemma \ref{lemstartKorn}. 
\begin{proposition}
\label{leadingordertermproposition}
There is a constant $C>0$ such that for every $\chi\in [-\pi,\pi)\setminus\left\{0\right\}$, $\vect u \in H^1_{\chi}(Y;H^1(\omega;\C^3))$ there exist $c_1,c_2,c_3,d \in \C$ such that  for the function 
%%$\vect w\in H^1_{\chi}(Y;H^1(\omega;\C^3))$ of the form
\begin{equation}
    \label{leadingorderterm}
   \vect w(\widehat{x}, y) = {\rm e}^{{\rm i}\chi y} \left(\begin{bmatrix}dx_2 \\[0.3em] -dx_1 \\[0.3em] c_3 \end{bmatrix} + \begin{bmatrix} c_1 \\[0.3em]
c_2 \\[0.3em] -{\rm i}\chi(c_1 x_1 + c_2 x_2 )\end{bmatrix} \right), \qquad (\widehat{x},y)\in\omega\times Y 
\end{equation}
one has 
\begin{align}
    %%\begin{split}
        &\Vert \vect u- \vect w\Vert _{H^1(\omega\times Y;\C^3)} \leq C \bigl\Vert  \simgrad \vect u \bigr\Vert _{L^2(\omega\times Y;\C^{3\times 3})}, \label{uw_est} 
        \\[0.5em]
        &\max\bigl\{|d|,|c_3|\bigr\} \leq |\chi|^{-1}C\bigl\Vert  \simgrad \vect u \bigr\Vert _{L^2(\omega\times Y;\C^{3\times 3})},\noindent 
        \\[0.5em]
          &\max\bigl\{|c_1|,|c_2|\bigr\} \leq \chi^{-2}C\bigl\Vert  \simgrad \vect u \bigr\Vert _{L^2(\omega\times Y;\C^{3\times 3})}.\noindent
   %% \end{split}
\end{align}
\end{proposition}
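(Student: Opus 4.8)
The plan is to bootstrap directly from Lemma \ref{lemstartKorn}. That lemma already produces an infinitesimal rigid-body motion $\widetilde{\vect w}=Ax+\vect c$ (with $A$ skew-symmetric, so $\simgrad\widetilde{\vect w}=0$) whose parameters $a,b,d,c_1,c_2,c_3$ satisfy both the magnitude bounds $\max\{|a|,|b|,|d|,|c_3|\}\le C|\chi|^{-1}\Vert\simgrad\vect u\Vert$, $\max\{|c_1|,|c_2|\}\le C|\chi|^{-2}\Vert\simgrad\vect u\Vert$ and the two compatibility relations $|({\rm e}^{{\rm i}\chi}-1)c_1-a|\le C\Vert\simgrad\vect u\Vert$, $|({\rm e}^{{\rm i}\chi}-1)c_2-b|\le C\Vert\simgrad\vect u\Vert$. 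I would take $\vect w$ to be the field \eqref{leadingorderterm} built from the \emph{same} $c_1,c_2,c_3,d$ (the numbers $a,b$ are not needed in $\vect w$). With this choice the two coefficient estimates claimed in the Proposition are immediate from Lemma \ref{lemstartKorn}, so the whole content of the statement is the $H^1$-bound \eqref{uw_est}; and since $\Vert\vect u-\widetilde{\vect w}\Vert_{H^1}\le C\Vert\simgrad\vect u\Vert$ by that lemma, the triangle inequality reduces \eqref{uw_est} to showing $\Vert\widetilde{\vect w}-\vect w\Vert_{H^1(\omega\times Y;\C^3)}\le C\Vert\simgrad\vect u\Vert_{L^2}$.

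To prove this last bound I would write out $\widetilde{\vect w}-\vect w$ and each of $\partial_{x_1},\partial_{x_2},\partial_y$ of it componentwise; because $\omega\times Y$ is bounded it is enough to estimate each of these finitely many functions in $L^\infty(\omega\times Y)$. Every resulting term is of one of three shapes: (i) $({\rm e}^{{\rm i}\chi y}-1)$ times one of $c_3,d$ (harmless, since those coefficients are only $O(|\chi|^{-1})$ and the factor is $O(|\chi|)$); (ii) a ``grouped'' term such as $c_1\bigl[({\rm e}^{{\rm i}\chi y}-1)-({\rm e}^{{\rm i}\chi}-1)y\bigr]$ or $x_1\bigl[a-{\rm i}\chi{\rm e}^{{\rm i}\chi y}c_1\bigr]$ and their $c_2/x_2$ analogues; or (iii) genuinely small leftovers like $\chi^2{\rm e}^{{\rm i}\chi y}(c_1x_1+c_2x_2)$. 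The engine is an elementary Taylor estimate valid for $y\in[-1/2,1/2]$, $\chi\in[-\pi,\pi)$: $|{\rm e}^{{\rm i}\chi y}-1|\le C|\chi|$, $|({\rm e}^{{\rm i}\chi y}-1)-({\rm e}^{{\rm i}\chi}-1)y|\le C\chi^2$, and $|({\rm e}^{{\rm i}\chi}-1)-{\rm i}\chi{\rm e}^{{\rm i}\chi y}|\le C\chi^2$. Combining these with the magnitude bounds and with the compatibility relations (which let me substitute $a=({\rm e}^{{\rm i}\chi}-1)c_1+O(\Vert\simgrad\vect u\Vert)$, and likewise for $b$) makes every term $O(1)\cdot\Vert\simgrad\vect u\Vert$ in $L^\infty$, hence in $L^2$, which is exactly what is needed.

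The single delicate point — and what I would flag as the real obstacle — is precisely the cancellation in type (ii): the coefficients $c_1,c_2$ are controlled only by $|\chi|^{-2}$, so an isolated term like $({\rm e}^{{\rm i}\chi y}-1)c_1$ is merely $O(|\chi|^{-1})$ and would ruin the estimate. One recovers the two missing powers of $\chi$ only by pairing $({\rm e}^{{\rm i}\chi y}-1)c_1$ with the $y$-linear part $ay$ of $\widetilde{\vect w}_1$ and then using $a\approx({\rm e}^{{\rm i}\chi}-1)c_1$, which converts the $\chi$-dependence into one of the $O(\chi^2)$ Taylor factors above; the analogous pairing handles the $x_1$- and $x_2$-coefficients in the third component. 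So the proof is essentially a careful bookkeeping exercise: arrange $\widetilde{\vect w}-\vect w$ and its derivatives so that every dangerously large coefficient appears multiplied by exactly the factor that absorbs it, then apply the Taylor estimate. I would also remark in passing that $\vect w$ is automatically $\chi$-quasiperiodic, being ${\rm e}^{{\rm i}\chi y}$ times a $y$-independent field, so that $\vect u-\vect w\in H^1_{\chi}(Y;H^1(\omega;\C^3))$ as it should be.
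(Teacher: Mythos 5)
Your proof is correct and follows essentially the same route as the paper: start from Lemma~\ref{lemstartKorn}, note the coefficient bounds carry over directly, reduce \eqref{uw_est} to $\Vert\widetilde{\vect w}-\vect w\Vert_{H^1}\le C\Vert\simgrad\vect u\Vert$, and estimate that difference (and each of $\partial_{x_1},\partial_{x_2},\partial_y$) term by term using Taylor bounds on ${\rm e}^{{\rm i}\chi y}$ combined with the compatibility relations that tie $a,b$ to $c_1,c_2$. The only cosmetic difference is that the paper first derives $|{\rm i}\chi c_1 - a|\le C\Vert\simgrad\vect u\Vert$ and splits $ay-({\rm e}^{{\rm i}\chi y}-1)c_1 = (a-{\rm i}\chi c_1)y + \bigl({\rm i}\chi y - ({\rm e}^{{\rm i}\chi y}-1)\bigr)c_1$, whereas you split through $({\rm e}^{{\rm i}\chi}-1)c_1$; both variants absorb the same $O(\chi^2)$ Taylor error against the $O(\chi^{-2})$ bound on $c_1,c_2$.
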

\begin{proof}
 We take $\widetilde{\vect w}$ and $a\in \R$, $b\in \R$, $d \in \R$ and $c \in \R^3$ from Lemma \ref{lemstartKorn}. 
It follows that
\begin{align}
\bigl\Vert \vect u(\widehat{x},y) -  \widetilde{\vect w}(\widehat{x},y)\bigr\Vert_{H^1(\omega\times Y;\C^3)} & = 
\left\Vert \vect u(\widehat{x},y) - \left(\begin{bmatrix}
0 & d & a \\[0.3em]
-d & 0 & b \\[0.3em]
-a & -b & 0 \\[0.3em]
\end{bmatrix} 
\begin{bmatrix}
x_1 \\[0.3em]
x_2 \\[0.3em]
y
\end{bmatrix}
+
\begin{bmatrix}
c_1 \\[0.3em]
c_2 \\[0.3em]
c_3
\end{bmatrix}
\right)\right\Vert_{H^1(\omega\times Y;\C^3)}\label{ab_coef} \\[0.5em]
&= 
\left\Vert \vect u -\begin{bmatrix}
 d x_2  \\[0.35em]
 - d x_1 \\[0.35em]
 c_3
\end{bmatrix}-\begin{bmatrix}
ay+c_1 \\[0.35em]
by+c_2 \\[0.35em]
 -a x_1 -b x_2
\end{bmatrix} \right\Vert_{H^1(\omega\times Y;\C^3)} \leq C \bigl\Vert  \simgrad \vect u \bigr\Vert _{L^2(\omega\times Y;\C^{3\times 3})}.\nonumber
\end{align} 
  
Using the obvious estimates 
%%estimations of error in Taylor expansion: 
\begin{equation} \label{nak100} 
\bigl|{\rm e}^{{\rm i}\chi}- (1 + {\rm i}\chi)\bigr| = {O}(\chi^2), \quad |{\rm e}^{{\rm i}\chi}- 1| = {O}(\RRR |\chi| ),
\end{equation} 
    we  infer that 
\begin{equation}
\label{ab}
\begin{aligned}
    |{\rm i}\chi c_1 - a| &\leq  \left| ({\rm e}^{{\rm i}\chi}- 1)c_1 - a\right| + \left|\bigl({\rm e}^{{\rm i}\chi}- (1 + {\rm i}\chi)\bigr)c_1 \right| \\[0.4em]
				&\leq  C_1 \bigl\Vert  \simgrad \vect u \bigr\Vert _{L^2(\omega\times Y;\C^{3\times 3})} + c_2 {\chi^2}\chi^{-2}\bigl\Vert  \simgrad \vect u \bigr\Vert _{L^2(\omega \times Y;\C^{3\times 3})}\\[0.4em]
			    &\leq C\bigl\Vert  \simgrad \vect u \bigr\Vert _{L^2(\omega \times Y;\C^{3\times 3})}, \\[0.4em]
|{\rm i}\chi c_2 - b| &\leq  C\bigl\Vert  \simgrad \vect u \bigr\Vert _{L^2(\omega \times Y;\C^{3\times 3})}
\end{aligned}
\end{equation}

The estimates \eqref{ab} allow us to eliminate the coefficients $a$ and $b$ in \eqref{ab_coef}, by estimating them in terms of $c_1$, $c_2$ in the $H^1(\omega\times Y;\C^3)$ norm. 
The following calculation provides the details of such estimates for the coefficient $a:$
\begin{align*}
\left\Vert ay + c_1 - {\rm e}^{{\rm i}\chi y} c_1\right\Vert_{L^2(\omega\times Y;\C^3)} &= \left\Vert ay  - \left({\rm e}^{{\rm i}\chi y} -1\right) c_1 \right\Vert_{L^2(\omega\times Y;\C^3)} \\[0.3em]
&\leq \left\Vert ay - {\rm i}\chi y c_1 \right\Vert_{L^2(\omega\times Y;\C^3)} + \left\Vert {\rm i}\chi y c_1 - \left({\rm e}^{{\rm i}\chi y}-1\right) c_1 \right\Vert_{L^2(\omega\times Y;\C^3)} \\[0.3em]
&= \bigl\Vert y\left(a - {\rm i}\chi  c_1\right) \bigr\Vert_{L^2(\omega\times Y;\C^3)} + \left\Vert \left({\rm i}\chi y - \left({\rm e}^{{\rm i}\chi y}-1\right)\right) c_1 \right\Vert_{L^2(\omega\times Y;\C^3)} \\[0.3em]
&\leq C_1 \bigl\Vert  \simgrad \vect u \bigr\Vert _{L^2(\omega\times Y;\C^{3\times 3})} + c_2 {\chi^2}\chi^{-2} \bigl\Vert  \simgrad \vect u \bigr\Vert _{L^2(\omega\times Y;\C^{3\times 3})} \\[0.3em]
 &\leq C \bigl\Vert  \simgrad \vect u \bigr\Vert _{L^2(\omega\times Y;\C^{3\times 3})},
\end{align*}
\begin{align*}
\left\Vert \partial_y\left(ay+ c_1 - {\rm e}^{{\rm i}\chi y} c_1 \right) \right\Vert_{L^2(\omega\times Y;\C^3)} &= \left\Vert a - {\rm i}\chi {\rm e}^{{\rm i}\chi y} c_1 \right\Vert_{L^2(\omega\times Y;\C^3)} \\[0.3em]
 &\leq \left\Vert a - {\rm i}\chi c_1 \right\Vert_{L^2(\omega\times Y;\C^3)} + \left\Vert {\rm i}\chi\left( 1 - {\rm e}^{{\rm i}\chi y} c_1\right) \right\Vert_{L^2(\omega\times Y;\C^3)} \\ 
&\leq C_1 \bigl\Vert  \simgrad \vect u \bigr\Vert _{L^2(\omega\times Y;\C^{3\times 3})} + c_2 {\chi^2}\chi^{-2}\bigl\Vert  \simgrad \vect u \bigr\Vert _{L^2(\omega\times Y;\C^{3\times 3})} \\ 
&\leq C \bigl\Vert  \simgrad \vect u \bigr\Vert _{L^2(\omega\times Y;\C^{3\times 3})}.
\end{align*}
Therefore, one has
\begin{equation}
\left\Vert ay + c_1 - {\rm e}^{{\rm i}\chi y} c_1 \right\Vert_{H^1(\omega\times Y;\C^3)}\leq C \bigl\Vert  \simgrad \vect u \bigr\Vert _{L^2(\omega\times Y;\C^{3\times 3})}.
\end{equation}
Similarly, for the coefficient $b$ one has
\begin{equation}
\left\Vert by + c_2 - {\rm e}^{{\rm i}\chi y} c_2 \right\Vert_{H^1(\omega\times Y;\C^3)}\leq C \bigl\Vert  \simgrad \vect u \bigr\Vert _{L^2(\omega\times Y;\C^{3\times 3})}.
\end{equation}
Combining the above estimates for $a$ and $b,$ one obtains,  using \eqref{ab} and Lemma \ref{lemstartKorn},
\begin{align*}
\left\Vert ax_1 + bx_2 - {\rm e}^{{\rm i}\chi y}{\rm i}\chi(c_1x_1+c_2x_2) 
%%{\color{red}\hat{\vect c}\cdot\widehat{x}}
 \right\Vert_{L^2(\omega\times Y;\C^3)} 
&\leq\left\Vert x_1\left(a - {\rm e}^{{\rm i}\chi y}{\rm i}\chi c_1 \right) \right\Vert_{L^2(\omega\times Y;\C^3)} + \left\Vert x_2\left(b - {\rm e}^{{\rm i}\chi y} {\rm i}\chi c_2 \right)  \right\Vert_{L^2(\omega\times Y;\C^3)} \\[0.4em]
&\leq C_1 \bigl\Vert  \simgrad \vect u \bigr\Vert _{L^2(\omega\times Y;\C^{3\times 3})} + c_2 \bigl\Vert  \simgrad \vect u \bigr\Vert _{L^2(\omega\times Y;\C^3) } \\[0.4em]
& \leq C\bigl\Vert  \simgrad \vect u \bigr\Vert _{L^2(\omega\times Y;\C^{3\times 3})},
\end{align*}
\begin{align*}
\left\Vert \partial_{x_1}\left( ax_1 + bx_2 - {\rm e}^{{\rm i}\chi y} {\rm i}\chi (x_1 c_1 + x_2 c_2) \right) \right\Vert_{L^2(\omega\times Y;\C^3)}  &\leq  
\left\Vert a - {\rm e}^{{\rm i}\chi y}{\rm i}\chi  c_1 \right\Vert_{L^2(\omega\times Y;\C^3)} \leq C\bigl\Vert  \simgrad \vect u \bigr\Vert _{L^2(\omega\times Y;\C^{3\times 3})}, \\[0.4em]
\left\Vert \partial_{x_2}\left( ax_1 + bx_2 - {\rm e}^{{\rm i}\chi y} {\rm i}\chi (x_1 c_1 + x_2 c_2) \right) \right\Vert_{L^2(\omega\times Y;\C^3)} & \leq 
\left\Vert b - {\rm e}^{{\rm i}\chi y}{\rm i}\chi  c_2 \right\Vert_{L^2(\omega\times Y;\C^3)} \leq C\bigl\Vert  \simgrad \vect u \bigr\Vert _{L^2(\omega\times Y;\C^{3\times 3})}, \\[0.4em]
\left\Vert \partial_{y}\left( ax_1 + bx_2 - {\rm e}^{{\rm i}\chi y}{\rm i}\chi (x_1 c_1 + x_2 c_2) \right) \right\Vert_{L^2(\omega\times Y;\C^3)}  &\leq
\left\Vert \chi^2 {\rm e}^{{\rm i}\chi y} \left(  x_1c_1 + x_2c_2 \right) \right\Vert_{L^2(\omega\times Y;\C^3)} \\[0.4em]
 &\leq C{\chi^2}\chi^{-2}\bigl\Vert  \simgrad \vect u \bigr\Vert _{L^2(\omega\times Y;\C^{3\times 3})} =   C\bigl\Vert  \simgrad \vect u \bigr\Vert _{L^2(\omega\times Y;\C^{3\times 3})}.
\end{align*}
Summarising the above estimates, one has
\begin{equation*}
\left\Vert ax_1 + bx_2 - {\rm e}^{{\rm i}\chi y}{\rm i}\chi (x_1 c_1 + x_2 c_2) \right\Vert_{H^1(\omega\times Y;\C^3)} \leq C\bigl\Vert  \simgrad \vect u \bigr\Vert _{L^2(\omega\times Y;\C^{3\times 3})},
\end{equation*}
as required. 

In the similar way, one can replace 
%%$\begin{bmatrix}dx_2 \\[0.3em] -dx_1 \\[0.3em] c_3 \end{bmatrix}$ 
$(dx_2, -dx_1, c_3)^\top$ in \eqref{ab_coef} with 
$e^{i\chi y}(dx_2, -dx_1, c_3)^\top,$ by
%\begin{bmatrix}dx_2 \\[0.3em] -dx_1 \\[0.3em] c_3 \end{bmatrix}$ 
using \eqref{nak100} and the estimates on $c_3$, $d$ provided by Lemma \ref{lemstartKorn}. Therefore, $\widetilde{\vect w}$ defined in Lemma \ref{lemstartKorn} can be replaced by $\vect w$,  which finishes the proof. 
\end{proof}

\begin{remark}
For $\vect u \in H^1_{\chi}(Y;H^1(\omega;\C^3)),$ we denote
%%denote this approximation of 
%%as: ${\rm rod}(\vect u)$, namely:

\begin{equation} \label{nak60} 
{\rm rod}(\vect u):= {\rm e}^{{\rm i}\chi y} ( d x_2, -d x_1, c_3)^\top + {\rm e}^{{\rm i}\chi y}\bigl(c_1, c_2,- {\rm i}\chi (c_1x_1 + c_2 x_2)\bigr)^\top.
\end{equation} 
The estimate \eqref{uw_est} can be written as follows:
%We interpret the previous proposition in the following fashion:
\begin{equation}
\vect u ={\rm  rod}( \vect u) + O\bigl(\Vert \simgrad \vect u \Vert_{L^2(\omega\times Y;\C^{3\times 3})}\bigr).
\end{equation}
\CCC The expression ${\rm  rod}( \vect u)$ has the classical  Bernoulli-Navier  form for a rod displacement, restricted to a finite-dimensional space. 
\end{remark}

\subsection{\CCC Invariant subspaces and Korn-type inequalities  }
\label{sub_invar}
 Under the Assumption \ref{matsym}, one can reduce the problem of estimating the resolvent of the operator ${\mathcal A}_\chi$ to two simpler problems. More precisely, under these assumptions, we are able to identify two orthogonal subspaces  $L^2_{\rm bend}$ and $L^2_{\rm stretch}$ of the space $L^2(\omega\times Y;\C^3)$ that are invariant under the operator of elasticity\footnote{\RRR  We keep the same notation as in \eqref{invariant_subspaces_real} despite the slight difference in the function space. It will always be clear from the context to which definition we refer to. }. These subspaces consist of ``out-of-line" displacements, which we refer to as bending displacements, and ``in-line" displacements, which we refer to as stretching displacements:
%%%We begin with decomposing the space $L^2(\omega\times Y;\C^3)$ into two orthogonal subspaces $L^2_{\rm bend}$ and $L^2_{\rm stretch}$ defined as follows:
\begin{equation}
    \begin{aligned}
        L^2_{\rm bend}&\equiv L^2_{\rm bend}(\omega\times Y;\C^3):=\left\{\vect u \in L^2(\omega\times Y;\C^3):\ \widehat{u}(-\widehat{x})=  \widehat{u} (\widehat{x}), \quad {u_3}(-\widehat{x}) = -{u_3}(\widehat{x})\right\}, \\[0.4em]
        L^2_{\rm stretch}&\equiv L^2_{\rm stretch}(\omega\times Y;\C^3):=\left\{\vect u \in L^2(\omega\times Y;\C^3):\ \widehat{u}(-\widehat{x}) = \vect -\widehat{u}(\widehat{x}), \quad {u_3}(-\widehat{x}) = {u_3}(\widehat{x})\right\}.
    \end{aligned} 
\label{invar_spaces_def_comp}
\end{equation}
Note that these spaces are mutually orthogonal also in $ H^1_{\chi}(Y;H^1(\omega;\C^3))$:
$$
\bigl[H^1_{\chi}(Y;H^1(\omega;\C^3))\cap L^2_{\rm bend}\bigr]^\perp = H^1_{\chi}\bigl(Y;H^1(\omega;\C^3)\bigr)\cap L^2_{\rm stretch}, 
$$
 which will be used in the proof of Propositon \ref{propnak10}. 
Furthermore, for $\vect u\in H^1_{\chi}(Y;H^1(\omega;\C^3))$, we have $\rm rod(\vect u) = \vect u_{\rm bend} + \vect u_{\rm stretch},$ where $\vect u_{\rm stretch}\in L^2_{\rm stretch}$ and $\vect u_{\rm bend}\in L^2_{\rm bend}$ are defined by  (cf. \eqref{nak60}): 
\begin{equation}
\begin{aligned}
\vect u_{\rm stretch}(\widehat{x},y)&:= {\rm e}^{{\rm i}\chi y} ( d x_2,-d x_1, c_3)^\top,\qquad(\widehat{x},y)\in\omega\times Y,\\[0.3em]
%% \in L^2_{\rm stretch}, 
\vect u_{\rm bend}(\widehat{x},y)&:= {\rm e}^{{\rm i}\chi y}\bigl(c_1, c_2, -{\rm i}\chi (c_1x_1 + c_2 x_2)\bigr)^\top,\qquad(\widehat{x},y)\in\omega\times Y.
%%\in L^2_{\rm bend}.
\end{aligned}
\label{decomp_vect}
\end{equation}
In view of the functions (\ref{decomp_vect}) being independent of the variable $y,$ in what follows we write $\vect u_{\rm stretch}=\vect u_{\rm stretch}(\widehat{x}),$ $\vect u_{\rm bend}=\vect u_{\rm
bend}(\widehat{x}).$ 
We next observe that $\simgrad \vect u_{\rm stretch} = {\rm i}X_\chi \vect u_{\rm stretch}.$   Using Proposition \ref{leadingordertermproposition}, we obtain the following estimates: 
\begin{equation}
    \begin{split}
        \Vert \vect u_{\rm bend}\Vert _{L^2(\omega\times Y,\C^3)} \leq C \max\left\{|c_1|, |c_2|\right\} \leq C  \chi^{-2}\bigl\Vert  \simgrad \vect u \bigr\Vert _{L^2(\omega\times Y;\C^{3\times 3})}, \\[0.4em]
\Vert \vect u_{\rm stretch}\Vert _{L^2(\omega\times Y,\C^3)} \leq C \max\left\{|d|, |c_3|\right\} \leq C|\chi|^{-1}\bigl\Vert  \simgrad \vect u \bigr\Vert _{L^2(\omega\times Y;\C^{3\times 3})}, \\[0.4em]
\bigl\Vert\simgrad \vect u_{\rm bend}\bigr\Vert _{L^2(\omega\times Y,\C^{3 \times 3})} \leq {\chi^2} \max\left\{|c_1|, |c_2|\right\} \leq C  \bigl\Vert  \simgrad \vect u \bigr\Vert _{L^2(\omega\times Y;\C^{3\times 3})} \\[0.4em]
\bigl\Vert\simgrad \vect u_{\rm stretch}\bigr\Vert _{L^2(\omega\times Y,\C^{3 \times 3})} \leq |\chi| \max\left\{|d|, |c_3|\right\} \leq C \bigl\Vert  \simgrad \vect u \bigr\Vert _{L^2(\omega\times Y;\C^{3\times 3})}.
    \end{split}
\end{equation}

The form of the vectors \eqref{decomp_vect} motivates introducing the following subspaces of $H^1_{\chi}(Y;H^1(\omega;\C^3)):$
\begin{align*}
	V_\chi^{\rm stretch}:=&\left\{(d x_2, -d x_1, c_3)^\top {\rm e}^{{\rm i}\chi y}, (\widehat{x},y)\in\omega\times Y: d, c_3 \in \C \right\}, 
	%%{\color{red}\leq} H^1_{\chi}(Y;H^1(\omega;\C^3))
	\\[0.4em]
V_\chi^{\rm bend}:=&\left\{(c_1,c_2,-{\rm i}\chi(c_1x_1+c_2 x_2))^\top {\rm e}^{{\rm i}\chi y}, (\widehat{x},y)\in\omega\times Y: c_1,c_2 \in \C \right\}. 
%%\leq H^1_{\chi}(Y;H^1(\omega;\C^3)),
\end{align*}
 for which one has  
\begin{equation}
\dim V_\chi^{\rm stretch}=\dim V_\chi^{\rm bend} =2, \qquad  V_\chi^{\rm stretch}\perp V_\chi^{\rm bend},\qquad
%%\end{equation}
%%\begin{equation}
	V_\chi^{\rm stretch} < L^2_{\rm stretch},\qquad V_\chi^{\rm bend} < L^2_{\rm bend}.
\end{equation}

The following estimates are crucial for the spectral analysis carried out in Section \ref{spec_analysis_sec}.

\begin{proposition}
\label{L2estim}
There exists a constant $C>0$ such that for all $\chi\in[-\pi,\pi)\setminus\{0\}$ the following estimates hold, depending on the space of deformations chosen:
%%{What about $\chi$?}
\begin{itemize}
\item For all $\vect u  \in H^1_{\chi}(Y;H^1(\omega;\C^3)),$ one has
	\begin{equation}
	\label{estim1}
	\Vert \vect u\Vert _{L^2(\omega\times Y,\C^3)} \leq
	C\chi^{-2} \bigl\Vert  \simgrad \vect u \bigr\Vert _{L^2(\omega\times Y;\C^{3\times 3})}, 
	\end{equation}
\item For all $\vect u \in (V_\chi^{\rm bend})^\perp,$ one has
	\begin{equation}
	\label{estim2}
	\Vert \vect u\Vert _{L^2(\omega\times Y,\C^3)} \leq C|\chi|^{-1}\bigl\Vert  \simgrad \vect u \bigr\Vert _{L^2(\omega\times Y;\C^{3\times 3})},
	\end{equation}
\item For all $\vect u \in (V_\chi^{\rm bend} \cup V_\chi^{\rm stretch})^\perp,$ one has
	\begin{equation}
	\label{estim3}
	\Vert \vect u\Vert _{L^2(\omega\times Y,\C^3)} \leq C \bigl\Vert  \simgrad \vect u \bigr\Vert _{L^2(\omega\times Y;\C^{3\times 3})},
	\end{equation}
\end{itemize} 
\end{proposition}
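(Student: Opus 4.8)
The plan is to deduce all three estimates from Proposition \ref{leadingordertermproposition} by peeling off, successively, fewer and fewer components of the finite-dimensional approximation ${\rm rod}(\vect u)$. The starting point is always the decomposition $\vect u = {\rm rod}(\vect u) + \vect r$, where ${\rm rod}(\vect u) = \vect u_{\rm bend} + \vect u_{\rm stretch}$ is as in \eqref{nak60}--\eqref{decomp_vect} and $\Vert \vect r\Vert_{H^1(\omega\times Y;\C^3)} \leq C\Vert\simgrad\vect u\Vert_{L^2(\omega\times Y;\C^{3\times 3})}$ by \eqref{uw_est}, together with the coefficient bounds $\max\{|c_1|,|c_2|\}\leq C\chi^{-2}\Vert\simgrad\vect u\Vert$ and $\max\{|d|,|c_3|\}\leq C|\chi|^{-1}\Vert\simgrad\vect u\Vert$. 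For \eqref{estim1} I would simply combine these: $\Vert\vect u\Vert_{L^2}\leq \Vert\vect u_{\rm bend}\Vert_{L^2}+\Vert\vect u_{\rm stretch}\Vert_{L^2}+\Vert\vect r\Vert_{L^2}\leq C\chi^{-2}\Vert\simgrad\vect u\Vert + C|\chi|^{-1}\Vert\simgrad\vect u\Vert + C\Vert\simgrad\vect u\Vert \leq C\chi^{-2}\Vert\simgrad\vect u\Vert$, the last step using $|\chi|\leq\pi$ so that $\chi^{-2}$ dominates. This requires no orthogonality and holds for all $\vect u$.

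For \eqref{estim2}, the idea is that the worst term above is $\vect u_{\rm bend}$, whose $L^2$ norm is only controlled by $\chi^{-2}\Vert\simgrad\vect u\Vert$; but $\vect u_{\rm bend}\in V_\chi^{\rm bend}$, so if $\vect u\perp V_\chi^{\rm bend}$ one can hope to remove it. Concretely, write $\vect u = \vect u_{\rm stretch} + \vect r + \vect u_{\rm bend}$ and take the inner product with a basis of $V_\chi^{\rm bend}$, or more cleanly project: since $\vect u\perp V_\chi^{\rm bend}$ and $\vect u_{\rm stretch}, \vect r$ have $H^1$-norms bounded by $C|\chi|^{-1}\Vert\simgrad\vect u\Vert$ and $C\Vert\simgrad\vect u\Vert$ respectively, the component of $\vect u_{\rm bend}$ along $V_\chi^{\rm bend}$ equals minus the $V_\chi^{\rm bend}$-component of $\vect u_{\rm stretch}+\vect r$; but $V_\chi^{\rm bend}\perp V_\chi^{\rm stretch}$ and $\vect u_{\rm stretch}\in V_\chi^{\rm stretch}$, so only $\vect r$ contributes, giving $\Vert P_{V_\chi^{\rm bend}}\vect u_{\rm bend}\Vert_{L^2} = \Vert P_{V_\chi^{\rm bend}}\vect r\Vert_{L^2}\leq \Vert\vect r\Vert_{L^2}\leq C\Vert\simgrad\vect u\Vert$. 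Since $\vect u_{\rm bend}\in V_\chi^{\rm bend}$, this says $\Vert\vect u_{\rm bend}\Vert_{L^2}\leq C\Vert\simgrad\vect u\Vert$ directly, improving the bad $\chi^{-2}$ to $O(1)$; then $\Vert\vect u\Vert_{L^2}\leq \Vert\vect u_{\rm bend}\Vert_{L^2}+\Vert\vect u_{\rm stretch}\Vert_{L^2}+\Vert\vect r\Vert_{L^2}\leq C|\chi|^{-1}\Vert\simgrad\vect u\Vert$. For \eqref{estim3} I would repeat the same trick once more: now $\vect u\perp(V_\chi^{\rm bend}\cup V_\chi^{\rm stretch})$, and the same projection argument (using $V_\chi^{\rm bend}\perp V_\chi^{\rm stretch}$) forces both $\Vert\vect u_{\rm bend}\Vert_{L^2}$ and $\Vert\vect u_{\rm stretch}\Vert_{L^2}$ to be $\leq C\Vert\simgrad\vect u\Vert$, whence $\Vert\vect u\Vert_{L^2}\leq C\Vert\simgrad\vect u\Vert$.

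The one technical subtlety I expect to be the main obstacle is the bookkeeping of \emph{which} inner product the orthogonality in the hypotheses refers to — $\vect u\in(V_\chi^{\rm bend})^\perp$ is stated with respect to the $H^1_\chi$ inner product, whereas the conclusion is an $L^2$ bound and the decomposition $\vect u={\rm rod}(\vect u)+\vect r$ is an $L^2$-type (indeed $H^1$) statement without any orthogonality between ${\rm rod}(\vect u)$ and $\vect r$. The cleanest fix is to work entirely in $H^1_\chi$: the projection $P_{V_\chi^{\rm bend}}$ in the argument above should be the $H^1_\chi$-orthogonal projection, all norms of $\vect u_{\rm stretch}$, $\vect u_{\rm bend}$, $\vect r$ appearing in the estimate are already available in the $H^1$ norm (from Proposition \ref{leadingordertermproposition} and the coefficient bounds, noting $\Vert\vect u_{\rm bend}\Vert_{H^1}\leq C\max\{|c_1|,|c_2|\}$ since $\simgrad\vect u_{\rm bend}$ is $O(\chi^2)\cdot\max\{|c_1|,|c_2|\}$, similarly for $\vect u_{\rm stretch}$), and one reads off the $L^2$ bound at the end. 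Care is also needed that $V_\chi^{\rm bend}\perp V_\chi^{\rm stretch}$ has been asserted in the excerpt both in $L^2$ and in $H^1_\chi$, and that ${\rm rod}(\vect u)\in V_\chi^{\rm bend}\oplus V_\chi^{\rm stretch}$ with the two summands $\vect u_{\rm bend},\vect u_{\rm stretch}$ lying in the respective pieces, which is immediate from \eqref{decomp_vect}. With these identifications the three estimates follow by the successive-peeling scheme above.
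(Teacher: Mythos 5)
Your proposal follows essentially the same strategy as the paper and is correct. Both start from the key consequence of Proposition~\ref{leadingordertermproposition}, namely $\Vert \vect u - \vect u_{\rm stretch} - \vect u_{\rm bend}\Vert_{L^2(\omega\times Y;\C^3)}\leq C\Vert\simgrad\vect u\Vert_{L^2}$ together with the coefficient bounds, and then successively peel off $\vect u_{\rm bend}$ and $\vect u_{\rm stretch}$ using the orthogonality hypotheses. The only stylistic difference is that where you run a projection argument ($\vect u_{\rm bend} = -P_{V_\chi^{\rm bend}}\vect r$, so $\Vert\vect u_{\rm bend}\Vert_{L^2}\leq\Vert\vect r\Vert_{L^2}$), the paper invokes the equivalent Pythagorean identity: since $\vect u-\vect u_{\rm stretch}\perp\vect u_{\rm bend}$ in $L^2$ (because $\vect u\perp V_\chi^{\rm bend}$ and $V_\chi^{\rm stretch}\perp V_\chi^{\rm bend}$), one has $\Vert\vect u-\vect u_{\rm stretch}\Vert_{L^2}^2 + \Vert\vect u_{\rm bend}\Vert_{L^2}^2 = \Vert\vect u-\vect u_{\rm stretch}-\vect u_{\rm bend}\Vert_{L^2}^2\leq C\Vert\simgrad\vect u\Vert^2_{L^2}$, and similarly for \eqref{estim3}. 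These are the same computation. On the one substantive worry you raise: the orthogonality $(V_\chi^{\rm bend})^\perp$, $(V_\chi^{\rm bend}\cup V_\chi^{\rm stretch})^\perp$ in the hypotheses is with respect to the $L^2(\omega\times Y;\C^3)$ inner product, not $H^1_\chi$ — this is forced by the use of these spaces in the min--max characterisation of eigenvalues in Section~\ref{spec_analysis_sec} and in Proposition~\ref{Rayleighestim}, where the constraints are explicitly in $L^2$. So your first, $L^2$-based projection argument is exactly what is needed, and the $H^1_\chi$ detour you propose as a contingency, while it would also give a valid bound, is not required.
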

\begin{proof}
The proof of all three estimates is based on the existence of $C>0$ such that for all vector functions $\vect u \in H^1_{\chi}(Y;H^1(\omega;\C^3))$ one has, \CCC as a consequence of Proposition \ref{leadingordertermproposition}, 
$$\Vert \vect u - \vect u_{\rm stretch} - \vect u_{\rm bend}\Vert _{L^2(\omega\times Y,\C^3)} \leq C\bigl\Vert  \simgrad \vect u \bigr\Vert _{L^2(\omega\times Y;\C^{3\times 3})}.$$

\begin{itemize}
	\item
For the bound \eqref{estim1}, we note that
\begin{align*}
\Vert \vect u\Vert _{L^2(\omega\times Y,\C^3)}&\leq \Vert \vect u - \vect u_{\rm stretch} - \vect u_{\rm bend}\Vert _{L^2(\omega\times Y,\C^3)} + \Vert \vect u_{\rm bend}\Vert _{L^2(\omega\times Y,\C^3)} + \Vert \vect u_{\rm stretch}\Vert _{L^2(\omega\times Y,\C^3)}\\[0.35em] &\leq  C\chi^{-2}\bigl\Vert  \simgrad \vect u \bigr\Vert _{L^2(\omega\times Y;\C^{3\times 3})}.
\end{align*}

\item
Next, for \eqref{estim2} is obtained by noting that for $\vect u \in (V_\chi^{\rm bend})^\perp$ one has
$$
\Vert \vect u - \vect u_{\rm stretch}\Vert _{L^2}^2 + \Vert \vect u_{\rm bend}\Vert _{L^2}^2 = \Vert \vect u - \vect u_{\rm stretch} - \vect u_{\rm bend}\Vert _{L^2(\omega\times Y,\C^3)}^2 \leq C\bigl\Vert  \simgrad \vect u \bigr\Vert _{L^2(\omega\times Y;\C^{3\times 3})}^2.
$$
Therefore
$$
\Vert \vect u - \vect u_{\rm stretch}\Vert _{L^2(\omega\times Y,\C^3)} \leq C\bigl\Vert  \simgrad \vect u \bigr\Vert _{L^2(\omega\times Y;\C^{3\times 3})}, $$
and hence
$$\Vert \vect u\Vert _{L^2(\omega\times Y,\C^3)} \leq \Vert \vect u - \vect u_{\rm stretch}\Vert _{L^2(\omega\times Y,\C^3)}  + \Vert \vect u_{\rm stretch}\Vert _{L^2(\omega\times Y,\C^3)} \leq  C|\chi|^{-1}\bigl\Vert  \simgrad \vect u \bigr\Vert _{L^2(\omega\times Y;\C^{3\times 3})}. $$

\item
Finally, for \eqref{estim3} we note that if $\vect u \in (V_\chi^{\rm bend} \cup V_\chi^{\rm stretch})^\perp$ then
\begin{align*}
\Vert \vect u\Vert _{L^2(\omega\times Y,\C^3)}^2 + \Vert \vect u_{\rm stretch}\Vert _{L^2(\omega\times Y,\C^3)}^2 + \Vert \vect u_{\rm bend}\Vert _{L^2(\omega\times Y,\C^3)}^2& = \Vert \vect u - \vect u_{\rm stretch} - \vect u_{\rm bend}\Vert _{L^2(\omega\times Y,\C^3)}^2\\[0.3em]
&\leq C\bigl\Vert  \simgrad \vect u \bigr\Vert _{L^2(\omega\times Y;\C^{3\times 3})}^2,
\end{align*}
which concludes the proof.
\end{itemize}
\end{proof}

\subsection{Spectral estimates}
\label{spec_analysis_sec}
By the Rellich-Kondrashov theorem, the space $H^1_{\chi}(Y;H^1(\omega;\C^3))$ is compactly embedded into $L^2_{\chi}( Y;L^2(\omega;\C^3))$. 
%%Thus, by the theorem of spectrum of compact operators, we deduce 
It follows that the spectrum of $\mathcal{A}_\chi$ \RRR is a sequence  of eigenvalues $(\lambda_n^\chi)_{n=1}^\infty$ that tends to infinity  and  can be assumed non-decreasing by a suitable rearrangement, if necessary.  

Here we state some results on the structure of the spectrum  of $\mathcal{A}_{\chi}$   and its scaling.
Recall the definition of the Rayleigh quotient associated with the bilinear form ${\mathfrak a}_\chi$, namely:
\begin{equation}
\mathcal{R}_\chi(\vect u) =  \frac{{\mathfrak a}( \vect u, \vect u)}{\Vert \vect u\Vert _{L^2(\omega\times Y,\C^3)}^2}, \quad \vect u \in H^1_{\chi}\bigl(Y;H^1(\omega;\C^3)\bigr).
\end{equation}
The Rayleigh quotient is closely related with the spectrum via the following characterizations:
\begin{equation}
\lambda_n^\chi = \min_{V \in L^n} \max_{ \vect v \in V} \mathcal{R}_\chi(\vect v),\qquad n=1,2,\dots.
\end{equation}
where $L^n$ denotes the family of $n$-dimensional subspaces of $H^1_{\chi}(Y;H^1(\omega;\C^3))$. Alternatively,
\begin{equation}
\lambda_n^\chi = \min \left\{\mathcal{R}_\chi(\vect v); \quad {\vect v}\perp{\vect v}_i \mbox{ in } L^2(\omega\times Y;\C^3),\quad  1\leq i \leq n-1\right\},\qquad n=1,2,\dots,
\end{equation}
where $\{{\vect v}_i,$ $1\le i\le n-1\}$ is any set spanning the same subspace of $H^1_{\chi}(Y;H^1(\omega;\C^3))$ as the first $n-1$ eigenvectors of $\mathcal{A}_\chi,$ $n=1,2,\dots,$ in the order of increasing $\lambda_n^\chi.$ 
\begin{remark}
Here we note that the function $\chi \to \lambda_n^{\chi}$ is continuous for all $n \in \N$. Therefore, the spectrum of the operator $\mathcal{A}_\varepsilon$ is a union of intervals $$
\bigl[\underline{\lambda}_n, \overline{\lambda}_n\bigr] := \bigcup_{\chi \in [-\pi,\pi)} \bigl\{\lambda_n^{\chi}\bigr\}, \quad n \in \N.$$
\end{remark}

The lowest eigenvalues of ${\mathcal A}_\chi$ in the invariant subspaces $V_\chi^{\rm bend}$, $V_\chi^{\rm stretch}$ have different orders of smallness with respect to the quasimomentum $\chi,$ as stated in the following proposition.

\begin{proposition}
\label{Rayleighestim}
There exist constants \CCC $C_1 > C_2 > 0$  such that
\begin{itemize}
\item $\mathcal{R}_\chi(\vect u) \geq \CCC C_2{\chi^4}  \qquad\forall \vect u \in H^1_{\chi}\bigl(Y;H^1(\omega;\C^3)\bigr);$
\item  $0\leq\mathcal{R}_\chi(\vect u) \leq C_1{\chi^4}\qquad \forall \vect u \in V_\chi^{\rm bend};$
\item $\mathcal{R}_\chi(\vect u) \geq \CCC C_2{\chi^2} \qquad\forall \vect u \in (V_\chi^{\rm bend})^\perp;$ 
\item $0\leq\mathcal{R}_\chi(\vect u) \leq C_1{\chi^2}\qquad\forall \vect u \in V_\chi^{\rm stretch};$
\item $\mathcal{R}_\chi(\vect u) \geq \CCC C_2 \qquad \forall \vect u \in (V_\chi^{\rm bend} \cup V_\chi^{\rm stretch})^\perp.$
\end{itemize}
\end{proposition}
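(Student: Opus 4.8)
The plan is to reduce all five assertions to two‑sided Poincaré‑type bounds on the ratio of the symmetrised‑gradient energy to the $L^2$‑norm, and then to invoke Proposition~\ref{L2estim} for the lower bounds and a short explicit computation for the upper bounds. First note that for $\vect u\in H^1_{\chi}(Y;H^1(\omega;\C^3))$ one has $\mathfrak{a}(\vect u,\vect u)=\int_{\omega\times Y}\mathbb{A}(y)\simgrad\vect u:\overline{\simgrad\vect u}\,d\widehat{x}dy$, so the pointwise coercivity \eqref{pointwisecoercivity} gives $\nu\Vert\simgrad\vect u\Vert^2_{L^2(\omega\times Y;\C^{3\times3})}\le\mathfrak{a}(\vect u,\vect u)\le\nu^{-1}\Vert\simgrad\vect u\Vert^2_{L^2(\omega\times Y;\C^{3\times3})}$. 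Hence $\mathcal{R}_\chi(\vect u)$ is, up to the fixed factors $\nu$ and $\nu^{-1}$, the quotient $\Vert\simgrad\vect u\Vert^2_{L^2}/\Vert\vect u\Vert^2_{L^2}$, and in particular $\mathcal{R}_\chi\ge0$ always, which already gives the inequalities ``$0\le\mathcal{R}_\chi$'' in the second and fourth bullets.

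For the three lower bounds (first, third, fifth bullets) I would read them straight off Proposition~\ref{L2estim}: estimate \eqref{estim1} gives $\Vert\vect u\Vert_{L^2}\le C\chi^{-2}\Vert\simgrad\vect u\Vert_{L^2}$ for all $\vect u\in H^1_{\chi}(Y;H^1(\omega;\C^3))$, whence $\mathcal{R}_\chi(\vect u)\ge\nu C^{-2}\chi^4$; likewise \eqref{estim2} yields $\mathcal{R}_\chi(\vect u)\ge\nu C^{-2}\chi^2$ on $(V_\chi^{\rm bend})^\perp$, and \eqref{estim3} yields $\mathcal{R}_\chi(\vect u)\ge\nu C^{-2}$ on $(V_\chi^{\rm bend}\cup V_\chi^{\rm stretch})^\perp$. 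Taking $C_2$ to be the least of the three constants so produced settles all three.

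The two upper bounds (second and fourth bullets) require an explicit evaluation of $\simgrad$ on the two‑dimensional spaces $V_\chi^{\rm stretch}$ and $V_\chi^{\rm bend}$. On $V_\chi^{\rm stretch}$ the generic element is $\vect u_{\rm stretch}={\rm e}^{{\rm i}\chi y}(dx_2,-dx_1,c_3)^\top$; its periodic part has antisymmetric ordinary gradient, so $\simgrad\vect u_{\rm stretch}={\rm i}X_\chi\vect u_{\rm stretch}$ (as already noted just above Proposition~\ref{L2estim}), and a one‑line computation of $X_\chi$ together with \eqref{crossectionconstants} and \eqref{coordinatesymmetries} gives $\Vert\simgrad\vect u_{\rm stretch}\Vert^2_{L^2}=\chi^2\bigl(\tfrac12({\mathfrak c}_1+{\mathfrak c}_2)|d|^2+|c_3|^2\bigr)$ while $\Vert\vect u_{\rm stretch}\Vert^2_{L^2}=({\mathfrak c}_1+{\mathfrak c}_2)|d|^2+|c_3|^2$, so $\mathcal{R}_\chi(\vect u_{\rm stretch})\le\nu^{-1}\chi^2$. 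On $V_\chi^{\rm bend}$ the generic element is $\vect u_{\rm bend}={\rm e}^{{\rm i}\chi y}\vect v$ with $\vect v=(c_1,c_2,-{\rm i}\chi(c_1x_1+c_2x_2))^\top$; using $\simgrad\vect u_{\rm bend}={\rm e}^{{\rm i}\chi y}(\simgrad\vect v+{\rm i}X_\chi\vect v)$ one checks that $\simgrad\vect v$ and ${\rm i}X_\chi\vect v$ cancel in all entries except $(3,3)$, where the surviving entry equals $\chi^2(c_1x_1+c_2x_2)$. Hence, using \eqref{coordinatesymmetries} to kill $\int_\omega x_1x_2$, one gets $\Vert\simgrad\vect u_{\rm bend}\Vert^2_{L^2}=\chi^4\bigl({\mathfrak c}_1|c_1|^2+{\mathfrak c}_2|c_2|^2\bigr)\le\chi^4\max\{{\mathfrak c}_1,{\mathfrak c}_2\}\Vert\vect u_{\rm bend}\Vert^2_{L^2}$ (the last step since $\Vert\vect u_{\rm bend}\Vert^2_{L^2}\ge|c_1|^2+|c_2|^2$), whence $\mathcal{R}_\chi(\vect u_{\rm bend})\le\nu^{-1}\max\{{\mathfrak c}_1,{\mathfrak c}_2\}\chi^4$. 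Taking $C_1$ to be the larger of the two constants thus obtained — and enlarging it if necessary so that $C_1>C_2$ — completes the argument.

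I do not expect a genuine obstacle: the only step needing attention is the cancellation producing the $(3,3)$‑entry $\chi^2(c_1x_1+c_2x_2)$ for bending deformations, since it is exactly this cancellation that makes the bending Rayleigh quotient of order $\chi^4$ rather than $\chi^2$ and explains the particular shape of the third component of the ansatz \eqref{leadingorderterm}; everything else is bookkeeping with the Korn‑type inequalities of Proposition~\ref{L2estim} already at hand.
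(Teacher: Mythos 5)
Your proposal is correct and follows essentially the same route as the paper: the lower bounds are read off Proposition~\ref{L2estim} combined with the pointwise coercivity \eqref{pointwisecoercivity}, and the upper bounds come from explicit evaluation of $\simgrad$ on the two‑dimensional spaces $V_\chi^{\rm stretch}$ and $V_\chi^{\rm bend}$. The only minor difference is that you compute the norm $\Vert\simgrad\vect u\Vert_{L^2}$ exactly (and correctly identify the key cancellation leaving only the $(3,3)$‑entry $\chi^2(c_1x_1+c_2x_2)$ in the bending case), whereas the paper records only the resulting one‑sided bound; this is a presentational, not a substantive, difference.
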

\begin{proof}
Using the uniform positive definiteness of the tensor $\A$ together with the estimates \eqref{estim1}, we obtain
\begin{equation}
\mathcal{R}_{\chi}(\vect u) = \frac{{ \mathfrak a}(\vect u,\vect u)}{\Vert \vect u\Vert _{L^2(\omega\times Y ,\C^3)}^2} \geq \nu\frac{\Vert \simgrad  \vect u\Vert _{L^2(\omega\times Y ,\C^{3 \times 3})}^2}{\Vert \vect u\Vert _{L^2(\omega\times Y ,\C^3)}^2} \geq C\nu{\chi^4}.
\end{equation} 
For arbitrary $\vect u= (c_1,c_2,-{\rm i}\chi (c_1 x_1 + c_2 x_2))^\top {\rm e}^{{\rm i}\chi y} \in V_\chi^{\rm bend},$ we estimate:
\begin{align*}
\bigl\Vert \simgrad \vect u \bigr\Vert_{L^2(\omega\times Y;\C^{3\times 3})}&\leq  \max\Bigl\{\sqrt{{\mathfrak c}_1(\omega)},\sqrt{{\mathfrak c}_2(\omega)}\Bigr\}{\chi^2}\sqrt{c_1^2 + c_2^2},\\[0.25em]
\Vert \vect u\Vert _{L^2(\omega\times Y ,\C^3)}&\geq  \sqrt{c_1^2 + c_2^2}, 
\end{align*}
where ${\mathfrak c}_1(\omega),$ ${\mathfrak c}_2(\omega)$ are defined by \eqref{crossectionconstants}. By combining the above, it follows that 
\begin{equation}
\mathcal{R}_{\chi}(\vect u) = \frac{{ \mathfrak a }(\vect u,\vect u)}{\Vert \vect u\Vert _{L^2(\omega\times Y ,\C^3)}^2} \leq \frac{1}{\nu}\frac{\bigl\Vert \simgrad\vect u \bigr\Vert_{L^2(\omega\times Y;\C^{3\times 3})}^2}{\Vert \vect u\Vert _{L^2(\omega\times Y ,\C^3)}^2} \leq \CCC \frac{C}{\nu}{\chi^4}. 
\end{equation}
Next, for an arbitrary $\vect u = (d x_2, -d x_1, c_3)^\top {\rm e}^{{\rm i}\chi y}\in V_\chi^{\rm stretch},$  one has 
\begin{equation}
\bigl\Vert \simgrad \vect u \bigr\Vert_{L^2(\omega\times Y;\C^{3\times 3})} \leq  |\chi|\max\Bigl\{\sqrt{{\mathfrak c}_1(\omega)+{\mathfrak c}_2(\omega)},1\Bigr\}\sqrt{d^2 + c_3^2}  ,
\end{equation}
while 
\begin{equation}
\Vert \vect u \Vert _{L^2(\omega\times Y ,\C^3)} \geq \min\Bigl\{\sqrt{{\mathfrak c}_1(\omega)+{\mathfrak c}_2(\omega)},1\Bigr\}\sqrt{d^2 + c_3^2} .
\end{equation}
It follows that 
\begin{equation}
\mathcal{R}_\chi (\vect v) \leq  \CCC \frac{C}{\nu}{\chi^2} ,
\end{equation}
for some  \CCC $C>0.$  Combining this with the estimates of Proposition \ref{L2estim} concludes the proof.
\end{proof}

The previous proposition allows us to deduce the following result on the structure and the scaling of the spectrum: 
\begin{theorem}
 The spectrum $\sigma(\mathcal{A}_\chi)$ contains two eigenvalues of order ${O}({\chi^4})$, two eigenvalues of order ${O}({\chi^2}),$ and the remaining eigenvalues are of order ${O}(1).$ 
 
Under Assumption \ref{matsym} on the rod cross-section and  material symmetries, the spectrum $\sigma(\mathcal{A}_\chi)$ is a disjoint union of $\sigma\bigl(\mathcal{A}_\chi |_{L^2_{\rm bend}}\bigr)$ and $\sigma\bigl(\mathcal{A}_\chi |_{L^2_{\rm stretch}}\bigr)$. The spectrum $\sigma\bigl(\mathcal{A}_\chi |_{L^2_{\rm bend}}\bigr)$ contains two eigenvalues of order ${O}({\chi^4}),$ while the remaining eigenvalues are of order ${O}(1)$. In contrast, the spectrum $\sigma\bigl(\mathcal{A}_\chi |_{L^2_{\rm stretch}}\bigr)$ contains two eigenvalues of order ${O}({\chi^2}),$ while the remaining eigenvalues are of order ${O}(1).$
\end{theorem}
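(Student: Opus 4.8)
This is a Courant--Fischer (min--max) argument built entirely on the Rayleigh-quotient bounds of Proposition \ref{Rayleighestim}, the dimension count $\dim V_\chi^{\rm bend}=\dim V_\chi^{\rm stretch}=2$, and the $L^2$-orthogonality relations recorded before Proposition \ref{L2estim}. Fix $\chi\in[-\pi,\pi)\setminus\{0\}$ and write $\lambda_1^\chi\le\lambda_2^\chi\le\cdots$ for the eigenvalues of $\mathcal{A}_\chi$; recall $\lambda_n^\chi=\min_{V\in L^n}\max_{\vect v\in V}\mathcal{R}_\chi(\vect v)$, where $L^n$ is the family of $n$-dimensional subspaces of $H^1_{\chi}(Y;H^1(\omega;\C^3))$, and that $\mathcal{R}_\chi$ is built from the non-negative form $\mathfrak a$.

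\emph{General case.} For the two lowest eigenvalues I would test with the $2$-dimensional space $V_\chi^{\rm bend}$: since $\mathcal{R}_\chi\le C_1\chi^4$ on $V_\chi^{\rm bend}$ (second bullet of Proposition \ref{Rayleighestim}), the min--max formula gives $\lambda_2^\chi\le\max_{\vect v\in V_\chi^{\rm bend}}\mathcal{R}_\chi(\vect v)\le C_1\chi^4$, while the first bullet gives $\lambda_1^\chi\ge C_2\chi^4$; hence $\lambda_1^\chi,\lambda_2^\chi\asymp\chi^4$. For $\lambda_3^\chi,\lambda_4^\chi$ I would test with the $4$-dimensional space $V_\chi^{\rm bend}\oplus V_\chi^{\rm stretch}$ (the sum is direct and $L^2$-orthogonal): writing $\vect u=\vect u_{\rm bend}+\vect u_{\rm stretch}$ and using non-negativity of $\mathfrak a$ (Cauchy--Schwarz) together with $L^2$-orthogonality yields $\mathcal{R}_\chi(\vect u)\le 2\max\{\mathcal{R}_\chi(\vect u_{\rm bend}),\mathcal{R}_\chi(\vect u_{\rm stretch})\}\le 2C_1\max\{\chi^4,\chi^2\}\le 2\pi^2 C_1\chi^2$ (recall $|\chi|<\pi$), so $\lambda_4^\chi\le C\chi^2$. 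The matching lower bound $\lambda_3^\chi\ge C_2\chi^2$ follows because every $3$-dimensional $V\subset H^1_{\chi}(Y;H^1(\omega;\C^3))$ meets the codimension-$2$ subspace $(V_\chi^{\rm bend})^\perp$ nontrivially, and $\mathcal{R}_\chi\ge C_2\chi^2$ there (third bullet); hence $\lambda_3^\chi,\lambda_4^\chi\asymp\chi^2$. Finally, for $n\ge 5$ the same intersection argument applied to the codimension-$4$ subspace $(V_\chi^{\rm bend}\cup V_\chi^{\rm stretch})^\perp$, on which $\mathcal{R}_\chi\ge C_2$ by the fifth bullet, gives $\lambda_5^\chi\ge C_2$; the uniform upper bound on each $\lambda_n^\chi$, $n\ge5$, follows from the continuity of $\chi\mapsto\lambda_n^\chi$ on the compact $[-\pi,\pi]$, equivalently by testing with a fixed $n$-dimensional space $\{{\rm e}^{{\rm i}\chi y}\phi_j\}$ with $\phi_j$ smooth and $Y$-periodic, whose Rayleigh quotients are bounded uniformly in $\chi$ since $|\chi|<\pi$.

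\emph{Symmetric case.} Under Assumption \ref{matsym} the subspaces $L^2_{\rm bend}$ and $L^2_{\rm stretch}$ reduce $\mathcal{A}_\chi$, so $\sigma(\mathcal{A}_\chi)=\sigma(\mathcal{A}_\chi|_{L^2_{\rm bend}})\sqcup\sigma(\mathcal{A}_\chi|_{L^2_{\rm stretch}})$ (with multiplicities), and one repeats the min--max argument on each invariant subspace. For $\mathcal{A}_\chi|_{L^2_{\rm bend}}$ one has $V_\chi^{\rm bend}\subset L^2_{\rm bend}$, hence its two lowest eigenvalues lie between $C_2\chi^4$ and $C_1\chi^4$ as above; for the rest, note $L^2_{\rm bend}\cap(V_\chi^{\rm bend})^\perp\subset(V_\chi^{\rm bend}\cup V_\chi^{\rm stretch})^\perp$ (since $V_\chi^{\rm stretch}\subset L^2_{\rm stretch}\perp L^2_{\rm bend}$), so $\mathcal{R}_\chi\ge C_2$ there and every $3$-dimensional subspace of $H^1_{\chi}(Y;H^1(\omega;\C^3))\cap L^2_{\rm bend}$ meets this subspace (of codimension $2$ within $L^2_{\rm bend}$) nontrivially, giving that the third eigenvalue is $\ge C_2$. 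Symmetrically, for $\mathcal{A}_\chi|_{L^2_{\rm stretch}}$ one uses $V_\chi^{\rm stretch}\subset L^2_{\rm stretch}$ and $L^2_{\rm stretch}\subset(V_\chi^{\rm bend})^\perp$ to sandwich the two lowest eigenvalues between $C_2\chi^2$ and $C_1\chi^2$, and $L^2_{\rm stretch}\cap(V_\chi^{\rm stretch})^\perp\subset(V_\chi^{\rm bend}\cup V_\chi^{\rm stretch})^\perp$ to conclude the remaining eigenvalues are bounded below by a positive constant independent of $\chi$ (uniform upper bound again by continuity/compactness).

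\emph{Main obstacle.} The estimates themselves are elementary once Proposition \ref{Rayleighestim} is available; the point that requires care is the bookkeeping of orthogonal complements. One must check that the inner product with respect to which $(V_\chi^{\rm bend})^\perp$ and $(V_\chi^{\rm bend}\cup V_\chi^{\rm stretch})^\perp$ are taken in Propositions \ref{L2estim}--\ref{Rayleighestim} is the same as the one used in the min--max intersection arguments, and that the chain of inclusions $V_\chi^{\rm bend}\subset L^2_{\rm bend}\perp L^2_{\rm stretch}\supset V_\chi^{\rm stretch}$ is exploited consistently, so that the codimension counts ($2$ and $4$ globally, $2$ within each invariant subspace) are correct and one indeed recovers \emph{exactly} two eigenvalues of each order.
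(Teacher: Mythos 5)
Your proposal is correct and follows precisely the route the paper indicates: the paper's own proof consists of the single sentence ``The proof is based on characterising of eigenvalues via the min--max principle and estimating the relevant Rayleigh quotients,'' and you have simply supplied the full details of that min--max argument using the bounds of Proposition \ref{Rayleighestim}, the dimension counts for $V_\chi^{\rm bend}$, $V_\chi^{\rm stretch}$, and the $L^2$-orthogonality relations. Your bookkeeping concern at the end is well placed but does resolve correctly, since the orthogonal complements in Propositions \ref{L2estim}--\ref{Rayleighestim} and in the alternative min--max characterisation are both taken with respect to the $L^2(\omega\times Y;\C^3)$ inner product.
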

\begin{proof}
The proof is based on on characterising of eigenvalues via the min-max principle and estimating the relevant Rayleigh quotients.
\end{proof}
Under the symmetry assumptions, the spectrum of the operator $\mathcal{A}_\chi$ can be split into the sets $\sigma(\mathcal{A}_\chi |_{L^2_{\rm bend}})$ and $\sigma(\mathcal{A}_\chi |_{L^2_{\rm stretch}}),$ so it is of interest to perform the asymptotic analysis of the resolvent problems for the scaled operators $\chi^{-4}\mathcal{A}_\chi |_{L^2_{\rm bend}}$ and
$\chi^{-2}\mathcal{A}_\chi |_{L^2_{\rm stretch}}$. 
The following proposition provides us with Korn type inequalities which are crucial for calculating apriori estimates for the resolvent problems. 
\begin{proposition}\label{propnak10} 
\label{aprioriestimates}There exists $C>0$ such that for every $\chi \in [-\pi,\pi)\setminus\left\{0\right\}:$
\begin{itemize}
	\item
For every $\vect u\in H^1_{\chi}(Y;H^1(\omega;\C^3))\cap L^2_{\rm stretch},$ one has
\begin{equation*}
\Vert \vect u\Vert _{H^1(\omega\times Y, \C^3)} \leq C|\chi|^{-1}\bigl\Vert \simgrad \vect u \bigr\Vert_{L^2(\omega\times Y;\C^{3\times 3})}.
\end{equation*}
\item
For every $\vect u\in H^1_{\chi}(Y;H^1(\omega;\C^3))\cap L^2_{\rm bend},$ one has
\begin{align*}
%\begin{split}
    \Vert {u_1}\Vert _{H^1(\omega\times Y, \C)}&\leq C\chi^{-2}\bigl\Vert  \simgrad \vect u \bigr\Vert _{L^2(\omega\times Y;\C^{3\times 3})},\\[0.35em]
    \quad \Vert {u_2}\Vert _{H^1(\omega\times Y, \C)}&\leq C\chi^{-2}\bigl\Vert  \simgrad \vect u \bigr\Vert _{L^2(\omega\times Y;\C^{3\times 3})}, \\[0.35em] 
    \Vert {u_3}\Vert _{H^1(\omega\times Y, \C)}&\leq C|\chi|^{-1}\bigl\Vert  \simgrad \vect u \bigr\Vert _{L^2(\omega\times Y;\C^{3\times 3})}.
%\end{split}
\end{align*}
\end{itemize}
\end{proposition}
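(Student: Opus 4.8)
The plan is to obtain both families of inequalities as a direct consequence of the leading-order Korn inequality of Proposition~\ref{leadingordertermproposition}, combined with the fact that $H^1_\chi(Y;H^1(\omega;\C^3))\cap L^2_{\rm bend}$ and $H^1_\chi(Y;H^1(\omega;\C^3))\cap L^2_{\rm stretch}$ are orthogonal complements of one another inside $H^1_\chi(Y;H^1(\omega;\C^3))$. Given $\vect u\in H^1_\chi(Y;H^1(\omega;\C^3))$, let $\vect u_{\rm bend}\in V_\chi^{\rm bend}$ and $\vect u_{\rm stretch}\in V_\chi^{\rm stretch}$ be the finite-dimensional fields from \eqref{decomp_vect}, so that ${\rm rod}(\vect u)=\vect u_{\rm bend}+\vect u_{\rm stretch}$ and, by \eqref{uw_est},
\[
\bigl\|\vect u-\vect u_{\rm bend}-\vect u_{\rm stretch}\bigr\|_{H^1(\omega\times Y;\C^3)}\le C\bigl\|\simgrad\vect u\bigr\|_{L^2(\omega\times Y;\C^{3\times3})}.
\]

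For the stretching case, suppose $\vect u\in H^1_\chi(Y;H^1(\omega;\C^3))\cap L^2_{\rm stretch}$. Then $\vect u-\vect u_{\rm stretch}$ again lies in $H^1_\chi\cap L^2_{\rm stretch}$, whereas $\vect u_{\rm bend}\in H^1_\chi\cap L^2_{\rm bend}$; since these subspaces are $H^1$-orthogonal, the Pythagorean identity gives $\|\vect u-\vect u_{\rm stretch}\|_{H^1}^2+\|\vect u_{\rm bend}\|_{H^1}^2=\|\vect u-\vect u_{\rm bend}-\vect u_{\rm stretch}\|_{H^1}^2\le C\|\simgrad\vect u\|_{L^2}^2$, and in particular $\|\vect u-\vect u_{\rm stretch}\|_{H^1}\le C\|\simgrad\vect u\|_{L^2}$. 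It remains to estimate $\|\vect u_{\rm stretch}\|_{H^1}$: since $\vect u_{\rm stretch}={\rm e}^{{\rm i}\chi y}(dx_2,-dx_1,c_3)^\top$ with $|{\rm e}^{{\rm i}\chi y}|=1$, a direct computation of its (ordinary) gradient on $\omega\times Y$ shows that $\|\vect u_{\rm stretch}\|_{H^1}\le C(1+|\chi|)\max\{|d|,|c_3|\}$, which by the coefficient bound of Proposition~\ref{leadingordertermproposition} and $|\chi|\le\pi$ is at most $C|\chi|^{-1}\|\simgrad\vect u\|_{L^2}$. Adding the two contributions and using $|\chi|\le\pi$ once more yields the asserted inequality.

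For the bending case, take $\vect u\in H^1_\chi(Y;H^1(\omega;\C^3))\cap L^2_{\rm bend}$ and run the same orthogonality argument with the roles of the two subspaces interchanged, obtaining $\|\vect u-\vect u_{\rm bend}\|_{H^1}\le C\|\simgrad\vect u\|_{L^2}$, hence $\|u_i-(\vect u_{\rm bend})_i\|_{H^1(\omega\times Y;\C)}\le C\|\simgrad\vect u\|_{L^2}$ for $i=1,2,3$. Writing $\vect u_{\rm bend}={\rm e}^{{\rm i}\chi y}\bigl(c_1,c_2,-{\rm i}\chi(c_1x_1+c_2x_2)\bigr)^\top$ and using \eqref{coordinatesymmetries}, one checks that $\|(\vect u_{\rm bend})_\alpha\|_{H^1}\le C\max\{|c_1|,|c_2|\}$ for $\alpha=1,2$ and $\|(\vect u_{\rm bend})_3\|_{H^1}\le C|\chi|\max\{|c_1|,|c_2|\}$. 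Inserting the bound $\max\{|c_1|,|c_2|\}\le C\chi^{-2}\|\simgrad\vect u\|_{L^2}$ from Proposition~\ref{leadingordertermproposition} and using $|\chi|\le\pi$ gives the three stated estimates for $u_1,u_2$ and $u_3$, respectively.

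I do not expect a genuine obstacle: the proposition is in essence a repackaging of Proposition~\ref{leadingordertermproposition}, and the only point needing care is the use of the $H^1_\chi$-orthogonality of $L^2_{\rm bend}$ and $L^2_{\rm stretch}$, which allows one to discard from the approximation error $\vect u-{\rm rod}(\vect u)$ the component lying in the invariant subspace \emph{opposite} to the one containing $\vect u$; this is precisely what upgrades the generic estimate $\|\vect u\|_{L^2}\le C\chi^{-2}\|\simgrad\vect u\|_{L^2}$ of Proposition~\ref{L2estim} to the sharper powers of $|\chi|$ recorded here. One should also keep in mind that the norm on the left-hand sides is the full $H^1$ norm rather than the energy seminorm, so Korn's inequality genuinely enters through Proposition~\ref{leadingordertermproposition}, while passing from $L^2$ to $H^1$ for the explicit fields $\vect u_{\rm bend}$ and $\vect u_{\rm stretch}$ only costs the bounded factor $1+|\chi|\le 1+\pi$.
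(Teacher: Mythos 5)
Your proof is correct and takes the same route the paper sketches: you combine the $H^1$-orthogonality of $H^1_\chi\cap L^2_{\rm bend}$ and $H^1_\chi\cap L^2_{\rm stretch}$ (stated just before the proposition in the paper) with Proposition \ref{leadingordertermproposition} to discard the component of ${\rm rod}(\vect u)$ lying in the opposite invariant subspace, and then bound the finite-dimensional term $\vect u_{\rm stretch}$ or $\vect u_{\rm bend}$ in $H^1$ directly from the coefficient estimates. The paper gives only a one-line indication of the argument; your write-up supplies exactly the details it leaves implicit, with no gaps.
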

\begin{proof}
The proof is based on the fact that the spaces   $H^1_{\chi}(Y;H^1(\omega;\C^3))\cap L^2_{\rm stretch}$ and $H^1_{\chi}(Y;H^1(\omega;\C^3))\cap L^2_{\rm bend}$ are orthogonal with respect to the $H^1(\omega \times Y;\C^3)$ inner product.  
\end{proof}
\begin{remark}
This heterogeneity in  component-wise  estimates allows the scaling of the third component of the force terms in the bending case. 
\end{remark}

\section{Asymptotic analysis of  scaled resolvents of $\mathcal{A}_{\chi}$ }
\label{section4}

The purpose of this section is to establish estimates on the difference between the solutions to the resolvent problems for ${\mathcal A}_\chi$ and the resolvent problem for
an appropriate %($\chi$-dependent)
 effective (``homogenised") operator introduced in Section \ref{hom_op_sec}.  
%%its leading-order term with respect to $\chi,$ i.e. 
%%the solution to the homogenised problem. 
The norm of the difference is estimated explicitly in terms of $|\chi|$ and the norm of the force density. 
%%These, in turn, lead to estimates  the resolvent operators in the operator norm topology. 
In order to achieve this, we perform an asymptotic expansion of the solution to the resolvent problem for ${\mathcal A}_\chi,$ with its leading-order term provided by the solution to the effective resolvent problem. 
%%Before proceeding to the asymptotic analysis, in the present section we define some key matrices describing the homogenised material properties. 
Before doing so, we introduce some auxiliary objects  describing the homogenised material properties. 

\subsection{Auxiliary notation and definitions}
\label{aux_obj_sec}

First, for each $\chi\in[-\pi,\pi),$ %%{\color{red}fix} ${\vect m}= (m_1, m_2, m_3, m_4)\in\C^4$ {$\chi$???????} and 
consider the following embedding operators:
\begin{equation}
    \begin{aligned}
        \widetilde{\mathcal{I}}^{\rm bend}_\chi&:\C^2 \to L^2_{\rm bend}(\omega\times Y;\C^3), \quad \widetilde{\mathcal{I}}^{\rm bend}_\chi \begin{bmatrix} m_1 \\[0.15em] m_2\end{bmatrix} = \begin{bmatrix}m_1 \\[0.3em] m_2 \\[0.15em] -{\rm i}\chi (x_1 m_1 + x_2 m_2)\end{bmatrix}, \\[0.7em]
        \widetilde{\mathcal{I}}^{\rm stretch}&: \C^2\to L^2_{\rm stretch}(\omega\times Y;\C^3), \quad   \widetilde{\mathcal{I}}^{\rm stretch}\begin{bmatrix}m_3 \\[0.15em] m_4\end{bmatrix} = \begin{bmatrix}x_2 m_3 \\[0.15em] -x_1 m_3 \\[0.15em]   m_4\end{bmatrix}, \\[0.7em]
        \widetilde{\mathcal{I}}^{\rm rod}_\chi&:\C^4 \to L^2(\omega \times Y;\C^3), \quad 
        \widetilde{\mathcal{I}}^{\rm rod}_\chi{\vect m} = \widetilde{\mathcal{I}}^{\rm bend}_\chi \begin{bmatrix} m_1 \\[0.15em] m_2\end{bmatrix} + \widetilde{\mathcal{I}}^{\rm stretch}\begin{bmatrix}m_3 \\[0.15em] m_4\end{bmatrix},\quad {\vect m}= \CCC (m_1, m_2, m_3, m_4)^\top .
    \end{aligned}
\end{equation}
These operators serve as a link between the appropriate Euclidean spaces and the finite-dimensional subspaces of rod displacement approximations. 

Second, we define the complex force-and-momentum operators, as follows: for each $\chi \in [-\pi,\pi),$ $\vect f \in L^2(\omega\times Y;\C^3)$ we set
\begin{equation}
\label{forcemomentumcomplex}
%%\begin{split}
\begin{aligned}
        \widetilde{\mathcal{M}}_\chi^{\rm bend}{\vect f} 
        %%\begin{bmatrix} {f_1}\\[0.2em] {f_2} \\[0.2em] %%{f_3} \end{bmatrix}
        &:= \int_{\omega\times Y}(\,\widehat{\!\vect f}+{\rm i}\chi f_3\widehat{x})
        %%\begin{bmatrix} {f_1} + {\rm i}\chi {f_3} x_1\\[0.25em]
        %%	 {f_2} + {\rm i}\chi {f_3} x_2\end{bmatrix} 
        \in \C^2, 
        \qquad 
        \widetilde{\mathcal{M}}^{\rm stretch}{\vect f} 
         %%\begin{bmatrix} {f_1}\\[0.2em] {f_2} \\[0.2em] {f_3} \end{bmatrix}
         := \int_{\omega\times Y}\begin{bmatrix} x_2 {f_1} - x_1 {f_2} \\[0.25em] {f_3}  \end{bmatrix}\in \C^2, \\[0.8em]
        \widetilde{\mathcal{M}}_\chi^{\rm rod}{\vect f}
         %%\begin{bmatrix} {f_1}\\[0.2em] {f_2} \\[0.2em] {f_3} \end{bmatrix}
         &:= \begin{bmatrix}
           \widetilde{\mathcal{M}}_\chi^{\rm bend} \vect f \\[0.3em] \widetilde{\mathcal{M}}^{\rm stretch} \vect f
        \end{bmatrix} = \int_{\omega\times Y}\begin{bmatrix}
           %%{f_1} + {\rm i}\chi {f_3} x_1\\[0.25em] {f_2} + {\rm i}\chi {f_3} x_2 
           \,\widehat{\!\vect f}+{\rm i}\chi f_3\widehat{x}
           \\[0.25em]  x_2 {f_1} - x_1 {f_2} \\[0.1em] {f_3}
        \end{bmatrix}\in \C^4.
%%\end{split}
\end{aligned}
\end{equation}
These momentum operators satisfy the following estimates: 
\begin{align*}
%%\begin{split}
     \left\Vert \widetilde{\mathcal{M}}_\chi^{\rm bend} \vect f   \right\Vert
     %%&
     \leq \bigl\Vert\,\widehat{\!\vect f} \bigr\Vert_{L^2(\omega \times Y; {\mathbb C}^2)}
     %%\left\Vert {f_1} \right\Vert_{L^2(\omega \times Y)} + \left\Vert {f_2} \right\Vert_{L^2(\omega \times Y)} 
     + |\chi|\left\Vert {f_3} \right\Vert_{L^2(\omega \times Y)},\qquad
     %%\\[0.3em]
     \left\Vert \widetilde{\mathcal{M}}^{\rm stretch} \vect f\right\Vert
     %%&
     \leq \left\Vert \vect f \right\Vert_{L^2(\omega \times Y;\C^3)}, \qquad \left\Vert \widetilde{\mathcal{M}}_\chi^{\rm rod} \vect f   \right\Vert \leq \left\Vert \vect f \right\Vert_{L^2(\omega \times Y;\C^3)}.
%%\end{split}
\end{align*}
Furthermore, 
\begin{equation}
    \begin{split}
        \mathfrak{G}_\varepsilon^{-1} \widetilde{\mathcal{M}}_\chi^{\rm bend} \mathfrak{G}_\varepsilon \vect f& = \mathfrak{G}_\varepsilon^{-1} \int_{\omega\times Y}\bigl(
        \mathfrak{G}_\varepsilon\,\widehat{\!\vect f}+{\rm i}\chi \mathfrak{G}_\varepsilon({f_3}\widehat{x})\bigr)
        %%\begin{bmatrix} \mathfrak{G}_\varepsilon{f_1} + {\rm i}\chi \mathfrak{G}_\varepsilon{f_3} x_1\\[0.4em] 
        %%	\mathfrak{G}_\varepsilon{f_2} + {\rm i}\chi \mathfrak{G}_\varepsilon{f_3} x_2 \\ \end{bmatrix} 
        = \int_\omega \left\{\mathfrak{G}_\varepsilon^{-1} \int_Y \mathfrak{G}_\varepsilon\,\widehat{\!\vect f}+
        \mathfrak{G}_\varepsilon^{-1}
        \biggl({\rm i}\chi \int_Y \mathfrak{G}_\varepsilon{f_3}\biggr)\widehat{x}
         %%\begin{bmatrix}\mathfrak{G}_\varepsilon^{-1} \int_Y %%\mathfrak{G}_\varepsilon{f_1} + %%x_1\mathfrak{G}_\varepsilon^{-1}\left( {\rm i}\chi %%\int_Y \mathfrak{G}_\varepsilon{f_3} %%\right)\\[0.5em] %%\mathfrak{G}_\varepsilon^{-1}\int_Y\mathfrak{G}_\vare%%psilon {f_2} + x_2 %%\mathfrak{G}_\varepsilon^{-1}\left({\rm i}\chi %%\int_Y\mathfrak{G}_\varepsilon{f_3} \right)  \\ %%\end{bmatrix} 
         \right\}\\[0.8em]
        &= \int_\omega\left\{\mathfrak{G}_\varepsilon^{-1} \int_Y \mathfrak{G}_\varepsilon\,\widehat{\!\vect f}- \varepsilon\biggl(\dfrac{d}{dx_3}\mathfrak{G}_\varepsilon^{-1} \int_Y \mathfrak{G}_\varepsilon{f_3}\biggr)\widehat{x} %%%\begin{bmatrix}\mathfrak{G}_\varepsilon^{-1} \int_Y \mathfrak{G}_\varepsilon{f_1} - \varepsilon x_1 \dfrac{d}{dx_3}\mathfrak{G}_\varepsilon^{-1} \int_Y \mathfrak{G}_\varepsilon{f_3} \\[0.8em] \mathfrak{G}_\varepsilon^{-1}\int_Y\mathfrak{G}_\varepsilon {f_2} - \varepsilon x_2 {\color{red}\dfrac{d}{dx_3}\mathfrak{G}_\varepsilon^{-1} \int_Y\mathfrak{G}_\varepsilon{f_3}}   \\ \end{bmatrix} 
        \right\}=\mathcal{M}_\varepsilon^{\rm bend} \Xi_\varepsilon \vect f.
    \end{split}
\end{equation}
Similarly, one has 
\begin{equation}
    \mathfrak{G}_\varepsilon^{-1} \widetilde{\mathcal{M}}^{\rm stretch} \mathfrak{G}_\varepsilon \vect f=\mathcal{M}^{\rm stretch} \Xi_\varepsilon \vect f, \qquad \mathfrak{G}_\varepsilon^{-1} \widetilde{\mathcal{M}}_\chi^{\rm rod} \mathfrak{G}_\varepsilon \vect f = \mathcal{M}_\varepsilon^{\rm rod} \Xi_\varepsilon \vect f.
\end{equation}

In addition to ${\mathfrak C}^{\rm stretch}$ and ${\mathfrak C}^{\rm rod}$ defined in Section \ref{main_results_sec}, for each $\chi\in[-\pi,\pi)$ we introduce the following matrices, which contain further information about the rod  cross-section and appear in calculations:
\begin{equation}
	\label{cstretchrodbend}
	\begin{aligned}
		\mathfrak{C}^{\rm bend}_{\chi}(\omega)&:= 
		\begin{bmatrix}
			1 + {\chi^2}{\mathfrak c}_1(\omega) & 0 \\[0.3em]
			0 & 1 + {\chi^2}{\mathfrak c}_2(\omega)
		\end{bmatrix}, \\[0.9em]
		\mathfrak{C}^{\rm rod}_{\chi}(\omega)&:= \begin{bmatrix}
			\mathfrak{C}^{\rm bend}_{\chi}(\omega) & 0 \\[0.3em]
			0 & \mathfrak{C}^{\rm stretch}(\omega)
		\end{bmatrix} = 
		\begin{bmatrix}
			1 + {\chi^2}{\mathfrak c}_1(\omega) & 0 & 0 & 0 \\[0.3em]
			0 & 1 + {\chi^2}{\mathfrak c}_2(\omega) & 0 & 0 \\[0.3em]
			0 & 0 & {\mathfrak c}_1(\omega)+{\mathfrak c}_2(\omega) & 0 \\[0.3em]
			0 & 0 & 0 & 1
		\end{bmatrix}.
	\end{aligned}
\end{equation}
%%For convenience, we also denote $\mathfrak{C}^{\rm rod}_{0}(\omega)=:\mathfrak{C}^{\rm rod}(\omega).$ 
As was mentioned just above Theorem \ref{THML2L2}, we shall normally drop `$(\omega)$' in the notation.
%, see e.g. \eqref{stretchlimitequation}.

Note that for every ${\vect m} = (m_1,m_2,m_3,m_4)^\top$, ${\vect d} = (d_1,d_2,d_3,d_4)^\top \in \C^4$, one has
\begin{equation}
    \int_{\omega\times Y}
    \widetilde{\mathcal{I}}_\chi^{\rm bend}
\begin{bmatrix}
    m_1 \\ m_2 
\end{bmatrix}
\cdot 
\overline{\widetilde{\mathcal{I}}_\chi^{\rm bend}\begin{bmatrix}
    d_1 \\ d_2 
\end{bmatrix}} = \mathfrak{C}^{\rm bend}_{\chi} \begin{bmatrix}
    m_1 \\ m_2 
\end{bmatrix} \cdot \overline{\begin{bmatrix}
    d_1 \\ d_2 
\end{bmatrix} },
\end{equation}
\begin{equation}
\int_{\omega\times Y} \widetilde{\mathcal{I}}^{\rm stretch}
 \begin{bmatrix}
    m_3   \\
    m_4 
\end{bmatrix} \cdot 
\overline{\widetilde{\mathcal{I}}^{\rm stretch}\begin{bmatrix}
    d_3   \\
    d_4 
\end{bmatrix} } =\mathfrak{C}^{\rm stretch}\begin{bmatrix}
    m_3   \\
    m_4 
\end{bmatrix} \cdot 
\overline{\begin{bmatrix}
    d_3   \\
    d_4 
\end{bmatrix}},
\end{equation}
\begin{equation}
  \int_{\omega\times Y}\widetilde{\mathcal{I}}_\chi^{\rm rod}{\vect m} \cdot \overline{\widetilde{\mathcal{I}}_\chi^{\rm rod}{\vect d}} = \mathfrak{C}^{\rm rod}_{\chi}{\vect m}\cdot \overline{\vect d}.
\end{equation}
Furthermore, for all $\vect f \in L^2(\omega \times Y;\C^3)$, $\vect d = (d_1,d_2,d_3,d_4)^\top \in \C^4$, one has
\begin{equation}
\begin{aligned}
    \int_{\omega\times Y}
\vect f 
\cdot 
\overline{\widetilde{\mathcal{I}}_\chi^{\rm bend} \begin{bmatrix}
    d_1   \\[0.2em]
    d_2 
\end{bmatrix}}   = \widetilde{\mathcal{M}}_\chi^{\rm bend} \vect f \cdot \overline{\begin{bmatrix}
    d_1   \\[0.2em]
    d_2 
\end{bmatrix}},&\quad\quad
%\end{equation}
%\begin{equation}
\int_{\omega\times Y} 
\vect f \cdot 
\overline{\widetilde{\mathcal{I}}^{\rm stretch}\begin{bmatrix}
    d_3   \\[0.2em]
    d_4 
\end{bmatrix}} =  \widetilde{\mathcal{M}}^{\rm stretch} \vect f \cdot \overline{\begin{bmatrix}
    d_3   \\[0.2em]
    d_4 
\end{bmatrix}},\\[0.9em]
%\end{equation}
%\begin{equation}
  \int_{\omega\times Y}\vect f \cdot \overline{\widetilde{\mathcal{I}}_\chi^{\rm rod}{\vect d}} &= \widetilde{\mathcal{M}}_\chi^{\rm rod} \vect f\cdot \overline{\vect d}.
\end{aligned}
\label{f_id}
\end{equation}
%
%%%%{\color{red}Note that the identities \eqref{f_id} can be expressed as the following dualities:
%%%%\begin{align*}
%\begin{split}
%%%%    \left\langle \widetilde{\mathcal{I}}^{\rm bend}_\chi\begin{bmatrix} m_1 \\  m_2\end{bmatrix},\vect f\right\rangle_{L^2} &= \left\langle \begin{bmatrix}m_1 \\ m_2\end{bmatrix},\mathcal{M}^{\rm bend}_\chi\vect f \right\rangle_{\C^2}, \quad  \left\langle \widetilde{\mathcal{I}}^{\rm stretch}\begin{bmatrix} m_3 \\  m_4\end{bmatrix},\vect f\right\rangle_{L^2}  = \left\langle \begin{bmatrix}m_1 \\ m_2\end{bmatrix},\mathcal{M}^{\rm stretch} \vect f \right\rangle_{\C^2}, \\[0.8em]
%%%%    \left\langle \widetilde{\mathcal{I}}^{\rm rod}_\chi m,\vect f\right\rangle_{L^2}  = & \left\langle m,\mathcal{M}^{\rm rod}_\chi \vect f \right\rangle_{\C^4}.
%\end{split}
%%%%\end{align*}}
The identities \eqref{f_id} can be equivalently expressed as follows:
\begin{equation}
    \begin{split}
        \widetilde{\mathcal{I}}^{\rm stretch}= (\widetilde{\mathcal{M}}^{\rm stretch})^*, \qquad & \widetilde{\mathcal{I}}^{\rm bend}_\chi= (\widetilde{\mathcal{M}}^{\rm bend}_\chi)^*, \qquad
        \widetilde{\mathcal{I}}^{\rm rod}_\chi= (\widetilde{\mathcal{M}}^{\rm rod}_\chi)^*.
    \end{split}
\end{equation}

Finally, we introduce the following matrices defined for each $\chi\in[-\pi,\pi),$ ${\vect m}= (m_1,m_2,m_3,m_4) \in \R^4,$ $\widehat{x}=(x_1, x_2)\in \omega$ and contain information on the symmetrized gradient of  Bernoulli-Navier  deformations (cf. \eqref{I_bend_stretch}):
%\begin{equation}
\begin{align}
    \Lambda_{\chi,m_3,m_4}^{\rm stretch}(\widehat{x})&:={\rm i}\chi\mathcal{J}_{m_3,m_4}^{\rm stretch}(\widehat{x})={\rm i}\chi\begin{bmatrix}
     0 & 0 & \dfrac{x_2 m_3}{2} \\[0.8em]
     0 & 0 & \dfrac{-x_1 m_3}{2} \\[0.8em]
     \dfrac{x_2 m_3}{2} & \dfrac{-x_1 m_3}{2} &  m_4
\end{bmatrix}=\simgrad \begin{bmatrix}
m_3 x_2   \\
-m_3 x_1 \\
m_4
\end{bmatrix} + {\rm i}X_{\chi}\begin{bmatrix}
m_3 x_2   \\
-m_3 x_1 \\
m_4
\end{bmatrix},\label{lam_bend}
\\[0.9em]
\Lambda_{\chi,m_1,m_2}^{\rm bend}(\widehat{x})&:= ({\rm i}\chi)^2 \mathcal{J}_{m_1,m_2}^{\rm bend}(\widehat{x})= ({\rm i}\chi)^2 \begin{bmatrix}
     0 & 0 & 0 \\
     0 & 0 & 0 \\
     0 & 0 & -m_1 x_1 - m_2 x_2
\end{bmatrix}\\[0.9em]
&\,=\simgrad  \begin{bmatrix}
m_1   \\
m_2 \\
-{\rm i}\chi (m_1 x_1 + m_2 x_2)
\end{bmatrix}
+ {\rm i}X_{\chi}\begin{bmatrix}
m_1   \\
m_2 \\
-{\rm i}\chi (m_1 x_1 + m_2 x_2)
\end{bmatrix},\label{lam_stretch}
\\[0.9em]
    \Lambda_{\chi,{\vect m}}^{\rm rod}(\widehat{x})&:= \Lambda_{\chi,m_1,m_2}^{\rm bend}(\widehat{x}) + \Lambda_{\chi,m_3,m_4}^{\rm stretch}(\widehat{x}).\nonumber
\end{align}
In what follows, we make use of the following estimates,  which can be obtained easily with some $C_1,$ $C_2>0$: 
\begin{eqnarray*}
C_1{\chi^2}\bigl|(m_1,m_2)^\top\bigr| \leq \bigl\Vert\Lambda_{\chi,m_1,m_2}^{\rm bend}\bigr\Vert_{L^2(\omega \times Y;\C^{3\times 3})} \leq C_2{\chi^2} \bigl|(m_1,m_2)^\top\bigr|, \\[0.4em]
C_1|\chi|\bigl|(m_3,m_4)^\top\bigr| \leq \bigl\Vert\Lambda_{\chi,m_3,m_4}^{\rm stretch}\bigr\Vert_{L^2(\omega \times Y;\C^{3 \times 3})} \leq C_2|\chi| \bigl|(m_3,m_4)^\top\bigr|,
\end{eqnarray*}

\subsection{Homogenised tensors in $\chi$-representation}

Now, consider the problem of finding $\vect u\in H_\#^1(Y;H^1(\omega;\C^3))$ such that
\begin{equation}
\label{perprob}
\int_{\omega\times Y}{\mathbb A}(y)\simgrad \vect u(\widehat{x},y): \overline{\simgrad \vect v(\widehat{x},y)}d\widehat{x}dy  = \int_{\omega\times Y} \vect f(\widehat{x},y)\cdot\overline{\vect v(\widehat{x},y)}\,d\widehat{x}dy\qquad \forall \vect v 
\in H_\#^1\bigl(Y;H^1(\omega;\C^3)\bigr).
\end{equation}
In order for this problem to be well posed, the right-hand side $\vect f$ must be orthogonal to the kernel of the operator $\simgrad$. The kernel of this operator consists of infinitesimal rigid motions, but since we are dealing with functions periodic in $y$ we have
\begin{equation}
\label{Hker}
H_\#^1\bigl(Y;H^1(\omega;\C^3)\bigr) \cap \ker(\simgrad) = \left\{ \begin{bmatrix} dx_2 + c_1\\[0.25em] -d x_1 + c_2 \\[0.25em] c_3 \end{bmatrix}, c_1,c_2,c_3,d \in \C \right\}< H_\#^1\bigl(Y;H^1(\omega;\C^3)\bigr). 
\end{equation}
In order to address the solvability of \eqref{perprob}, we consider the closed subspace of $H_\#^1\bigl(Y;H^1(\omega;\C^3)$ defined as the orthogonal complement of the kernel \eqref{Hker}  with the respect to the $L^2(\omega\times Y)$ inner product:
\begin{align*}
H:=
%&
\left[H_\#^1\bigl(Y;H^1(\omega;\C^3)\bigr) \cap \ker(\simgrad) \right]^\perp  
%\\ 
%=&  
=\left\{ \vect u \in H_\#^1\bigl(Y;H^1(\omega;\C^3)\bigr): \int_{\omega \times Y}\vect u = 0,\ \  \int_{\omega \times Y} (x_2u_1 - x_1u_2) = 0 \right\}, 
\end{align*}
where the the orthogonal complement is understood in the sense of the $L^2$ inner product. Note 
%%that $H$ is closed subspace of a Hilbert space $H_\#^1(Y;H^1(\omega;\C^3))$ and 
that the Korn and Poincar\'{e} inequalities hold on $H,$ and for $\vect f\in H$ the Lax-Milgram theorem yields the existence of a unique solution $\vect u\in H$ to the problem \eqref{perprob}, where test functions taken in the space $H$. Since $\vect f\in H$, this is equivalent to allowing arbitrary test functions $\vect v\in H_\#^1(Y;H^1(\omega;\C^3))$. 

The above argument allows us to consider the following 
well-posed problem: find $\vect u_{\chi,{\vect m}} \in H$ such that
\begin{equation*}
\int_{\omega \times Y} {\mathbb A}(y) \left(\simgrad \vect u_{\chi,{\vect m}}(\widehat{x},y) + \Lambda_{\chi,{\vect m}}^{\rm rod}(\widehat{x}) \right):\overline{\simgrad \vect v(\widehat{x},y)}\,d\widehat{x}dy = 0
\qquad \forall \vect v\in H.
\end{equation*}
It has a unique solution and can be equivalently rewritten by using the adjoint of the operator $\simgrad$, namely 
$$
(\simgrad)^*: L_\#^2\bigl(Y;H^1(\omega;\C^3)\bigr) \to H, $$ as follows:
\begin{equation}
\label{achirodcorrector}
(\simgrad)^* {\mathbb A}(y) \simgrad \vect u_{\chi,{\vect m}}(\widehat{x},y)= -(\simgrad)^* {\mathbb A}(y) \Lambda_{\chi,{\vect m}}^{\rm rod}(\widehat{x}).
%%, \quad \vect u_{\chi,{\vect m}} \in H.
\end{equation}
Next, we define the \CCC Hermitian  matrix ${\mathbb A}_\chi^{\rm rod}  \in \C^{4\times4}$ via
\begin{equation}
{\mathbb A}_\chi^{\rm rod}{\vect m}\cdot \overline{\vect d} = \int_{\omega \times Y} {\mathbb A}(y)\left( \simgrad \vect u_{\chi,{\vect m}}(\widehat{x},y) + \Lambda_{\chi,{\vect m}}^{\rm rod}(\widehat{x}) \right) :\overline{\Lambda_{\chi,{\vect d}}^{\rm rod}(\widehat{x})}\,d\widehat{x}dy,\qquad {\vect m}, {\vect d}\in{\mathbb C}^4.
\label{arodchi}
\end{equation} 
\CCC 
Using the linearity in $\chi$ and the uniqueness of the solution to \eqref{achirodcorrector}, we infer that
\begin{eqnarray*}
 {\mathbb A}_\chi^{\rm rod}{\vect m}\cdot \overline{\vect d}&=& {\mathbb A}^{\rm rod}\left(({\rm i}\chi)^2(m_1,m_2,0,0)^\top+{\rm i}\chi(0,0,m_3,m_4)^\top\right) \cdot \overline{\left(({\rm i}\chi)^2(d_1,d_2,0,0)^\top+{\rm i}\chi(0,0,d_3,d_4)^\top\right)}, \\ & & \qquad \qquad \vect m,\vect d \in \C^4,        
\end{eqnarray*} 
where $\mathbb{A}^{\rm rod}$ is defined in Proposition \ref{proposition21}.

%%%%Similarly to the notation introduced in Section \ref{hom_op_sec}, we define Hermitian matrices 
%%%% ${\mathbb A}_{\chi}^{\rm bend},$ $ {\mathbb A}_{\chi}^{\rm stretch}\in \C^{2 \times 2}$ by the identities
%%%%\begin{equation}
%%%%\begin{split}
%%%%&{\mathbb A}_{\chi}^{\rm bend} (m_1,m_2)^\top \cdot \overline{(d_1,d_2)^\top}:={\mathbb A}_{\chi}^{\rm rod} (m_1,m_2,0,0)^\top \cdot\overline{ (d_1,d_2,0,0)^\top}=\chi^4{\mathbb A}^{\rm bend} (m_1,m_2)^\top \cdot \overline{(d_1,d_2)^\top},
%%%%\\[0.35em]
%%%%&{\mathbb A}^{\rm stretch}_{\chi} (m_3,m_4)^\top \cdot \overline{(d_3,d_4)^\top}:={\mathbb A}^{\rm rod}_{\chi} (0,0,m_3,m_4)^\top \cdot \overline{(0,0,d_3,d_4)^\top}=\chi^2{\mathbb A}^{\rm stretch} (m_3,m_4)^\top \cdot \overline{(d_3,d_4)^\top},\\[0.35em] &\hspace{+50ex} \forall (m_1, m_2)^\top,(m_3, m_4)^\top,(d_1, d_2)^\top, (d_3, d_4)^\top \in{\mathbb C}^2,
%%%%\end{split}
%%%%\end{equation}
%%%%where ${\mathbb A}^{\rm bend}$, ${\mathbb A}^{\rm stretch}$ are defined by \eqref{gotovo2}.

\CCC In the case when Assumption \ref{matsym} is satisfied  we analyse the problem \eqref{perprob} separately on two invariant subspaces, namely (cf. Section \ref{sub_invar})
\begin{align*}
H^{\rm stretch} &:= \left[H_\#^1\bigl(Y;H^1(\omega;\C^3)\bigr) \cap \ker(\simgrad)\cap L^2_{\rm stretch} \right]^\perp  \\[0.3em] 
&\hspace{8em}=  \left\{ \vect u \in H_\#^1\bigl(Y;H^1(\omega;\C^3)\bigr)\cap L^2_{\rm stretch}: \int_{\omega \times Y} {u_3}  = 0, \int_{\omega \times Y}(x_1u_2 - x_2u_1)= 0 \right\}, \\[0.3em]
H^{\rm bend} &:=\left[H_\#^1\bigl(Y;H^1(\omega;\C^3)\bigr) \cap \ker(\simgrad) \cap L^2_{\rm bend}\right]^\perp  \\[0.3em]
&\hspace{8em}=\left\{\vect u \in H_\#^1\bigl(Y;H^1(\omega;\C^3)\bigr)\cap L^2_{\rm bend}: \int_{\omega \times Y} u_1 = 0, \int_{\omega \times Y}u_2 = 0\right\},
\end{align*}
where, as in the case of the definition of the space $H,$ the orthogonal complements are understood in the sense of the corresponding $L^2$ inner products. Note that
\begin{align*}
	H_\#^1\bigl(Y;H^1(\omega;\C^3)\bigr) \cap \ker(\simgrad) \cap L^2_{\rm stretch}&= \left\{ \begin{bmatrix} dx_2 \\ -d x_1  \\ c_3 \end{bmatrix}, c_3,d \in \C \right\} < H_\#^1(Y;H^1(\omega;\C^3)),\\[0.35em]
H_\#^1\bigl(Y;H^1(\omega;\C^3)\bigr) \cap \ker(\simgrad)\cap L^2_{\rm bend}&= \left\{ \begin{bmatrix}  c_1\\  c_2 \\ 0 \end{bmatrix}, c_1,c_2 \in \C \right\}< H_\#^1(Y;H^1(\omega;\C^3)).
\end{align*}
We consider the solutions $\vect u_{\chi,m_3, m_4}^{\rm stretch}\in H^{\rm stretch},$ $\vect u_{\chi,m_1, m_2}^{\rm bend}\in H^{\rm bend}$  to the problems
\begin{align} \label{nak61} 
	(\simgrad)^* {\mathbb A}(y) \simgrad \vect u_{\chi,m_3, m_4}^{\rm stretch}(\widehat{x}, y)&= -(\simgrad)^* {\mathbb A}(y) \Lambda_{\chi,m_3,m_4}^{\rm stretch}(\widehat{x}), \\[0.45em]
(\simgrad)^* {\mathbb A}(y) \simgrad \vect u_{\chi,m_1, m_2}^{\rm bend}(\widehat{x},y)&= -(\simgrad)^* {\mathbb A}(y) \Lambda_{\chi,m_1,m_2}^{\rm bend}(\widehat{x}),
%%, \quad \vect u_{\chi,m_1, m_2}^{\rm bend} \in H^{\rm bend}.
%%\quad \vect u_{\chi,m_3, m_4}^{\rm stretch} \in H^{\rm stretch}.
\end{align}
 which are clearly well posed, as a consequence of the fact that the range of $(\simgrad)^*$ is orthogonal to the kernel of $\simgrad$. 
Note that under Assumption \ref{matsym} on the cross-section geometry and material symmetries, each of the the above problems has a unique solution.  
%% in order to see that non-trivial unique solutions exist. 
%Note that, by invoking material symmetries and the proposition \ref{invariance}, functions $u_{\chi,m_1, m_2}^{\rm bend}$ and $u_{\chi,m_3, m_4}^{\rm stretch}$ belong to the spaces $L^2_{\rm bend}(\omega\times Y;\C^3)$ and $L^2_{\rm stretch}(\omega\times Y;\C^3)$, respectively. \\
Furthermore, \CCC we have 
\begin{align*}
	\chi^2{\mathbb A}_{\rm stretch}  (m_3,m_4)^{\top}\cdot \overline{(d_3,d_4)^{\top}}&= \int_{\omega \times Y} {\mathbb A}(y)\left( \simgrad \vect u_{\chi,m_3, m_4}^{\rm stretch}(\widehat{x},y) + \Lambda_{\chi,m_3,m_4}^{\rm stretch}(\widehat{x}) \right) :\overline{\Lambda_{\chi,d_3,d_4}^{\rm stretch}(\widehat{x})} d\widehat{x}dy,\\[0.3em]
	&\hspace{7em}\quad (m_1, m_2)^\top, (d_1, d_2)^\top\in{\mathbb C}^2,\\[0.4em]
\chi^4{\mathbb A}^{\rm bend}  (m_1,m_2)^{\top}\cdot \overline{(d_1,d_2)^{\top}}&= \int_{\omega \times Y} {\mathbb A}(y)\left( \simgrad \vect u_{\chi,m_1, m_2}^{\rm bend} (\widehat{x},y)+ \Lambda_{\chi,m_1,m_2}^{\rm bend}(\widehat{x}) \right) :\overline{\Lambda_{\chi,d_1,d_2}^{\rm bend}(\widehat{x})} d\widehat{x}dy,\\[0.3em]
&\hspace{7em}\quad (m_3, m_4)^\top, (d_3, d_4)^\top\in{\mathbb C}^2,
\end{align*}
where ${\mathbb A}^{\rm bend}$, ${\mathbb A}^{\rm stretch}$ are defined by \eqref{gotovo2}. \CCC Also, under Assumption \ref{matsym}   the following decomposition holds \CCC (cf. Section \ref{hom_op_sec}) :
\begin{align*}
{\mathbb A}_\chi^{\rm rod} {\vect m}\cdot \overline{\vect d} = \chi^4{\mathbb A}^{\rm bend}  (m_1,m_2)^{\top}\cdot \overline{(d_1,d_2)^{\top}}&+\chi^2{\mathbb A}^{\rm stretch}  (m_3,m_4)^{\top}\cdot \overline{(d_3,d_4)^{\top}},\\[0.3em]
\quad &\forall {\vect m}=(m_1, m_2, m_3, m_4)^\top,\ \ {\vect d}=(d_1, d_2, d_3, d_4)^\top\in{\mathbb C}^4.
\end{align*} 
\CCC The following statement is a direct consequence of Proposition \ref{proposition21}, see also Corollary \ref{coerc_corrol}.
\begin{proposition}
For the constant $\eta$ of Proposition \ref{proposition21} one has, for all ${\vect m}= (m_1,m_2,m_3,m_4)^\top \in \C^4,$
\begin{equation}
    \begin{aligned}
       & \eta \left({\chi^4}\bigl|(m_1,m_2)^\top\bigr|^2 + {\chi^2} \bigl|(m_3,m_4)^\top\bigr|^2\right) \leq  {\mathbb A}_\chi^{\rm rod}{\vect m}\cdot \overline{\vect m} \leq \eta^{-1}\left({\chi^4}\bigl|(m_1,m_2)^\top\bigr|^2 + {\chi^2} \bigl|(m_3,m_4)^\top\bigr|^2\right), \\[0.3em]
       &
        \eta\bigl|(m_3,m_4)^\top\bigr|^2 \leq {\mathbb A}^{\rm stretch}  (m_3,m_4)\cdot \overline{(m_3,m_4)^{\top}}
       \leq \eta^{-1}\bigl|(m_3,m_4)^\top\bigr|^2,\\[0.3em] 
       &
       \eta\bigl|(m_1,m_2)^\top\bigr|^2\leq {\mathbb A}^{\rm bend}  (m_1,m_2)\cdot \overline{(m_1,m_2)^{\top}} \leq \eta^{-1} \bigl|(m_1,m_2)^\top\bigr|^2.
\end{aligned}
\end{equation}
\end{proposition}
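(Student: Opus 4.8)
The plan is to obtain all three estimates as immediate corollaries of Proposition~\ref{proposition21} and Corollary~\ref{coerc_corrol}; the only work is the passage from real to complex arguments, together with a substitution of a $\chi$-scaled vector into the bounds already proved for ${\mathbb A}^{\rm rod}$.

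First I would record the elementary fact that if $B\in\R^{n\times n}$ is symmetric and $\alpha|m|^2\le Bm\cdot m\le\beta|m|^2$ for all $m\in\R^n$, then $\alpha|\zeta|^2\le B\zeta\cdot\overline{\zeta}\le\beta|\zeta|^2$ for all $\zeta\in\C^n$. Indeed, writing $\zeta=a+{\rm i}b$ with $a,b\in\R^n$, one has $B\zeta\cdot\overline{\zeta}=Ba\cdot a+Bb\cdot b+{\rm i}(Bb\cdot a-Ba\cdot b)=Ba\cdot a+Bb\cdot b$, the imaginary part vanishing by the symmetry of $B$, while $|\zeta|^2=|a|^2+|b|^2$; applying the real bounds to $a$ and to $b$ separately gives the claim. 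Applying this with $B={\mathbb A}^{\rm rod}$ yields the complex form of \eqref{coerc}, and with $B={\mathbb A}^{\rm bend}$ and $B={\mathbb A}^{\rm stretch}$ it yields the complex forms of the two estimates of Corollary~\ref{coerc_corrol}. The second and third inequalities of the proposition are precisely these complex forms, so nothing further is needed for them.

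For the first inequality I would invoke the identity recorded just above the statement, namely ${\mathbb A}_\chi^{\rm rod}\vect m\cdot\overline{\vect m}={\mathbb A}^{\rm rod}\vect\xi\cdot\overline{\vect\xi}$ with $\vect\xi=\vect\xi(\chi,\vect m):=(-{\chi^2} m_1,\,-{\chi^2} m_2,\,{\rm i}\chi m_3,\,{\rm i}\chi m_4)^\top$, which follows from the linearity in $\chi$ of the cell problem \eqref{achirodcorrector} and the uniqueness of its solution (and requires no recourse to Assumption~\ref{matsym}, since \eqref{coerc} holds in general). Since $|\vect\xi|^2={\chi^4}\bigl|(m_1,m_2)^\top\bigr|^2+{\chi^2}\bigl|(m_3,m_4)^\top\bigr|^2$, the complex form of \eqref{coerc} applied to $\vect\xi$ gives
\[
\eta\Bigl({\chi^4}\bigl|(m_1,m_2)^\top\bigr|^2+{\chi^2}\bigl|(m_3,m_4)^\top\bigr|^2\Bigr)\ \le\ {\mathbb A}_\chi^{\rm rod}\vect m\cdot\overline{\vect m}\ \le\ \eta^{-1}\Bigl({\chi^4}\bigl|(m_1,m_2)^\top\bigr|^2+{\chi^2}\bigl|(m_3,m_4)^\top\bigr|^2\Bigr),
\]
as required.

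I do not expect any genuine obstacle: the argument is bookkeeping. The only points meriting a moment's care are the real-to-complex extension of the quadratic-form inequalities — which rests on the symmetry of ${\mathbb A}^{\rm rod}$, ${\mathbb A}^{\rm bend}$, ${\mathbb A}^{\rm stretch}$ — and the correct tracking of the $(-{\chi^2},-{\chi^2},{\rm i}\chi,{\rm i}\chi)$ weights entering when one passes from ${\mathbb A}^{\rm rod}$ to ${\mathbb A}_\chi^{\rm rod}$; once these are in place the three estimates are immediate.
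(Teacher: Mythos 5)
Your proof is correct and follows the paper's intended route: the paper simply states that the proposition is ``a direct consequence of Proposition~\ref{proposition21}, see also Corollary~\ref{coerc_corrol},'' and what you have done is spell out the two bookkeeping steps—the real-to-complex extension of quadratic-form bounds for a real symmetric matrix, and the substitution of the weighted vector $\vect\xi=(({\rm i}\chi)^2 m_1,({\rm i}\chi)^2 m_2,{\rm i}\chi m_3,{\rm i}\chi m_4)^\top$ into \eqref{coerc} via the identity relating ${\mathbb A}_\chi^{\rm rod}$ to ${\mathbb A}^{\rm rod}$—that the paper leaves implicit.
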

%%%%It follows that each of the matrices  ${\mathbb A}_\chi^{\rm stretch} $ and ${\mathbb A}_\chi^{\rm bend} $ has two real eigenvalues, of the orders ${\chi^2}$ and ${\chi^4}$ respectively. 

Note that by applying the scaled Gelfand transform to the homogenised operators we get the following formulae:
\begin{equation}
%%\begin{split}
    \mathfrak{G}_\varepsilon\mathcal{A}^{\rm bend} \Xi_\varepsilon\mathfrak{G}_\varepsilon^{-1} \vect u = \frac{\chi^4}{\varepsilon^4}{\mathbb A}^{\rm bend} \int_Y \vect u,\qquad
    \mathfrak{G}_\varepsilon\mathcal{A}^{\rm stretch} \Xi_\varepsilon\mathfrak{G}_\varepsilon^{-1} \vect u = \frac{\chi^2}{\varepsilon^2}{\mathbb A}^{\rm stretch} \int_Y \vect u.
%%\end{split}
\end{equation}

%%%{\color{red}This is seen also by examining formulae:
%%%\begin{align}
%%%    {\color{red}P_{\C^2}}\mathfrak{G}_\varepsilon\mathcal{A}^{\rm bend} \mathfrak{G}_\varepsilon^{-1}P_{\C^2}  &=P_{\C^2}\mathfrak{G}_\varepsilon\left\{\frac{d^2}{dx_3^2}{\mathbb A}^{\rm bend}\frac{d^2}{dx_3^2}\right\}\mathfrak{G}_\varepsilon^{-1}P_{\C^2} = \frac{1}{\varepsilon^4}P_{\C^2}{\mathbb A}_{\chi}^{\rm bend}P_{\C^2}, \\[0.45em]
%%%    P_{\C^2}\mathfrak{G}_\varepsilon\mathcal{A}^{\rm stretch} \mathfrak{G}_\varepsilon^{-1} P_{\C^2} &=P_{\C^2}\mathfrak{G}_\varepsilon\left\{\frac{d}{dx_3}{\mathbb A}^{\rm stretch}\frac{d}{dx_3}\right\}\mathfrak{G}_\varepsilon^{-1} P_{\C^2}= \frac{1}{\varepsilon^2}P_{\C^2}{\mathbb A}_{\chi}^{\rm stretch}P_{\C^2}, \\[0.45em]
%%%     P_{\C^4}\mathfrak{G}_\varepsilon\mathcal{A}^{\rm rod} \mathfrak{G}_\varepsilon^{-1} P_{\C^4} & = \left(\frac{1}{\varepsilon^2}P_{\C^2}{\mathbb A}_{\chi}^{\rm bend}P_{\C^2},  \frac{1}{\varepsilon^2}P_{\C^2}{\mathbb A}_{\chi}^{\rm stretch} P_{\C^2}\right)^\top, {\color{red}\quad \mbox{(In the case of material symmetries)}.}
%%%\end{align}
%%%}

In what follows, we fist establish the asymptotics of the solutions in the stretching invariant subspace (Section \ref{stretching_sec}), as the related analysis is more straightforward. We then carry out the analysis in the bending invariant subspace (Section \ref{bending_sec}). 

Henceforth, we shall normally omit the independent variables of functions under integrals and in differential equations. In each instance these can be easily recovered from the context and the definitions of the auxiliary objects involved (such as those defined in Section \ref{aux_obj_sec}).  

\subsection{Asymptotics in the ``stretching" space}
\label{stretching_sec}

%%{ Mention that we start with the stretching case because it's easier to analyse.}

Here we provide estimates for the error in approximating the solution to the following resolvent equation posed in the space of stretching deformations: 
%\\ 
\iffalse
find $\vect u\in H^1_{\chi}(Y;H^1(\omega;\C^3))$  such that
\begin{equation}
\label{quasiperiodicstretch}
\frac{1}{{\chi^2}}\int_{\omega \times Y} \C \simgrad \vect u :\overline{\simgrad \vect v} + \int_{\omega \times Y} \vect u\overline{\vect v} = \int_{\omega \times Y} {\rm e}^{{\rm i}\chi y}\vect f \overline{\vect v}, \quad \vect v\in H^1_{\chi}(Y;H^1(\omega;\C^3))\cap L^2_{\rm stretch},
\end{equation}
where $\vect f\in L^2_{\rm stretch}$. As this problem is posed in the space of quasiperiodic functions, we can also reformulate it in the following way: \fi
find $\vect u\in H^1_{\#}(Y;H^1(\omega;\C^3))\cap L^2_{\rm stretch}$ such that
\begin{equation}
\label{stretchingproblem}
\begin{aligned}
\frac{1}{{\chi^2}}\int_{\omega \times Y} \mathbb{A}  (\simgrad \vect u + {\rm i}X_\chi \vect u):\overline{(\simgrad \vect v+ {\rm i}X_\chi \vect v)}&+ \int_{\omega \times Y} \vect u\cdot\overline{\vect v}\\[0.35em]
&= \int_{\omega \times Y} \vect f\cdot\overline{\vect v}\qquad \forall\vect v\in H^1_{\#}\bigl(Y;H^1(\omega;\C^3)\bigr)\cap L^2_{\rm stretch}.
\end{aligned}
\end{equation}
%%For all  
Using the adjoint of the expression $\simgrad + {\rm i}X_\chi$, the identity \eqref{stretchingproblem} can be formally written as follows:
%%{\color{red}rewritten} as finding $\vect u\in H^1_{\#}(Y;H^1(\omega;\C^3))\cap L^2_{\rm stretch}$
%%such that 
\begin{equation}
\chi^{-2}\left((\simgrad)^* +  \left({\rm i}X_\chi\right)^*\right){\mathbb A}\left(\simgrad + {\rm i}X_\chi\right) \vect u+ \vect u = \vect f.
\end{equation}
%Note that, in order for the problem above to have solution, the right hand side must satisfy 
%$$\int_Y f = 0. $$
%Then, the solution satisfies 
%$$u\in H_{\#}^1(Y; H^1(\omega;\C^3))\cap\left\{u, \int_Y u = 0 \right\}, $$
%
%%{ Say something about omitting the arguments of functions and conventions about integrals.}
As discussed in the next remark, the problem \eqref{stretchingproblem} is well posed. 

\begin{remark}
We test the above equation with the solution $\vect u$ and employ Proposition \ref{aprioriestimates} to obtain
\begin{align*}
%\begin{split}
    \chi^{-2}\bigl\Vert \left(\simgrad + {\rm i}X_\chi \right) \vect u \bigr\Vert_{L^2(\omega\times Y;\C^{3\times 3})}^2 + \Vert \vect u\Vert _{L^2(\omega\times Y, \C^3)}^2  &\leq  C\bigl\Vert  \vect f \bigr\Vert _{L^2(\omega\times Y, \C^3)}\Vert \vect u\Vert _{L^2(\omega\times Y, \C^3)} \\[0.35em]
& \leq C|\chi|^{-1}\bigl\Vert  \vect f \bigr\Vert _{L^2(\omega\times Y, \C^3)}\bigl\Vert \left(\simgrad + {\rm i}X_\chi \right)\vect u \bigr\Vert_{L^2(\omega\times Y;\C^{3\times 3})},
%\end{split}
\end{align*}
so that
\begin{equation}
\bigl\Vert\left(\simgrad + {\rm i}X_\chi \right) \vect u \bigr\Vert_{L^2(\omega\times Y;\C^{3\times 3})} \leq C|\chi| \bigl\Vert  \vect f \bigr\Vert _{L^2(\omega\times Y, \C^3)}.
\end{equation}
Again, by Proposition \ref{aprioriestimates} we deduce the following apriori estimate:
\begin{equation}
\label{aprioriestimatestetchingh1}
\begin{split}
\Vert \vect u\Vert _{H^1(\omega\times Y, \C^3)} \leq C\bigl\Vert  \vect f \bigr\Vert _{L^2(\omega\times Y, \C^3)} %\\
%\Vert u_1 - {\rm e}^{{\rm i} \chi y} d x_2\Vert _{H^1} \leq C|\chi| \Vert f\Vert _{L^2}, \\
%\Vert u_2 + {\rm e}^{{\rm i} \chi y} d x_1\Vert _{H^1} \leq C|\chi| \Vert f\Vert _{L^2}, \\
%\Vert u_3 - {\rm e}^{{\rm i} \chi y} c_3\Vert _{H^1} \leq C|\chi| \Vert f\Vert _{L^2}.
\end{split}
\end{equation}
\end{remark}

 The main result of this section is Proposition \ref{propbending}. Its proof follows by performing an asymptotic expansion in $\chi,$ which we will now present.

We want to construct a function that approximates the solution to \eqref{stretchingproblem}, up to an error of order ${\chi^2}$. This is achieved in several steps, by introducing appropriate corrector terms. The leading-order term of the approximation is given by a suitable infinitesimal stretching rigid motion, as described below. 
\subsubsection*{1) Leading-order term and first-order corrector}
Consider the solution $(m_3,m_4)^\top$ to the equation
\begin{equation}
\label{stretchlimitequation}
\bigl({\mathbb A}^{\rm stretch}  + \mathfrak{C}^{\rm stretch} \bigr) 
\begin{bmatrix}
    m_3 \\ m_4
\end{bmatrix}  = \widetilde{\mathcal{M}}^{\rm stretch} \vect f,
\end{equation}
 where $\mathfrak{C}^{\rm stretch}$ is given by \eqref{cstretchrodbend1}. 
By testing this equation against $(m_3,m_4)^\top$  and using the fact that $\mathfrak{C}^{\rm stretch}$ is positive-definite and  ${\mathbb A}_\chi^{\rm stretch}$ in non-negative,  it is clear that 
%%we have the estimate: 
\begin{equation}
\bigl|(m_3,m_4)^\top\bigr| \leq C \bigl\Vert  \vect f \bigr\Vert _{L^2(\omega \times Y;\C^3)},
\end{equation}
with $C>0$ independent of $\chi$ and ${\vect f}.$ Consider the function $\vect u_0:= \widetilde{\mathcal{I}}^{\rm stretch} \begin{bmatrix}
    m_3 \\ m_4
\end{bmatrix}$ and note that it can also be bounded, as follows:
\begin{equation}
\Vert \vect u_0\Vert _{H^1(\omega\times Y ; \C^3)} \leq C\bigl\Vert  \vect f \bigr\Vert _{L^2(\omega\times Y;\C^3)}.
\end{equation}  
Furthermore, define a "first-order corrector" ${\vect u_1} \in H^{\rm stretch}$ as the solution to 
%%the well-posed problem 
\begin{equation}
\label{stretchfirstcorrectoreqation}
(\simgrad)^* {\mathbb A}\simgrad{\vect u_1} = -(\simgrad)^* {\mathbb A} \Lambda_{\chi,m_3,m_4}^{\rm stretch}.
%%, \quad {\vect u_1} \in H^{\rm stretch}.
\end{equation}
 Notice that $\vect u_1$ was denoted by $\vect u^{\rm stretch}_{\chi,m_3,m_4} $ in \eqref{nak61} and was used to define $\mathbb{A}_{\chi}^{\rm stretch}$.  
Also, it is clear from the definition of ${\mathbb A}_\chi^{\rm stretch} $  and \eqref{stretchlimitequation}  that 
\begin{equation}
\label{firstorderterm}
\begin{aligned}
\frac{1}{{\chi^2}}\int_{\omega \times Y} {\mathbb A}\left( \simgrad {\vect u_1} + \Lambda_{\chi,m_3,m_4}^{\rm stretch} \right) :\overline{\Lambda_{\chi,d_3,d_4}^{\rm stretch}} 
&+ \int_{\omega\times Y} 
 \begin{bmatrix}
    m_3 x_2   \\
    -m_3 x_1 \\
     m_4
\end{bmatrix} \cdot 
\overline{\begin{bmatrix}
    d_3 x_2   \\
    -d_3 x_1 \\
     d_4
\end{bmatrix} } \\[0.6em]
&=\int_{\omega\times Y} 
 %%\begin{bmatrix}
 %%   f_1 \\
 %%   f_2 \\
 %%    f_3
%%\end{bmatrix} 
{\vect f}\cdot 
\overline{\begin{bmatrix}
    d_3 x_2   \\
    -d_3 x_1 \\
     d_4
\end{bmatrix}}\qquad \forall (d_3, d_4)^\top\in{\mathbb R}^2.
\end{aligned}
\end{equation}
The corrector term ${\vect u_1}$ belongs to the space $L^2_{\rm stretch}$ due to the structure of the elasticity tensor ${\mathbb A}$. Finally,  by testing \eqref{stretchfirstcorrectoreqation} with $\vect u_1$ we obtain   the existence of $C>0$ such that for all ${\vect f}\in L^2(\omega\times Y;\C^3),$ $\chi\in[-\pi,\pi)\setminus\{0\}$ one has
\begin{equation} \label{nakn1}
\Vert {\vect u_1}\Vert _{H^1(\omega\times Y ; \C^3)} \leq C|\chi|\bigl\Vert  \vect f \bigr\Vert _{L^2(\omega\times Y;\C^3)}.
\end{equation}

\iffalse
$$\widetilde{\vect f}_1 = \frac{-i}{{\chi^2}}\left(X_{\chi}^*{\mathbb A}(y)\simgrad {\vect u_1} + X_{\chi}^* {\mathbb A}(y) \Lambda_{\chi,m_3,m_4}^{\rm stretch}(\widehat{x}) \right) +\vect u_0 - \vect f, \quad \widetilde{\vect f}_1:=L^2(\omega\times Y;\C^3)\to \C.$$
\fi
\subsubsection*{2) Second-order corrector}

Define a functional $\widetilde{\vect f}_1:=L^2(\omega\times Y;\C^3)\to\C$ by the formula
	\begin{equation}
		\widetilde{\vect f}_1 = -\chi^{-2}\left({\rm i}X_\chi\right)^* {\mathbb A}\left(\simgrad {\vect u_1} + \Lambda_{\chi,m_3,m_4}^{\rm stretch} \right) -\vect u_0 + \vect f.
	\end{equation}
	It  follows from \eqref{firstorderterm}  that $\widetilde{\vect f}_1$ vanishes when tested  against  infinitesimal stretching rigid-body motions, which follows directly from \eqref{firstorderterm}.
In view of the equation \eqref{stretchingproblem}, we define a second-order corrector term ${\vect u_2}\in H^{\rm stretch}$ as the solution to the equation
%%with the following equation:
\begin{equation}
\label{stretchsecondcorrectorequation}
\begin{aligned}
%\begin{split}
\chi^{-2}(\simgrad)^* {\mathbb A}\simgrad {\vect u_2}
%\\[0.35em]
%&
&=-\chi^{-2}\left(\left({\rm i}X_\chi\right)^*{\mathbb A}\simgrad {\vect u_1} + (\simgrad)^* {\mathbb A}{\rm i}X_{\chi}{\vect u_1} + \left({\rm i}X_\chi\right)^*{\mathbb A} \Lambda_{\chi,m_3,m_4}^{\rm stretch}\right)
 - \vect u_0 +\vect f\\[0.3em]
&= \widetilde{\vect f}_1-\chi^{-2}(\simgrad)^* {\mathbb A}{\rm i}X_{\chi}{\vect u_1},
 %%\quad {\vect u_2} \in H^{\rm stretch}. 
%\end{split}
\end{aligned}
\end{equation}
%%The right-hand side is equal to the functional 
%%$$
%%{\color{red}\widetilde{\vect f}_1}-\chi^{-2}\left( (\simgrad)^* {\mathbb A}{\rm i}X_{\chi}{\vect u_1} \right),
%%$$ 
whose right-hand side clearly vanishes when tested against functions in $H^{\rm stretch}$. Therefore, \eqref{stretchsecondcorrectorequation} is well posed, and
\begin{equation}\label{nakn2} 
\Vert {\vect u_2}\Vert _{H^1(\omega\times Y ; \C^3)} \leq C{\chi^2}\bigl\Vert  \vect f \bigr\Vert _{L^2(\omega\times Y;\C^3)}.
\end{equation}
\begin{remark}
   %%The definition of the leading-order term might seem like it was introduced blindly. However it 
   The problem \eqref{stretchlimitequation}
   is well motivated by the following rationale. Thinking about $\vect u_0$ as the leading-order term in a perturbation expansion for the solution $\vect u$ to \eqref{stretchingproblem}, one is lead to the equation  
   \begin{equation}
       (\simgrad)^*{\mathbb A}\simgrad \vect u_0 = 0.
   \end{equation}
 In other words, $\vect u_0$ is an infinitesimal 
$y$-periodic rigid motion, hence $\vect u_0 = (m_1 + m_3 x_2,m_2 - m_3 x_1, m_4)^\top.$  Next, since $\vect u_0\in  L_{\rm stretch}^2(\omega \times Y;\C^3)$, we infer  that $\vect u_0 = (m_3 x_2, - m_3 x_1, m_4)^\top$. The equation \eqref{firstorderterm}  %%Moreover, the well posedness condition for \eqref{stretchsecondcorrectorequation} 
 could be inferred as a consequence of well-posedness of \eqref{stretchsecondcorrectorequation} and thus does not need to be guessed apriori. 
\end{remark}

 %%, there would be issues with continuation of this procedure.
The total approximation constructed so far, namely $\vect u_{\rm approx}:= \vect u_0 + {\vect u_1} + {\vect u_2}$ satisfies the equation
\begin{equation}
\chi^{-2}\left((\simgrad)^* + \left({\rm i}X_\chi\right)^*\right){\mathbb A}\left(\simgrad + {\rm i}X_\chi\right) \vect u_{\rm approx} + \vect u_{\rm approx} - \vect f = {\vect R}_{\chi},
\end{equation}
%%The remainder ${\vect R}_{\chi}$ can be calculated and is given by
where
\begin{equation}
{\vect R}_{\chi}:=\chi^{-2}\left(\left({\rm i}X_\chi\right)^*{\mathbb A}\simgrad {\vect u_2} + (\simgrad)^* {\mathbb A}{\rm i}X_{\chi}{\vect u_2} + \left({\rm i}X_\chi\right)^* {\mathbb A}(y){\rm i}X_{\chi} {\vect u_1} + \left({\rm i}X_\chi\right)^* {\mathbb A}{\rm i}X_{\chi}{\vect u_2} \right) +{\vect u_1} +{\vect u_2}.  
\end{equation}
The remainder ${\vect R}_{\chi}$ satisfies  the bound 
\begin{equation}
\Vert {\vect R}_{\chi}\Vert _{[H^1_\#(Y;H^1(\omega;\C^3))]^*} \leq C |\chi| \bigl\Vert  \vect f \bigr\Vert _{L^2(\omega\times Y;\C^3)},
\label{Rchiest}
\end{equation}
where $[H^1_\#(Y;H^1(\omega;\C^3))]^*$ stands for the dual of $H^1_\#(Y;H^1(\omega;\C^3)).$

As far as the proof of Theorem \ref{THML2L2} is concerned, the bound \eqref{Rchiest}, combined with a standard ellipticity argument (shown below for the case of a refined approximation $\widetilde{\vect u}_{\rm approx},$ see \eqref{u_approx_tilde}),
%%, see Remark \ref{ellipticity_argument} 
provides the required resolvent estimate for ${\mathcal A}^\varepsilon,$ see Section \ref{L2toL2} for details.
%%%The approximation ${\vect u}_{\rm approx}$ is not satisfactory since the error it yields is not small enough with respect to $\chi,$ which necessitates a refinement of the approximation. We have to proceed further with the approximation calculation in order to reduce the error.
%%At this point it becomes evident that 
However, for higher precision norm-resolvent estimates, one needs to construct suitable ``correctors" to ${\vect u}_1,$ ${\vect u}_2,$ ${\vect u}_3.$ Indeed, on the one hand, as discussed above in the context of  constructing  ${\vect u}_0,$ and consequently ${\vect u}_1$ and ${\vect u}_2,$ we have used up all the degrees of freedom available. In particular, choosing the terms ${\vect u}_1$ and ${\vect u}_2$ to be elements of $H^{\rm stretch}$ is too restrictive. On the other hand, the problem (${\vect u_3} \in H^{\rm stretch}$)
\begin{equation*}
%%\begin{split}
\chi^{-2}(\simgrad)^* {\mathbb A} \simgrad {\vect u_3} = 
 -\chi^{-2}\left(\left({\rm i}X_\chi\right)^*{\mathbb A}\simgrad {\vect u_2} + (\simgrad)^* {\mathbb A}{\rm i}X_{\chi}{\vect u_2} + \left({\rm i}X_\chi\right)^* {\mathbb A} {\rm i}X_{\chi}{\vect u_1} \right) - {\vect u_1}
 %%, 
%%\quad {\vect u_3} \in H^{\rm stretch}. 
%%\end{split}
\end{equation*}
is not well posed. In what follows we construct the mentioned correctors.
%%%\begin{remark}
%%Here we note that the continuation of the procedure is only necessary if one wishes to calculate higher precision norm-resolvent estimates. The approximation defined so far is enough for the $L^2 \to L^2$ norm-resolvent estimates displayed in the Theorem \ref{THML2L2}.
%%\end{remark}

\subsubsection*{3) Refining the approximation}
We proceed with the correction of the leading-order term as follows. Set 
\[
\vect u_0^{(1)}:= \widetilde{\mathcal{I}}^{\rm stretch}\begin{bmatrix}
    ({\vect m}^{(1)})_3 \\[0.3em] 
    ({\vect m}^{(1)})_4
\end{bmatrix}, 
\]
where the vector
%%$\begin{bmatrix}
    $\bigl(({\vect m}^{(1)})_3, ({\vect m}^{(1)})_4\bigr)^\top$
%%\end{bmatrix}$ 
is the solution to 
\begin{equation}
	\label{u01_stretch}
\begin{aligned}
\left(\frac{1}{{\chi^2}}{\mathbb A}_\chi^{\rm stretch}  + \mathfrak{C}^{\rm stretch}  \right) \begin{bmatrix}
    ({\vect m}^{(1)})_3 \\[0.3em] 
    ({\vect m}^{(1)})_4
\end{bmatrix}\cdot \overline{\begin{bmatrix}
    d_3 \\ d_4
\end{bmatrix}}&= -\frac{1}{{\chi^2}}\int_{\omega \times Y} {\mathbb A}\bigl( \simgrad {\vect u_2} + {\rm i}X_{\chi} {\vect u_1} \bigr) :\overline{\Lambda_{\chi,d_3,d_4}^{\rm stretch}}\qquad
\\[0.3em] 
&\hspace{7em}
\forall (d_3,d_4)^\top\in \C^2.
\end{aligned}
\end{equation}
It is easy to see that $\vect u_0^{(1)}$ defined in this way satisfies the estimate
\begin{equation}\label{u0^1_stretch}
\bigl\Vert \vect u_0^{(1)}\bigr\Vert _{H^1(\omega\times Y;\C^3)} \leq C|\chi|\bigl\Vert  \vect f \bigr\Vert _{L^2(\omega\times Y;\C^3)}.     
\end{equation}
The next corrector ${\vect u_1^{(1)}}$  is set to solve 
\begin{equation}
(\simgrad)^*{\mathbb A} \simgrad {\vect u_1^{(1)}} = -(\simgrad)^* {\mathbb A} \Lambda_{\chi,({\vect m}^{(1)})_3,({\vect m}^{(1)})_4}^{\rm stretch},
\label{u11_stretch}
%%, \quad {\vect u_1^{(1)}} \in H^{\rm stretch},
\end{equation}
and hence  it  satisfies the bound
\begin{equation}\label{nakn3} 
\bigl\Vert {\vect u_1^{(1)}} \bigr\Vert _{H^1(\omega\times Y;\C^3)} \leq C{\chi^2}\bigl\Vert  \vect f \bigr\Vert _{L^2(\omega\times Y;\C^3)},
\end{equation}
with $C>0$ independent of $\vect f.$ 

It is the result of a straightforward calculation that \eqref{u01_stretch} and \eqref{u11_stretch} imply
%%easy to check that {\color{red}these two equations} yield
\begin{align*}
&\frac{1}{{\chi^2}}\int_{\omega \times Y} {\mathbb A}\left( \simgrad {\vect u_1^{(1)}} + \Lambda_{\chi,({\vect m}^{(1)})_3,({\vect m}^{(1)})_4}^{\rm stretch}\right) :\overline{\Lambda_{\chi,d_3,d_4}^{\rm stretch}} + \int_{\omega\times Y} 
 \begin{bmatrix}
    ({\vect m}^{(1)})_3 x_2   \\[0.3em]
    -({\vect m}^{(1)})_3 x_1 \\[0.3em]
     ({\vect m}^{(1)})_4
\end{bmatrix} \cdot 
\overline{\begin{bmatrix}
    d_3 x_2   \\[0.2em]
    -d_3 x_1 \\[0.2em]
     d_4
\end{bmatrix}} \\[0.6em]
&\quad=-\frac{1}{{\chi^2}}\int_{\omega \times Y} {\mathbb A}\left( \simgrad {\vect u_2} + {\rm i}X_{\chi}{\vect u_1} \right) :\overline{\Lambda_{\chi,d_3,d_4}^{\rm stretch}} \qquad \forall (d_3,d_4)^\top\in \C^2,
\end{align*}
and therefore the functional  $\widetilde{\vect f}_2:L^2(\omega\times Y;\C^3)\to \C$ defined by
\begin{eqnarray*}
& &\widetilde{\vect f}_2 := -\chi^{-2}\Bigl(\left({\rm i}X_\chi\right)^*{\mathbb A}\simgrad ({\vect u_2} + {\vect u_1^{(1)}} ) + \left({\rm i}X_\chi\right)^* {\mathbb A} \Lambda_{\chi,({\vect m}^{(1)})_3,({\vect m}^{(1)})_4}^{\rm stretch} + \left({\rm i}X_\chi\right)^*{\mathbb A}{\rm i}X_{\chi}{\vect u_1}\Bigr)-\vect u_0^{(1)} -{\vect u_1}
\end{eqnarray*}
vanishes on infinitesimal stretching rigid-body motions. This allows us to pose the following problem for ${\vect u_2^{(1)}} \in H^{\rm stretch}:$
\begin{align*}
%\begin{split}
\chi^{-2}(\simgrad)^* {\mathbb A} \simgrad {\vect u_2^{(1)}} &= \widetilde{\vect f}_2 
 -\chi^{-2}(\simgrad)^* {\mathbb A}{\rm i}X_{\chi}\bigl({\vect u_2} + {\vect u_1^{(1)}}\bigr),
%\end{split}
\end{align*}
whose solution satisfies the bound
%%{\color{red}The problem is well posed and the solution satisfies:}
\begin{equation}\label{nakn4} 
\bigl\Vert{\vect u_2^{(1)}} \bigr\Vert _{H^1(\omega\times Y;\C^3)} \leq C|\chi|^3 \bigl\Vert  \vect f \bigr\Vert _{L^2(\omega\times Y;\C^3)}.
\end{equation}
%%\subsubsection*{4) Final approximation}
%%With these corrections defined, we are done with the approximation procedure. We 
Having defined the three correctors above, we write 
%%define the function 
\begin{equation}
\widetilde{\vect u}_{\rm approx}:=\vect u_0 + \vect u_0^{(1)} + {\vect u_1} + {\vect u_1^{(1)}} +{\vect u_2} + {\vect u_2^{(1)}}.
\label{u_approx_tilde}
\end{equation}
Note that
%%This approximation $\widetilde{\vect u}_{\rm approx}$ 
\begin{equation*}
\chi^{-2}\left((\simgrad)^* + \left({\rm i}X_\chi\right)^*\right){\mathbb A}\left(\simgrad + {\rm i}X_\chi\right) \widetilde{\vect u}_{\rm approx} + \widetilde{\vect u}_{\rm approx} - \vect f = \widetilde{\vect R}_{\chi},
%%\label{Rtildechi}
\end{equation*}
%%The residual $\widetilde{\vect R}_{\chi}$ is now given with the following expression:
where
\begin{align*}
%\begin{split}
\widetilde{\vect R}_{\chi}&:=\chi^{-2}\Bigl(\left({\rm i}X_\chi\right)^*{\mathbb A}\simgrad{\vect u_2^{(1)}} + (\simgrad)^* {\mathbb A}{\rm i}X_{\chi}{\vect u_2^{(1)}} + \left({\rm i}X_\chi\right)^* {\mathbb A}{\rm i}X_{\chi} {\vect u_2^{(1)}} + \left({\rm i}X_\chi\right)^* {\mathbb A}{\rm i}X_{\chi}{\vect u_2} \Bigr)  \\[0.3em]
&\quad+\chi^{-2}\left(\left({\rm i}X_\chi\right)^* {\mathbb A}{\rm i}X_{\chi}{\vect u_2} + \left({\rm i}X_\chi\right)^* {\mathbb A}{\rm i}X_{\chi} {\vect u_1^{(1)}} \right) + {\vect u_1^{(1)}} +{\vect u_2} +{\vect u_2^{(1)}} 
%\end{split}
\end{align*}
%%Note that the right-hand side of \eqref{Rtildechi} 
satisfies the estimate 
\begin{equation}
\bigl\Vert \widetilde{\vect R}_{\chi}\bigr\Vert _{[H^1_\#(Y;H^1(\omega;\C^3))]^*} \leq C {\chi^2} \bigl\Vert  \vect f \bigr\Vert _{L^2(\omega\times Y;\C^3)}.
\label{refined_est}
\end{equation}
Furthermore, define the approximation error 
$$\vect u_{\rm error}:={\vect u} - \widetilde{\vect u}_{\rm approx}. $$
The function $\vect u_{\rm error}$ is the solution to the problem
\begin{equation}
\chi^{-2}\left((\simgrad)^* + \left({\rm i}X_\chi\right)^*\right){\mathbb A}\left(\simgrad + {\rm i}X_\chi\right) \vect u_{\rm error} + \vect u_{\rm error} = -\widetilde{\vect R}_{\chi}.
\label{uerror} 
\end{equation}
%%\begin{remark}
%%	\label{ellipticity_argument}
Testing \eqref{uerror} with $\vect u_{\rm error}$, we obtain 
\begin{equation}
\begin{split}
        \bigl\Vert \left(\simgrad + {\rm i}X_\chi \right)\vect u_{\rm error} \bigr\Vert _{L^2(\omega \times Y;\C^{3\times 3})}^2 & + \bigl\Vert  \vect u_{\rm error} \bigr\Vert _{L^2(\omega\times Y;\C^3)}^2 \\[0.3em]
        & \leq \chi^{-2}\bigl\Vert \left(\simgrad + {\rm i}X_\chi \right)\vect u_{\rm error} \bigr\Vert _{L^2(\omega \times Y;\C^{3\times 3})}^2 + \bigl\Vert  \vect u_{\rm error} \bigr\Vert _{L^2(\omega\times Y;\C^3)}^2 \\[0.3em]
        & \leq C\bigl\Vert \widetilde{\vect R}_{\chi}\bigr\Vert _{[H^1_\#(Y;H^1(\omega;\C^3))]^*} \bigl\Vert  \vect u_{\rm error} \bigr\Vert _{H^1(\omega\times Y;\C^3)}.
\end{split}
\end{equation}
On the other hand,
\begin{equation}
    \bigl\Vert  \simgrad \vect u_{\rm error} \bigr\Vert _{L^2(\omega \times Y;\C^{3\times 3})}^2 \leq C \left(\bigl\Vert\left(\simgrad + {\rm i}X_\chi \right)\vect u_{\rm error}\bigr\Vert_{L^2(\omega \times Y;\C^{3\times 3})}^2 + {\chi^2}\bigl\Vert  \vect u_{\rm error} \bigr\Vert _{L^2(\omega\times Y;\C^3)}^2  \right),
\end{equation}
which yields
\begin{equation}
    \bigl\Vert  \simgrad \vect u_{\rm error} \bigr\Vert _{L^2(\omega \times Y;\C^{3\times 3})}^2 - C{\chi^2}\bigl\Vert  \vect u_{\rm error} \bigr\Vert _{L^2(\omega\times Y;\C^3)}^2 \leq C \left(\bigl\Vert\left(\simgrad + {\rm i}X_\chi \right)\vect u_{\rm error}\bigr\Vert_{L^2(\omega \times Y;\C^{3\times 3})}^2\right).
\end{equation}
Therefore, one has
\begin{equation*}
  %%  \begin{split}
        \bigl\Vert \simgrad\vect u_{\rm error} \bigr\Vert _{L^2(\omega \times Y;\C^{3\times 3})}^2 + \left( 1 - C{\chi^2}\right)\bigl\Vert  \vect u_{\rm error} \bigr\Vert _{L^2(\omega\times Y;\C^3)}^2 
        %%\\
        %& 
        \leq C\bigl\Vert \widetilde{\vect R}_{\chi}\bigr\Vert _{[H^1_\#(Y;H^1(\omega;\C^3))]^*} \bigl\Vert  \vect u_{\rm error} \bigr\Vert _{H^1(\omega\times Y;\C^3)}.
    %%\end{split}
\end{equation*}
%%Since for $\vect u_{\rm error} \in H^1_{\#}(Y;H^1(\omega;\C^3))$ we have the 
By combining this with the Korn's inequality
\begin{equation}
    \bigl\Vert  \vect u_{\rm error} \bigr\Vert _{H^1(\omega\times Y;\C^3)}^2 \leq C \left(\bigl\Vert \simgrad\vect u_{\rm error} \bigr\Vert _{L^2(\omega \times Y;\C^{3\times 3})}^2  + \bigl\Vert  \vect u_{\rm error} \bigr\Vert _{L^2(\omega\times Y;\C^3)}^2 \right),
\end{equation}
it is clear that for $|\chi|<\eta$, where $\eta$ is a fixed small constant, one has
%% the following estimate holds:
\begin{equation}
    \label{aprioriestimateh-1}
    \bigl\Vert  \vect u_{\rm error} \bigr\Vert _{H^1(\omega\times Y;\C^3)} \leq C\bigl\Vert \widetilde{\vect R}_{\chi}\bigr\Vert _{[H^1_\#(Y;H^1(\omega;\C^3))]^*}. 
\end{equation}
where the constant $C>0$ depends only on $\eta>0$.
%%\end{remark}

Finally, combining \eqref{aprioriestimateh-1} with \eqref{refined_est}, we infer that
%\begin{equation}
%\frac{1}{{\chi^2}}\Vert \left( \simgrad + {\rm i}X_{\chi}\right)\vect u_{\rm error}\Vert _{L^2(\omega\times Y;\C^3)}^2 + \Vert \vect u_{\rm error}\Vert _{L^2(\omega\times Y;\C^3)}^2 \leq C \Vert \tilde{R}_{\chi}\Vert _{[H^1_\#(Y;H^1(\omega;\C^3))]^*} \Vert \vect u_{\rm error}\Vert _{H^1(\omega\times Y;\C^3)}.
%\end{equation}
%Apriori estimates calculated in Proposition \ref{aprioriestimates} allow us to deduce the following estimate on the error:
\begin{equation}\label{nakn5} 
\bigl\Vert \vect u_{\rm error}\bigr\Vert _{H^1(\omega\times Y;\C^3)} \leq C{\chi^2} \bigl\Vert  \vect f \bigr\Vert _{L^2(\omega\times Y;\C^3)}.
\end{equation}
%%%{\color{red}By leaving out higher-order terms, we can estimate the error in the approximation by lower-order terms:}
In particular,  by Steps 1)-5) we have proved the following statement.
\begin{proposition}\label{propbending} 
Let $\vect u\in H^1_{\#}(Y;H^1(\omega;\C^3))$ be the solution of \eqref{stretchingproblem}. Then there exists $C>0$ such that for all $\chi\in [-\pi, \pi)\setminus\{0\}$ and $\vect f\in L^2(\omega\times Y;\C^3)$ one has
%%the following estimates are valid:{ What about $\chi$?}
\begin{equation}
\begin{split}\label{stretch_estimates}
 \Vert \vect u -{\vect u}_0 
%%\widetilde{\mathcal{I}}^{\rm stretch} \begin{bmatrix}m_3 \\[0.2em] m_4\end{bmatrix}
 \Vert_{H^1(\omega\times Y,\C^3)} \leq C|\chi|\bigl\Vert  \vect f \bigr\Vert _{L^2(\omega\times Y;\C^3)}, \\
 \bigl\Vert \vect u - {\vect u}_0-{\vect u}_0^{(1)}
%%\widetilde{\mathcal{I}}^{\rm stretch} \begin{bmatrix}m_3 + ({\vect m}^{(1)})_3 \\[0.25em] m_4 + ({\vect m}^{(1)})_4\end{bmatrix} 
- {\vect u_1}\bigr\Vert_{H^1(\omega\times Y,\C^3)} \leq C{\chi^2}\bigl\Vert  \vect f \bigr\Vert _{L^2(\omega\times Y;\C^3)}, 
\end{split}
\end{equation}
where ${\vect u}_0,$ ${\vect u}_0^{(1)},$ ${\vect u}_1$ 
%%$m_3,m_4,({\vect m}^{(1)})_3,({\vect m}^{(1)})_4, {\vect u_1}$ 
%%$m_3,m_4,({\vect m}^{(1)})_3,({\vect m}^{(1)})_4, {\vect u_1}$ 
are defined by the above approximation procedure.
\end{proposition}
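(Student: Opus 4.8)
The proof is short: the proposition follows by assembling the bounds for the individual pieces of the approximation $\widetilde{\vect u}_{\rm approx}$ constructed in Steps~1)--3) and applying the triangle inequality. First I would collect, in one place, the estimates already obtained: $\Vert\vect u_0\Vert_{H^1(\omega\times Y;\C^3)}\le C\Vert\vect f\Vert_{L^2(\omega\times Y;\C^3)}$, together with $\Vert\vect u_0^{(1)}\Vert_{H^1}\le C|\chi|\,\Vert\vect f\Vert_{L^2}$, $\Vert\vect u_1\Vert_{H^1}\le C|\chi|\,\Vert\vect f\Vert_{L^2}$ (see \eqref{nakn1}), $\Vert\vect u_1^{(1)}\Vert_{H^1}\le C\chi^2\Vert\vect f\Vert_{L^2}$ (see \eqref{nakn3}), $\Vert\vect u_2\Vert_{H^1}\le C\chi^2\Vert\vect f\Vert_{L^2}$ (see \eqref{nakn2}), $\Vert\vect u_2^{(1)}\Vert_{H^1}\le C|\chi|^3\Vert\vect f\Vert_{L^2}$ (see \eqref{nakn4}); and the key consequence of the remainder bound \eqref{refined_est} and the coercivity estimate \eqref{aprioriestimateh-1}, namely \eqref{nakn5}, which states that $\Vert\vect u-\widetilde{\vect u}_{\rm approx}\Vert_{H^1}\le C\chi^2\Vert\vect f\Vert_{L^2}$ whenever $|\chi|<\eta$, for a fixed small $\eta>0$. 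Here and below all norms are taken over $\omega\times Y$ and all constants are independent of $\chi$ and $\vect f$ (they may depend on the fixed $\eta$).

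For $|\chi|<\eta$ I would then use the definition \eqref{u_approx_tilde} to write
\[
\vect u-\vect u_0=(\vect u-\widetilde{\vect u}_{\rm approx})+\vect u_0^{(1)}+\vect u_1+\vect u_1^{(1)}+\vect u_2+\vect u_2^{(1)},
\]
\[
\vect u-\vect u_0-\vect u_0^{(1)}-\vect u_1=(\vect u-\widetilde{\vect u}_{\rm approx})+\vect u_1^{(1)}+\vect u_2+\vect u_2^{(1)},
\]
apply the triangle inequality, and insert the bounds listed above. Since $|\chi|\le\pi$, the terms of order $\chi^2$ and $|\chi|^3$ are all controlled by $|\chi|$, so the first right-hand side is $\le C|\chi|\,\Vert\vect f\Vert_{L^2}$, the slowest-decaying omitted contributions being $\vect u_0^{(1)}$ and $\vect u_1$; and in the second, every omitted term is of order at least $\chi^2$ (again using $|\chi|^3\le\pi\chi^2$), giving $\le C\chi^2\Vert\vect f\Vert_{L^2}$. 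These are exactly the two asserted estimates on the range $|\chi|<\eta$.

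The one point that genuinely requires care — which I would treat last — is the complementary regime $\eta\le|\chi|<\pi$, excluded by the smallness requirement in \eqref{aprioriestimateh-1} (this smallness is the reason the problem was scaled by $\chi^{-2}$ and the reason a split in $|\chi|$ is needed). There, however, the apriori bound \eqref{aprioriestimatestetchingh1} gives $\Vert\vect u\Vert_{H^1}\le C\Vert\vect f\Vert_{L^2}$, while $\vect u_0,\vect u_0^{(1)},\vect u_1$ are likewise bounded by $C\Vert\vect f\Vert_{L^2}$ (using $|\chi|\le\pi$); hence, since $1\le|\chi|/\eta$ and $1\le\chi^2/\eta^2$,
\[
\Vert\vect u-\vect u_0\Vert_{H^1}\le C\Vert\vect f\Vert_{L^2}\le C\eta^{-1}|\chi|\,\Vert\vect f\Vert_{L^2},\qquad
\Vert\vect u-\vect u_0-\vect u_0^{(1)}-\vect u_1\Vert_{H^1}\le C\Vert\vect f\Vert_{L^2}\le C\eta^{-2}\chi^2\Vert\vect f\Vert_{L^2}.
\]
Taking the larger of the constants obtained in the two regimes yields the claimed estimates with a single $C$. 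I do not expect any real obstacle here: all the analytic content (construction of the correctors, the remainder estimate, and the $\chi$-uniform coercivity argument) is already in Steps~1)--3), and what remains is the bookkeeping of corrector orders plus the trivial separate handling of $|\chi|$ bounded away from zero.
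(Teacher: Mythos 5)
Your proof is correct and takes essentially the same approach as the paper, which cites the error estimate \eqref{nakn5} together with the corrector bounds \eqref{nakn1}, \eqref{nakn2}, \eqref{u0^1_stretch}, \eqref{nakn3}, \eqref{nakn4} and leaves the triangle-inequality assembly implicit. You additionally spell out the regime $\eta\le|\chi|<\pi$ via the apriori bound \eqref{aprioriestimatestetchingh1} --- a detail the paper's one-line proof omits even though its derivation of \eqref{aprioriestimateh-1}, and hence of \eqref{nakn5}, explicitly requires $|\chi|<\eta$ --- thereby closing a small expository gap.
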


\begin{proof} 
The proof follows from \eqref{nakn5} by using \eqref{nakn1}, \eqref{nakn2}, \eqref{u0^1_stretch}, \eqref{nakn3}, and \eqref{nakn4}. 
\end{proof} 	

\begin{remark}
   The first estimate in \eqref{stretch_estimates} can be rewritten as
   \begin{equation}
   \label{stretchingestimate1}
   \left\Vert \left(\frac{1}{{\chi^2}}\mathcal{A}_\chi + I \right)^{-1} \bigg|_{L^2_{\rm stretch}} - \left(\widetilde{\mathcal{M}}^{\rm stretch}\right)^*\left(\frac{1}{{\chi^2}}{\mathbb A}_\chi^{\rm stretch } + \mathfrak{C}^{\rm stretch} \right)^{-1}\widetilde{\mathcal{M}}^{\rm stretch}  \right\Vert_{L^2 \to H^1} \leq C|\chi|, 
   \end{equation}
while the second estimate takes the form
%%   The second estimate can be rewritten as:
   \begin{equation}
   \label{stretchingestimate2}
       \left\Vert \left(\frac{1}{{\chi^2}}\mathcal{A}_\chi + I \right)^{-1} \bigg|_{L^2_{\rm stretch}} - \left(\widetilde{\mathcal{M}}^{\rm stretch}\right)^*\left(\frac{1}{{\chi^2}}{\mathbb A}_\chi^{\rm stretch } + \mathfrak{C}^{\rm stretch} \right)^{-1}\widetilde{\mathcal{M}}^{\rm stretch} - \mathcal{A}_{\chi,\rm corr}^{\rm stretch} - \widetilde{\mathcal{A}}_{\chi,\rm corr}^{\rm stretch} \right\Vert_{L^2 \to H^1} \leq C{\chi^2}, 
   \end{equation}
   where the bounded operators $\mathcal{A}_{\chi,\rm corr}^{\rm stretch}$ and $\widetilde{\mathcal{A}}_{\chi,\rm corr}^{\rm stretch}$ are defined by 
   %%{\color{red}with the asymptotic procedure above with:}
   \begin{equation}
       \mathcal{A}_{\chi,\rm corr}^{\rm stretch} \vect f := {\vect u_1}, \qquad \widetilde{\mathcal{A}}_{\chi,\rm corr}^{\rm stretch} \vect f := \vect u_0^{(1)}.
   \end{equation}

\end{remark}

\subsection{Asymptotics in the ``bending" space}
\label{bending_sec}
  In order to obtain an approximation for scaled loads  for $\chi\in[-\pi,\pi)\setminus\{0\},$ we consider the scaled force density
 \[
S_{\!|\chi|}\vect f:=\bigl(\,\widehat{\!\vect f},
%%{f_1},{f_2},
|\chi|^{-1}{f_3}\bigr)^\top.
\] 
For each $\chi\in[-\pi,\pi)\setminus\{0\},$ we study the following problem: 
for $\vect f\in L^2_{\rm bend},$ find $\vect u\in H^1_{\#}(Y;H^1(\omega;\C^3))$ such that
\begin{equation}
\label{quasiperiodicbend}
\frac{1}{{\chi^4}}\int_{\omega \times Y} {\mathbb A} (\simgrad \vect u + {\rm i}X_\chi \vect u) :\overline{(\simgrad \vect v + {\rm i}X_\chi \vect v)} + \int_{\omega \times Y} \vect u\,\cdot\,\overline{\vect v} = \int_{\omega \times Y} S_{\!|\chi|}\vect f\,\cdot\,\overline{\vect v}\qquad \forall\vect v\in H^1_{\#}(Y;H^1(\omega;\C^3)).  
\end{equation} 
which can be formally written as
%%%The problem can be rewritten as before: find $\vect u\in H^1_{\#}(Y;H^1(\omega;\C^3))\cap L^2_{\rm bend}$
%%such that 
\begin{equation}
	\label{bendingproblem}
\chi^{-4}\left((\simgrad)^* + \left({\rm i}X_\chi\right)^*\right){\mathbb A}\left(\simgrad + {\rm i}X_\chi\right) \vect u + \vect u = S_{\!|\chi|}\vect f.
\end{equation}
As we are working under the assumption of material symmetries, one has $\vect u\in L^2_{\rm bend}.$ %%possesses the same symmetry properties as the force term $\vect f$. 

As in the case of the stretching subspace,  the above problem is well posed, see the remark below. Note that related apriori bound is used in deriving the convergence estimate \eqref{noforcescaling}.
\begin{remark}
	\label{bending_apriori_rem}
	Testing \eqref{bendingproblem} against the solution $\vect u$ and applying Proposition \ref{aprioriestimates} yields
	\begin{align*}
		%\begin{split}
		& \chi^{-4}\bigl\Vert \left(\simgrad + {\rm i}X_\chi \right)\vect u\bigr\Vert _{L^2(\omega\times Y, \C^3)}^2 + \Vert \vect u\Vert _{L^2(\omega\times Y, \C^3)}^2  \\[0.4em] 
		&\leq   C\left(\Vert {f_1}\Vert _{L^2(\omega\times Y, \C)}\Vert u_1\Vert _{L^2(\omega\times Y, \C)}+ \Vert {f_2}\Vert _{L^2(\omega\times Y, \C)}\Vert u_2\Vert _{L^2(\omega\times Y, \C)} +|\chi|^{-1}\Vert {f_3}\Vert _{L^2(\omega\times Y, \C)}\Vert {u_3}\Vert _{L^2(\omega\times Y, \C)}\right) \\[0.4em]
		&\leq C\chi^{-2}\bigl\Vert  \vect f \bigr\Vert _{L^2(\omega\times Y, \C^3)}\bigl\Vert \left(\simgrad + {\rm i}X_\chi \right) \vect u \bigr\Vert_{L^2(\omega\times Y;\C^{3\times 3})}. 
		%\end{split}
	\end{align*}
	Therefore, one has 
	\begin{equation}
		\bigl\Vert \left(\simgrad + {\rm i}X_\chi \right) \vect u \bigr\Vert_{L^2(\omega\times Y;\C^{3\times 3})} \leq C{\chi^2} \bigl\Vert  \vect f \bigr\Vert _{L^2(\omega\times Y, \C^3)}.
	\end{equation}
	Finally, again by Proposition \ref{aprioriestimates}, we have
	\begin{equation}
		\label{aprioriestimatebendingh1}
		\begin{split}\Vert {\widehat{\vect u}}\Vert _{H^1(\omega\times Y, \C)} \leq C\bigl\Vert  \vect f \bigr\Vert _{L^2(\omega\times Y, \C^3)},
			%%\Vert {u_1}\Vert _{H^1(\omega\times Y, \C)} \leq C\bigl\Vert  \vect f \bigr\Vert _{L^2(\omega\times Y, \C^3)}, \quad \Vert {u_2}\Vert _{H^1(\omega\times Y, \C)} \leq C\bigl\Vert  \vect f \bigr\Vert _{L^2(\omega\times Y, \C^3)},
			\quad \Vert {u_3}\Vert _{H^1(\omega\times Y, \C)} \leq C|\chi|\bigl\Vert  \vect f \bigr\Vert _{L^2(\omega\times Y, \C^3)}. %\\
			%\Vert u_1 - {\rm e}^{{\rm i} \chi y} c_1\Vert _{H^1} \leq C{\chi^2} \Vert f\Vert _{L^2}, \quad
			%\Vert u_2 - {\rm e}^{{\rm i} \chi y} c_2\Vert _{H^1} \leq C{\chi^2} \Vert f\Vert _{L^2},, \\
			%\Vert u_3 + {\rm e}^{{\rm i} \chi y} (x_1 c_1 + x_2 c_2)\Vert _{H^1} \leq C {\chi^2} \Vert f\Vert _{L^2},,
		\end{split}
	\end{equation}
\end{remark}

We would like to construct a function in $L^2_{\rm bend}$ describing the leading-order behaviour of ${\vect u}$ with respect to the parameter $\chi$ with suitable correctors added, so that the overall error is of order  $O({\chi^4})$ in the $H^1$ norm. In doing so, we shall repeat the approach of Section \ref{stretching_sec}.
The mentioned leading-order term is given with the general expression for an infinitesimal bending rigid motion.  The main result of this section is Proposition \ref{propbending}. The asymptotics here is more complex than the one performed in Section \ref{stretching_sec}.

\subsubsection*{1) Leading-order term and second-order corrector}
The leading-order term is defined as the solution to the following problem: find $(m_1,m_2)^\top\in\C^2$ that satisfies
\begin{equation}
    \bigl({\mathbb A}^{\rm bend}+\mathfrak{C}^{\rm bend}_{\chi}\bigr)
    \begin{bmatrix} m_1 \\ m_2 \end{bmatrix}  = 
    \widetilde{\mathcal{M}}_{\chi}^{\rm bend} S_{\!|\chi|} \vect f,
    \label{hom1}
\end{equation}
 where $\mathfrak{C}^{\rm bend}_{\chi}$ is defined in \eqref{cstretchrodbend}. 
 Similarly to the case of \eqref{stretchlimitequation},  the solution satisfies the estimate 
\begin{equation}
\bigl|(m_1,m_2)^\top\bigr| \leq C \bigl\Vert  \vect f \bigr\Vert _{L^2(\omega\times Y,\C^3)}.
\end{equation}
Setting $\vect u_0 = \widetilde{\mathcal{I}}_\chi^{\rm bend} \begin{bmatrix} m_1 \\ m_2 \end{bmatrix}$, it is then clear that there exists $C>0$ such that for all $\vect f\in L^2(\omega\times Y;\C^3)$ one has %%we have the estimates:
\begin{equation}
\bigl\Vert ({\vect u}_0)_\alpha\bigr\Vert _{H^1(\omega\times Y ; \C)} \leq C\bigl\Vert  \vect f \bigr\Vert _{L^2(\omega\times Y;\C^3)}, \quad \alpha=1,2,\qquad  \bigl\Vert ({\vect u}_0)_3\bigr\Vert _{H^1(\omega\times Y ; \C)} \leq C|\chi|\bigl\Vert  \vect f \bigr\Vert _{L^2(\omega\times Y;\C^3)}.
\end{equation}  
We define the first-order corrector \CCC ${\vect u_1} \in H^{\rm bend}$  as the solution to
\begin{equation}\label{bendfirstcorrectoreqation}
(\simgrad)^* {\mathbb A} \simgrad {\vect u_1} = -(\simgrad)^* {\mathbb A} \Lambda_{\chi,m_1,m_2}^{\rm bend}. 
%%\quad  {\vect u_1} \in H_{\rm bend}.
\end{equation}
 Note that this corrector is already defined in \eqref{nak61}, where it was denoted by $\vect u^{\rm bend}_{\chi,m_1,m_2}$ and was used in the definition of $\mathbb{A}^{\rm bend}_{\chi}$.  
This problem  \eqref{bendfirstcorrectoreqation} is well posed, and one has
\begin{equation}\label{nak6} 
\Vert {\vect u_1}\Vert _{H^1(\omega\times Y ; \C^3)} \leq C{\chi^2}\bigl\Vert  \vect f \bigr\Vert _{L^2(\omega\times Y;\C^3)},
\end{equation}
with a constant $C>0$ independent of $\vect f.$ It is clear from the definition of ${\mathbb A}_\chi^{\rm bend}$ that
%% the following holds:
	\begin{equation}
\begin{aligned}
%\begin{split}
\frac{1}{{\chi^4}}\int_{\omega \times Y} {\mathbb A}\left( \simgrad {\vect u_1} + \Lambda_{\chi,m_1,m_2}^{\rm bend}\right)&:\overline{\Lambda_{\chi,d_1,d_2}^{\rm bend}} 
%%\\[0.5em]
%%&\hspace{10em}
+\int_{\omega\times Y}
\begin{bmatrix}
    m_1 \\ m_2 \\-{\rm i}\chi(m_1 x_1 + m_2 x_2)
\end{bmatrix}
\cdot 
\overline{\begin{bmatrix}
    d_1 \\ d_2 \\-{\rm i}\chi(d_1 x_1 + d_2 x_2)
\end{bmatrix}}
\\[0.5em] 
%%&\hspace{10em}
&=\int_{\omega\times Y} 
\begin{bmatrix}
   %%{f_1} \\[0.25em] {f_2} 
   \,\widehat{\!\vect f}
   \\[0.25em]   |\chi|^{-1}{f_3}
\end{bmatrix} \cdot\overline{\begin{bmatrix}
    d_1 \\ d_2 \\-{\rm i}\chi(d_1 x_1 + d_2 x_2)
\end{bmatrix}}\qquad\forall (d_1,d_2)^\top \in \C^2.
%\end{split}
\end{aligned}
\label{first_step_bend}
\end{equation}
%%for every $(d_1,d_2)^\top \in \C^2$. 

\subsubsection*{2) Higher-order correctors}
In view of the equation \eqref{bendingproblem}, we introduce a third-order corrector ${\vect u_2}$ as follows. Consider the following functional on $H_{\#}^1( Y;H^1(\omega;\C^3)):$
%%$H^1_\#(\Omega):$ 
%%\widetilde{\vect f}_2:=L^2(\Omega)\to \C.}
\begin{align*}
%\begin{split}
\widetilde{\vect f}_2 :=&-\chi^{-4}\left(\left({\rm i}X_\chi\right)^*{\mathbb A}\simgrad {\vect u_1} + (\simgrad)^* {\mathbb A}{\rm i}X_{\chi}{\vect u_1} + \left({\rm i}X_\chi\right)^* {\mathbb A} \Lambda_{\chi,m_1,m_2}^{\rm bend}\right)  \\[0.3em]
 &-\bigl( 0, 0, -{\rm i}\chi(m_1 x_1 + m_2 x_2) \bigr)^\top + \bigl(0, 0, |\chi|^{-1}{f_3}\bigr)^\top.
%\end{split}
\end{align*}
This functional vanishes when tested against constant functions of form $(c_1,c_2,0)^\top,$ $c_1, c_2\in{\mathbb C}.$ To see this, we note first that 
\begin{equation}
	{\rm i}X_\chi(c_1, c_2, 0)^\top = 
	\begin{bmatrix}
		0 & 0 & \dfrac{1}{2}{\rm i}\chi c_1 \\[0.7em]
		0 & 0 & \dfrac{1}{2}{\rm i}\chi c_2 \\[0.7em]
		\dfrac{1}{2}{\rm i}\chi c_1 & \dfrac{1}{2}{\rm i}\chi c_1 & 0
	\end{bmatrix} = -\simgrad\bigl(0, 0, -{\rm i}\chi(c_1 x_1 + c_2 x_2)\bigr)^\top\qquad \forall c_1, c_2\in{\mathbb C},
\end{equation}
from which it follows that (cf. \eqref{lam_bend}, \eqref{lam_stretch})
\begin{equation}
	\Lambda^{\rm bend}_{\chi,c_1,c_2}(x)=  {\rm i}X_\chi\bigl(0, 0, -{\rm i}\chi(c_1 x_1 + c_2 x_2)\bigr)^\top\qquad \forall c_1, c_2\in{\mathbb C}.
	%\label{lam_bend_recall}
\end{equation}
%%perform the following calculation based on \eqref{lam_bend_recall}:
Next, we have 
\begin{align*}
%\begin{split}
\big(\left({\rm i}X_\chi\right)^*{\mathbb A}\simgrad {\vect u_1}&+ \left({\rm i}X_\chi\right)^* {\mathbb A} \Lambda_{\chi,m_1,m_2}^{\rm bend}\big)(c_1,c_2,0)^\top \\[0.3em]
&=\int_{\omega\times Y}{\mathbb A}\left(\simgrad {\vect u_1} + \Lambda_{\chi,m_1,m_2}^{\rm bend}\right):\overline{{\rm i}X_{\chi}  (c_1,c_2,0)^\top} \\[0.3em]
&=-\int_{\omega\times Y}{\mathbb A}\left(\simgrad {\vect u_1} + \Lambda_{\chi,m_1,m_2}^{\rm bend}\right):\overline{\simgrad\bigl( 0, 0, -{\rm i}\chi(c_1 x_1 + c_2 x_2)\bigr)^\top}\\[0.3em]
&=
-\left((\simgrad)^*{\mathbb A}\simgrad {\vect u_1} + (\simgrad)^* {\mathbb A} \Lambda_{\chi,m_1,m_2}^{\rm bend}\right)\bigl( 0, 0, -{\rm i}\chi(c_1 x_1 + c_2x_2)\bigr)^\top= 0.
%\end{split}
\end{align*}
The last equality follows from the definition of the corrector term ${\vect u_1}$ and the fact that $$
\bigl(0,0,{\rm i}\chi(c_1 x_1 + c_2 x_2)\bigr)^\top\in H^{\rm bend}.
$$
Next we define the corrector ${\vect u_2}\in H^{\rm bend}$ as the solution to the following well-posed problem:
\begin{equation*}
\chi^{-4}(\simgrad)^* {\mathbb A} \simgrad {\vect u_2} = \widetilde{\vect f}_2.
%%, \quad {\vect u_2} \in H^{\rm bend}.
\end{equation*}
%%The corrector term ${\vect u_2}$ 
It satisfies the bound
\begin{equation}\label{nak9} 
\Vert {\vect u_2}\Vert _{H^1(\omega\times Y ; \C^3)} \leq C|\chi|^3\bigl\Vert  \vect f \bigr\Vert _{L^2(\omega\times Y;\C^3)}.
\end{equation}

In order to decrease the error further, we proceed to introducing a corrector of higher order.
%%%{\color{red}We are able to define even higher order corrector term in order to cancel out more of the residual parts and to decrease the error.}
To this end, define ${\vect u}_3$ as the solution to
\begin{equation}
\label{u3bendproblem}
    \chi^{-4}(\simgrad)^* {\mathbb A} \simgrad {\vect u}_3 = \widetilde{\vect f}_3, 
    %%\quad {u_3} \in H^{\rm bend},
\end{equation}
where the functional $\widetilde{\vect f}_3$ is defined by
\begin{equation}
    \widetilde{\vect f}_3 :=-\chi^{-4}\left(\left({\rm i}X_\chi\right)^*{\mathbb A}\simgrad {\vect u_2} + (\simgrad)^* {\mathbb A}{\rm i}X_{\chi}{\vect u_2} + \left({\rm i}X_\chi\right)^* {\mathbb A} {\rm i}X_{\chi}{\vect u_1}\right)  \\
 - ( m_1, m_2, 0 )^\top + (\,\widehat{\!\vect f}, 0)^\top.
\end{equation}
%% In order to show that 
%%{\color{red} 
%%	$$
%% \bigl\langle \widetilde{\vect f}_3, ( C_1, c_2, 0 )^\top \bigr\rangle = 0,
%% $$ 
%%}
The following calculation shows that $\widetilde{\vect f}_3$ vanishes when tested against vectors of the form $(c_1, c_2, 0)^\top,$ $c_1, c_2\in{\mathbb C}:$
\begin{align*}
%\begin{split}
& \frac{1}{{\chi^4}} \int_{\omega \times Y} {\mathbb A} \left(\simgrad {\vect u_2} +  {\rm i}X_{\chi}{\vect u_1}\right):\overline{{\rm i}X_{\chi}(c_1, c_2, 0)^\top}\\[0.4em]
%& =  -\frac{1}{{\chi^4}}\int_{\omega\times Y}{\mathbb A}\left(\simgrad u_2 + {\rm i}X_\chi \vect u_1\right):\overline{{\rm i}X_{\chi}  ( C_1, c_2, 0 )^\top} dxdy \\
%
&\hspace{5em}=-\frac{1}{{\chi^4}}\int_{\omega \times Y} {\mathbb A}\left(\simgrad {\vect u_2} +  {\rm i}X_\chi {\vect u_1}\right):\overline{\simgrad\bigl(0, 0, -{\rm i}\chi(c_1 x_1 + c_2 x_2)\bigr)^\top}\\[0.4em]
&\hspace{5em}=\frac{1}{{\chi^4}} \int_{\omega \times Y} {\mathbb A}\left(\simgrad {\vect u_1} +  \Lambda_{\chi,m_1,m_2}^{\rm bend}\right):\overline{{\rm i}X_\chi\bigl(0, 0, -{\rm i}\chi(c_1 x_1 + c_2 x_2)\bigr)^\top}  \\[0.4em] 
 &\hspace{10em}+\int_{\omega\times Y}\bigl(0, 0, -{\rm i}\chi(m_1 x_1 + m_2 x_2)\bigr)^\top\cdot \overline{\bigl(0, 0, -{\rm i}\chi(c_1 x_1 + c_2 x_2)\bigr)^\top}\\[0.4em]
 &\hspace{15em}-\int_{\omega\times Y}\bigl(0, 0,|\chi|^{-1} {f_3}\bigr)^\top\cdot \overline{\bigl(0, 0, -{\rm i}\chi(c_1 x_1 + c_2 x_2)\bigr)^\top} \\[0.4em]
&\hspace{5em}=\frac{1}{{\chi^4}}\int_{\omega \times Y} {\mathbb A}\left( \simgrad {\vect u_1} + \Lambda_{\chi,m_1,m_2}^{\rm bend} \right) :\overline{\Lambda_{\chi,c_1,c_2}^{\rm bend}} 
\\[0.4em]
%= & \frac{i}{{\chi^4}} \left(X_{\chi}^*{\mathbb A}\simgrad u_1 + X_{\chi}^* {\mathbb A} \Lambda_{\chi,m_1,m_2}^{\rm bend}(\widehat{x})\right) (C_1, c_2, {\rm i}\chi(C_1 x_1 + c_2 x_2) )^\top  \\
% & - \frac{i}{{\chi^4}} \left(X_{\chi}^*{\mathbb A}\simgrad u_1 + X_{\chi}^* {\mathbb A} \Lambda_{\chi,m_1,m_2}^{\rm bend}(\widehat{x})\right) (C_1, c_2, 0 )^\top  \\
 &\hspace{10em}+\int_{\omega\times Y}(0, 0, -{\rm i}\chi(m_1 x_1 + m_2 x_2) )^\top\cdot \overline{\bigl(0, 0, -{\rm i}\chi(c_1 x_1 + c_2 x_2)\bigr)^\top}\\[0.4em]
 &\hspace{15em}- \int_{\omega\times Y}\bigl(0, 0,|\chi|^{-1}{f_3}\bigr)^\top\cdot \overline{\bigl(0, 0, -{\rm i}\chi(c_1 x_1 + c_2 x_2)\bigr)^\top}\\
%
%= & \frac{1}{{\chi^4}}\left((\simgrad)^*{\mathbb A}\simgrad u_1 + (\simgrad)^* {\mathbb A} \Lambda_{\chi,m_1,m_2}^{\rm bend}(\widehat{x})\right) 
%  (C_1, c_2, \chi(C_1 x_1 + c_2 x_2) )^\top \\
% & + \int_{\omega \times Y}(m_1, m_2, 0 )^\top\cdot \overline{(C_1, c_2, 0 )^\top} + 
%  \int_{\omega\times Y}(f_1, f_2, 0 )^\top\cdot \overline{(C_1, c_2, 0 )^\top} \\
&\hspace{5em}=-\int_{\omega \times Y}(m_1, m_2, 0 )^\top\cdot \overline{(c_1, c_2, 0 )^\top} + 
  \int_{\omega\times Y}(\,\widehat{\!\vect f}, 0 )^\top\cdot \overline{(c_1, c_2, 0 )^\top}. 
%%\end{split}
\end{align*}
This proves that the problem \eqref{u3bendproblem} is well posed and the solution satisfies the bound
\begin{equation}\label{nak13} 
\Vert {\vect u_3}\Vert _{H^1(\omega\times Y ; \C^3)} \leq C{\chi^4}\bigl\Vert  \vect f \bigr\Vert _{L^2(\omega\times Y;\C^3)}.
\end{equation}
The total approximation built up so far $\vect u_{\rm approx}:= \vect u_0 + {\vect u_1} + {\vect u_2} + {\vect u_3}$ satisfies
\begin{equation*}
\chi^{-4}\left((\simgrad)^* + \left({\rm i}X_\chi\right)^*\right){\mathbb A}\left(\simgrad + {\rm i}X_\chi\right) \vect u_{\rm approx} + \vect u_{\rm approx} -S_{\!|\chi|}\vect f = {\vect R}_{\chi},
\end{equation*}
where the right-hand side is given by
\begin{equation*}
{\vect R}_{\chi} =\chi^{-4}\left(\left({\rm i}X_\chi\right)^*{\mathbb A}{\rm i}X_\chi {\vect u_2} +\left({\rm i}X_\chi\right)^*{\mathbb A}\simgrad {\vect u_3} +  (\simgrad)^* {\mathbb A}{\rm i}X_{\chi}{\vect u_3} + \left({\rm i}X_\chi\right)^* {\mathbb A}{\rm i}X_{\chi} {\vect u_3} \right) + {\vect u_1} + {\vect u_2} + {\vect u_3},  
\end{equation*}
and hence satisfies
%%It satisfies the estimate
\begin{equation}
\Vert {\vect R}_{\chi}\Vert _{[H^1_\#(Y;H^1(\omega;\C^3))]^*} \leq C |\chi| \bigl\Vert  \vect f \bigr\Vert _{L^2(\omega\times Y,\C^3)}.
\end{equation}
We next refine the approximation by introducing suitable correctors ${\vect u}_j^{(1)},$ $j=0,1,2,3,$ to the functions ${\vect u}_j,$ $j=0,1,2,3,$ defined above.
%%{\color{red}Here, we must restart the procedure in order to further develop the approximation.}

\subsubsection*{3) First refinement of the approximation}
In order to eliminate lower-order terms on the right-hand side of the equation for the approximation error, see \eqref{approx_error_eq} below, we update the leading-order term of the expansion. We do so by defining 
$\vect u_0^{(1)} = \widetilde{\mathcal{I}}_\chi^{\rm bend}\begin{bmatrix}
    ({\vect m}^{(1)})_1 \\ 
    ({\vect m}^{(1)})_2
\end{bmatrix}$, where the vector $\begin{bmatrix}
    ({\vect m}^{(1)})_1 \\ 
    ({\vect m}^{(1)})_2
\end{bmatrix}$ solves  the identity 
%%is the solution of the problem
\begin{equation}
	\label{u01_bend}
\begin{aligned}
&\left(\frac{1}{{\chi^4}}{\mathbb A}_\chi^{\rm bend}  + \mathfrak{C}^{\rm bend}_\chi\right)
\begin{bmatrix}
    ({\vect m}^{(1)})_1 \\[0.3em] 
    ({\vect m}^{(1)})_2
\end{bmatrix}\cdot \overline{
\begin{bmatrix}
    d_1 \\[0.2em] d_2
\end{bmatrix}}\\[0.6em]
&\hspace{6em}=-\frac{1}{{\chi^4}}\int_{\omega \times Y}{\mathbb A}\bigl(\simgrad {\vect u_3} + {\rm i}X_\chi {\vect u_2}\bigr):\overline{{\rm i}X_\chi(d_1,d_2,0)^\top}\qquad \forall (d_1,d_2)^\top\in \C^2.
\end{aligned}
\end{equation}
Clearly, one has the following bounds:
\begin{equation}
	\label{u0^1_bend}
\begin{aligned}
\Bigl|\bigl(({\vect m}^{(1)})_1,({\vect m}^{(1)})_2\bigr)^\top\Bigr| &\leq C|\chi| \bigl\Vert  \vect f \bigr\Vert _{L^2(\omega\times Y,\C^3)},\\[0.2em]
\bigl\Vert ({\vect u}_0^{(1)})_\alpha\bigr\Vert _{H^1(\omega\times Y ; \C)} &\leq C|\chi|\bigl\Vert  \vect f \bigr\Vert _{L^2(\omega\times Y;\C^3)}, \quad \alpha = 1,2,\qquad 
%%\\[0.2em]  
\bigl\Vert ({\vect u}_0^{(1)})_3\bigr\Vert _{H^1(\omega\times Y ; \C)}\leq C{\chi^2}\bigl\Vert  \vect f \bigr\Vert _{L^2(\omega\times Y;\C^3)}.
\end{aligned}
\end{equation}
Next, we define the corrector  ${\vect u_1^{(1)}} $ as the solution to
\begin{equation}
 (\simgrad)^* {\mathbb A} \simgrad {\vect u_1^{(1)}} = - (\simgrad)^* {\mathbb A} \Lambda_{\chi,({\vect m}^{(1)})_1,({\vect m}^{(1)})_2}^{\rm bend}.
 %%, \quad  {\vect u_1^{(1)}} \in H^{\rm bend}.
 \label{u11_bending}
\end{equation}
Similarly to a previously shown argument, the function ${\vect u_1^{(1)}}$ satisfies the estimate 
\begin{equation}\label{nak10} 
\bigl\Vert {\vect u_1^{(1)}} \bigr\Vert _{H^1(\omega\times Y ; \C^3)} \leq C|\chi|^3\bigl\Vert  \vect f \bigr\Vert _{L^2(\omega\times Y;\C^3)}.
\end{equation}
%%{\color{red}These correctors satisfy the following:}
Combining \eqref{u01_bend} and \eqref{u11_bending} yields the identity
\begin{align}
%%\begin{split}
\frac{1}{{\chi^4}}\int_{\omega \times Y} {\mathbb A}\Bigl( \simgrad {\vect u_1^{(1)}} &+ \Lambda_{\chi,({\vect m}^{(1)})_1,({\vect m}^{(1)})_2}^{\rm bend} \Bigr) :\overline{\Lambda_{\chi,d_1,d_2}^{\rm bend}}
\\[0.4em]
&+\int_{\omega\times Y}
\begin{bmatrix}
    ({\vect m}^{(1)})_1 \\[0.3em] 
    ({\vect m}^{(1)})_2 \\[0.3em]
    -{\rm i}\chi(({\vect m}^{(1)})_1 x_1 + ({\vect m}^{(1)})_2 x_2)
\end{bmatrix}
\cdot 
\overline{\begin{bmatrix}
    d_1 \\[0.2em] d_2 \\[0.2em]
    -{\rm i}\chi(d_1 x_1 + d_2 x_2)
\end{bmatrix}} \\[0.5em]
&=-\frac{1}{{\chi^4}}\int_{\omega \times Y} {\mathbb A}\bigl( \simgrad {\vect u_3} + {\rm i}X_{\chi}{\vect u_2} \bigr) :\overline{{\rm i}X_\chi (d_1,d_2,0)^\top} \qquad \forall (d_1,d_2)^\top\in \C^2.
%%\end{split}
\end{align}

The approximation procedure is continued similarly to the case of the stretching space discussed in Section \ref{stretching_sec}. In particular, we sequentially define two more correctors ${\vect u_2^{(1)}} , {\vect u}_3^{(1)} \in H_{\#}^1(Y; H^1(\omega;\C^3))$ as solutions to the well-posed problems
\begin{align*}
%%\begin{split}
\chi^{-4}(\simgrad)^* {\mathbb A} \simgrad {\vect u_2^{(1)}} &= 
 -\chi^{-4}\Big(\left({\rm i}X_\chi\right)^*{\mathbb A}\simgrad {\vect u_1^{(1)}} + (\simgrad)^* {\mathbb A}{\rm i}X_{\chi}{\vect u_1^{(1)}} 
 \\[0.3em]
 &\quad+\left({\rm i}X_\chi\right)^* {\mathbb A} \Lambda_{\chi,({\vect m}^{(1)})_1,({\vect m}^{(1)})_2}^{\rm bend}\Big) -\Bigl(0,0,-{\rm i}\chi\bigl(({\vect m}^{(1)})_1 x_1 + ({\vect m}^{(1)})_2 x_2\bigr)\Bigr)^\top,
 \\[0.5em]
 %% \quad  {\vect u_2^{(1)}} \in H^{\rm bend}. 
%%\end{split}
%%\end{align*}
%%\begin{equation}
%%\begin{split}
\chi^{-4}(\simgrad)^* {\mathbb A} \simgrad \vect u_3^{(1)}&= 
-\chi^{-4}\left( \left({\rm i}X_\chi\right)^* {\mathbb A} \simgrad \bigl({\vect u_2^{(1)}} + {\vect u_3}\bigr) + (\simgrad)^* {\mathbb A} {\rm i}X_{\chi}\bigl({\vect u_2^{(1)}} + {\vect u_3}\bigr) \right)
 \\[0.4em]
&\quad-\chi^{-4}\left( \left({\rm i}X_\chi\right)^* {\mathbb A}{\rm i}X_{\chi}\bigl({\vect u_1^{(1)}} + {\vect u_2}\bigr) \right) - \bigl(({\vect m}^{(1)})_1, ({\vect m}^{(1)})_2, 0\bigr)^\top.
%%, \quad  {\vect u_3^{(1)}} \in H^{\rm bend}, 
%\end{split}
\end{align*}
By a straightforward calculation, it is shown that they satisfy the bounds
\begin{equation}\label{nak7} 
\bigl\Vert{\vect u_2^{(1)}} \bigr\Vert_{H^1(\omega\times Y ; \C^3)} \leq C{\chi^4}\bigl\Vert  \vect f \bigr\Vert _{L^2(\omega\times Y;\C^3)},\qquad 
%%\end{equation}
%%\begin{equation}
\bigl\Vert\vect u_3^{(1)}\bigr\Vert_{H^1(\omega\times Y ; \C^3)} \leq C|\chi|^5\bigl\Vert  \vect f \bigr\Vert _{L^2(\omega\times Y;\C^3)}.
\end{equation}
For our purposes it is necessary to further decrease the approximation error, and hence we iterate the above procedure once again.
\subsubsection*{4) Second refinement of the approximation}
Here we define correctors that eliminate the remaining lower-order terms on the right-hand side of the equation satisfied by the approximation error, thus achieving the desired order of smallness of the error itself.
 
The correctors $\vect u_0^{(2)} = \widetilde{\mathcal{I}}^{\rm bend}_\chi \begin{bmatrix}
    ({\vect m}^{(2)})_1 \\ ({\vect m}^{(2)})_2
\end{bmatrix}, \vect u_1^{(2)}, \vect u_2^{(2)}, \vect u_3^{(2)} \in H^{\rm bend}$ are constructed sequentially using the following relations:
%\begin{equation}
\begin{align}
%%\begin{split}
&\begin{aligned}
&\left(\frac{1}{{\chi^4}}{\mathbb A}_\chi^{\rm bend}  + \mathfrak{C}^{\rm bend}_\chi\right) \begin{bmatrix}
    ({\vect m}^{(2)})_1 \\[0.3em] ({\vect m}^{(2)})_2
\end{bmatrix}\cdot \overline{\begin{bmatrix}
    d_1 \\[0.25em] d_2
\end{bmatrix}} 
=-\int_{\omega \times  Y}{\mathbb A}\bigl(\simgrad {\vect u_3^{(1)}} + {\rm i}X_\chi {\vect u_2^{(1)}} + {\rm i}X_\chi {\vect u_3}\bigr):\overline{{\rm i}X_\chi (d_1,d_2,0)^\top} \\[0.3em] 
&\hspace{15em}-{\rm i}\chi\int_{\omega \times Y} \left(x_1{u_1},x_2{u_2} \right)^\top\cdot \overline{(d_1,d_2)^\top}\qquad
%%\end{split}
%%\end{equation}
\forall (d_1,d_2)^\top\in \C^2,
\end{aligned}
\label{hom2}\\[0.5em]
%%\begin{equation}
&(\simgrad)^* {\mathbb A} \simgrad \vect u_1^{(2)}
 =-(\simgrad)^*{\mathbb A} \Lambda_{\chi,({\vect m}^{(2)})_1,({\vect m}^{(2)})_2}^{\rm bend},\nonumber
 %%, \quad  {\vect u_1^{(2)}}  \in H^{\rm bend},
 \\[0.5em]
%%\end{equation}
%%\begin{equation}
%%\begin{split}
&\chi^{-4}(\simgrad)^* {\mathbb A} \simgrad {\vect u_2^{(2)}} = 
 -\chi^{-4}\Big(\left({\rm i}X_\chi\right)^*{\mathbb A}\simgrad {\vect u_1^{(2)}}  + (\simgrad)^* {\mathbb A}{\rm i}X_{\chi}{\vect u_1^{(2)}} \nonumber 
 \\[0.4em]
 &\hspace{15em}
 +\left({\rm i}X_\chi\right)^* {\mathbb A} \Lambda_{\chi,({\vect m}^{(2)})_1,({\vect m}^{(2)})_2}^{\rm bend}\Big)-\Bigl(0,0,-{\rm i}\chi\bigl(({\vect m}^{(2)})_1 x_1 + ({\vect m}^{(2)})_2 x_2\bigr)\Bigr)^\top,\nonumber
 %%, \quad  {\vect u_2^{(2)}}  \in H^{\rm bend},
 \\[0.5em] 
%%\end{split}
%%\end{equation}
%%\begin{equation}
%%\begin{split}
&\chi^{-4}(\simgrad)^* {\mathbb A} \simgrad {\vect u_3^{(2)}} = 
-\chi^{-4}\left( \left({\rm i}X_\chi\right)^* {\mathbb A} \simgrad\bigl({\vect u_2^{(2)}}  + {\vect u_3^{(1)}}\bigr) + (\simgrad)^* {\mathbb A} {\rm i}X_{\chi}\bigl({\vect u_2^{(2)}}  + {\vect u_3^{(1)}}\bigr) \right)\nonumber
 \\[0.4em]
&\hspace{15em}-\chi^{-4}\left( \left({\rm i}X_\chi\right)^* {\mathbb A} {\rm i}X_{\chi}\bigl({\vect u_1^{(2)}}  + {\vect u_2^{(1)}} + {\vect u_3}\bigr) \right) - \bigl(({\vect m}^{(2)})_1, ({\vect m}^{(2)})_2, 0\bigr)^\top - {\vect u_1}.\nonumber
%%, \quad  {\vect u_3^{(2)}}  \in H^{\rm bend}. 
%%\end{split}
\end{align}
All of these problems are well posed, which can be seen by checking directly that their right-hand sides vanish when tested against functions in $H^{\rm bend}$. Furthermore, one has
\begin{align}\label{nak11} 
\bigl\Vert({\vect u}_0^{(2)})_\alpha\bigr\Vert_{H^1(\omega\times Y ; \C)} &\leq C{\chi^2}\bigl\Vert  \vect f \bigr\Vert _{L^2(\omega\times Y;\C^3)},\quad\alpha=1,2, \qquad \bigl\Vert({\vect u}_0^{(2)})_3\bigr\Vert_{H^1(\omega\times Y ; \C)} \leq C|\chi|^3\bigl\Vert  \vect f \bigr\Vert _{L^2(\omega\times Y;\C^3)}\\[0.3em]
%%\end{equation}
%%\begin{equation}
\bigl\Vert{\vect u_1^{(2)}} \bigr\Vert_{H^1(\omega\times Y ; \C^3)}&\leq C{\chi^4}\bigl\Vert  \vect f \bigr\Vert _{L^2(\omega\times Y;\C^3)},\qquad
%%\end{equation}
%%\begin{equation}
\bigl\Vert{\vect u_2^{(2)}} \bigr\Vert_{H^1(\omega\times Y ; \C^3)} \leq C|\chi|^5\bigl\Vert  \vect f \bigr\Vert _{L^2(\omega\times Y;\C^3)},\\[0.3em]
%%\qquad
%%\end{equation}
%5\begin{equation}
\bigl\Vert{\vect u_3^{(2)}} \bigr\Vert_{H^1(\omega\times Y ; \C^3)}&\leq C\chi^6\bigl\Vert  \vect f \bigr\Vert _{L^2(\omega\times Y;\C^3)}.
\end{align}

\subsubsection*{5) Final approximation}
Here we carry out the last step of the approximation procedure. To this end, we define the function 
$$ 
\widetilde{\vect u}_{\rm approx}:= \vect u_0 + \vect u_0^{(1)} + \vect u_0^{(2)} + {\vect u_1} + {\vect u_1^{(1)}} + {\vect u_1^{(2)}}  + {\vect u_2} + {\vect u_2^{(1)}} + {\vect u_2^{(2)}}, 
$$
 which  satisfies
\begin{equation}
\chi^{-4}\left((\simgrad)^* + \left({\rm i}X_\chi\right)^*\right){\mathbb A}\left(\simgrad + {\rm i}X_\chi\right) \widetilde{\vect u}_{\rm approx} + \widetilde{\vect u}_{\rm approx} - S_{\!|\chi|}\vect f = \widetilde{\vect R}_{\chi},
\end{equation}
where $\widetilde{\vect R}_{\chi}$ is given by
\begin{align*}
%%\begin{split}
\widetilde{\vect R}_{\chi}=\chi^{-4}\Big(\left({\rm i}X_\chi\right)^*{\mathbb A}{\rm i}X_{\chi} ({\vect u_3^{(1)}} + {\vect u_2^{(2)}}  + {\vect u_3^{(2)}}  )&+(\simgrad)^* {\mathbb A}{\rm i}X_{\chi}{\vect u_3^{(2)}}  +\left({\rm i}X_\chi\right)^* {\mathbb A} \simgrad {\vect u_3^{(2)}} \Big)  \\[0.1em]
&+{\vect u_2} + {\vect u_3} + {\vect u_1^{(1)}} + {\vect u_2^{(1)}} + {\vect u_3^{(1)}} + {\vect u_1^{(2)}}  + {\vect u_2^{(2)}}  + {\vect u_3^{(2)}}
%%\end{split}
\end{align*}
and can be estimated as follows:
\begin{equation}
\bigl\Vert \widetilde{\vect R}_{\chi}\bigr\Vert _{[H^1_\#(Y;H^1(\omega;\C^3))]^*} \leq C |\chi|^3 \bigl\Vert  \vect f \bigr\Vert _{L^2(\omega\times Y,\C^3)}.
\end{equation}
The approximation error
$$
\vect u_{\rm error}:= \vect u - \widetilde{\vect u}_{\rm approx}. 
$$
satisfies
\begin{equation}
\chi^{-4}\left((\simgrad)^* + \left({\rm i}X_\chi\right)^*\right){\mathbb A}\left(\simgrad + {\rm i}X_\chi\right) \vect u_{\rm error} + \vect u_{\rm error} = -\widetilde{\vect R}_{\chi}.
\label{approx_error_eq} 
\end{equation}
A suitable version of the estimate \eqref{aprioriestimateh-1} now yields the bound
%\begin{equation}
%\frac{1}{{\chi^4}}\Vert \left( \simgrad + {\rm i}X_{\chi}\right)\vect u_{\rm error}\Vert _{L^2(\omega\times Y,\C^3)}^2 + \Vert \vect u_{\rm error}\Vert _{L^2(\omega\times Y,\C^3)}^2 \leq C\Vert \tilde{R}_{\chi}\Vert _{L^2(\omega\times Y,\C^3)} \Vert \vect u_{\rm error}\Vert _{L^2(\omega\times Y,\C^3)}.
%\end{equation}
%Now, invoking the apriori estimates, we can conclude the following:
\begin{equation} \label{nak8} 
\begin{split}
    \bigl\Vert\vect u_{\rm error}\bigr\Vert_{H^1(\omega\times Y, \C^3)} \leq C|\chi|^3\bigl\Vert  \vect f \bigr\Vert _{L^2(\omega\times Y, \C^3)}.
\end{split}
\end{equation}
The following proposition provides the final estimates on the approximation.
\begin{proposition}\label{propbend} 
Let $\vect u\in H^1_{\#}(Y;H^1(\omega;\C^3))$ be the  solution of problem \eqref{bendingproblem}. Then, the following estimates  hold: 
\begin{equation}
\begin{aligned}
	\label{bend_estimates}
 \bigl\Vert P_i(\vect u -{\vect u}_0)
%% \widetilde{\mathcal{I}}^{\rm bend}_\chi \begin{bmatrix} m_1  \\ m_2 \end{bmatrix} 
\bigr\Vert_{H^1(\omega\times Y,\C^2)}&\leq \left\{ \begin{array}{ll}
        C|\chi|\bigl\Vert  \vect f \bigr\Vert _{L^2(\omega\times Y;\C^3)}, & \mbox{ $i = 1,2$},\\[0.5em]
         C{\chi^2}\bigl\Vert  \vect f \bigr\Vert _{L^2(\omega\times Y;\C^3)}, & \mbox{ $i = 3,$}\end{array} \right.  \\[0.7em]
 \bigl\Vert P_i(\vect u - {\vect u}_0-{\vect u_0^{(1)}} 
 %%\widetilde{\mathcal{I}}^{\rm bend}_\chi %%\begin{bmatrix} m_1  \\ m_2 \end{bmatrix} - 
%% \widetilde{\mathcal{I}}^{\rm bend}_\chi \begin{bmatrix} ({\vect m}^{(1)})_1  \\[0.3em] ({\vect m}^{(1)})_2 \end{bmatrix}  
-{\vect u_1})\bigr\Vert_{H^1(\omega\times Y,\C^2)}
&\leq \left\{ \begin{array}{ll}
        C{\chi^2}\bigl\Vert  \vect f \bigr\Vert _{L^2(\omega\times Y;\C^3)}, & \mbox{ $i = 1,2$},\\[0.5em]
         C|\chi|^3\bigl\Vert  \vect f \bigr\Vert _{L^2(\omega\times Y;\C^3)}, & \mbox{ $i = 3$},\end{array} \right. 
\end{aligned}
\end{equation}
\iffalse
\begin{equation}
\begin{aligned}
	\label{bend_estimates}
 \left\Vert\begin{bmatrix}{\vect u_1} \\ {\vect u_2}\end{bmatrix} - 
{\rm e}^{{\rm i}\chi y}\begin{bmatrix} m_1  \\ m_2 \end{bmatrix} \right\Vert_{H^1(\omega\times Y,\C^2)} \leq C|\chi|\bigl\Vert  \vect f \bigr\Vert _{L^2(\omega\times Y;\C^3)}, \\
\left\Vert {u_3}  + {\rm e}^{{\rm i}\chi y }{\rm i}\chi(m_1 x_1 + m_2 x_2) \right\Vert_{H^1(\omega\times Y,\C)} \leq C{\chi^2}\bigl\Vert  \vect f \bigr\Vert _{L^2(\omega\times Y;\C^3)},\\
 \left\Vert\begin{bmatrix}{\vect u_1} \\ {\vect u_2}\end{bmatrix} - 
{\rm e}^{{\rm i}\chi y}\begin{bmatrix} m_1 + ({\vect m}^{(1)})_1  \\ m_2 +({\vect m}^{(1)})_2 \end{bmatrix} - {\rm e}^{{\rm i}\chi y}\begin{bmatrix}({\vect u_1})_1 \\ ({\vect u_1})_2\end{bmatrix} \right\Vert_{H^1(\omega\times Y,\C^2)} \leq C{\chi^2}\bigl\Vert  \vect f \bigr\Vert _{L^2(\omega\times Y;\C^3)}, \\
\left\Vert {u_3}  + {\rm e}^{{\rm i}\chi y }{\rm i}\chi( (m_1 +({\vect m}^{(1)})_1) x_1 + (m_2 + ({\vect m}^{(1)})_2) x_2) - {\rm e}^{{\rm i}\chi y}({\vect u_1})_3 \right\Vert_{H^1(\omega\times Y,\C)} \leq C|\chi|^3\bigl\Vert  \vect f \bigr\Vert _{L^2(\omega\times Y;\C^3)},\\
\end{aligned}
\end{equation}
\fi
where ${\vect u}_0, {\vect u}_0^{(1)}, {\vect u_1}$
%% $m_1,m_2,({\vect m}^{(1)})_1,({\vect m}^{(1)})_2,{\vect u_1}$ 
are defined by the above approximation procedure in the bending space.
\end{proposition}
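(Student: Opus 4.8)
The plan is to deduce both chains of estimates directly from the final approximation bound \eqref{nak8} together with the component-wise corrector estimates obtained during the construction in the bending space; no further analytical input is needed. First I would record the two telescoping identities that follow from the definition of $\widetilde{\vect u}_{\rm approx}$ in Step~5) and from $\vect u_{\rm error}:=\vect u-\widetilde{\vect u}_{\rm approx}$:
\begin{align*}
\vect u - \vect u_0 &= \vect u_{\rm error} + \vect u_0^{(1)} + \vect u_0^{(2)} + {\vect u_1} + {\vect u_1^{(1)}} + {\vect u_1^{(2)}} + {\vect u_2} + {\vect u_2^{(1)}} + {\vect u_2^{(2)}},\\[0.35em]
\vect u - \vect u_0 - \vect u_0^{(1)} - {\vect u_1} &= \vect u_{\rm error} + \vect u_0^{(2)} + {\vect u_1^{(1)}} + {\vect u_1^{(2)}} + {\vect u_2} + {\vect u_2^{(1)}} + {\vect u_2^{(2)}}.
\end{align*}
Applying $P_i$ and the triangle inequality then reduces the proposition to summing, component by component, the bounds already established for each corrector on the right-hand sides.

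For the first estimate the corrector of lowest order in $|\chi|$ is $\vect u_0^{(1)}$, whose first two components are $O(|\chi|)\Vert\vect f\Vert_{L^2}$ and whose third component is $O(\chi^2)\Vert\vect f\Vert_{L^2}$ by \eqref{u0^1_bend}; the components of ${\vect u_1}$ are $O(\chi^2)\Vert\vect f\Vert_{L^2}$ by \eqref{nak6}, and all the remaining correctors $\vect u_0^{(2)},{\vect u_1^{(1)}},{\vect u_1^{(2)}},{\vect u_2},{\vect u_2^{(1)}},{\vect u_2^{(2)}}$, together with $\vect u_{\rm error}$, are of order $\chi^2$ or higher by \eqref{nak11}, \eqref{nak10}, \eqref{nak7}, \eqref{nak9}, \eqref{nak8}. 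Here one must keep track of the anisotropy built into the ansatz: the correctors lying in $H^{\rm bend}$ (such as ${\vect u_1},{\vect u_2},{\vect u_1^{(1)}},\dots$) are estimated uniformly in all three components, whereas $\vect u_0$ and the $\vect u_0^{(j)}$, being images of $\widetilde{\mathcal I}^{\rm bend}_\chi$, have third components carrying the extra factor $|\chi|$ coming from the entry $-{\rm i}\chi(x_1 m_1 + x_2 m_2)$. Collecting the dominant contributions gives $C|\chi|\Vert\vect f\Vert_{L^2}$ for $i=1,2$ and $C\chi^2\Vert\vect f\Vert_{L^2}$ for $i=3$.

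For the second estimate the lowest-order corrector on the right-hand side is $\vect u_0^{(2)}$, whose first two components are $O(\chi^2)\Vert\vect f\Vert_{L^2}$ and whose third component is $O(|\chi|^3)\Vert\vect f\Vert_{L^2}$ by \eqref{nak11}; every other term present, namely ${\vect u_1^{(1)}},{\vect u_1^{(2)}},{\vect u_2},{\vect u_2^{(1)}},{\vect u_2^{(2)}}$ and $\vect u_{\rm error}$, is of order $|\chi|^3$ or higher by \eqref{nak10}, \eqref{nak7}, \eqref{nak9}, \eqref{nak8}. With the same anisotropy bookkeeping, the sum gives $C\chi^2\Vert\vect f\Vert_{L^2}$ for $i=1,2$ and $C|\chi|^3\Vert\vect f\Vert_{L^2}$ for $i=3$, as claimed.

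The only genuinely delicate point is this bookkeeping: one has to verify that each of the nine terms making up $\widetilde{\vect u}_{\rm approx}$ that is \emph{not} retained in the truncated approximation has been assigned the correct power of $|\chi|$, and, wherever it occurs, that the third component receives its additional factor of $|\chi|$. All of the hard analysis is already contained in \eqref{nak8}, which itself rests on the ellipticity and Korn argument behind \eqref{aprioriestimateh-1} adapted to the scaling $\chi^{-4}$; once the terms have been collected, the two estimates follow immediately. \qed
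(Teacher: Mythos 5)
Your argument is correct and follows the same route as the paper's (one-line) proof: write $\vect u-\vect u_0$ and $\vect u-\vect u_0-\vect u_0^{(1)}-\vect u_1$ as $\vect u_{\rm error}$ plus the remaining correctors from $\widetilde{\vect u}_{\rm approx}$, then invoke \eqref{nak8} together with the component-wise corrector bounds and take maxima of powers of $|\chi|$. One small remark: the paper's proof cites \eqref{nak13} (the bound on $\vect u_3$) in its list, but $\vect u_3$ is not a summand of $\widetilde{\vect u}_{\rm approx}$ and only enters through the remainder $\widetilde{\vect R}_\chi$, i.e.\ it is already absorbed into \eqref{nak8}; the estimate actually needed for the leftover term $\vect u_2$ is \eqref{nak9}, which is the one you correctly cite.
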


\begin{proof} 
The proof is a direct consequence of \eqref{nak8}, by utilising the estimates \eqref{nak6}, \eqref{nak13}, \eqref{u0^1_bend}, \eqref{nak10}, \eqref{nak7}, and \eqref{nak11}.  	
\end{proof} 	

\begin{remark}
	\label{replacement_rem}
	\begin{enumerate}[label=\textbf{(\alph*)}]
    \item Note that the estimates in above asymptotic procedure continue to hold if the vectors $(m_1, m_2)^\top$, $(({\vect m}^{(1)})_1, ({\vect m}^{(1)})_2)^\top,$ $(({\vect m}^{(2)})_1, ({\vect m}^{(2)})_2)^\top$ are replaced throughout by the solutions $(\widetilde{m}_1, \widetilde{m}_2)^\top$, $((\widetilde{\vect m}^{(1)})_1, (\widetilde{\vect m}^{(1)})_2)^\top,$ $((\widetilde{\vect m}^{(2)})_1, (\widetilde{\vect m}^{(2)})_2)^\top$ to the problems obtained from \eqref{hom1}, \eqref{u01_bend}, \eqref{hom2} respectively, by setting $\mathfrak{C}^{\rm bend}_{\chi} = I.$ We start by demonstrating how the described change affects the part pertaining to $(m_1, m_2)^\top.$
    
Using $(m_1 - \widetilde{m}_1, m_2 - \widetilde{m}_2)^\top$ as a test function in \eqref{hom1} and in the equation
\[
 \bigl({\mathbb A}^{\rm bend}+I\bigr)
\begin{bmatrix} \widetilde{m}_1 \\ \widetilde{m}_2 \end{bmatrix}  = 
\mathcal{M}_{\chi}^{\rm bend} S_{\!|\chi|} \vect f
\]
and subtracting one from the other, yields
$$
{\mathbb A}^{\rm bend}
 \begin{bmatrix}
 m_1 - \widetilde{m}_1 \\ m_2 - \widetilde{m}_2     
 \end{bmatrix} \cdot \begin{bmatrix}
 m_1 - \widetilde{m}_1 \\ m_2 - \widetilde{m}_2     
 \end{bmatrix} + {\chi^2}\begin{bmatrix}
 {\mathfrak c}_1(\omega) & 0 \\ 0 &  {\mathfrak c}_2(\omega)     
 \end{bmatrix} \begin{bmatrix}
 m_1  \\ m_2     
 \end{bmatrix} \cdot \begin{bmatrix}
 m_1 - \widetilde{m}_1 \\ m_2 - \widetilde{m}_2     
 \end{bmatrix} = 0.
 $$
Therefore, one has 
 $$
 \left\vert\begin{bmatrix}
 m_1 - \widetilde{m}_1 \\ m_2 - \widetilde{m}_2     
 \end{bmatrix}\right\vert \leq C{\chi^2} \bigl\Vert  \vect f \bigr\Vert _{L^2(\omega\times Y;\C^3)}.
$$
Similarly, one obtains appropriate estimates for the differences 
\[
\begin{bmatrix}
	({\vect m}^{(1)})_1 - (\widetilde{\vect m}^{(1)})_1 \\  ({\vect m}^{(1)})_2 - (\widetilde{\vect m}^{(1)})_2
\end{bmatrix},\quad 
\begin{bmatrix}
	({\vect m}^{(2)})_1 - (\widetilde{\vect m}^{(2)})_1 \\  ({\vect m}^{(2)})_2 - (\widetilde{\vect m}^{(2)})_2,
\end{bmatrix},
\]
which concludes the proof of the above claim.
 
\item Note that using ${\mathfrak C}^{\rm bend}_\chi$ instead of ${\mathfrak C}^{\rm bend}$ is equivalent to utilising the expression 
\begin{equation}
-{\rm i}\chi(m_1 x_1 + m_2 x_2)
\label{third_comp}
\end{equation}
 in the third component of the second term on the left-hand side of \eqref{first_step_bend}. The observation made in the present remark above shows, in particular, that the asymptotics procedure could have been carried by setting the said component to zero. However, we decided to include the term \eqref{third_comp}
in order to show that the approximate solution has the form of the Kirchhoff-Love ansatz, cf. Section \ref{sec31}, which by implication necessitated the use of the tensor ${\mathfrak C}^{\rm bend}_\chi.$ 
%%, however, to carry out the asymptotic procedure with an additional 
\end{enumerate}
\end{remark}  
\begin{remark}
   The first estimate in \eqref{bend_estimates} can be rewritten as:
   \begin{equation}
   \label{bendingestimate1}
       \left\Vert P_i\left(\left(\frac{1}{{\chi^4}}\mathcal{A}_\chi + I \right)^{-1}\bigg|_{L^2_{\rm bend}} - \left(\widetilde{\mathcal{M}}_\chi^{\rm bend}\right)^*\bigl({\mathbb A}^{\rm bend} + I \bigr)^{-1}\widetilde{\mathcal{M}}_\chi^{\rm bend}  \right)S_{\!|\chi|} \right\Vert_{L^2 \to H^1} \leq  \left\{ \begin{array}{ll}
        C|\chi|, & \mbox{ $i = 1,2$},\\[0.4em]
         C{\chi^2}, & \mbox{ $i = 3$}.\end{array} \right. 
   \end{equation}
   The second estimate can be rewritten as:
   \begin{equation}
   \label{bendingestimate2}
   \begin{split}
       \left\Vert P_i\left(\left(\dfrac{1}{\chi^4}\mathcal{A}_\chi + I \right)^{-1}\bigg|_{L^2_{\rm bend}} - \left(\widetilde{\mathcal{M}}_\chi^{\rm bend}\right)^*\bigl({\mathbb A}^{\rm bend} + I \bigr)^{-1}\widetilde{\mathcal{M}}_\chi^{\rm bend} - \mathcal{A}_{\chi,\rm corr}^{\rm bend} - \widetilde{\mathcal{A}}_{\chi,\rm corr}^{\rm bend} \right) S_{\!|\chi|}\right\Vert_{L^2 \to H^1} \\[0.7em] \leq \left\{ \begin{array}{ll}
        C{\chi^2}, & \mbox{ $i = 1,2$},\\[0.35em]
         C|\chi|^3, & \mbox{ $i = 3$},\end{array} \right.
            \end{split}
   \end{equation}
   where the bounded operators $\mathcal{A}_{\chi,\rm corr}^{\rm bend}$ and $\widetilde{\mathcal{A}}_{\chi,\rm corr}^{\rm bend}$ are defined by 
   %%with the asymptotic procedure above with:
   \begin{equation}
       \mathcal{A}_{\chi,\rm corr}^{\rm bend} \vect f := {\vect u_1}, \qquad \widetilde{\mathcal{A}}_{\chi,\rm corr}^{\rm bend} \vect f := \vect u_0^{(1)}.
   \end{equation}

\end{remark}

\begin{remark}
 We note that the absence of the scaling term $S_{\!|\chi|}$ is similar to the absence of the out-of-line force term in the $H^1$ estimate for the distance between the original and homogenised resolvents. Indeed, consider the following two problems: 
 \begin{equation}
     \bigl({\mathbb A}^{\rm bend}  + I \bigr)\begin{bmatrix}
 \widehat{m}_1  \\ \widehat{m}_2     
 \end{bmatrix} = \widetilde{\mathcal{M}}_\chi^{\rm  bend}\vect f, \qquad     \bigl({\mathbb A}^{\rm bend}+I\bigr)\begin{bmatrix}
 \widetilde{m}_1  \\ \widetilde{m}_2     
 \end{bmatrix} = \widetilde{\mathcal{M}}_\chi^{\rm  bend} S_\infty \vect f = \int_{\omega \times Y}\,\widehat{\!\vect f},
%%\begin{bmatrix}
%% {f_1} \\ {f_2}
%% \end{bmatrix},
 \end{equation}
 where 
 \begin{equation} \label{nak70} 
 S_\infty = \begin{bmatrix}
 1 & 0 & 0 \\ 0 & 1 & 0 \\ 0 & 0 & 0
 \end{bmatrix}.
 \end{equation} 
 The difference $(\widehat{m}_1 - \widetilde{m}_1, \widehat{m}_2 - \widetilde{m}_2)^\top$ satisfies 
 \begin{equation}
     \bigl({\mathbb A}^{\rm bend}  + I \bigr)\begin{bmatrix}
\widehat{m}_1 - \widetilde{m}_1  \\[0.25em] \widehat{m}_2 - \widetilde{m}_2     
 \end{bmatrix} =\int_{\omega \times Y}{\rm i}\chi f_3\widehat{x}
%%\begin{bmatrix}
%% {\rm i} \chi {f_3} x_1 \\[0.25em] 
%% {\rm i} \chi {f_3} x_2
%% \end{bmatrix},
 \end{equation}
 so that one has
 \begin{equation}
     \bigl|\left(\widehat{m}_1 - \widetilde{m}_1, \widehat{m}_2 - \widetilde{m}_2  \right)^\top\bigr|\leq C |\chi| \left\Vert \vect f\right\Vert_{L^2(\omega \times Y;\C^3)}.
 \end{equation}
It follows immediately that
\begin{equation}
    \left\Vert P_i\left( \widetilde{\mathcal{I}}_\chi^{\rm bend}\begin{bmatrix}
    \widehat{m}_1 \\ \widehat{m}_2
    \end{bmatrix} - \widetilde{\mathcal{I}}_\chi^{\rm bend}\begin{bmatrix}
    \widetilde{m}_1 \\ \widetilde{m}_2
    \end{bmatrix} \right) \right\Vert_{H^1(\omega\times Y;\C^3)} \leq  \left\{ \begin{array}{ll}
        C|\chi|\bigl\Vert  \vect f \bigr\Vert _{L^2(\omega\times Y;\C^3)}, & \mbox{ $i = 1,2$},\\[0.6em]
         C{\chi^2}\bigl\Vert  \vect f \bigr\Vert _{L^2(\omega\times Y;\C^3)}, & \mbox{ $i = 3$},\end{array}\right. 
\end{equation} 
and therefore
 \begin{equation}
     \left\Vert  P_i \left( \widetilde{\mathcal{M}}_\chi^{\rm bend}\right)^* \bigl({\mathbb A}^{\rm bend}
     +I\bigr)^{-1}\widetilde{\mathcal{M}}_\chi^{\rm bend} \left( I - S_\infty \right)\right\Vert _{L^2 \to H^1} \leq  \left\{ \begin{array}{ll}
        C|\chi|, & \mbox{ $i = 1,2$},\\[0.5em]
         C{\chi^2}, & \mbox{ $i = 3$}.\end{array} \right.
 \end{equation}
 Similarly, due to \eqref{aprioriestimatebendingh1}, see Remark \ref{bending_apriori_rem}, we have 
 \begin{equation}
 \label{noforcescaling}
     \left\Vert  P_i  \left(\frac{1}{{\chi^4}}\mathcal{A}_\chi + I \right)^{-1} \left( I - S_\infty \right)\right\Vert _{L^2 \to H^1} \leq  \left\{ \begin{array}{ll}
        C|\chi|, & \mbox{ $i = 1,2$},\\[0.5em]
         C{\chi^2}, & \mbox{ $i = 3$}.\end{array} \right.
 \end{equation}
\end{remark}

 \begin{remark}
    We emphasise that the asymptotic procedure, performed in this section can be extended to an approximation of arbitrary order in $|\chi|$, by generating a series in $\chi$. One can write down explicit recurrence relations that define the relevant correctors entering this series up to any order in $|\chi|$. 
    %%Not only does this process lead to an approximation to arbitrary precision, but it also involves  physically meaningful terms. 
 \end{remark}

\section{Norm-resolvent estimates under geometric and material symmetries}
\label{section5}
In this section, we provide the norm-resolvent estimates for the operators on the original domain $\omega \times \R$. This is done by chosing optimal estimates with respect to $\varepsilon$ for each $\chi$, and then applying the Gelfand pullback in order to  translate  the estimates to the original physical setting. 

\subsection{$L^2 \to L^2$ norm-resolvent estimates}
\label{L2toL2}
We start with the stretching case. Recall that $P_i:\R^3 \to \R$ is the projection on the $i$-th vector component, $i=1,2,3;$ in what follows we also utilise similar projections $\pi_i :\R^2 \to \R,$ $i=1,2.$

\iffalse
\begin{theorem}
\label{stretchingnormresolvent}
Suppose that Assumption \ref{matsym} holds and that the force density $\vect f$ is taken to be element of $L^2_{\rm stretch}$. Suppose also that the spectral scaling parameter satisfies the inequality $\gamma>-2.$ Then there exists $C>0$ such that the following estimates hold:
\end{theorem}
\fi

\begin{proof}[Proof of the Theorem \ref{THML2L2} under Assumption \ref{matsym}, formula \eqref{stretch23}]\RRR
First, we notice that the norm-resolvent estimate \eqref{stretch23} is equivalent to 
\begin{equation}
\label{stretchl2l2estimatescomponents}
    \begin{aligned}
        &\left\Vert P_1\left(\frac{1}{\varepsilon^{\gamma}} \mathcal{A}_\varepsilon + I \right)^{-1}\vect f - x_2 \pi_1 \left(\frac{1}{\varepsilon^{\gamma}}\mathcal{A}^{\rm stretch} + \mathfrak{C}^{\rm stretch}\right)^{-1}\mathcal{M}^{\rm stretch} \Xi_\varepsilon\vect f \right\Vert_{L^2(\omega\times \mathbb{R})} \leq C\varepsilon^{\tfrac{\gamma+2}{2}} \bigl\Vert  \vect f \bigr\Vert _{L^2(\omega\times \mathbb{R};\R^3)},  \\[0.6em]
        &\left\Vert P_2\left( \frac{1}{\varepsilon^{\gamma}}\mathcal{A}_\varepsilon + I \right)^{-1}\vect f + x_1 \pi_1 \left(\frac{1}{\varepsilon^{\gamma}}\mathcal{A}^{\rm stretch} + \mathfrak{C}^{\rm stretch}\right)^{-1}\mathcal{M}^{\rm stretch}\Xi_\varepsilon\vect f \right\Vert_{L^2(\omega\times \mathbb{R})} \leq C\varepsilon^{\tfrac{\gamma+2}{2}} \bigl\Vert  \vect f \bigr\Vert _{L^2(\omega\times \mathbb{R};\R^3)}, \\[0.6em]
        &\left\Vert P_3 \left(\frac{1}{\varepsilon^{\gamma}} \mathcal{A}_\varepsilon + I \right)^{-1}\vect f -  \pi_2 \left(\frac{1}{\varepsilon^{\gamma}}\mathcal{A}^{\rm stretch} + \mathfrak{C}^{\rm stretch}\right)^{-1}\mathcal{M}^{\rm stretch}\Xi_\varepsilon\vect f \right\Vert_{L^2(\omega\times \mathbb{R})} \leq C\varepsilon^{\tfrac{\gamma+2}{2}} \bigl\Vert  \vect f \bigr\Vert _{L^2(\omega\times \mathbb{R};\R^3)}.
    \end{aligned}
\end{equation}
Therefore, in what follows we aim at proving \eqref{stretchl2l2estimatescomponents}.
\CCC We will only prove the first estimate, since the others are obtained in a similar way. 
We recall the estimates \eqref{stretch_estimates} for the solutions of the resolvent equations 
\begin{equation*}
 \bigl(\chi^{-2}\mathcal{A}_\chi + I\bigr)\vect u = \vect f,\qquad\quad
%\end{equation*}
%\begin{equation*}
 \bigl({\mathbb A}^{\rm stretch}  + \mathfrak{C}^{\rm stretch}\bigr)(m_1,m_2)^\top = \widetilde{\mathcal{M}}^{\rm stretch} \vect f.
\end{equation*}

\CCC By virtue of \eqref{stretchingestimate1}, the following norm-resolvent estimate holds:
\begin{align*}
    &\left\Vert P_1\left( \frac{1}{{\chi^2}} \mathcal{A}_\chi + I \right)^{-1} \vect f - x_2 \pi_1 \bigl({\mathbb A}^{\rm stretch}  + \mathfrak{C}^{\rm stretch}\bigr)^{-1}\widetilde{\mathcal{M}}^{\rm stretch}\vect f \right\Vert_{L^2(\omega \times  Y)} \leq C |\chi| \bigl\Vert  \vect f \bigr\Vert _{L^2(\omega \times  Y;\C^3)}.
\end{align*}

Recall that the operator $\mathcal{A}_\chi$ is selfadjoint, positive, with compact resolvent, so its spectrum consists of real non-negative eigenvalues, all of which are of order ${O}(1)$ except for the  smallest two,  $\lambda_1^\chi, \lambda_2^\chi$, which are of order ${O}({\chi^2})$. (We  can estimate the  interval  in which $\lambda_1^\chi/{\chi^2}$ and $\lambda_2^\chi/{\chi^2}$ are found uniformly in $|\chi|$). 

%The operator $\frac{1}{\varepsilon^2}\mathcal{A}_\chi + I$ has two smallest eigenvalues of the order ${O}(\frac{{\chi^2}}{\varepsilon^2}) + 1$, with the rest of the order ${O}(\frac{1}{\varepsilon^2}) + 1$. For $\varepsilon^2 \approx {\chi^2}$, $\lambda_1,\lambda_2 \approx {O}(1)$, while for $\varepsilon^2 \ll {\chi^2}$ we have $\lambda_1,\lambda_2 \approx {O}(\frac{{\chi^2}}{\varepsilon^2})$  \\
 For every $\varepsilon>0$, $\chi \neq 0$ we define
\begin{equation}\label{def_g_analytic}
   g_{\varepsilon,\chi}(z) := \left(\frac{{\chi^2}}{\varepsilon^{\gamma+2}}z + 1 \right)^{-1},\quad  \Re(z) > 0.
\end{equation} 
 Note that for every $\eta > 0,$ the function  $g_{\varepsilon,\chi}$ is bounded on the half-plane $\left\{z \in \C, \Re(z) \geq \eta\right\}:$
 \begin{equation}
     \bigl|g_{\varepsilon,\chi}(z)\bigr| \leq C(\eta) \left(\max\left\{\frac{{\chi^2}}{\varepsilon^{\gamma+2}}, 1\right\}\right)^{-1}.
 \end{equation}
%%%This is due to the following calculation
%%% \begin{eqnarray*}
%%% \bigl|g_{\varepsilon,\chi}(z)\bigr|^{-1} & = & \biggl|\frac{{\chi^2}}{\varepsilon^{\gamma+2}}z + 1\biggr| \geq \frac{{\chi^2}}{\varepsilon^{\gamma+2}}\eta+ 1 \geq \left\{ \begin{array}{ll}
%%%         \dfrac{{\chi^2}}{\varepsilon^{\gamma+2}} & \mbox{if ${\chi^2} \gg \varepsilon^{\gamma+2}$},\\[0.8em]
%%%        1 & \mbox{if $ \varepsilon^{\gamma+2} \approx {\chi^2}$}.\end{array} \right.
%%% \end{eqnarray*}
 Due to the bounds on the both eigenvalues of $\mathcal{A}_\chi$ of order ${\chi^2}$, we deduce that the two smallest eigenvalues of the operator $\chi^{-2}\mathcal{A}_\chi$ are uniformly positioned within a fixed, independent  $\chi$ nor $\varepsilon,$ interval  on the positive real axis. The same is true for the two \RRR eigenvalues $\kappa_1^\chi$, $\kappa_2^\chi$ of the matrix  $\chi^{-2}{\mathbb A}_\chi^{\rm stretch} $.  Uniform bounds on these eigenvalues allow us to deduce that there exists a closed contour $\Gamma \subset \left\{z\in \C, \Re(z)>0\right\}$ and a constant $\mu > 0$, such that for every $\chi \in [-\mu,\mu]\setminus \left\{0\right\}$ one has the following properties:
\begin{itemize}
\item $\Gamma$ encloses the two smallest eigenvalues of both the operators $\chi^{-2}\mathcal{A}_\chi$ and $\chi^{-2}{\mathbb A}_\chi^{\rm stretch} $. 
%%\\
\item $\Gamma$ does not enclose any other eigenvalue (of higher order). 
%%\\
\item $\exists \rho_0 > 0$,  $\inf_{z\in \Gamma}|z-\lambda_i^\chi|\geq \rho_0$,  $\inf_{z\in \Gamma}|z-\kappa_i^\chi|\geq \rho_0$, $i=1,2.$ 
\end{itemize}
Note that $g_{\varepsilon,\chi}$ is analytic in the right half-plane, the Cauchy integral formula yields
$$
g_{\varepsilon,\chi}(\mathcal{A_\chi})P_{\Gamma}:=\frac{1}{2\pi{\rm i}} \oint_\Gamma g_{\varepsilon,\chi}(z)(zI-\mathcal{A}_\chi)^{-1}dz, 
$$
where $P_\Gamma$ is the projection operator onto the eigenspace spanned  by  eigenfunctions corresponding  to the  eigenvalues enclosed  by  $\Gamma,$ and the integral over $\Gamma$ is taken in the anticlockwise direction. Note that
\begin{equation*}
    \frac{1}{\varepsilon^{\gamma+2}}\mathcal{A}_\chi + I = \frac{{\chi^2}}{\varepsilon^{\gamma+2}}\left(\frac{1}{{\chi^2}}\mathcal{A}_\chi \right) + I,
\end{equation*} 
 and therefore 
$$
\left(\frac{1}{\varepsilon^{\gamma+2}}\mathcal{A}_\chi + I\right)^{-1}P_\Gamma \vect f =\frac{1}{2\pi{\rm i}} \oint_\Gamma g_{\varepsilon,\chi}(z)\left(zI-\frac{1}{{\chi^2}}\mathcal{A}_\chi\right)^{-1} \vect  f dz.
$$
Due to the uniform estimates on the spectrum of $\mathcal{A}_\chi$, we have 
\begin{equation*}
    \begin{split}
        \left\Vert
        \left( \frac{1}{\varepsilon^{\gamma+2}}\mathcal{A}_\chi + I\right)^{-1}
        -\left( \frac{1}{\varepsilon^{\gamma+2}}\mathcal{A}_\chi + I\right)^{-1}P_\Gamma
        \right\Vert_{L^2 \to H^1} 
        \!\!=\left\Vert
        \left( \frac{1}{\varepsilon^{\gamma+2}}\mathcal{A}_\chi + I\right)^{-1}
        (I-P_\Gamma)
        \right\Vert_{L^2 \to H^1} \leq C\varepsilon^{\gamma+2},
    \end{split}
\end{equation*}
so, by applying the triangle inequality, in the estimates we can drop the projection $P_\Gamma.$  %sandwiching the operator $\varepsilon^{-(\gamma+2)}\mathcal{A}_\chi.$  
Note that the integral formula can also be applied to the resolvent  $({\mathbb A}^{\rm stretch}  + \mathfrak{C}^{\rm stretch})^{-1}$ despite its non-standard structure. As a result, we obtain
\begin{align*}
&\left\Vert P_1\left(\frac{1}{\varepsilon^{\gamma+2}}\mathcal{A}_\chi+I\right)^{-1}\vect f - x_2\pi_1\left(\frac{\chi^2}{\varepsilon^{\gamma+2}}{\mathbb A}^{\rm stretch} +\mathfrak{C}^{\rm stretch}\right)^{-1}\widetilde{\mathcal{M}}^{\rm stretch}\vect f \right\Vert_{L^2(\omega \times Y)}  
\\[0.5em]
&\leq \frac{1}{2\pi} \oint_{\Gamma}\bigl|g_{\varepsilon,\chi}(z)\bigr|\left\Vert P_1\left(zI -\frac{1}{{\chi^2}}\mathcal{A}_\chi\right)^{-1} \vect f - x_2\pi_1\bigl(z\mathfrak{C}^{\rm stretch}-{\mathbb A}^{\rm stretch}\bigr)^{-1}\widetilde{\mathcal{M}}^{\rm stretch} \vect f \right\Vert_{L^2(\omega \times Y)} dz 
 \\[0.5em]
 &\leq C|\chi| \left(\max\left\{\frac{\chi^{2}}{\varepsilon^{\gamma+2}}, 1\right\}\right)^{-1}\|\vect f\|_{L^2(\omega \times Y;\C^3)} \leq C \varepsilon^{\tfrac{{\gamma+2}}{2}}\|\vect f\|_{L^2(\omega \times Y;\C^3)}, 
\end{align*}
where the bound is the sharpest when ${\chi^2} \approx \varepsilon^{\gamma+2}$.

Since 
\begin{equation}
	\label{trans_back_stretch}
		\begin{aligned}
 \left( \frac{1}{\varepsilon^{\gamma}}\mathcal{A}^{\rm stretch} + \mathfrak{C}^{\rm stretch} \right)^{-1} &= \mathfrak{G}_\varepsilon^{-1}\left( \frac{\chi^2}{\varepsilon^{\gamma+2}}{\mathbb A}^{\rm stretch}  + \mathfrak{C}^{\rm stretch} \right)^{-1} \mathfrak{G}_\varepsilon,\\[0.6em]    \left(\frac{1}{\varepsilon^{\gamma}} \mathcal{A}_\varepsilon + I \right)^{-1}&= \mathfrak{G}_\varepsilon^{-1}\left( \frac{1}{\varepsilon^{\gamma+2}}\mathcal{A}_\chi + I \right)^{-1} \mathfrak{G}_\varepsilon,
 \end{aligned}
\end{equation}
the \CCC first estimate in \eqref{stretchl2l2estimatescomponents}  follows from the following \CCC identity,  which uses \eqref{smootheningoperator1} :
\begin{align}
&\begin{aligned}
&\mathfrak{G}_\varepsilon^{-1}\left(P_1 \left( \frac{1}{\varepsilon^{\gamma+2}}\mathcal{A}_\chi + I \right)^{-1} - x_2\pi_1\left( \frac{\chi^2}{\varepsilon^{\gamma+2}}{\mathbb A}^{\rm stretch}  + \mathfrak{C}^{\rm stretch} \right)^{-1}\widetilde{\mathcal{M}}^{\rm stretch}\right)  \mathfrak{G}_\varepsilon  \\[0.4em]
&\hspace{10em}=P_1\left( \frac{1}{\varepsilon^{\gamma}}\mathcal{A}_\varepsilon + I \right)^{-1} - x_2\pi_1\left(\frac{1}{\varepsilon^{\gamma}} \mathcal{A}^{\rm stretch} + \mathfrak{C}^{\rm stretch} \right)^{-1}\mathcal{M}^{\rm stretch}\Xi_\varepsilon, 
\end{aligned}	\label{trans_back_stretch1}
\end{align}
 by virtue of   the fact that the Gelfand transform is an isometry.
\end{proof}
Next, we analyse the bending case.

\iffalse
\begin{theorem}
\label{bend_thm1}	
Suppose that Assumption \ref{matsym} 
%on the material symmetries
holds and that the force density $\vect f$ is an element of $L^2_{\rm bend}$. Suppose that the spectral scaling and force scaling parameters satisfy the conditions $\gamma > -2$ and $\delta \geq 0.$ Then we have the following estimates:
\end{theorem}
\fi

\begin{proof}[Proof of the Theorem \ref{THML2L2} under Assumption \ref{matsym}, formula \eqref{bend23}] \RRR
Our first observation is that the estimates \eqref{bend23} can be rewritten as 
\begin{equation}
\label{bendl2l2estimatescomponents}
    \begin{aligned}
        \Bigg\Vert P_i \left( \frac{1}{\varepsilon^\gamma}\mathcal{A}_\varepsilon + I \right)^{-1} S_{\!\varepsilon^\delta} \vect f&-\pi_i \left(\frac{1}{\varepsilon^{\gamma-2}}\mathcal{A}^{\rm bend} + I\right)^{-1}\mathcal{M}^{\rm bend}_\varepsilon \Xi_\varepsilon S_{\!\varepsilon^\delta}  \vect f  \Bigg\Vert_{L^2(\omega\times \mathbb{R})} \\[0.6em] 
        &\leq C \varepsilon^{\tfrac{\gamma + 2}{4}}\max\Bigl\{\varepsilon^{\tfrac{\gamma + 2}{4}-\delta}, 1\Bigr\} \bigl\Vert  \vect f \bigr\Vert _{L^2(\omega\times \mathbb{R};\R^3)}, \quad i = 1,2, \\[0.6em]
         \Bigg\Vert P_3 \left( \frac{1}{\varepsilon^\gamma}\mathcal{A}_\varepsilon + I \right)^{-1} S_{\!\varepsilon^\delta} \vect f &+\varepsilon  %%\begin{bmatrix}
         %%x_1 \\ x_2
         %%\end{bmatrix}^\top
     \widehat{x}\cdot\frac{d}{d x_3}\left( \frac{1}{\varepsilon^{\gamma-2}}\mathcal{A}^{\rm bend} + I\right)^{-1}\mathcal{M}^{\rm bend}_\varepsilon \Xi_\varepsilon S_{\!\varepsilon^\delta} \vect f \Bigg\Vert_{L^2(\omega\times \mathbb{R})} \\[0.6em] 
     &\leq C \varepsilon^{\tfrac{\gamma + 2}{2}}\max\Bigl\{\varepsilon^{\tfrac{\gamma + 2}{4}-\delta}, 1\Bigr\} \bigl\Vert  \vect f \bigr\Vert _{L^2(\omega\times \mathbb{R};\R^3)}.
    \end{aligned}
\end{equation}
Our goal here is therefore to prove \eqref{bendl2l2estimatescomponents}. 

By virtue of \eqref{bendingestimate1} and Remark \ref{replacement_rem} we have the following norm-resolvent estimates:
\begin{align*}
    &\left\Vert P_i\left(\frac{1}{{\chi^4}}\mathcal{A}_\chi + I\right)^{-1} S_{\!|\chi|}\vect f- \pi_i\bigl({\mathbb A}^{\rm bend}  + I\bigr)^{-1}\widetilde{\mathcal{M}}_\chi^{\rm bend} S_{\!|\chi|}\vect f \right\Vert_{L^2\to L^2} \leq C|\chi| \left\Vert \vect f \right\Vert_{L^2(\omega\times Y;\C^3)}, \quad i=1,2, \\[0.8em]
    &\left\Vert P_3\left(\frac{1}{{\chi^4}}\mathcal{A}_\chi +I \right)^{-1}S_{\!|\chi|} \vect f +{\rm i}\chi
    %\begin{bmatrix} x_1\\ x_2 \\ \end{bmatrix}
    \widehat{x}\cdot\bigl({\mathbb A}^{\rm bend}+I \bigr)^{-1}\widetilde{\mathcal{M}}_\chi^{\rm bend} S_{\!|\chi|}\vect f \right\Vert_{L^2\to L^2} \leq C{\chi^2} \left\Vert \vect f \right\Vert_{L^2(\omega \times Y;\C^3)}. 
\end{align*}
%Recall that the operator $\mathcal{A}_\chi$ is selfadjoint, positive, with compact inverse, so its spectrum consists of real nonnegative eigenvalues, all of which are of order ${O}(1)$ except for the two smallest which are of order ${O}({\chi^4})$. (We know the precise fixed interval in which $\lambda_1/{\chi^4}$ and $\lambda_2/{\chi^4}$ can be found, uniformly on $|\chi|$). \\
%The operator $\frac{1}{\varepsilon^4}\mathcal{A}_\chi + I$ has two smallest eigenvalues of the order ${O}(\frac{{\chi^4}}{\varepsilon^4}) + 1$, with the rest of the order ${O}(\frac{1}{\varepsilon^4}) + 1$. For $\varepsilon^4 \approx {\chi^4}$, $\lambda_1,\lambda_2 \approx {O}(1)$, while for $\varepsilon^4 \ll {\chi^4}$ we have $\lambda_1,\lambda_2 \approx {O}(\frac{{\chi^4}}{\varepsilon^4})$  \\
%\newline
For each fixed $\varepsilon>0$, $\chi \neq 0$ we define the function 
\begin{equation}\label{definition_f_analytic}
    h_{\varepsilon,\chi}(z) := \left(\frac{{\chi^4}}{\varepsilon^{\gamma+2}}z + 1 \right)^{-1}, \quad  \Re(z) > 0.
\end{equation}
Similar to an earlier argument, for every $\eta > 0$ the function  $h_{\varepsilon,\chi}$ is bounded on the half-plane $\left\{z \in \C, \Re(z) \geq \eta\right\}:$ 
 \begin{equation}
     \bigl|h_{\varepsilon,\chi}(z)\bigr| \leq  C(\eta)\left(\max\left\{\frac{{\chi^4}}{\varepsilon^{\gamma+2}}, 1\right\}\right)^{-1}.
 \end{equation}
Due to the bounds on both the eigenvalues $\lambda_1^\chi$, $\lambda_2^\chi$ of $\mathcal{A}_\chi$ of the order ${\chi^4}$, and the eigenvalues $\kappa_1^\chi$, $\kappa_2^\chi$ of the matrix ${\mathbb A}_\chi^{\rm bend} $, there exists a closed contour $\Gamma \subset \left\{z\in \C, \Re(z)>0\right\}$ and a constant $\mu> 0$ such that for all  $\chi \in [-\mu,\mu]\setminus \left\{0\right\}$ the following properties hold:
\begin{itemize}
\item $\Gamma$ encloses the two smallest eigenvalues of both the  operators $\chi^{-4}\mathcal{A}_\chi$ and $\chi^{-4}{\mathbb A}_\chi^{\rm bend} $. 

\item $\Gamma$ does not enclose any other eigenvalue (of higher order). 

\item $\exists \rho_0 > 0$, $\inf_{z\in \Gamma}|z-\lambda_i^\chi|\geq \rho_0$, $\inf_{z\in \Gamma}|z-\kappa_i^\chi|\geq \rho_0$,  $i=1,2.$ 
\end{itemize}
Due to the fact that $h_{\varepsilon,\chi}$ is analytic on the right half-plane,  the Cauchy integral formula yields
$$ h_{\varepsilon,\chi}(\mathcal{A}_\chi)P_\Gamma:= \frac{1}{2\pi{\rm i}} \oint_\Gamma h_{\varepsilon,\chi}(z)(zI-\mathcal{A}_\chi)^{-1}dz, $$ 
where, similarly to the stretching case,  $P_\Gamma$ is the projection operator onto the eigenspace spanned  by  eigenfunctions corresponding  to the  eigenvalues enclosed  by the contour $\Gamma$ just defined.
 Noting that 
\begin{equation*}
    \frac{1}{\varepsilon^{\gamma+2}}\mathcal{A}_\chi + I = \frac{{\chi^4}}{\varepsilon^{\gamma+2}}\left(\frac{1}{{\chi^4}}\mathcal{A}_\chi \right) + I, \quad \quad 
    S_{\!\varepsilon^\delta} \vect f = S_{\!|\chi|} S_{\varepsilon^\delta/|\chi|} \vect f
\end{equation*} 
 and 
\begin{equation}\label{estimatef_bend}
    \bigl\|S_{\varepsilon^\delta/|\chi|} \vect f\bigr\|_{L^2(\omega\times Y;\C^3)}\leq \max \left\{1, \frac{|\chi|}{\varepsilon^{\delta}}  \right\} \|\vect f\|_{L^2(\omega \times Y;\C^3)},
\end{equation}
 we write 
\begin{equation}
    \left(\frac{1}{\varepsilon^{\gamma + 2}}\mathcal{A}_\chi + I\right)^{-1}P_\Gamma S_{\!\varepsilon^\delta} \vect f = \frac{1}{2\pi{\rm i}} \oint_\Gamma h_{\varepsilon,\chi}(z)\left(zI-\frac{1}{{\chi^4}}\mathcal{A}_\chi\right)^{-1} S_{\!|\chi|} S_{\varepsilon^\delta/|\chi|} \vect f dz.
\end{equation}
In the same fashion as before,  we can drop the operators $P_{\Gamma}$ from estimates. 
%%that sandwich $\left(\varepsilon^{-(\gamma + 2)}\mathcal{A}_\chi + I\right)^{-1}$. 
%%\newline
%We want to provide norm estimates of operators: 
%$$ P_i\left(\frac{1}{\varepsilon^4}\mathcal{A}_\chi+I\right)^{-1} - {\rm e}^{{\rm i}\chi %y}\pi_i\left(\frac{1}{\varepsilon^4}{\mathbb A}_\chi^{\rm bend} +I\right)^{-1}\widetilde{\mathcal{M}}_\chi^{\rm bend}, %\quad i=1,2, $$
%$$P_3\left(\frac{1}{\varepsilon^4}\mathcal{A}_\chi+I\right)^{-1} + {\rm e}^{{\rm i}\chi y} i %chi\begin{bmatrix} x_1\\ x_2 \\ %\end{bmatrix}\cdot\left(\frac{1}{\varepsilon^4}{\mathbb A}_\chi^{\rm bend} +I\right)^{-1}\mathcal{M}_\chi^{b%end}. $$
%Hence we need to estimate uniformly for all $z \in \Gamma$ the differences of resolvents 
%$$P_i\left(zI-\frac{1}{{\chi^4}}\mathcal{A}_\chi\right)^{-1} - {\rm e}^{{\rm i}\chi %y}\pi_\alpha\left(zI - \frac{1}{{\chi^4}}{\mathbb A}_\chi^{\rm bend} \right)^{-1}\widetilde{\mathcal{M}}_\chi^{\rm bend}, %\quad \alpha=1,2, $$
%$$P_3\left(zI -\frac{1}{{\chi^4}}\mathcal{A}_\chi\right)^{-1} + {\rm e}^{{\rm i}\chi y} i %\chi\begin{bmatrix} x_1\\ x_2 \\ \end{bmatrix}\cdot\left( zI %-\frac{1}{{\chi^4}}{\mathbb A}_\chi^{\rm bend} \right)^{-1}\widetilde{\mathcal{M}}_\chi^{\rm bend}. $$
%\\
As a result, $i=1,2$ we have
\begin{align*}
& \left\Vert P_i\left(\frac{1}{\varepsilon^{\gamma + 2}}\mathcal{A}_\chi+I\right)^{-1}S_{\!\varepsilon^\delta} \vect f - \pi_i\left(\frac{\chi^4}{\varepsilon^{\gamma + 2}}{\mathbb A}^{\rm bend} +I\right)^{-1}\widetilde{\mathcal{M}}_\chi^{\rm bend}S_{\!\varepsilon^\delta} \vect f \right\Vert_{L^2(\omega \times Y)}  
\\[0.9em]
 &\leq \frac{1}{2\pi} \oint_{\Gamma} \bigl|h_{\varepsilon,\chi}(z)\bigr|\left\Vert P_i\left(zI -\frac{1}{{\chi^4}}\mathcal{A}_\chi\right)^{-1}S_{\!|\chi|} S_{\varepsilon^\delta/|\chi|} \vect f - \pi_i\bigl(zI-{\mathbb A}^{\rm bend}\bigr)^{-1}\widetilde{\mathcal{M}}_\chi^{\rm bend} S_{\!|\chi|} S_{\varepsilon^\delta/|\chi|} \vect f \right\Vert_{L^2(\omega \times Y)} dz 
 \\[0.9em]
 &\leq C|\chi| \left(\max\left\{\frac{{\chi^4}}{\varepsilon^{\gamma + 2}}, 1\right\}\right)^{-1}
 \bigl\| S_{\varepsilon^\delta/|\chi|} \vect f\bigr\|_{L^2(\omega \times  Y;\C^3)}\leq C|\chi| \left(\max\left\{\frac{{\chi^4}}{\varepsilon^{\gamma + 2}}, 1\right\}\right)^{-1}\max\left\{\frac{|\chi|}{\varepsilon^{\delta}}, 1\right\}\|\vect f\|_{L^2(\omega \times  Y;\C^3)} \\[0.9em] 
 &\leq C \varepsilon^{\tfrac{\gamma + 2}{4}}\max\Bigl\{\varepsilon^{\tfrac{\gamma + 2}{4}-\delta}, 1\Bigr\} \|\vect f\|_{L^2(\omega \times Y;\C^3)},
\end{align*}
where the bound is optimal when ${\chi^4}\approx \varepsilon^{\gamma + 2}$.
For the third component, we have
\begin{align*}
%\begin{split}
&\left\Vert P_3\left(\frac{1}{\varepsilon^{\gamma+2}}\mathcal{A}_\chi+I\right)^{-1} S_{\!\varepsilon^\delta} \vect f +  {\rm i} \chi
%\begin{bmatrix} x_1\\ x_2 \\ \end{bmatrix}
\widehat{x}\cdot\left(\frac{\chi^4}{\varepsilon^{\gamma+2}}{\mathbb A}^{\rm bend} +I\right)^{-1}\widetilde{\mathcal{M}}_\chi^{\rm bend} S_{\varepsilon^{\delta}} \vect f \right\Vert_{L^2(\omega \times Y)}  
\\[0.8em]
& \leq \frac{1}{2\pi} \oint_{\Gamma} \bigl|h_{\varepsilon,\chi}(z)\bigr|\left\Vert P_3\left(zI -\frac{1}{{\chi^4}}\mathcal{A}_\chi\right)^{-1} S_{\!|\chi|} S_{\varepsilon^\delta/|\chi|} \vect f +  {\rm i}\chi
%\begin{bmatrix} x_1\\ x_2 \\ \end{bmatrix}
\widehat{x}\cdot\bigl(zI-{\mathbb A}^{\rm bend} \bigr)^{-1}\widetilde{\mathcal{M}}_\chi^{\rm bend} S_{\!|\chi|} S_{\varepsilon^\delta/|\chi|} \vect f \right\Vert_{L^2(\omega \times  Y)} dz 
 \\[0.8em]
& \leq  C{\chi^2} \left(\max\left\{\frac{{\chi^4}}{\varepsilon^{4}}, 1\right\}\right)^{-1}\max\left\{\frac{|\chi|}{\varepsilon^{\delta}}, 1\right\}\|\vect f\|_{L^2(\omega \times Y;\C^3)} 
%%\\[0.8em]
%%& 
\leq C \varepsilon^{\tfrac{\gamma+2}{2}} \max\Bigl\{\varepsilon^{\tfrac{\gamma + 2}{4}-\delta}, 1\Bigr\}\|\vect f\|_{L^2(\omega \times Y;\C^3)}.
 %\end{split}
\end{align*}
Since
$$
\left(\frac{1}{\varepsilon^{\gamma-2}}\mathcal{A}^{\rm bend} + I \right)^{-1} = \mathfrak{G}_\varepsilon^{-1} \left( \frac{\chi^4}{\varepsilon^{\gamma+2}}{\mathbb A}^{\rm bend}  + I \right)^{-1} \mathfrak{G}_\varepsilon,\qquad 
%%$$
%%$$
\left( \frac{1}{\varepsilon^\gamma}\mathcal{A}_\varepsilon + I \right)^{-1} = \mathfrak{G}_\varepsilon^{-1}\left( \frac{1}{\varepsilon^{\gamma+2}}\mathcal{A}_\chi + I \right)^{-1} \mathfrak{G}_\varepsilon, $$
 it follows that 
\begin{align}
	\label{trans_back_bend12}
	&\begin{aligned}	
%%\begin{split}
&\mathfrak{G}_\varepsilon^{-1}\left(P_i \left( \frac{1}{\varepsilon^{\gamma+2}}\mathcal{A}_\chi + I \right)^{-1} - \pi_i\left(\frac{\chi^4}{\varepsilon^{\gamma+2}}{\mathbb A}^{\rm bend}  + I \right)^{-1}\widetilde{\mathcal{M}}_\chi^{\rm bend}\right)  \mathfrak{G}_\varepsilon  \\[0.3em]
&\hspace{8em}=P_i\left( \frac{1}{\varepsilon^\gamma}\mathcal{A}_\varepsilon + I \right)^{-1} - \pi_i\left( \frac{1}{\varepsilon^{\gamma - 2}}\mathcal{A}^{\rm bend} + I \right)^{-1}\mathcal{M}^{\rm bend}_\varepsilon \Xi_\varepsilon,\qquad i=1,2, 
%\end{split}
%\end{equation}
%%\begin{equation}
\end{aligned}
\\
	\label{transform_back_bend3}
&\begin{aligned}	
%\begin{split}
&\mathfrak{G}_\varepsilon^{-1}\left(P_3 \left( \frac{1}{\varepsilon^{\gamma + 2}}\mathcal{A}_\chi + I \right)^{-1} + {\rm i}\chi\widehat{x} 
%%\begin{bmatrix} x_1\\ x_2 \\ \end{bmatrix}
\cdot \left(\frac{\chi^4}{\varepsilon^{\gamma + 2}}{\mathbb A}^{\rm bend}  + I \right)^{-1}\widetilde{\mathcal{M}}_\chi^{\rm bend}\right)  \mathfrak{G}_\varepsilon \\[0.3em]
&\hspace{8em}=P_3\left( \frac{1}{\varepsilon^\gamma}\mathcal{A}_\varepsilon + I \right)^{-1} + \varepsilon %%\begin{bmatrix} x_1\\ x_2 \\ \end{bmatrix}
\widehat{x}\cdot\frac{d}{dx_3}\left(\frac{1}{\varepsilon^{\gamma - 2}} \mathcal{A}^{\rm bend} + I \right)^{-1}\mathcal{M}^{\rm bend}_\varepsilon \Xi_\varepsilon.
\end{aligned}
\end{align}
In order to establish the formula \eqref{bendl2l2estimatescomponents} we use the fact that the Gelfand transform is an isometry. 
\end{proof}

%%%%\begin{remark}
Using the notation introduced in Remark \ref{newnotationformomentums}, we can reformulate the estimates \eqref{stretchl2l2estimatescomponents} and \eqref{bendl2l2estimatescomponents} in a more compact fashion, namely as the operator-norm estimates \eqref{stretch23} and \eqref{bend23} of Theorem \ref{THML2L2}.
%%%follows: 
%%%\begin{align}
%%%\label{normresolventestimatestretch1}
%%%&\left\Vert \left(\frac{1}{\varepsilon^\gamma} \mathcal{A}_\varepsilon + I \right)^{-1}\bigg|_{L^2_{\rm stretch}} - (\mathcal{M}^{\rm stretch})^*\left(\frac{1}{\varepsilon^\gamma}\mathcal{A}^{\rm stretch} + \mathfrak{C}^{\rm stretch} \right)^{-1}\mathcal{M}^{\rm stretch} \Xi_{\varepsilon} \right\Vert_{L^2 \to L^2} \leq C\varepsilon^{\tfrac{\gamma + 2}{2}},\\[0.6em]
%%\end{equation}
%%%\\
%%%&\begin{aligned}
%%\begin{split}
%%%\label{normresolventestimatebend1}
%%%&\left\Vert P_i\left(\left( \frac{1}{\varepsilon^\gamma}\mathcal{A}_\varepsilon + I \right)^{-1}\bigg|_{L^2_{\rm bend}} 
%%%-(\mathcal{M}^{\rm bend}_\varepsilon)^*\left(\frac{1}{\varepsilon^{\gamma-2}}\mathcal{A}^{\rm bend} + I \right)^{-1}\mathcal{M}^{\rm bend}_\varepsilon \Xi_\varepsilon 
%%%		\right) S_{\!\varepsilon^\delta}
%%% \right\Vert_{L^2 \to L^2}\\[0.6em]
%%% &\hspace{15em}\leq  
%%%  \left\{ \begin{array}{ll}
%%%         C\varepsilon^{\tfrac{\gamma + 2}{4}}\max\Bigl\{\varepsilon^{\tfrac{\gamma + 2}{4}-\delta}, 1\Bigr\}, & i = 1,2,\\[0.5em]
%%%         C\varepsilon^{\tfrac{\gamma + 2}{2}}\max\Bigl\{\varepsilon^{\tfrac{\gamma + 2}{4}-\delta}, 1\Bigr\}, & i = 3.\end{array} \right. 
        %% \end{split}
%%%\end{aligned}
%%%\nonumber
%%%\end{align}
%%%%\end{remark}

\begin{remark}
It is clear that, due to \eqref{noforcescaling}, in the case $\delta = 0$, in addition to the estimate
\begin{equation}
    \left\Vert 
	P_i \left( 
			\left( \frac{1}{\varepsilon^\gamma}\mathcal{A}_\varepsilon + I \right)^{-1}\bigg|_{L^2_{\rm bend}} 
- (\mathcal{M}^{\rm bend}_\varepsilon)^*\left(\frac{1}{\varepsilon^{\gamma-2}}\mathcal{A}^{\rm bend} + I \right)^{-1}\mathcal{M}^{\rm bend}_\varepsilon \Xi_\varepsilon 
		\right) 
 \right\Vert_{L^2 \to L^2} \leq  
 { C} \left\{ \begin{array}{ll}
         \varepsilon^{\tfrac{\gamma + 2}{4}}, & i = 1,2,\\[0.7em]
         \varepsilon^{\tfrac{\gamma + 2}{2}}, & i = 3,\end{array} \right.
\end{equation}
one also has a norm-resolvent estimate in the absence of out-of-line force terms:  
\begin{equation}
    \left\Vert 
	P_i 
		\left( 
			\left( \frac{1}{\varepsilon^\gamma}\mathcal{A}_\varepsilon + I \right)^{-1}\bigg|_{L^2_{\rm bend}} 
- (\mathcal{M}^{\rm bend}_\varepsilon)^*\left(\frac{1}{\varepsilon^{\gamma-2}}\mathcal{A}^{\rm bend} + I \right)^{-1}\mathcal{M}^{\rm bend}_\varepsilon \Xi_\varepsilon S_\infty 
		\right) 
 \right\Vert_{L^2 \to L^2} \leq  
  { C}\left\{ \begin{array}{ll}
         \varepsilon^{\tfrac{\gamma + 2}{4}}, & i = 1,2,\\[0.7em]
         \varepsilon^{\tfrac{\gamma + 2}{2}}, & i = 3.\end{array} \right.
\end{equation}
%%Namely, this is the norm-resolvent estimate in the absence of out-of-line force terms. 
   
\end{remark}

We next show that the smoothing operator $\Xi_\varepsilon$
 appearing in the above norm-resolvent estimates can be dropped without affecting the estimates.  For the definition of the matrix $S_\infty,$ which appears in one of the two estimates below, we recall \eqref{nak70}.
\begin{corollary}
\label{absenceofforceterms}
Suppose that Assumption \ref{matsym} 
%the assumptions on the material symmetries
holds and that the spectral scaling parameter satisfies the inequality  $\gamma > -2.$ Then there exists $C>0$ such that for every $\varepsilon> 0$ one has 
\begin{align}
&\left\Vert \left(\frac{1}{\varepsilon^\gamma} \mathcal{A}_\varepsilon + I \right)^{-1}\bigg|_{L^2_{\rm stretch}} - (\mathcal{M}^{\rm stretch})^*\left(\frac{1}{\varepsilon^\gamma}\mathcal{A}^{\rm stretch} + \mathfrak{C}^{\rm stretch} \right)^{-1}\mathcal{M}^{\rm stretch}  \right\Vert_{L^2 \to L^2} \leq C\varepsilon^{\tfrac{\gamma + 2}{2}},\nonumber\\[0.8em]
%%\end{equation}
%%\begin{align}
&\left\Vert P_i\left(\left( \frac{1}{\varepsilon^\gamma}\mathcal{A}_\varepsilon + I \right)^{-1}\bigg|_{L^2_{\rm bend}} 
- (\mathcal{M}^{\rm bend}_\varepsilon)^*\left(\frac{1}{\varepsilon^{\gamma-2}}\mathcal{A}^{\rm bend} + I \right)^{-1}\mathcal{M}^{\rm bend}_\varepsilon  S_\infty 
		\right) 
 \right\Vert_{L^2 \to L^2} \leq  
  { C}\left\{ \begin{array}{ll}
         \varepsilon^{\tfrac{\gamma + 2}{4}}, & i = 1,2,\\[0.55em]
         \varepsilon^{\tfrac{\gamma + 2}{2}}, & i = 3,\end{array} \right.\quad\  \label{bendingestimatesinfinity}
\end{align}
\end{corollary}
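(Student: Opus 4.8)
The plan is to show that replacing $\Xi_\varepsilon$ by the identity in the estimates \eqref{stretch23} and \eqref{bend23} (together with the projection onto $S_\infty$ in the bending case) costs nothing in the order of the remainder. The key observation is that the difference operator $I-\Xi_\varepsilon$ is, on the Gelfand side, the multiplication by $\mathbbm{1}_{[-\pi,\pi)}(\chi)-\int_Y(\cdot)\,dy$, i.e.\ it kills the mean over the cell; equivalently it removes the $\chi$-fibre contribution to the zeroth Fourier mode. Hence the whole point is to estimate the homogenised terms $(\mathcal{M}^{\rm stretch})^*(\varepsilon^{-\gamma}\mathcal{A}^{\rm stretch}+\mathfrak{C}^{\rm stretch})^{-1}\mathcal{M}^{\rm stretch}(I-\Xi_\varepsilon)$ and its bending analogue, and to show they are $O(\varepsilon^{(\gamma+2)/2})$, respectively of the stated component-wise orders.

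First I would pass to the Gelfand representation via \eqref{trans_back_stretch1}, \eqref{trans_back_bend12}, \eqref{transform_back_bend3}, so that on each fibre the operator becomes $(\widetilde{\mathcal{M}}^{\rm stretch})^*(\varepsilon^{-\gamma-2}\chi^2\mathbb{A}^{\rm stretch}+\mathfrak{C}^{\rm stretch})^{-1}\widetilde{\mathcal{M}}^{\rm stretch}$ acting after subtraction of the cell-average, and similarly in the bending case with $\chi^4$. Because $\Xi_\varepsilon$ corresponds to $\int_Y(\cdot)\,dy$, the operator $(\widetilde{\mathcal M}^{\rm stretch})^*(\cdots)^{-1}\widetilde{\mathcal M}^{\rm stretch}(I-\Xi_\varepsilon)$ vanishes identically: the momentum operators $\widetilde{\mathcal M}^{\rm stretch}$, $\widetilde{\mathcal M}^{\rm bend}_\chi$ already integrate over $\omega\times Y$, so they annihilate anything with zero cell-average, and likewise their adjoints $\widetilde{\mathcal I}$ produce only $y$-independent functions which are unaffected by $I-\Xi_\varepsilon$ on the right. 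Thus in fact $(\mathcal M^{\rm stretch})^*(\cdots)^{-1}\mathcal M^{\rm stretch}\Xi_\varepsilon=(\mathcal M^{\rm stretch})^*(\cdots)^{-1}\mathcal M^{\rm stretch}$ exactly, and the same for bending, so the first claim of the corollary is \emph{literally the same statement} as \eqref{stretch23}. This is the cleanest route; I would state it as: since $\widetilde{\mathcal M}^{\rm stretch}=\widetilde{\mathcal M}^{\rm stretch}\circ\int_Y$, and $\mathfrak G_\varepsilon^{-1}\widetilde{\mathcal M}^{\rm stretch}\mathfrak G_\varepsilon=\mathcal M^{\rm stretch}\Xi_\varepsilon$ by the identities in Section \ref{aux_obj_sec}, the smoothing operator can be inserted or removed freely.

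For the bending estimate \eqref{bendingestimatesinfinity} there is the extra subtlety of the factor $S_\infty$ in place of $S_{\!\varepsilon^\delta}$ with $\delta=0$ (so $S_{\!\varepsilon^0}=I$). Here I would invoke the two estimates \eqref{noforcescaling} and the displayed bound for $P_i(\widetilde{\mathcal M}^{\rm bend}_\chi)^*(\mathbb A^{\rm bend}+I)^{-1}\widetilde{\mathcal M}^{\rm bend}_\chi(I-S_\infty)$ proved in the last Remark of Section \ref{bending_sec}: both are $O(|\chi|)$ for $i=1,2$ and $O(\chi^2)$ for $i=3$, uniformly in $\chi$. Combining these with the Cauchy-integral argument already carried out in the proof of \eqref{bend23} — i.e.\ integrating $h_{\varepsilon,\chi}(z)$ over the contour $\Gamma$ and using $|h_{\varepsilon,\chi}(z)|\le C(\max\{\chi^4/\varepsilon^{\gamma+2},1\})^{-1}$, which is sharp at $\chi^4\approx\varepsilon^{\gamma+2}$ — yields exactly the orders $\varepsilon^{(\gamma+2)/4}$ for $i=1,2$ and $\varepsilon^{(\gamma+2)/2}$ for $i=3$. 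Then the triangle inequality (bounding the difference between the $S_\infty$-version and the $\Xi_\varepsilon$-version of the approximating operator against these two Remark estimates, and the difference between the true resolvent and its $S_\infty$-restriction against \eqref{noforcescaling}) gives \eqref{bendingestimatesinfinity}.

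The only place any real care is needed is bookkeeping: one must be sure the transform-back identities \eqref{trans_back_stretch1}, \eqref{trans_back_bend12}, \eqref{transform_back_bend3} are applied with the correct placement of $\Xi_\varepsilon$ (it always sits on the \emph{right}, next to $\mathcal M$, because $\mathfrak G_\varepsilon^{-1}\widetilde{\mathcal M}\mathfrak G_\varepsilon=\mathcal M\Xi_\varepsilon$), and that in the bending case the identity $S_{\!\varepsilon^\delta}\vect f = S_{\!|\chi|}S_{\varepsilon^\delta/|\chi|}\vect f$ with $\delta=0$ is compatible with the appearance of $S_\infty=\lim_{|\chi|\to 0}S_{|\chi|}\cdot(\text{something})$ — concretely, $S_\infty$ projects out the third component, which is precisely the out-of-line force term, and the Remark in Section \ref{bending_sec} is designed to absorb exactly that. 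I do not expect any genuine obstacle; the corollary is essentially a repackaging of results already in hand, and the "hard part" is merely writing the chain of equalities and inequalities without sign or index errors. I would therefore keep the proof short, citing \eqref{stretch23}, \eqref{bend23}, \eqref{noforcescaling}, and the final Remark of Section \ref{bending_sec}, and noting that the identities relating $\mathfrak G_\varepsilon$-conjugated momentum operators to $\mathcal M\Xi_\varepsilon$ make the removal of $\Xi_\varepsilon$ immediate.
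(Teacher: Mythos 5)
Your proposal contains a genuine gap: the central claim that $(\mathcal{M}^{\rm stretch})^*(\cdots)^{-1}\mathcal{M}^{\rm stretch}\Xi_\varepsilon = (\mathcal{M}^{\rm stretch})^*(\cdots)^{-1}\mathcal{M}^{\rm stretch}$ holds \emph{identically} is false. The confusion is between the physical-domain operator $\mathcal{M}^{\rm stretch}$, which integrates only over the cross-section $\omega$ and therefore produces a genuine $L^2$-function of $x_3$ carrying arbitrary longitudinal frequencies, and the fibre-wise operator $\widetilde{\mathcal{M}}^{\rm stretch}$, which integrates over all of $\omega\times Y$ to give a $\chi$-dependent constant. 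The identity recorded in Section~\ref{aux_obj_sec} is $\mathfrak{G}_\varepsilon^{-1}\widetilde{\mathcal{M}}^{\rm stretch}\mathfrak{G}_\varepsilon = \mathcal{M}^{\rm stretch}\Xi_\varepsilon$, i.e.\ the Gelfand pullback already carries the smoothing factor; it does \emph{not} equal $\mathcal{M}^{\rm stretch}$ alone. Equivalently, $\Xi_\varepsilon$ is a proper orthogonal projection (onto $|\xi|\le 1/(2\varepsilon)$ in the $x_3$-Fourier variable), and $\mathcal{M}^{\rm stretch}\Xi_\varepsilon=\Xi_\varepsilon\mathcal{M}^{\rm stretch}\neq\mathcal{M}^{\rm stretch}$ in general: applied to a force density whose cross-sectional average has a high-frequency content, the two operators disagree. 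Your assertion that $\widetilde{\mathcal{M}}^{\rm stretch}$ ``annihilates anything with zero cell-average'' is true on the fibre side, but translating it back gives exactly $\mathcal{M}^{\rm stretch}\Xi_\varepsilon(I-\Xi_\varepsilon)=0$, which is trivial ($\Xi_\varepsilon$ is idempotent) and says nothing about $\mathcal{M}^{\rm stretch}(I-\Xi_\varepsilon)$.

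What the paper actually does, and what your argument skips, is a genuine quantitative step: taking the Fourier transform in $x_3$, one observes that $(\varepsilon^{-\gamma}\mathcal{A}^{\rm stretch}+\mathfrak{C}^{\rm stretch})^{-1}(I-\Xi_\varepsilon)$ (respectively the $\mathcal{A}^{\rm bend}$ analogue) acts only on frequencies $|\xi|>1/(2\varepsilon)$, where coercivity of the homogenised tensor gives $|(\xi^2\varepsilon^{-\gamma}\mathbb{A}^{\rm stretch}+\mathfrak{C}^{\rm stretch})^{-1}|\le C\varepsilon^{\gamma+2}$ (and likewise $|(\xi^4\varepsilon^{-(\gamma-2)}\mathbb{A}^{\rm bend}+I)^{-1}|\le C\varepsilon^{\gamma+2}$). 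Since $\gamma+2>(\gamma+2)/2>(\gamma+2)/4$ for $\gamma>-2$, this extra error is strictly lower order and can be absorbed. A triangle inequality with \eqref{stretch23} or with the remark preceding the corollary then yields the statement. In the bending half, you also overlook that \eqref{bendingestimatesinfinity} has $\mathcal{M}^{\rm bend}_\varepsilon S_\infty$ with \emph{no} $\Xi_\varepsilon$, so the $S_\infty$-discussion alone (\eqref{noforcescaling} and the remark at the end of Section~\ref{bending_sec}) does not finish the job either — the same Fourier estimate is needed there too, using the simplification $P_i(\mathcal{M}^{\rm bend}_\varepsilon)^*(\cdots)^{-1}\mathcal{M}^{\rm bend}_\varepsilon(I-\Xi_\varepsilon)S_\infty=\pi_i(\cdots)^{-1}(I-\Xi_\varepsilon)$ for $i=1,2$ and the corresponding $\varepsilon\,\widehat{x}\cdot\tfrac{d}{dx_3}$ expression for $i=3$.
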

\begin{proof}

 The application of the Fourier transform to the limit resolvent in the stretching case yields 
\begin{align*}
 %\begin{split}
          &{\mathcal F}\left[(\mathcal{M}^{\rm stretch})^*\left(\frac{1}{\varepsilon^\gamma}\mathcal{A}^{\rm stretch} + \mathfrak{C}^{\rm stretch} \right)^{-1}\mathcal{M}^{\rm stretch}    \left(I -\Xi_\varepsilon \right) \vect f \right](\xi) \\[0.4em]
          &\hspace{10em}=(\mathcal{M}^{\rm stretch})^*\left(\frac{\xi^2}{\varepsilon^\gamma}{\mathbb A}^{\rm stretch} + \mathfrak{C}^{\rm stretch} \right)^{-1}\mathcal{M}^{\rm stretch} \,\mathcal{F}[\vect f](\xi)\mathbbm{1}_{\left\langle -\infty, -(2\varepsilon)^{-1}] \cup [(2\varepsilon)^{-1}, \infty \right\rangle  }(\xi),
 %\end{split}
 \end{align*}
where, as before, $\mathcal{F}$ stands for the Fourier transform. Furthermore, for $|\xi|>1/(2\varepsilon)$ and $\gamma > -2$ we have
 \begin{equation}
     \left( \frac{\xi^2}{\varepsilon^\gamma} {\mathbb A}^{\rm stretch} + \mathfrak{C}^{\rm stretch} \right){\vect m} \cdot {\vect m}\geq \frac{\xi^2}{\varepsilon^{\gamma}}C |{\vect m}|^2 \geq \frac{C}{\varepsilon^{\gamma + 2}}|{\vect m}|^2\qquad\forall{\vect m}\in{\mathbb R}^2,
 \end{equation}
 and hence
 \begin{equation}
  %   |m| \leq C\varepsilon^{\frac{\gamma + 2}{2}}\left( \frac{\xi^2}{\varepsilon^\gamma} {\mathbb A}^{\rm stretch} + \mathfrak{C}^{\rm stretch} \right) m \cdot  {\vect m}^\top.
      \left| \left( \frac{\xi^2}{\varepsilon^\gamma} {\mathbb A}^{\rm stretch} + \mathfrak{C}^{\rm stretch} \right)^{-1} \right| \leq C\varepsilon^{\gamma + 2}.
 \end{equation}
Combining the above, we obtain
\begin{equation}
    \begin{split}
        \left\Vert{\mathcal F}\left[(\mathcal{M}^{\rm stretch})^*\left(\frac{1}{\varepsilon^\gamma}\mathcal{A}^{\rm stretch} + \mathfrak{C}^{\rm stretch} \right)^{-1}\mathcal{M}^{\rm stretch}    \left(I -\Xi_\varepsilon \right) \vect f \right]\right\Vert_{L^2} \leq C\varepsilon^{\gamma + 2}\bigl\Vert \,\mathcal{F}[\vect f]\bigr\Vert_{L^2},
    \end{split}
\end{equation}
 so $\Xi_\varepsilon$ can be removed from \eqref{stretch23}. Similarly to the stretching case, one can eliminate the smoothing operator from the norm-resolvent estimate in the case of absence of out-of-line force terms. 
 For $i = 1,2$ one has 
 \begin{equation}
     	P_i 
		\left( (\mathcal{M}^{\rm bend}_\varepsilon)^*\left(\frac{1}{\varepsilon^{\gamma-2}}\mathcal{A}^{\rm bend} + I \right)^{-1}\mathcal{M}^{\rm bend}_\varepsilon ( I -\Xi_\varepsilon) S_\infty \right) = \pi_i \left(\frac{1}{\varepsilon^{\gamma-2}}\mathcal{A}^{\rm bend} + I \right)^{-1} ( I -\Xi_\varepsilon).
 \end{equation}
It follows that
  \begin{equation}
 \begin{split}
          {\mathcal F}\left[\pi_i \left(\frac{1}{\varepsilon^{\gamma-2}}\mathcal{A}^{\rm bend} + I \right)^{-1} ( I -\Xi_\varepsilon) \vect f\right](\xi)  = \left(\frac{\xi^4}{\varepsilon^{\gamma-2}}{\mathbb A}^{\rm bend} + I \right)^{-1}{\mathcal F}[\vect f](\xi)  \mathbbm{1}_{\left\langle -\infty, -\frac{1}{2\varepsilon}] \cup [ \frac{1}{2\varepsilon}, \infty \right\rangle  }(\xi).
 \end{split}
 \end{equation}
 Furthermore, for $|\xi|>1/(2\varepsilon)$ and $\gamma>-2$ we have
 \begin{equation}
     \left( \frac{\xi^4}{\varepsilon^{\gamma-2}} {\mathbb A}^{\rm bend} + I \right){\vect m}\cdot{\vect m}\geq \frac{\xi^4}{\varepsilon^{\gamma-2}}C |{\vect m}|^2 \geq \frac{C}{\varepsilon^{\gamma + 2}}|{\vect m}|^2\qquad\forall{\vect m}\in{\mathbb R}^2.
 \end{equation}
 Finally, we obtain
\begin{equation}
    \begin{split}
        \left\Vert 	P_i\,{\mathcal F}\left[(\mathcal{M}^{\rm bend}_\varepsilon)^*\left(\frac{1}{\varepsilon^{\gamma-2}}\mathcal{A}^{\rm bend} + I \right)^{-1}\mathcal{M}^{\rm bend}_\varepsilon ( I -\Xi_\varepsilon) S_\infty \vect f\right]\right\Vert_{L^2} \leq C\varepsilon^{\gamma + 2}\left\Vert{\mathcal F}[\vect f] \right\Vert_{L^2},\qquad i = 1,2.
    \end{split}
\end{equation}
%%Same can be done in the case $i = 3$,
Similarly, one obtains
\begin{equation}
    P_3 
		\left( (\mathcal{M}^{\rm bend}_\varepsilon)^*\left(\frac{1}{\varepsilon^{\gamma-2}}\mathcal{A}^{\rm bend} + I \right)^{-1}\mathcal{M}^{\rm bend}_\varepsilon ( I -\Xi_\varepsilon) S_\infty \right) = \varepsilon  %\begin{bmatrix}
        % x_1 \\ x_2
         %\end{bmatrix}^\top
         \widehat{x}\cdot\frac{d}{d x_3} \left(\frac{1}{\varepsilon^{\gamma-2}}\mathcal{A}^{\rm bend} + I \right)^{-1} ( I -\Xi_\varepsilon),
\end{equation}
where 
\begin{align}
 %%\begin{split}
      {\mathcal F}\bigg[\varepsilon  
     % \begin{bmatrix}
     %    x_1 \\ x_2
     %    \end{bmatrix}^\top
     \widehat{x}&\cdot\frac{d}{d x_3} \left(\frac{1}{\varepsilon^{\gamma-2}}\mathcal{A}^{\rm bend} + I \right)^{-1} ( I -\Xi_\varepsilon) \vect f \biggr](\xi)
     \\[0.5em]
     &\hspace{10em}
     = \varepsilon \xi 
     %\begin{bmatrix}
      %   x_1 \\ x_2
       %  \end{bmatrix}^\top
     \widehat{x}\cdot\left(\frac{\xi^4}{\varepsilon^{\gamma-2}}{\mathbb A}^{\rm bend} + I \right)^{-1}{\mathcal F}[\vect f](\xi)  \mathbbm{1}_{\left\langle -\infty, -\frac{1}{2\varepsilon}] \cup [ \frac{1}{2\varepsilon}, \infty \right\rangle}(\xi).
 %%\end{split}
 \end{align} 
 \end{proof}
The second bound in Corollary \ref{absenceofforceterms} can be simplified as follows. 
\begin{corollary}
\label{corr3}
Suppose that Assumption \ref{matsym} 
%%the assumptions on the material symmetries
holds. Let $\gamma > -2$ be the spectral scaling parameter. Then there exists $C>0$ such that for all $\varepsilon>0$ one has
\begin{equation}
\label{newbendingm0estimate}
     \left\Vert 
		\left( 
			\left( \frac{1}{\varepsilon^\gamma}\mathcal{A}_\varepsilon + I \right)^{-1}\bigg|_{L^2_{\rm bend}} 
- (\mathcal{M}^{\rm bend}_0)^*\left(\frac{1}{\varepsilon^{\gamma-2}}\mathcal{A}^{\rm bend} + I \right)^{-1}\mathcal{M}^{\rm bend}_0   
		\right) 
 \right\Vert_{L^2 \to L^2} \leq  C\varepsilon^{\tfrac{\gamma + 2}{4}}.
\end{equation}
\end{corollary}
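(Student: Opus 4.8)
The plan is to deduce \eqref{newbendingm0estimate} from the second estimate \eqref{bendingestimatesinfinity} in Corollary \ref{absenceofforceterms} by showing that replacing $\mathcal{M}^{\rm bend}_\varepsilon$ (together with the scaling $S_\infty$) by $\mathcal{M}^{\rm bend}_0$ costs nothing in the order of the estimate. First I would recall from \eqref{forcemomentumrealdomain} that
\[
\mathcal{M}_\varepsilon^{\rm bend}\vect f=\int_\omega\widehat{\vect f}-\varepsilon\int_\omega\Bigl(\tfrac{\partial}{\partial x_3}f_3\Bigr)\widehat{x},\qquad \mathcal{M}_0^{\rm bend}\vect f=\int_\omega\widehat{\vect f},
\]
so that $\mathcal{M}_\varepsilon^{\rm bend}\vect f=\mathcal{M}_0^{\rm bend}\vect f-\varepsilon\,\tfrac{d}{dx_3}\!\int_\omega f_3\widehat{x}$, and moreover $\mathcal{M}_0^{\rm bend}\vect f=\mathcal{M}_\varepsilon^{\rm bend}S_\infty\vect f$ since $S_\infty$ annihilates the third component and leaves $\widehat{\vect f}$ unchanged (recall \eqref{nak70}). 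Hence
\[
(\mathcal{M}_0^{\rm bend})^*\Bigl(\tfrac{1}{\varepsilon^{\gamma-2}}\mathcal{A}^{\rm bend}+I\Bigr)^{-1}\mathcal{M}_0^{\rm bend}
=S_\infty(\mathcal{M}_\varepsilon^{\rm bend})^*\Bigl(\tfrac{1}{\varepsilon^{\gamma-2}}\mathcal{A}^{\rm bend}+I\Bigr)^{-1}\mathcal{M}_\varepsilon^{\rm bend}S_\infty,
\]
using $(\mathcal{M}_0^{\rm bend})^*=S_\infty(\mathcal{M}_\varepsilon^{\rm bend})^*$ in the formal sense of Remark \ref{newnotationformomentums}. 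Therefore the operator in \eqref{newbendingm0estimate} differs from that in \eqref{bendingestimatesinfinity} (with $P_i$ summed back to the full operator) only through the extra left multiplication by $S_\infty$ and the passage from $P_i$-componentwise bounds to the full operator norm.

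The key step is then to combine the componentwise bounds. For $i=1,2$, left-multiplication by $S_\infty$ acts as the identity on the first two components, so those contributions are controlled by $\varepsilon^{(\gamma+2)/4}$ directly from \eqref{bendingestimatesinfinity}. For $i=3$, note that $S_\infty$ sets the third component to zero; hence $P_3$ of the difference with the extra $S_\infty$ on the left is simply $P_3(\tfrac{1}{\varepsilon^\gamma}\mathcal{A}_\varepsilon+I)^{-1}\big|_{L^2_{\rm bend}}$ composed with $(I-$ the homogenised part with $S_\infty)$ — more precisely one must observe that the third component of $(\mathcal{M}_0^{\rm bend})^*(\cdots)\mathcal{M}_0^{\rm bend}\vect f$ equals $-\varepsilon\,\widehat{x}\cdot\tfrac{d}{dx_3}(\tfrac{1}{\varepsilon^{\gamma-2}}\mathcal{A}^{\rm bend}+I)^{-1}\mathcal{M}_0^{\rm bend}\vect f$, which is exactly the quantity already estimated by $C\varepsilon^{(\gamma+2)/2}\le C\varepsilon^{(\gamma+2)/4}$ in the third line of \eqref{bendingestimatesinfinity} (with $S_\infty$ in place and $\Xi_\varepsilon$ removed via Corollary \ref{absenceofforceterms}). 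Summing the three componentwise estimates and using $\varepsilon^{(\gamma+2)/2}\le C\max\{1,\varepsilon^{(\gamma+2)/2-(\gamma+2)/4}\}\varepsilon^{(\gamma+2)/4}$ for $\varepsilon$ in any bounded range (and a trivial boundedness argument for large $\varepsilon$, or simply noting all resolvents are uniformly bounded) yields the claimed $L^2\to L^2$ bound of order $\varepsilon^{(\gamma+2)/4}$.

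The main obstacle — really the only subtle point — is the bookkeeping around the formal adjoint $(\mathcal{M}_\varepsilon^{\rm bend})^*=\mathcal{I}_\varepsilon^{\rm bend}$, which is only defined in the smoothed sense of \eqref{dual_bend}; one must check that the identity $(\mathcal{M}_0^{\rm bend})^*(\cdots)\mathcal{M}_0^{\rm bend}=S_\infty(\mathcal{M}_\varepsilon^{\rm bend})^*(\cdots)\mathcal{M}_\varepsilon^{\rm bend}S_\infty$ holds as an identity of bounded operators, which is legitimate because the range of $(\tfrac{1}{\varepsilon^{\gamma-2}}\mathcal{A}^{\rm bend}+I)^{-1}\mathcal{M}_0^{\rm bend}$ lies in $H^2(\mathbb{R};\mathbb{R}^2)$ so that $\mathcal{I}_\varepsilon^{\rm bend}$ applied to it is a genuine $L^2$ function with the stated third component $-\varepsilon\tfrac{\partial}{\partial x_3}(x_1 u_1+x_2 u_2)$. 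Once this is in place, combining the already-proved componentwise estimates of \eqref{bendingestimatesinfinity} is routine, and no new asymptotic analysis is required.
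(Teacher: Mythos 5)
Your reduction is on the right track — you correctly observe that $\mathcal{M}^{\rm bend}_\varepsilon S_\infty=\mathcal{M}^{\rm bend}_0$ and that for $i=1,2$ the two approximating operators coincide — but the treatment of the third component contains a concrete error and, more importantly, skips the single new estimate that the corollary actually requires. The third component of $(\mathcal{M}^{\rm bend}_0)^*(\cdots)\mathcal{M}^{\rm bend}_0\vect f$ is \emph{zero}, not $-\varepsilon\,\widehat{x}\cdot\tfrac{d}{dx_3}(\cdots)\mathcal{M}^{\rm bend}_0\vect f$: since $(\mathcal{M}^{\rm bend}_0)^*=\mathcal{I}^{\rm bend}_0$ sends $(u_1,u_2)^\top\mapsto(u_1,u_2,0)^\top$, there is no third-component contribution from the $\mathcal{M}^{\rm bend}_0$-approximation at all. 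Consequently, for $i=3$ you must bound
\[
P_3\left(\frac{1}{\varepsilon^\gamma}\mathcal{A}_\varepsilon+I\right)^{-1}\bigg|_{L^2_{\rm bend}}
=P_3\left(\text{resolvent}-(\mathcal{M}^{\rm bend}_\varepsilon)^*(\cdots)\mathcal{M}^{\rm bend}_\varepsilon S_\infty\right)
+\varepsilon\,\widehat{x}\cdot\frac{d}{dx_3}\left(\frac{1}{\varepsilon^{\gamma-2}}\mathcal{A}^{\rm bend}+I\right)^{-1}\mathcal{M}^{\rm bend}_0,
\]
and only the first summand is controlled by the third line of \eqref{bendingestimatesinfinity}. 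The second summand — the quantity you mislabel as ``already estimated'' — has not been bounded anywhere earlier in the paper; \eqref{bendingestimatesinfinity} with $i=3$ estimates the \emph{difference}, not this corrector term on its own, and it is of a different size (the difference is $O(\varepsilon^{(\gamma+2)/2})$, while the corrector term is only $O(\varepsilon^{(\gamma+2)/4})$).

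The missing ingredient is an $\varepsilon$-dependent elliptic-regularity and interpolation bound for $\vect b:=(\tfrac{1}{\varepsilon^{\gamma-2}}\mathcal{A}^{\rm bend}+I)^{-1}\vect g$: from $\|\vect b\|_{L^2}\le C\|\vect g\|_{L^2}$ and $\|\nabla^2\vect b\|_{L^2}\le C\varepsilon^{(\gamma-2)/2}\|\vect g\|_{L^2}$, the interpolation inequality $\|\nabla\vect b\|_{L^2}^2\le C\|\nabla^2\vect b\|_{L^2}\|\vect b\|_{L^2}$ gives $\|\varepsilon\nabla\vect b\|_{L^2}\le C\varepsilon^{(\gamma+2)/4}\|\vect g\|_{L^2}$. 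This is precisely the paper's argument, it is a genuinely new estimate (not a corollary of anything cited in your proposal), and without it your claim that ``no new asymptotic analysis is required'' is unjustified.
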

\begin{proof}
It follows from the ellipticity of the operator $\mathcal{A}^{\rm bend}$ that the solution $\vect b \in H^2(\R;\R^2)$ of 
\begin{equation}
    \left(\frac{1}{\varepsilon^{\gamma-2}}\mathcal{A}^{\rm bend} + I \right) \vect b = \vect g
\end{equation}
satisfies the estimates 
\begin{equation}
   \bigl\Vert  \vect b \bigr\Vert _{L^2(\R;\R^2)} \leq C\bigl\Vert  \vect g \bigr\Vert _{L^2(\R;\R^2)}, \qquad  
    \bigl\Vert \nabla^2 \vect b\bigr\Vert_{L^2(\R;\R^{2 \times 2 \times 2})} \leq C\varepsilon^{\tfrac{\gamma -2}{2}} \bigl\Vert  \vect g\bigr\Vert _{L^2(\R;\R^2)}.
\end{equation}
Using the interpolation inequality
\begin{equation}
    \bigl\Vert  \nabla \vect b \bigr\Vert _{L^2(\R;\R^{2 \times 2})}^2 \leq C \bigl\Vert \nabla^2\vect b \bigr\Vert_{L^2(\R;\R^{2 \times 2 \times 2})}\Vert\vect b \Vert_{L^2(\R;\R^2)},
\end{equation}
we clearly have
\begin{equation}
    \bigl\Vert \varepsilon \nabla \vect b \bigr\Vert_{L^2(\omega;\R^{2 \times 2})} \leq C\varepsilon^{\tfrac{\gamma +2}{4}} \bigl\Vert  \vect g\bigr\Vert _{L^2(\omega;\R^2)}.
\end{equation}
Hence, replacing $\mathcal{M}^{\rm bend}_\varepsilon$ by $\mathcal{M}^{\rm bend}_0$ in \eqref{bendingestimatesinfinity} maintains an order $\varepsilon^{(\gamma +2)/4}$ bound on the approximation error.
\end{proof}

\subsection{$L^2 \to H^1$ norm resolvent estimates}
\label{L2toH1}

In order to state the results we define the following operators which take the zero-order terms to the associated first-order corrector terms:
\begin{equation*}
\mathcal{B}_{1,\rm stretch}^{\chi, \rm corr} : \C^2 \to H^{\rm stretch}, \quad \mathcal{B}_{1,\rm bend}^{\chi, \rm corr} : \C^2 \to H^{\rm bend}, 
\end{equation*}
 via: 
\begin{align}
	\label{B1_str}
\mathcal{B}_{1,\rm stretch}^{\chi, \rm corr}\begin{bmatrix} m_3 \\ m_4\end{bmatrix}&:= {\vect u_1}\in H^{\rm stretch}, \qquad 
(\simgrad)^* {\mathbb A} \simgrad {\vect u_1} = -(\simgrad)^* {\mathbb A} \Lambda_{\chi,m_3,m_4}^{\rm stretch},\\[0.6em]
%% \quad  {\vect u_1} \in H^{\rm stretch},
\label{B1_bend}
  \mathcal{B}_{1,\rm bend}^{\chi, \rm corr}\begin{bmatrix} m_1 \\ m_2\end{bmatrix}&:= {\vect u_1}\in H^{\rm bend}, \qquad
(\simgrad)^* {\mathbb A} \simgrad {\vect u_1} = -(\simgrad)^* {\mathbb A} \Lambda_{\chi,m_1,m_2}^{\rm bend}, 
%%\quad {\vect u_1} \in H^{\rm bend}.
\end{align}
Thus, the first-order corrector operators depending on the spectral parameter $z \in \C$ can be defined for each $\chi \in [-\pi,\pi)\setminus\{0 \}$ with the following formulae: 
\begin{equation*}
   \mathcal{A}_{\chi,\rm corr}^{\rm stretch}(z):= \mathcal{B}_{1,\rm stretch}^{\chi, \rm corr}\bigl(z\mathfrak{C}^{\rm stretch}-{\mathbb A}^{\rm stretch}\bigr)^{-1}\widetilde{\mathcal{M}}^{\rm stretch}, \quad \mathcal{A}_{\chi,\rm corr}^{\rm bend}(z):=\mathcal{B}_{1,\rm bend}^{\chi, \rm corr}\bigl(zI-{\mathbb A}^{\rm bend}\bigr)^{-1}\widetilde{\mathcal{M}}_\chi^{\rm bend}.
\end{equation*}
Next we define the rescaled versions:
\begin{equation}
    \mathcal{A}_{\chi,\varepsilon,\rm corr}^{\rm stretch}=\oint_{\Gamma^{\rm stretch}} g_{\varepsilon,\chi}(z) \mathcal{A}_{\chi, \rm corr}^{\rm stretch}(z)dz,\qquad \mathcal{A}_{\chi,\varepsilon,\rm corr}^{\rm bend}=\oint_{\Gamma^{\rm bend}} h_{\varepsilon,\chi}(z)\mathcal{A}_{\chi, \rm corr}^{\rm bend}(z)dz, \quad \varepsilon > 0,
\end{equation}
where $\Gamma^{\rm bend}$, $\Gamma^{\rm stretch}$ are contours which uniformly enclose the scaled eigenvalues of ${\mathbb A}_\chi^{\rm bend} $, ${\mathbb A}_\chi^{\rm stretch} $, respectively. 
Notice here that we have:
\begin{equation*}
   \mathcal{A}_{\chi,\varepsilon,\rm corr}^{\rm stretch}:= \mathcal{B}_{1,\rm stretch}^{\chi, \rm corr}\left( \frac{\chi^2}{\varepsilon^{\gamma+2}}{\mathbb A}^{\rm stretch} + \mathfrak{C}^{\rm stretch}\right)^{-1} \widetilde{\mathcal{M}}^{\rm stretch}, \quad \mathcal{A}_{\chi,\varepsilon,\rm corr}^{\rm bend}:=\mathcal{B}_{1,\rm bend}^{\chi, \rm corr}\left( \frac{\chi^4}{\varepsilon^{\gamma+2}}{\mathbb A}^{\rm bend} + I\right)^{-1}\widetilde{\mathcal{M}}_\chi^{\rm bend}.
\end{equation*}
Finally, we are able to define the following corrector operators: 
\begin{equation}\label{trans_back_corr1_str}
    \mathcal{A}^{\rm \rm corr}_{\rm stretch}(\varepsilon)=\mathfrak{G}_\varepsilon^{-1} \mathcal{A}_{\chi,\varepsilon,\rm corr}^{\rm stretch} \mathfrak{G}_\varepsilon,\qquad \mathcal{A}^{\rm \rm corr}_{\rm bend}(\varepsilon)=\mathfrak{G}_\varepsilon^{-1}  \mathcal{A}_{\chi,\varepsilon,\rm corr}^{\rm bend} \mathfrak{G}_\varepsilon, \qquad \varepsilon > 0.
\end{equation}
Let us start with the stretching case.

\iffalse
\begin{theorem}
	\label{thm53}
Suppose that the Assumption \ref{matsym} holds, the force density $\vect f$ is an element of $L^2_{\rm stretch},$ and the spectral scaling and force scaling parameters satisfy the conditions $\gamma > -2$ and $\delta \geq 0.$ Then there exists $C>0,$ independent of the above data, such that
\end{theorem}
\fi

\begin{proof}[Proof of the Theorem \ref{l2h1theorem} under Assumption \ref{matsym}, formula \eqref{precision_stretch}]
Our intention is to prove the following estimates:
\begin{equation}
    \begin{aligned}
        &\left\Vert P_1\left(\frac{1}{\varepsilon^\gamma} \mathcal{A}_\varepsilon + I \right)^{-1}\vect f - x_2\pi_1 \left(\frac{1}{\varepsilon^\gamma}\mathcal{A}^{\rm stretch} + \mathfrak{C}^{\rm stretch}\right)^{-1}\mathcal{M}^{\rm stretch} \Xi_\varepsilon \vect f  -P_1 \mathcal{A}_{\rm stretch}^{\rm \rm corr}(\varepsilon) \vect f\right\Vert_{H^1(\omega\times \mathbb{R})}\\[0.6em]
        &\hspace{27em}\leq C \max\left\{\varepsilon^{\gamma + 1}, \varepsilon^{\tfrac{\gamma + 2}{2}} \right\} \bigl\Vert  \vect f \bigr\Vert _{L^2(\omega\times \mathbb{R};\R^3)}, \\[0.7em]
        &\left\Vert P_2\left(\frac{1}{\varepsilon^\gamma} \mathcal{A}_\varepsilon + I \right)^{-1}\vect f + x_1 \pi_1 \left(\frac{1}{\varepsilon^\gamma}\mathcal{A}^{\rm stretch} + \mathfrak{C}^{\rm stretch}\right)^{-1}\mathcal{M}^{\rm stretch}\Xi_\varepsilon \vect f - P_2 \mathcal{A}_{\rm stretch}^{\rm \rm corr}(\varepsilon)\vect f \right\Vert_{H^1(\omega\times \mathbb{R})}\\[0.7em] &\hspace{27em}\leq C \max\Bigl\{\varepsilon^{\gamma + 1}, \varepsilon^{\tfrac{\gamma + 2}{2}}\Bigr\} \bigl\Vert  \vect f \bigr\Vert _{L^2(\omega\times \mathbb{R};\R^3)}, \\[0.7em]
        &\left\Vert P_3\left(\frac{1}{\varepsilon^\gamma} \mathcal{A}_\varepsilon + I \right)^{-1}\vect f -  \pi_2 \left(\frac{1}{\varepsilon^\gamma}\mathcal{A}^{\rm stretch} + \mathfrak{C}^{\rm stretch}\right)^{-1}\mathcal{M}^{\rm stretch}\Xi_\varepsilon \vect f -P_3 \mathcal{A}_{\rm stretch}^{\rm \rm corr}(\varepsilon)\vect f \right\Vert_{H^1(\omega\times \mathbb{R})}\\[0.7em]
        &\hspace{27em}\leq C \max\Bigl\{\varepsilon^{\gamma + 1}, \varepsilon^{\tfrac{\gamma + 2}{2}} \Bigr\} \bigl\Vert  \vect f \bigr\Vert _{L^2(\omega\times\mathbb{R};\R^3)}.
    \end{aligned}
\label{L2H1stretch}
\end{equation}
which is equivalent to formula \eqref{precision_stretch}. \CCC Again, we will only prove the first estimate, since the others go in an analogous way. 
In order to prove the required  $H^1$ estimate, the first resolvent estimate in \eqref{stretch_estimates} does not suffice, the reason being that, the Gelfand pullback would ruin the order of the estimate in the third variable. On the other hand, we do not need the whole expression in the second estimate from \eqref{stretch_estimates}, either. This is because we can neglect the element $\vect u_0^{(1)}$ in the $H^1(\omega \times Y;\C^3)$ norm. Indeed, from \eqref{u01_stretch} we note that $\vect u_0^{(1)}$ does not depend on $y,$ and 
\begin{align*}
%\begin{split}
    \bigl\|\vect u_0^{(1)}\bigr\|_{L^2(\omega \times Y;\C^3)}&\leq C |\chi|\|\vect f\|_{L^2(\omega \times Y;\C^3)},  \quad 
    \bigl\| \partial_{x_\alpha}\vect u_0^{(1)}\bigr\|_{L^2(\omega \times Y;\C^3)}\leq C |\chi|\|\vect f\|_{L^2(\omega \times Y;\C^3)}, \quad \alpha =1,2.
%\end{split}
\end{align*}
Hence, the following estimates are obtained from \eqref{stretch_estimates}:
\begin{align}
    \left\Vert \partial_{x_\alpha}\left(
    %%\begin{bmatrix}{\vect u_1} \\[0.25em] {\vect u_2} \\[0.25em] {u_3}\end{bmatrix} 
    {\vect u}- 
\begin{bmatrix} m_3  x_2  \\[0.25em] -m_3 x_1 \\[0.25em] m_4  \end{bmatrix} -{\vect u}_1 
%%\begin{bmatrix}({\vect u_1})_1 \\[0.25em] ({\vect u_1})_2 \\[0.25em] %%({\vect u_1})_3\end{bmatrix} 
\right) \right\Vert_{L^2(\omega\times Y,\C^3)}&\leq C|\chi|\bigl\Vert  \vect f \bigr\Vert _{L^2(\omega\times Y;\C^3)},\nonumber\\[0.6em]
%%\end{equation*}
%%\begin{equation}
  \left\Vert \partial_y\left(\vect{u}
  %%\begin{bmatrix}{\vect u_1} \\[0.25em] {\vect u_2} \\[0.25em] {u_3}\end{bmatrix} 
    -\begin{bmatrix} m_3  x_2  \\[0.25em] -m_3 x_1 \\[0.25em] m_4  \end{bmatrix} - {\vect u}_1
    %%\begin{bmatrix}({\vect u_1})_1 \\[0.25em] ({\vect u_1})_2 \\[0.25em] ({\vect u_1})_3\end{bmatrix} 
    \right) \right\Vert_{L^2(\omega\times Y,\C^3)}&\leq C{\chi^2}\bigl\Vert  \vect f \bigr\Vert _{L^2(\omega\times Y;\C^3)}.\label{estimywithoutu0}
\end{align}
We will focus on the norm of derivative with respect to $y$. For this,  we require a higher-order estimate in $|\chi|.$ We then obtain the desired estimate by applying the inverse of scaled Gelfand transform. Rewriting the estimate \eqref{estimywithoutu0}, we deduce that
\begin{align*}
    & \left\Vert \partial_y \left( P_1\left( \frac{1}{{\chi^2}} \mathcal{A}_\chi + I \right)^{-1} - x_2 \pi_1\bigl({\mathbb A}^{\rm stretch}  + \mathfrak{C}^{\rm stretch}\bigr)^{-1}\widetilde{\mathcal{M}}^{\rm stretch} -P_1  \mathcal{A}_{\chi,\rm corr}^{\rm stretch} \right)\vect f \right\Vert_{L^2(\omega\times Y)}\\[0.6em] 
    &\hspace{25em}\leq C{\chi^2} \bigl\Vert  \vect f \bigr\Vert _{L^2(\omega\times Y;\C^3)}.
\end{align*}
We use the approach introduced when deriving the $L^2 \to L^2$ operator-norm estimates. To this end, define the function $g_{\varepsilon,\chi}(z)$ as in \eqref{def_g_analytic} and choose a contour $\Gamma \subset \C$ so that
\begin{align*}
&\left\Vert\partial_y \left( P_1\left(\frac{1}{\varepsilon^{\gamma+2}}\mathcal{A}_\chi+I\right)^{-1} - x_2\pi_1\left(\frac{\chi^2}{\varepsilon^{\gamma+2}}{\mathbb A}^{\rm stretch} +\mathfrak{C}^{\rm stretch}\right)^{-1}\widetilde{\mathcal{M}}^{\rm stretch} - P_1\mathcal{A}_{\chi,\varepsilon,\rm corr}^{\rm stretch}\right)\vect f \right\Vert_{L^2(\omega \times Y)}
\\[0.4em]
&\leq \frac{1}{2\pi} \oint_{\Gamma} \bigl|g_{\varepsilon,\chi}(z)\bigr| \Bigg\| \partial_y \left( P_1\Big(zI -\frac{1}{{\chi^2}}\mathcal{A}_\chi\right)^{-1} - x_2\pi_1\bigl(z\mathfrak{C}^{\rm stretch}-{\mathbb A}^{\rm stretch} \bigr)^{-1}\widetilde{\mathcal{M}}^{\rm stretch} \\[0.4em] 
& \qquad-P_1 \mathcal{B}_{1,\rm stretch}^{\chi, \rm corr}\bigl(z\mathfrak{C}^{\rm stretch}-{\mathbb A}^{\rm stretch}\bigr)^{-1}\widetilde{\mathcal{M}}^{\rm stretch} \Big) \vect f \Bigg\|_{L^2(\omega \times  Y)} dz \leq  C{\chi^2} \max\left\{\frac{\chi^{2}}{\varepsilon^{\gamma+2}}, 1\right\}^{-1}\|\vect f\|_{L^2(\omega \times Y;\C^3)} \\[0.4em] 
&\leq C \varepsilon^{\gamma + 2}\|\vect f\|_{L^2(\omega \times Y;\C^3)}.
\end{align*}
Applying the Gelfand transform, we obtain 
\begin{align*}
     &\left\Vert\partial_{x_3}\left( P_1\left(\frac{1}{\varepsilon^\gamma} \mathcal{A}_\varepsilon + I \right)^{-1}\vect f - x_2 \pi_1 \left(\frac{1}{\varepsilon^\gamma}\mathcal{A}^{\rm stretch} + \mathfrak{C}^{\rm stretch}\right)^{-1}\mathcal{M}^{\rm stretch} \vect f  -P_1 \mathcal{A}_{\rm stretch}^{\rm \rm corr}(\varepsilon) \vect f\right)\right\Vert_{L^2(\omega\times \mathbb{R})}\\[0.6em] 
     &\hspace{25em}\leq C \varepsilon^{\gamma + 1}\|\vect f\|_{L^2(\omega \times \R;\R^3)},
\end{align*}
due to \eqref{trans_back_stretch}, \eqref{trans_back_corr1_str}, and the second identity in \eqref{gelfand_y}.
%%and the fact that 
%% \[
%% \varepsilon \frac{\partial}{\partial x_3} = \mathfrak{G}_\varepsilon^{-1} \frac{\partial}{\partial y}\mathfrak{G}_\varepsilon + {O}(\chi \varepsilon).
%%\]
 The estimates for the remaining derivatives are obtained similarly as in the proof of Theorem \ref{THML2L2} under the Assumption \ref{matsym}, formula  \eqref{stretch23}, see Section \ref{L2toL2}. This establishes the \CCC first estimate in  \eqref{L2H1stretch}.
\end{proof}
Next, we prove the analogous result for the bending case.

\iffalse
\begin{theorem}
	\label{bend_thm2}
Suppose that Assumption \ref{matsym} holds and the force density $\vect f$ is an element of $L^2_{\rm bend}$.  Suppose further that the spectral and force scaling parameters satisfy the conditions $\gamma > -2$ and $\delta \geq 0.$
%%parameter of spectral scaling. Let $\delta \geq 0$ be the parameter of force scaling. 
The following estimates hold for $C>0$ independent of $\varepsilon$ and $\vect f:$ 
%{$C,\varepsilon$?}
\end{theorem}
\fi

\begin{proof}[Proof of the Theorem \ref{l2h1theorem} under Assumption \ref{matsym}, formula \eqref{precision_bend}]
The equivalent form of \eqref{precision_bend} is the following: 
\begin{equation}
\begin{aligned}
  %  \begin{split}
        &\left\Vert P_i \left( \frac{1}{\varepsilon^\gamma}\mathcal{A}_\varepsilon + I \right)^{-1} S_{\!\varepsilon^\delta} \vect f - \pi_i \left(\frac{1}{\varepsilon^{\gamma-2}}\mathcal{A}^{\rm bend} + I\right)^{-1}\mathcal{M}_\varepsilon^{\rm bend}\Xi_\varepsilon S_{\!\varepsilon^\delta} \vect f - P_i \mathcal{A}_{\rm bend}^{\rm \rm corr}(\varepsilon)S_{\!\varepsilon^\delta} \vect f \right\Vert_{H^1(\omega\times \mathbb{R})} \\[0.5em]
         &\hspace{15em}\leq C \max\Bigl\{\varepsilon^{\tfrac{\gamma + 2}{4}},\varepsilon^{\tfrac{\gamma }{2}}\Bigr\}\max\Bigl\{\varepsilon^{\tfrac{\gamma + 2}{4} - \delta}, 1\Bigr\} \bigl\Vert  \vect f \bigr\Vert _{L^2(\omega\times \mathbb{R};\R^3)}, \quad i =1,2,\\[0.6em]
        &\left\Vert P_3\left( \frac{1}{\varepsilon^\gamma}\mathcal{A}_\varepsilon + I \right)^{-1}S_{\!\varepsilon^\delta} \vect f +  \varepsilon\widehat{x} %%\begin{bmatrix}
        %%x_1 \\ x_2
        %%\end{bmatrix}^\top
    \cdot\frac{d}{d x_3}\left(\frac{1}{\varepsilon^{\gamma-2}} \mathcal{A}^{\rm bend} + I\right)^{-1}\mathcal{M}_\varepsilon^{\rm bend}\Xi_\varepsilon S_{\!\varepsilon^\delta} \vect f - P_3 \mathcal{A}_{\rm bend}^{\rm \rm corr}(\varepsilon)S_{\!\varepsilon^\delta} \vect f \right\Vert_{H^1(\omega\times \mathbb{R})} \\[0.6em] 
    &\hspace{15em}\leq  C \max\Bigl\{\varepsilon^{\tfrac{\gamma + 2}{2}},\varepsilon^{\tfrac{3\gamma + 2}{4}}\Bigr\}\max\Bigl\{\varepsilon^{\tfrac{\gamma + 2}{4}- \delta}, 1\Bigr\} \bigl\Vert  \vect f \bigr\Vert _{L^2(\omega\times \mathbb{R};\R^3)}.
    %\end{split}
\end{aligned}
\label{L2H1bend}
\end{equation}
We start the proof by claiming that the following bounds hold:
	\begin{equation}
	\begin{aligned}
		\left\Vert\partial_y \left(\widehat{\vect u} 
		%%\begin{bmatrix}{u_1} \\ {u_2}\end{bmatrix} 
		- 
		\begin{bmatrix} m_1  \\[0.25em] m_2 \end{bmatrix} - \widehat{\vect u}_1
		%%\begin{bmatrix}({u_1})_1 \\[0.25em] ({u_1})_2\end{bmatrix} 
		\right) \right\Vert_{L^2(\omega\times Y,\C^2)} &\leq C{\chi^2}\bigl\Vert  \vect f \bigr\Vert _{L^2(\omega\times Y;\C^3)}, \\[0.3em]
		\bigl\Vert \partial_y \bigl( {u_3}  + {\rm i}\chi( m_1 x_1 + m_2 x_2) - ({\vect u}_1)_3 \bigr) \bigr\Vert_{L^2(\omega\times Y)} &\leq C|\chi|^3\bigl\Vert  \vect f \bigr\Vert _{L^2(\omega\times Y;\C^3)},
	\end{aligned}
\label{u01_bounds}
\end{equation}
with $C>0$ independent of $\chi,$ ${\vect f}.$ To verify (\ref{u01_bounds}), we use the second pair of estimates in \eqref{bend_estimates} and notice that the corrector term $\vect u_0^{(1)}$ does not contribute to the sought approximation, as it is 
%%in the $H^1(\omega \times Y;\C^3)$ norm. 
%%$\vect u_0^{(1)}$ 
 independent of $y$ and satisfies the bounds
%%$\vect u_0^{(1)}$ is independent of $y$ 
\begin{align*}
    &\bigl\|({\vect u}_0^{(1)})_\alpha\bigr\|_{L^2(\omega\times Y)}\leq C|\chi|\|\vect f\|_{L^2(\omega\times Y;\C^3)},\quad\alpha=1,2, \qquad\bigl\|({\vect u}_0^{(1)})_3\bigr\|_{L^2(\omega\times Y)}\leq C{\chi^2}\|\vect f\|_{L^2(\omega\times Y;\C^3)},\\[0.4em]    
    &\bigl\|\partial_{x_\alpha}({\vect u}_0^{(1)})_\alpha\bigr\|_{L^2(\omega\times Y)}\leq C|\chi|\|\vect f\|_{L^2(\omega\times Y;\C^3)},\quad \alpha=1,2, \qquad \bigl\|\partial_{x_3}({\vect u}_0^{(1)})_3\bigr\|_{L^2(\omega\times Y)}\leq C{\chi^2}\|\vect f\|_{L^2(\omega\times Y;\C^3)},
    %%\\[0.4em]
   %% &
   %% {\color{red}\bigl\|\partial_y (\vect u_0^{(1)})_\alpha\bigr\|_{L^2(\omega\times Y)}=0,\quad\alpha=1,2, \qquad \|\partial_y ({\vect u}_0^{(1)})_3\|_{L^2(\omega\times Y)}=0.}
\end{align*}
which follow directly from \eqref{u0^1_bend}. 

%%%{\color{red}As before, we focus on the norm of the derivative with respect to $y$.}
It follows from \eqref{u01_bounds}, taking into account the adjustment afforded by Remark \ref{replacement_rem} that there exists $C>0$ such that the following estimates hold for all  ${\vect f}\in L^2(\omega \times Y; {\mathbb C}^3),$ $\chi\in[\pi,\pi)\setminus\{0\}:$
%% for $i=1,2$:
\begin{align*}
    &\left\Vert \partial_y \left( P_i \left( \frac{1}{{\chi^4}} \mathcal{A}_\chi + I \right)^{-1} -  \pi_i \bigl({\mathbb A}^{\rm bend}  + I\bigr)^{-1}\widetilde{\mathcal{M}}_\chi^{\rm bend} -P_i  \mathcal{A}_{\chi,\rm corr}^{\rm bend} \right) S_{\!|\chi|}\vect f \right\Vert_{L^2(\omega \times Y)} 
    %%\\[0.45em]
    %%&\hspace{20em}
    \leq {\chi^2} \bigl\Vert  \vect f \bigr\Vert _{L^2(\omega \times Y;\C^3)},\qquad i=1,2, \\[0.5em]
    &\left\Vert \partial_y \left( P_3\left( \frac{1}{{\chi^4}} \mathcal{A}_\chi + I \right)^{-1} +{\rm i}\chi \widehat{x}
    %%\begin{bmatrix} x_1 \\ x_2 \end{bmatrix}^\top
     \cdot \bigl({\mathbb A}^{\rm bend}
     +I\bigr)^{-1}\widetilde{\mathcal{M}}_\chi^{\rm bend} -P_3  \mathcal{A}_{\chi,\rm corr}^{\rm bend}\right)S_{\!|\chi|}\vect f \right\Vert_{L^2(\omega \times Y)}  
     %%\\[0.45em] 
    %%&\hspace{20em}
    \leq |\chi|^3 \bigl\Vert  \vect f \bigr\Vert _{L^2(\omega \times Y;\C^3)}.
\end{align*}
Introducing the function $h_{\varepsilon,\chi}(z)$ as in \eqref{definition_f_analytic}, 
we can provide norm resolvent estimates for the operators
\begin{equation}
    \partial_y \left( P_i\left(\frac{1}{\varepsilon^{\gamma + 2}}\mathcal{A}_\chi+I\right)^{-1} - \pi_i\left(\frac{\chi^4}{\varepsilon^{\gamma + 2}}{\mathbb A}^{\rm bend} +I\right)^{-1}\widetilde{\mathcal{M}}_\chi^{\rm bend}-P_i  \mathcal{A}_{\chi,\varepsilon,\rm corr}^{\rm bend} \right), \quad i=1,2,
\end{equation}
\begin{equation}
    \partial_y \left(P_3\left(\frac{1}{\varepsilon^{\gamma + 2}}\mathcal{A}_\chi+I\right)^{-1} +  {\rm i} \chi
    %%\begin{bmatrix} x_1\\ x_2 \\ \end{bmatrix}^\top
    \widehat{x}\cdot\left(\frac{\chi^4}{\varepsilon^{\gamma + 2}}{\mathbb A}^{\rm bend} +I\right)^{-1}\widetilde{\mathcal{M}}_\chi^{\rm bend}-P_3  \mathcal{A}_{\chi,\varepsilon,\rm corr}^{\rm bend} \right).
\end{equation}
Indeed, for $i=1,2$ we have
\begin{align*}
\Bigg\| \partial_y&\left( P_i\left(\frac{1}{\varepsilon^{\gamma + 2}}\mathcal{A}_\chi+I\right)^{-1}- \pi_i\left(\frac{\chi^4}{\varepsilon^{\gamma + 2}}{\mathbb A}^{\rm bend} +I\right)^{-1}\widetilde{\mathcal{M}}_\chi^{\rm bend} -P_i  \mathcal{A}_{\chi,\varepsilon,\rm corr}^{\rm bend}\right)S_{\!\varepsilon^\delta} \vect f \Bigg \|_{L^2(\omega \times  Y)}  
\\[0.5em]
&\leq \frac{1}{2\pi} \oint_{\Gamma} \bigl|h_{\varepsilon,\chi}(z)\bigr|\Bigg\| \partial_y\Big( P_i 
 \left (zI -\frac{1}{{\chi^4}}\mathcal{A}_\chi\right)^{-1} - \pi_i\bigl(zI-{\mathbb A}^{\rm bend} \bigr)^{-1}\widetilde{\mathcal{M}}_\chi^{\rm bend}  \\[0.5em]
&\hspace{7em}-P_i \mathcal{B}_{1,\rm bend}^{\chi, \rm corr}\bigl(zI-{\mathbb A}^{\rm bend}\bigr)^{-1}\widetilde{\mathcal{M}}_\chi^{\rm bend} \Big) S_{\!|\chi|} S_{\varepsilon^\delta / |\chi|}\vect f \Bigg\|_{L^2(\omega \times Y)} dz \\[0.5em]
&\leq  C{\chi^2} \left(\max\left\{\frac{{\chi^4}}{\varepsilon^{\gamma + 2}}, 1\right\}\right)^{-1}\max\left\{|\chi|\varepsilon^{-\delta}, 1\right\}\|\vect f\|_{L^2(\omega \times Y;\C^3)}  
\\[0.5em]
&\leq C \varepsilon^{\tfrac{\gamma + 2}{ 2}}\max\Bigl\{\varepsilon^{\tfrac{\gamma + 2}{4}-\delta}, 1\Bigr\} \|\vect f\|_{L^2(\omega \times Y;\C^3)}.
\end{align*}
Similarly, for the third component, we have
\begin{align*}
   %% \begin{split}
        \Bigg\| \partial_y \bigg( P_3&\left(\frac{1}{\varepsilon^{\gamma + 2}}\mathcal{A}_\chi+I \right)^{-1} - {\rm i} \chi\widehat{x}
        %%\begin{bmatrix} x_1\\ x_2 \\ \end{bmatrix}^\top
        \cdot \left(\frac{\chi^4}{\varepsilon^{\gamma + 2}}{\mathbb A}^{\rm bend} +I\right)^{-1}\widetilde{\mathcal{M}}_\chi^{\rm bend} -P_3  \mathcal{A}_{\chi,\varepsilon,\rm corr}^{\rm bend} \bigg) \CCC S_{\varepsilon^\delta}  \vect f \Bigg \|_{L^2(\omega \times Y)} \\[0.6em]        
        &\leq \frac{1}{2\pi} \oint_{\Gamma} \bigl|h_{\varepsilon,\chi}(z)\bigr|\Bigg\| \partial_y \Big(P_3\left(zI-\frac{1}{\chi^4}\mathcal{A}_\chi\right)^{-1} - {\rm i} \chi\widehat{x}
        %%\begin{bmatrix} x_1\\ x_2 \\ \end{bmatrix}^\top
        \cdot \bigl(zI-{\mathbb A}^{\rm bend} \bigr)^{-1}\widetilde{\mathcal{M}}_\chi^{\rm bend} \\[0.6em] 
 &\hspace{10em}-P_3 \mathcal{B}_{1,\rm bend}^{\chi, \rm corr}\bigl(zI-{\mathbb A}^{\rm bend}\bigr)^{-1}\widetilde{\mathcal{M}}_\chi^{\rm bend} \Big) S_{\!|\chi|} S_{\varepsilon^\delta/|\chi|}\vect f \Bigg\|_{L^2(\omega \times Y)} dz   \\[0.6em]
 &\leq  C|\chi|^3 \left(\max\left\{\frac{{\chi^4}}{\varepsilon^{\gamma + 2}}  , 1\right\}\right)^{-1}\max\left\{|\chi|\varepsilon^{-\delta}, 1\right\}\|\vect f\|_{L^2(\omega \times Y;\C^3)} \\[0.6em]
&\leq C \varepsilon^{\tfrac{3(\gamma + 2)}{4}}\max\Bigl\{\varepsilon^{\tfrac{\gamma+2}{4}-\delta}, 1\Bigr\} \|\vect f\|_{L^2(\omega \times Y;\C^3)}.
  %%  \end{split}
\end{align*}
Now, by passing back to the original physical region, we obtain $(i=1,2)$:
\begin{align*}
   %% \begin{split}
        \Bigg\| \partial_{x_3} \Bigg( P_i\left(\frac{1}{\varepsilon^{\gamma}}\mathcal{A}_\varepsilon+I\right)^{-1}-\pi_i&\left(\frac{1}{\varepsilon^{\gamma - 2}}\mathcal{A}^{\rm bend}+I\right)^{-1}\mathcal{M}_\varepsilon^{\rm bend}\Xi_\varepsilon -P_i  \mathcal{A}_{\rm corr}^{\rm bend}(\varepsilon)\Bigg) S_{\!\varepsilon^\delta} \vect f \Bigg \|_{L^2(\omega \times  \R)} \\[0.35em]
        &\leq C \varepsilon^{\tfrac{\gamma }{ 2}}\max\Bigl\{\varepsilon^{\tfrac{\gamma + 2}{4}-\delta}, 1\Bigr\} \|\vect f\|_{L^2(\omega \times \R;\R^3)},
  %%  \end{split}
\end{align*}
and also 
\begin{align*}
  %%  \begin{split}
        \Bigg\| \partial_{x_3}\Bigg( P_3\left(\frac{1}{\varepsilon^{\gamma}}\mathcal{A}_\varepsilon+I \right)^{-1} +\widehat{x}
        %%\begin{bmatrix} x_1\\ x_2 \\ \end{bmatrix}^\top
        &\cdot \varepsilon\frac{d}{dx_3} \left(\frac{1}{\varepsilon^{\gamma - 2}}\mathcal{A}^{\rm bend}+I\right)^{-1}\mathcal{M}_\varepsilon^{\rm bend} \Xi_\varepsilon -P_3  \mathcal{A}_{\rm corr}^{\rm bend}(\varepsilon) \Bigg) S_{\!\varepsilon^\delta} \vect f \Bigg \|_{L^2(\omega \times \R)} 
        \\[0.35em]
        &\leq C \varepsilon^{\tfrac{3\gamma + 2}{4}}\max\Bigl\{\varepsilon^{\tfrac{\gamma+2}{4}-\delta}, 1\Bigr\} \|\vect f\|_{L^2(\omega \times \R;\R^3)},
   %% \end{split}
\end{align*}
by virtue of \eqref{trans_back_bend12}, \eqref{transform_back_bend3}, \eqref{trans_back_corr1_str}, and the second formula in \eqref{gelfand_y}. The remaining derivatives are estimated similarly as in the proof of Theorem \ref{THML2L2} under the Assumption \ref{matsym}, formula  \eqref{bend23}, see Section \ref{L2toL2}. 
%%as well as the {\color{red}definition of the corrector operator}, the statement of the theorem now follows. 
\end{proof}

\subsection{Higher-order $L^2 \to L^2$ norm-resolvent estimates}
\label{higher_order_sec}

We define the leading order term corrector operators as follows:
\begin{equation*}
  \mathcal{\widetilde{A}}^{\rm stretch}_{\chi,\rm corr} :L^2(\omega \times Y;\C^3)\to L^2(\omega \times Y;\C^3), \qquad \mathcal{\widetilde{A}}^{\rm bend}_{\chi,\rm corr} :L^2(\omega \times Y;\C^3)\to L^2(\omega \times Y;\C^3)
\end{equation*}
such that according to the asymptotic procedure in the last section (equations \eqref{u01_stretch} and  \eqref{u01_bend})
we have:
\begin{equation}
\label{correctoroperatorsu01}
   \mathcal{\widetilde{A}}^{\rm stretch}_{\chi,\rm corr} \vect f:=\bigl(\vect u_0^{(1)}\bigr)_{\rm stretch}, \qquad
   \mathcal{\widetilde{A}}^{\rm bend}_{\chi,\rm corr}  S_{|\chi|} \vect f :=\bigl(\vect u_0^{(1)}\bigr)_{\rm bend}.
\end{equation}

Without going into much detail, we will next briefly discuss the form these operators take under the inverse Gelfand transform, i.e., on the original physical domain.  
We reflect on the asymptotic procedure from the last section and consider now the resolvent problems depending on the spectral parameter $z \in \C$.
Our aim is to vaguely express the operators $\mathcal{\widetilde{A}}^{\rm stretch}_{\chi,\rm corr}(z)$ and $\mathcal{\widetilde{A}}^{\rm bend}_{\chi,\rm corr}(z)$ in a closed form, where $z \in \C$ is a spectral parameter. To this end, we focus first on the stretching case.

   Note that, due to the structure and linearity of the equation \eqref{stretchsecondcorrectorequation}, one can express the corrector term ${\vect u_2}(z)$ as 
   \begin{equation}
       {\vect u_2}(z) =  \chi^2  \widehat{\mathcal{B}} \left(z\mathfrak{C}^{\rm stretch}- \chi^{-2}{\mathbb A}_\chi^{\rm stretch}\right)^{-1}\widetilde{\mathcal{M}}^{\rm stretch} \vect f + \chi^2  \mathcal{B}_{1,\rm stretch}^{\rm corr}\vect f,
   \end{equation}
where $\widehat{\mathcal{B}}_\chi$,  $\mathcal{B}_{1,\rm stretch}^{\chi, \rm corr}$ are  bounded linear operators which are defined via \eqref{stretchsecondcorrectorequation} and \eqref{u01_stretch}.  Furthermore, due to \eqref{u01_stretch}, we have, for all $z$ except those from a countable set,  
\begin{equation}
\label{u01operatorform}
\begin{split}
        \vect u_0^{(1)}(z) &=\chi \left( \widetilde{\mathcal{M}}^{\rm stretch} \right)^*\left(z\mathfrak{C}^{\rm stretch}-{\mathbb A}^{\rm stretch}\right)^{-1}\widecheck{\mathcal{B}} \left(z\mathfrak{C}^{\rm stretch}-{\mathbb A}^{\rm stretch}\right)^{-1}\widetilde{\mathcal{M}}^{\rm stretch} \vect f \\&+\chi \left( \widetilde{\mathcal{M}}^{\rm stretch} \right)^*\left(z\mathfrak{C}^{\rm stretch}-{\mathbb A}^{\rm stretch}\right)^{-1}\widetilde{\mathcal{B}} \vect f.
\end{split}
\end{equation}
where the bounded operators $\widetilde{\mathcal{B}}$, $\widecheck{\mathcal{B}}$ are introduced via \eqref{u01_stretch}. Equivalently, 
\begin{equation}
\begin{split}
        \mathcal{\widetilde{A}}^{\rm stretch}_{\chi,\rm corr}(z) &= \chi \left( \widetilde{\mathcal{M}}^{\rm stretch} \right)^*\left(z\mathfrak{C}^{\rm stretch}-{\mathbb A}^{\rm stretch}\right)^{-1}\widecheck{\mathcal{B}} \left(z\mathfrak{C}^{\rm stretch}-{\mathbb A}^{\rm stretch}\right)^{-1}\widetilde{\mathcal{M}}^{\rm stretch}  \\&+ \chi\left( \widetilde{\mathcal{M}}^{\rm stretch} \right)^*\left(z\mathfrak{C}^{\rm stretch}-{\mathbb A}^{\rm stretch}\right)^{-1}\widetilde{\mathcal{B}}.
\end{split}
\end{equation}
The same structure is valid for the operator $\mathcal{\widetilde{A}}^{\rm bend}_{\chi,\rm corr}(z)$ as well. Next, since we are dealing with a finite dimensional spaces, it is clear that we have the following matrix structure: 
\begin{equation}
    \left( z\mathfrak{C}^{\rm stretch}-{\mathbb A}^{\rm stretch}\right)^{-1}\widecheck{\mathcal{B}} \left(z\mathfrak{C}^{\rm stretch}-{\mathbb A}^{\rm stretch}\right)^{-1} =  \begin{bmatrix}
    \mathcal{B}_{11}(z) & \mathcal{B}_{12}(z) \\[0.35em]
    \mathcal{B}_{21}(z) & \mathcal{B}_{22}(z)
    \end{bmatrix},
\end{equation}   
where the coordinate functions depend on the spectral parameter $z\in \C$ as follows (again, excluding an appropriate countable set of $z$):

\begin{eqnarray*}
    \mathcal{B}_{ij}(z) &=& \frac{a_{ij}}{(z-\chi^{-2}\lambda_1^\chi)^2} + \frac{b_{ij}}{(z-\chi^{-2}\lambda_1^\chi)(z-\chi^{-2}\lambda_2^\chi)} +  \frac{c_{ij}}{(z-\chi^{-2}\lambda_2^\chi)^2} \\
    & & + \frac{d_{ij}}{(z-\chi^{-2}\lambda_1^\chi)}+\frac{e_{ij}}{(z-\chi^{-2}\lambda_2^\chi)}+f_{ij},
\end{eqnarray*}

where $\lambda_1^\chi$ and $\lambda_2^\chi$ are the eigenvalues of the matrix ${\mathbb A}_\chi^{\rm stretch} $ and  $a_{ij}$, $b_{ij}$, $c_{ij}$, $d_{ij}$, $e_{ij}$ $f_{ij}\in{\rm i}{\mathbb R}.$   Recall that those two eigenvalues, when scaled with $1/{\chi^2}$ are positioned in a fixed interval uniformly in $\chi$. 
\begin{lemma}
Let $\Gamma$ be a closed contour enclosing both eigenvalues $\lambda_1^\chi$, $\lambda_2^\chi$ of the matrix ${\mathbb A}_\chi^{\rm stretch} $ uniformly in $\chi$. Then the following formulae hold:
\begin{align*}
%\begin{split}
    \frac{1}{2\pi{\rm i}} \oint_\Gamma \frac{g_{\varepsilon,\chi}(z)}{(z-\chi^{-2}\lambda_i^\chi)^2}dz&= -\frac{{\chi^2}}{\varepsilon^{\gamma+2}} \frac{1}{(\varepsilon^{-(\gamma+2)}\lambda_i^\chi + 1)^2}, \quad i=1,2, \\[0.5em]
        \frac{1}{2\pi{\rm i}} \oint_\Gamma \frac{g_{\varepsilon,\chi}(z)}{(z-\chi^{-2}\lambda_1^\chi)(z-\chi^{-2}\lambda_2^\chi)}dz&=- \frac{{\chi^2}}{\varepsilon^{\gamma+2}}\frac{1}{(\varepsilon^{-(\gamma+2)}\lambda_1^\chi + 1)(\varepsilon^{-(\gamma+2)}\lambda_2^\chi + 1)}, \\[0.5em]
          \frac{1}{2\pi{\rm i}} \oint_\Gamma \frac{h_{\varepsilon,\chi}(z)}{(z-\chi^{-4}\lambda_i^\chi)^2}dz&= -\frac{{\chi^4}}{\varepsilon^{\gamma+2}} \frac{1}{(\varepsilon^{-(\gamma+2)}\lambda_i^\chi + 1)^2}, \quad i=1,2, \\[0.5em]
        \frac{1}{2\pi{\rm i}} \oint_\Gamma \frac{h_{\varepsilon,\chi}(z)}{(z-\chi^{-4}\lambda_1^\chi)(z-\chi^{-4}\lambda_2^\chi)}dz&=- \frac{{\chi^4}}{\varepsilon^{\gamma+2}}\frac{1}{(\varepsilon^{-(\gamma+2)}\lambda_1^\chi + 1)(\varepsilon^{-(\gamma+2)}\lambda_2^\chi + 1)},
      %%  \end{split}
\end{align*}

A similar statement holds  for the matrix ${\mathbb A}_\chi^{\rm bend} $ if we replace $g_{\varepsilon,\chi}$ with $h_{\varepsilon,\chi}$.
\end{lemma}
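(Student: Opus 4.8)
The plan is to reduce each of the four identities to a single residue computation via the Cauchy residue theorem applied to an explicit rational function. I would first dispose of the geometric setup. By the decomposition of $\mathbb{A}_\chi^{\rm rod}$ under Assumption \ref{matsym}, the matrix $\mathbb{A}_\chi^{\rm stretch}$ equals $\chi^2\mathbb{A}^{\rm stretch}$, so its eigenvalues are $\lambda_i^\chi=\chi^2\mu_i$, where $\mu_1,\mu_2$ are the ($\chi$-independent) eigenvalues of $\mathbb{A}^{\rm stretch}$, which by Corollary \ref{coerc_corrol} lie in $[\eta,\eta^{-1}]\subset(0,\infty)$; in particular $\chi^{-2}\lambda_i^\chi=\mu_i$ is independent of $\chi$, and the contour $\Gamma=\Gamma^{\rm stretch}$ can be fixed once and for all inside the open right half-plane $\{\Re z>0\}$ so as to enclose exactly $\mu_1,\mu_2$. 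The rational function $g_{\varepsilon,\chi}$ of \eqref{def_g_analytic} has its only singularity at $z=-\varepsilon^{\gamma+2}/\chi^2<0$, which lies in the left half-plane and hence outside $\Gamma$; so the integrand in each of the first two identities is holomorphic on and inside $\Gamma$ apart from the point(s) $\chi^{-2}\lambda_i^\chi$, and each integral equals the sum of the residues there (with the anticlockwise orientation used throughout the paper).

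Next I would carry out the residue evaluations. Writing $g_{\varepsilon,\chi}(z)=(cz+1)^{-1}$ with $c:=\chi^2/\varepsilon^{\gamma+2}$, note that $c\,\chi^{-2}\lambda_i^\chi=\varepsilon^{-(\gamma+2)}\lambda_i^\chi$. For the first identity the pole at $a:=\chi^{-2}\lambda_i^\chi$ is of order two, so the residue is $g_{\varepsilon,\chi}'(a)=-c\,(ca+1)^{-2}=-\tfrac{\chi^2}{\varepsilon^{\gamma+2}}\bigl(\varepsilon^{-(\gamma+2)}\lambda_i^\chi+1\bigr)^{-2}$, which is the claimed value. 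For the second identity I would first assume $\lambda_1^\chi\ne\lambda_2^\chi$, so the poles $a_j:=\chi^{-2}\lambda_j^\chi$ are simple and the integral equals $\bigl(g_{\varepsilon,\chi}(a_1)-g_{\varepsilon,\chi}(a_2)\bigr)/(a_1-a_2)$; substituting $g_{\varepsilon,\chi}(a_j)=\bigl(\varepsilon^{-(\gamma+2)}\lambda_j^\chi+1\bigr)^{-1}$ and $a_1-a_2=\chi^{-2}(\lambda_1^\chi-\lambda_2^\chi)$ and simplifying the difference quotient gives exactly $-\tfrac{\chi^2}{\varepsilon^{\gamma+2}}\bigl(\varepsilon^{-(\gamma+2)}\lambda_1^\chi+1\bigr)^{-1}\bigl(\varepsilon^{-(\gamma+2)}\lambda_2^\chi+1\bigr)^{-1}$; the coincident case $\lambda_1^\chi=\lambda_2^\chi$ reduces to the double-pole computation, or follows by continuity in $(\lambda_1^\chi,\lambda_2^\chi)$.

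For the last two identities I would repeat the argument verbatim with $h_{\varepsilon,\chi}$ from \eqref{definition_f_analytic} in place of $g_{\varepsilon,\chi}$, with $c:=\chi^4/\varepsilon^{\gamma+2}$ and with $\Gamma=\Gamma^{\rm bend}$ enclosing the eigenvalues of $\chi^{-4}\mathbb{A}_\chi^{\rm bend}=\mathbb{A}^{\rm bend}$ (again a fixed set in $(0,\infty)$ by Corollary \ref{coerc_corrol}); the pole of $h_{\varepsilon,\chi}$ at $-\varepsilon^{\gamma+2}/\chi^4<0$ stays outside $\Gamma$. The only change in the algebra is that $c\,\chi^{-4}\lambda_i^\chi=\varepsilon^{-(\gamma+2)}\lambda_i^\chi$ once more (so the factors $\varepsilon^{-(\gamma+2)}\lambda_j^\chi+1$ are untouched) while $a_1-a_2=\chi^{-4}(\lambda_1^\chi-\lambda_2^\chi)$, which turns the prefactor $\chi^2/\varepsilon^{\gamma+2}$ into $\chi^4/\varepsilon^{\gamma+2}$. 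I do not expect any genuine obstacle: this is the residue theorem for an explicit rational function, and the only care needed is bookkeeping — checking that the sole pole of $g_{\varepsilon,\chi}$ (resp.\ $h_{\varepsilon,\chi}$) stays off $\Gamma$ uniformly in $\varepsilon$ and $\chi$ (immediate, being on the negative real axis while $\Gamma\subset\{\Re z>0\}$), keeping the anticlockwise orientation so that the signs come out as written, and treating the possibly coincident eigenvalues in the second and fourth formulae.
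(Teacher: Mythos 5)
Your residue computation is correct and is exactly the argument the authors have in mind; the paper states this lemma without proof, as it follows directly from the Cauchy residue theorem applied to the explicit rational integrands (with the single pole of $g_{\varepsilon,\chi}$, resp.\ $h_{\varepsilon,\chi}$, lying on the negative real axis and hence outside any contour $\Gamma\subset\{\Re z>0\}$). Your bookkeeping of the residues, the reduction $c\,\chi^{-2}\lambda_i^\chi=\varepsilon^{-(\gamma+2)}\lambda_i^\chi$ (resp.\ $c\,\chi^{-4}\lambda_i^\chi$), and the continuity argument for coincident eigenvalues are all sound.
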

The previous lemma allows us to conclude the following structure ( after redefining $\widecheck{\mathcal{B}}$ and $ \widetilde{\mathcal{B}}$):
%%\begin{equation} 
%%	\label{naknadno1} 
\begin{align}
        \frac{1}{2\pi{\rm i}} \oint_\Gamma g_{\varepsilon,\chi}(z)\mathcal{\widetilde{A}}^{\rm stretch}_{\chi,\rm corr}(z)dz &=- \frac{{\chi^3}}{\varepsilon^{\gamma+2}}  \left( \widetilde{\mathcal{M}}^{\rm stretch} \right)^*\!\left( \frac{\chi^2}{\varepsilon^{\gamma+2}}{\mathbb A}^{\rm stretch} + \mathfrak{C}^{\rm stretch}\right)^{-1}\!\!\widecheck{\mathcal{B}}\!\left(\frac{\chi^2}{\varepsilon^{\gamma+2}}{\mathbb A}^{\rm stretch} + \mathfrak{C}^{\rm stretch}\right)^{-1}\!\!\widetilde{\mathcal{M}}^{\rm stretch}\\[-1em]
        &\quad+ \chi  \left( \widetilde{\mathcal{M}}^{\rm stretch} \right)^*\left( \frac{\chi^2}{\varepsilon^{\gamma+2}}{\mathbb A}^{\rm stretch} + \mathfrak{C}^{\rm stretch}\right)^{-1} \widetilde{\mathcal{B}}  =: \mathcal{\widetilde{A}}^{\rm stretch}_{\chi,\varepsilon,\rm corr},\label{naknadno1_1}  
        \\[0.4em]
        \frac{1}{2\pi{\rm i}} \oint_\Gamma h_{\varepsilon,\chi}(z)\mathcal{\widetilde{A}}^{\rm bend}_{\chi,\rm corr}(z)dz &= \frac{{\chi^5}}{\varepsilon^{\gamma+2}} \left( \widetilde{\mathcal{M}}_\chi^{\rm bend} \right)^*\left( \frac{\chi^4}{\varepsilon^{\gamma+2}}{\mathbb A}^{\rm bend} + I\right)^{-1}\widecheck{\mathcal{B}} \left( \frac{\chi^4}{\varepsilon^{\gamma+2}}{\mathbb A}^{\rm bend} + I\right)^{-1}\widetilde{\mathcal{M}}_\chi^{\rm bend}  \\[0.4em]
        &\quad+\chi\left( \widetilde{\mathcal{M}}_\chi^{\rm bend} \right)^*\left( \frac{\chi^4}{\varepsilon^{\gamma+2}}{\mathbb A}^{\rm bend} + I\right)^{-1}\widetilde{\mathcal{B}} =: \mathcal{\widetilde{A}}^{\rm bend}_{\chi,\varepsilon,\rm corr}.\label{naknadno1_2}
\end{align}
%%\end{equation}
We also use the following notation for the Gelfand pullback of the operators \eqref{naknadno1_1}, \eqref{naknadno1_2}:
\begin{equation}
\label{higherorderl2l2correctors}
    \mathcal{\widetilde{A}}^{\rm \rm corr}_{\rm bend}(\varepsilon)=\mathfrak{G}_\varepsilon^{-1}  \mathcal{\widetilde{A}}^{\rm bend}_{\chi,\varepsilon,\rm corr} \mathfrak{G}_\varepsilon, \qquad 
    \mathcal{\widetilde{A}}^{\rm \rm corr}_{\rm stretch}(\varepsilon)=\mathfrak{G}_\varepsilon^{-1}  \mathcal{\widetilde{A}}^{\rm stretch}_{\chi,\varepsilon,\rm corr} \mathfrak{G}_\varepsilon.
\end{equation}

\begin{remark} 
Using the above expressions as well as \eqref{stretchsecondcorrectorequation}, \eqref{u01_stretch}, and  \eqref{u01_bend}, we infer the existence of operators $K_1, K_2:L^2(\R;\R^n ) \to L^2(\R;\R^n )$, which possess a smoothing effect, such that 
\begin{align*} 
 \mathcal{\widetilde{A}}^{\rm \rm corr}_{\rm stretch}(\varepsilon)\vect f&=
  \varepsilon^{1-\gamma}(\mathcal{M}^{\rm stretch})^* \left( \frac{1}{\varepsilon^{\gamma}}\mathcal{A}^{\rm stretch} + \mathfrak{C}^{\rm stretch}\right)^{-1} \mathbf{A}_1 \frac{d^3}{dx_3^3} \left( \frac{1}{\varepsilon^{\gamma}}\mathcal{A}^{\rm stretch} + \mathfrak{C}^{\rm stretch}\right)^{-1} \mathcal{M}^{\rm stretch}\Xi_{\eps} \vect{f}\\[0.4em]
  &\quad + 
  \varepsilon  (\mathcal{M}^{\rm stretch})^*\left( \frac{1}{\varepsilon^{\gamma}}\mathcal{A}^{\rm stretch} + \mathfrak{C}^{\rm stretch}\right)^{-1}  \mathbf{A}_2 \frac{d}{dx_3}\Xi_{\varepsilon} K_1 \vect{f}, \\[0.4em] 
  \mathcal{\widetilde{A}}^{\rm \rm corr}_{\rm bend}(\varepsilon)\vect f&=
  \varepsilon^{3-\gamma}(\mathcal{M}^{\rm bend}_{\eps})^* \left( \frac{1}{\varepsilon^{\gamma-2}}\mathcal{A}^{\rm bend} +I\right)^{-1}  \mathbf{A}_3 \frac{d^5}{dx_3^5} \left( \frac{1}{\varepsilon^{\gamma-2}}\mathcal{A}^{\rm bend} + I\right)^{-1} \mathcal{M}^{\rm bend}_{\eps}\Xi_{\eps} \vect{f}\\[0.4em]
  &\quad+ 
   \varepsilon(\mathcal{M}^{\rm bend})^*\left( \frac{1}{\varepsilon^{\gamma-2}}\mathcal{A}^{\rm bend} +I\right)^{-1} \mathbf{A}_4  \frac{d}{dx_3}  \Xi_{\varepsilon} K_2 \vect{f},
\end{align*} 
 where $\mathbf{A}_i \in \R^{2 \times 2},$ $i=1,\dots,4$ are matrices.\end{remark}
	
The proof of the second part of Theorem \ref{thm_gen_L2L2high} (i.e. the case of the original system satisfying additional symmetries listed in Assumption \ref{matsym}) now closely follows the argument of Sections \ref{L2toL2}, \ref{L2toH1}.
 
\begin{remark} \label{remnonstandcorr} 
The correctors obtained in 	Section \ref{L2toH1}, cf. \eqref{trans_back_corr1_str}, are standard homogenisation correctors, after a modification. Namely from \eqref{stretchfirstcorrectoreqation}, see also \eqref{bendfirstcorrectoreqation},  it follows that they satisfy the usual cell formula. The modification comes from mollifying the loads via the use of the operators $\widetilde{\mathcal{M}}^{\rm stretch}$ and $\widetilde{\mathcal{M}}^{\rm bend}_{\chi}$. 

We refer to the correctors introduced in this section as non-standard correctors.  They  are in the spirit of the correctors obtained in \cite{ABV} and \cite{BirmanSuslina_corrector}. The difference with respect to \cite{BirmanSuslina_corrector} comes from the complexity of the problem, i.e., due to the fact that our limit problem contains a fourth-order differential expression, while the difference with respect to the correctors of \cite{ABV} comes also from the fact that the error obtained in that paper is not uniform with the respect to the data, unlike that obtained here.
\end{remark}

\section{Analysis of the case of a general elasticity tensor}
\label{section6}
In this section, we drop Assumption \ref{matsym} on the cross-section geometry and material symmetries of the rod. Separately, we investigate and develop asymptotics for the solution of two resolvent problems with different scalings, one for each of the orders of magnitudes of the operator eigenspaces. \RRR Note that the asymptotic procedure in the case of a general elasticity tensor is more involved than the one under Assumption \ref{matsym}. 
 The main results are given in Proposition \ref{propfin1} and Proposition \ref{propfin2}.

\subsection{Asymptotics for ${\chi^2}$-scaled resolvent problem}
We begin with the asymptotics for the following resolvent problem: find $\vect u\in H_\#^1(Y;H^1(\omega;\C^3))$ such that
\begin{equation}
\label{generaltensorproblemchi2}
\frac{1}{{\chi^2}}\int_{\omega \times Y}{\mathbb A} (\simgrad \vect u + {\rm i}X_\chi \vect u) :\overline{(\simgrad \vect v + {\rm i}X_\chi \vect v)}+ \int_{\omega \times Y} \vect u\cdot \overline{\vect v}= \int_{\omega \times Y} \vect f\cdot\overline{\vect v}\qquad \forall \vect v \in H_\#^1\bigl(Y;H^1(\omega;\C^3)\bigr),
\end{equation}
which we also write as
\begin{equation}
\chi^{-2}\left((\simgrad)^* +  \left({\rm i}X_\chi\right)^*\right){\mathbb A}\left(\simgrad + {\rm i}X_\chi\right) \vect u +  \vect u = \vect f.
\end{equation}
\subsubsection*{1) Leading order term, the first and the second corrector}
Consider the solution ${\vect m}\in \C^4$ to the equation
\begin{equation}
\label{leadingordertermgeneralchi2}
\left(\chi^{-2}{\mathbb A}_\chi^{\rm rod} + \mathfrak{C}^{\rm rod}_{\chi}\right) 
{\vect m} = \widetilde{\mathcal{M}}_\chi^{\rm rod} \vect f,
\end{equation}
where ${\mathbb A}_\chi^{\rm rod}$ is defined by \eqref{arodchi}  and $\mathfrak{C}^{\rm rod}_{\chi}$ is defined by \eqref{cstretchrodbend}. Using the fact that $ \mathfrak{C}^{\rm rod}$ is positive and ${\mathbb A}_\chi^{\rm rod}$ is non-negative, we conclude that  the solution satisfies the estimate  
$$
|{\vect m}| \leq C \bigl\Vert  \vect f \bigr\Vert _{L^2(\omega\times Y;\C^3)},
$$
where $C>0$ is independent of $\vect f.$ 
The leading-order term
\begin{equation}
 \RRR   \vect u_0:=\widetilde{\mathcal{I}}_\chi^{\rm rod}{\vect m}\in H_{\#}^1\bigl(Y; H^1(\omega;\C^3)\bigr) 
\end{equation}
clearly satisfies
\begin{equation}
\bigl\Vert\vect u_0\bigr\Vert_{H^1(\omega\times Y ; \C^3)} \leq C\bigl\Vert  \vect f \bigr\Vert _{L^2(\omega\times Y;\C^3)}.
\end{equation}  
The correctors ${\vect u_1}, {\vect u_2} \in H$ are defined as the solutions to the equations 
\begin{align}
(\simgrad)^* {\mathbb A} \simgrad{\vect u_1}&= -(\simgrad)^* {\mathbb A} \Lambda_{\chi,{\vect m}}^{\rm rod},\label{firstcorrectorgeneralchi2}\\[0.5em]
%% \quad {\vect u_1} \in H.
%%\end{equation}
%%\begin{equation}
%%\begin{split}
\chi^{-2}(\simgrad)^* {\mathbb A} \simgrad {\vect u_2}&= 
 -\chi^{-2}\Bigl(\left({\rm i}X_{\chi}\right)^*{\mathbb A}\simgrad {\vect u_1} + (\simgrad)^* {\mathbb A}{\rm i}X_{\chi}{\vect u_1} + \left({\rm i}X_{\chi}\right)^* {\mathbb A} \Lambda_{\chi,{\vect m}}^{\rm rod}\Bigr)  \\[0.6em]
 &\quad-\begin{bmatrix}
    m_3 x_2   \\[0.25em]
    -m_3 x_1 \\[0.25em]
     m_4 - {\rm i}\chi(m_1 x_1 + m_2 x_2)
\end{bmatrix}  + \begin{bmatrix}
    %%{f_1} - \int_{\omega \times Y} {f_1} d\widehat{x}dy  \\[0.5em]
    %%{f_2}  - \int_{\omega \times Y} {f_2} d\widehat{x}dy
    \,\widehat{\!\vect f} - \int_{\omega \times Y}\,\widehat{\!\vect f}
    \\[0.5em]
    {f_3}
\end{bmatrix}.
\label{secondcorrectorgeneralchi2}
%%, \quad {\vect u_2} \in H. 
%%\end{split}
\end{align}
 Notice that $\vect u_1$ was already defined in the expression \eqref{achirodcorrector}, where it was denoted by $\vect u_{\chi,\vect m}$ and used for the definition of ${\mathbb A}_\chi^{\rm rod}$.  
It is easy to check that the right-hand side of \eqref{secondcorrectorgeneralchi2} is orthogonal to $Y$-periodic infinitesimal rigid-body motions, making that a well-posed problem. The orthogonality with respect to vectors of the form $(c_1,c_2,0)^\top,$ $c_1, c_2\in{\mathbb C},$ is due to \eqref{firstcorrectorgeneralchi2}, while the orthogonality with respect to $(c_3 x_2, -c_3 x_1, c_4)^\top,$ $c_3, c_4\in{\mathbb C},$ is due to \eqref{leadingordertermgeneralchi2} and \eqref{firstcorrectorgeneralchi2}.  We have the following estimates: 
\begin{equation} \label{nak20} 
    \bigl\Vert{\vect u_1}\bigr\Vert_{H^1(\omega\times Y ; \C^3)} \leq C|\chi|\bigl\Vert  \vect f \bigr\Vert _{L^2(\omega\times Y;\C^3)}, \quad 
    \bigl\Vert{\vect u_2}\bigr\Vert_{H^1(\omega\times Y ; \C^3)} \leq C{\chi^2}\bigl\Vert  \vect f \bigr\Vert _{L^2(\omega\times Y;\C^3)}.
\end{equation}

\subsubsection*{2) Refining the approximation}
Next, we update the leading-order term with ${\vect m}^{(1)} \in \C^4$ satisfying
\begin{equation}
\label{u0chi2restart1}
\left(\frac{1}{{\chi^2}}{\mathbb A}_\chi^{\rm rod}  + \mathfrak{C}^{\rm rod}_{\chi} \right) 
{\vect m}^{(1)}\cdot\overline{\vect d} = -\frac{1}{{\chi^2}}\int_{\omega \times Y} {\mathbb A} \left(\simgrad {\vect u_2} + {\rm i}X_{\chi}{\vect u_1} \right): \overline{\Lambda_{\chi,{\vect d}}^{\rm rod}} \qquad \forall {\vect d}\in \C^4.
\end{equation}
Setting ${\vect d}={\vect m}^{(1)}$ in the identity \eqref{u0chi2restart1} 
%%with ${\vect m}^{(1)}$ 
and using ellipticity estimates for ${\mathbb A}_\chi^{\rm rod} $ as well as suitable bounds  on the right-hand side of \eqref{u0chi2restart1}, we infer that
\begin{equation} \label{nak21} 
    \bigl|{\vect m}^{(1)}\bigr|\leq C|\chi|\bigl\Vert  \vect f \bigr\Vert _{L^2(\omega \times Y;\C^3)}.
\end{equation}
Additionally, by setting to zero the stretching components $d_3,d_4,$ we obtain a sharper estimate
\begin{equation}\label{nak22} 
    \Bigl|\bigl({\vect m}^{(1)}_1,{\vect m}^{(1)}_2\bigr)^\top\Bigr| \leq C{\chi^2}\bigl\Vert  \vect f \bigr\Vert _{L^2(\omega \times Y;\C^3)}.
\end{equation}
Clearly, for $\vect u_0^{(1)}:=\widetilde{\mathcal{I}}_\chi^{\rm rod}{\vect m}^{(1)}$ one has the bound
\begin{equation}\label{nak23}
    \bigl\Vert \vect u_0^{(1)} \bigr\Vert_{H^1(\omega\times Y ; \C^3)} \leq C |\chi|\bigl\Vert  \vect f \bigr\Vert _{L^2(\omega\times Y;\C^3)}.
\end{equation}
The next corrector, ${\vect u_1^{(1)}} \in H,$ is defined as the solution to 
\begin{equation}
\label{u1chi2restart1}
(\simgrad)^* {\mathbb A} \simgrad {\vect u_1^{(1)}} = -(\simgrad)^* {\mathbb A} \Lambda_{\chi,{\vect m}^{(1)}}^{\rm rod}
%%, \quad {\vect u_1^{(1)}} \in H.
\end{equation}
and satisfies the bound
\begin{equation}\label{nak24} 
\bigl\Vert{\vect u_1^{(1)}} \bigr\Vert_{H^1(\omega\times Y;\C^3)} \leq C{\chi^2}\bigl\Vert  \vect f \bigr\Vert _{L^2(\omega\times Y;\C^3)}
\end{equation}
\RRR Furthermore, the corrector ${\vect u}_2^{(1)} \in H$ is defined  so as to cancel out the remaining terms of order $|\chi|$ in the approximation. Namely, we set ${\vect u}_2^{(1)}$ to be the solution to
\begin{equation}
\label{u2chi2restart1}
%\begin{split}
\begin{aligned}
\chi^{-2}(\simgrad)^* {\mathbb A} \simgrad {\vect u_2^{(1)}} &= 
-\chi^{-2}\left(\left({\rm i}X_{\chi}\right)^*{\mathbb A}\simgrad\bigl({\vect u_2} + {\vect u_1^{(1)}} \bigr) + (\simgrad)^* {\mathbb A}{\rm i}X_{\chi}\bigl({\vect u_2} + {\vect u_1^{(1)}} \bigr)\right) \\[0.2em] 
&\quad-\chi^{-2}\left( \left({\rm i}X_{\chi}\right)^* {\mathbb A} \Lambda_{\chi,{\vect m}^{(1)}}^{\rm rod} +\left({\rm i}X_{\chi}\right)^*{\mathbb A}{\rm i}X_{\chi}{\vect u_1}   \right) \\[0.6em] 
&\quad-\begin{bmatrix}
    ({\vect m}^{(1)})_3 x_2   \\[0.3em]
    -({\vect m}^{(1)})_3 x_1 \\[0.3em]
     ({\vect m}^{(1)})_4 - {\rm i}\chi\bigl(({\vect m}^{(1)})_1 x_1 + ({\vect m}^{(1)})_2 x_2\bigr)
\end{bmatrix}  - \begin{bmatrix}
      m_1   \\[0.25em]
     m_2  \\[0.25em]
    0
\end{bmatrix} + \begin{bmatrix}
\int_{\omega \times Y} \,\widehat{\!\vect f}
     %%  \int_{\omega \times Y} {f_1} d\widehat{x}dy  \\[0.55em]
    %% \int_{\omega \times Y} {f_2} d\widehat{x}dy
     \\[0.55em]
    0
\end{bmatrix} - {\vect u_1}.
%% \quad {\vect u_2}^{(1 )} \in H. 
%\end{split}
\end{aligned}
\end{equation}
Obviously, due to \eqref{u1chi2restart1}, the right-hand side of \eqref{u2chi2restart1} is orthogonal to infinitesimal \RRR stretching rigid-body motions. 
To verify that it is also orthogonal to vectors of the form $(c_1,c_2,0)^\top,$ $c_1,c_2 \in \C$, we perform the following calculation:
\begin{align*}
   %% \begin{split}
        &- \frac{1}{{\chi^2}}\int_{\omega \times  Y} {\mathbb A}\left(\simgrad\bigl({\vect u_2} + {\vect u_1^{(1)}} \bigr) +  \Lambda_{\chi,{\vect m}^{(1)}}^{\rm rod}(x) +{\rm i}X_{\chi}{\vect u_1}  \right) :\overline{{\rm i}X_{\chi}\begin{bmatrix}
      c_1   \\
     c_2  \\
    0
\end{bmatrix}}
\\[0.9em]
&=  \frac{1}{{\chi^2}}  \int_{\omega \times Y}{\mathbb A}\left(\simgrad \bigl({\vect u_2} + {\vect u_1^{(1)}}\bigr) +  \Lambda_{\chi,{\vect m}^{(1)}}^{\rm rod} +{\rm i}X_{\chi}{\vect u_1}   \right) :\overline{\simgrad\begin{bmatrix}
      0   \\
     0  \\
    - {\rm i}\chi(c_1 x_1 + c_2 x_2)
\end{bmatrix}} \\[0.9em]
&=\frac{1}{{\chi^2}}\int_{\omega \times  Y}{\mathbb A}\left(\simgrad {\vect u_2}  +{\rm i}X_{\chi}{\vect u_1}   \right): \overline{\simgrad \begin{bmatrix}
      0   \\
     0  \\
    - {\rm i}\chi(c_1 x_1 + c_2 x_2)
\end{bmatrix} }\quad \quad  (\mbox{due to \eqref{u1chi2restart1}})\\[0.9em]
&=-\frac{1}{{\chi^2}}\int_{\omega \times Y} {\mathbb A}\left(\simgrad {\vect u_1}  +\Lambda_{\chi,{\vect m}}^{\rm rod}\right) : \overline{{\rm i}X_\chi\begin{bmatrix}
      0   \\
     0  \\
    - {\rm i}\chi(c_1 x_1 + c_2 x_2) 
\end{bmatrix}} \\[0.9em] 
&\quad- \int_{\omega\times Y}\begin{bmatrix}
      0   \\
     0  \\
    - {\rm i}\chi(m_1 x_1 + m_2 x_2)
\end{bmatrix}\cdot\overline{\begin{bmatrix}
      0   \\
     0  \\
    - {\rm i}\chi(c_1 x_1 + c_2 x_2) 
\end{bmatrix} }
+ \int_{\omega\times Y}\begin{bmatrix}
      0   \\
     0  \\
    {f_3}
\end{bmatrix}\cdot\overline{\begin{bmatrix}
      0   \\
     0  \\
    - {\rm i}\chi(c_1 x_1 + c_2 x_2) 
\end{bmatrix}}\quad \quad  (\mbox{due to \eqref{secondcorrectorgeneralchi2}})
\\[0.9em]
&=-\frac{1}{{\chi^2}} \int_{\omega \times Y}{\mathbb A} \bigl(\simgrad {\vect u_1}  +\Lambda_{\chi,{\vect m}}^{\rm rod}   \bigr):\overline{\Lambda_{\chi,c_1,c_2}^{\rm bend}}  \\[0.9em]
&\quad- \int_{\omega\times Y}\begin{bmatrix}
      0   \\
     0  \\
    - {\rm i}\chi(m_1 x_1 + m_2 x_2)
\end{bmatrix}\cdot\overline{\begin{bmatrix}
      0   \\
     0  \\
    - {\rm i}\chi(c_1 x_1 + c_2 x_2) 
\end{bmatrix} }+ \int_{\omega\times Y}\begin{bmatrix}
      0   \\
     0  \\
    {f_3}
\end{bmatrix}\cdot\overline{\begin{bmatrix}
      0   \\
     0  \\
    - {\rm i}\chi(c_1 x_1 + c_2 x_2) 
\end{bmatrix}} \\[0.9em]
&=\int_{\omega\times Y}\begin{bmatrix}
      m_1   \\
     m_2  \\
    - {\rm i}\chi(m_1 x_1 + m_2 x_2)
\end{bmatrix}\cdot\overline{\begin{bmatrix}
      c_1   \\
     c_2  \\
    - {\rm i}\chi(c_1 x_1 + c_2 x_2) 
\end{bmatrix} }- \int_{\omega\times Y}{\vect f}
%%\begin{bmatrix}
%%      {f_1}   \\
%%     {f_2}  \\
%%    {f_3}
%%\end{bmatrix}
\cdot\overline{\begin{bmatrix}
      c_1   \\
     c_2\\
    - {\rm i}\chi(c_1 x_1 + c_2 x_2) 
\end{bmatrix}}\\[0.9em]
&\quad- \int_{\omega\times Y}\begin{bmatrix}
      0   \\
     0  \\
    - {\rm i}\chi(m_1 x_1 + m_2 x_2)
\end{bmatrix}\cdot\overline{\begin{bmatrix}
      0   \\
     0  \\
    - {\rm i}\chi(c_1 x_1 + c_2 x_2) 
\end{bmatrix} }+ \int_{\omega\times Y}\begin{bmatrix}
      0   \\
     0  \\
    {f_3}
\end{bmatrix}\cdot\overline{\begin{bmatrix}
      0   \\
     0  \\
    - {\rm i}\chi(c_1 x_1 + c_2 x_2) 
\end{bmatrix}} \ (\mbox{due to \eqref{leadingordertermgeneralchi2}, \eqref{firstcorrectorgeneralchi2}})\\[0.9em]
&=\begin{bmatrix}
      m_1   \\
     m_2  \\
0
\end{bmatrix}\cdot\overline{\begin{bmatrix}
      c_1   \\
     c_2  \\
0
\end{bmatrix} }- \begin{bmatrix}
\int_{\omega \times Y}\,\widehat{\!\vect f}
      %%\int_{\omega \times Y}{f_1}  \\[0.5em]
      %%\int_{\omega \times Y}{f_2}  
      \\[0.4em]
0
\end{bmatrix}\cdot\overline{\begin{bmatrix}
      c_1   \\
     c_2\\
0
\end{bmatrix}} = \left( \begin{bmatrix}
      m_1   \\
     m_2  \\
0
\end{bmatrix} - \begin{bmatrix}
\int_{\omega \times Y}\,\widehat{\!\vect f}
      %%\int_{\omega \times Y}{f_1}   \\[0.5em]
      %%\int_{\omega \times Y}{f_2}  
      \\[0.4em]
0
\end{bmatrix} \right)\cdot\overline{\begin{bmatrix}
      c_1   \\
     c_2\\
0
\end{bmatrix}}.
    %%\end{split}
\end{align*}
It follows that the problem \eqref{u2chi2restart1} is well posed. The following  statement is  the key one  in obtaining the required bound for its solution. 
%%the solution of \eqref{u2chi2restart1}.
%%estimate is crucial for the continuation of our procedure since it allows us to disregard {\color{red}this term} from the definition of the corrector term ${\vect u_2}$, {{\color{red}thus obtaining} a well-posed problem. {\color{red} This term will, however, need to be cancelled at some point, and it indeed will, with the definition of the corrector term ${\vect u_2^{(1)}}.$}
	\begin{lemma}
		\label{aux_lemma}
		The solution ${\vect m}\in \C^4$ of \eqref{leadingordertermgeneralchi2} satisfies the bound  
		\begin{equation}
			%%	\label{additionalaccuracychi2}
				\left|\begin{bmatrix}
					m_1 \\[0.25em] m_2
				\end{bmatrix} - %%\begin{bmatrix}
					\int_{\omega\times Y} \,\widehat{\!\vect f}
					%%\int_{\omega\times Y} {f_1} \\[0.5em] %%\int_{\omega\times Y} {f_2} 
			%%	\end{bmatrix} 
			\right| \leq C |\chi| \bigl\Vert  \vect f \bigr\Vert _{L^2(\omega\times Y;\C^3)}.
			\end{equation}
	\end{lemma}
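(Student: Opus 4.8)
The plan is to read off the claimed bound from the structure of equation \eqref{leadingordertermgeneralchi2}, exploiting the way the matrices ${\mathbb A}_\chi^{\rm rod}$ and $\mathfrak{C}_\chi^{\rm rod}$ scale in $\chi$. Recall that, by the formula derived just after \eqref{arodchi},
\[
\chi^{-2}{\mathbb A}_\chi^{\rm rod}{\vect m}\cdot\overline{\vect d}
= {\mathbb A}^{\rm rod}\bigl(({\rm i}\chi)(m_1,m_2,0,0)^\top + (0,0,m_3,m_4)^\top\bigr)
\cdot\overline{\bigl(({\rm i}\chi)(d_1,d_2,0,0)^\top + (0,0,d_3,d_4)^\top\bigr)},
\]
which is $O(1)$ in the $(m_3,m_4)$–$(d_3,d_4)$ block, $O(|\chi|)$ in the mixed blocks, and $O(\chi^2)$ in the $(m_1,m_2)$–$(d_1,d_2)$ block. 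Likewise $\mathfrak{C}_\chi^{\rm bend} = I + O(\chi^2)$, while the $x$-coordinate normalisation \eqref{coordinatesymmetries} makes $\mathfrak{C}_\chi^{\rm rod}$ block-diagonal with the bending block being $I+O(\chi^2)$ and the stretching block being $\mathfrak{C}^{\rm stretch}$, independent of $\chi$. Also $\widetilde{\mathcal{M}}_\chi^{\rm rod}{\vect f}$ has bending part $\int_{\omega\times Y}(\,\widehat{\!\vect f} + {\rm i}\chi f_3\widehat{x})$ and stretching part $\int_{\omega\times Y}(x_2 f_1 - x_1 f_2, f_3)^\top$.

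First I would extract the first two (bending) components of the vector identity \eqref{leadingordertermgeneralchi2}. Writing $\widehat{\vect m} := (m_1,m_2)^\top$ and $\check{\vect m} := (m_3,m_4)^\top$, the bending block of the equation reads
\[
\bigl(\chi^2 B_{11} + \chi\, B_{12}\bigr)\widehat{\vect m} \;+\; \chi\, C_{12}\check{\vect m}
\;+\;\bigl(I + \chi^2 E\bigr)\widehat{\vect m}
= \int_{\omega\times Y}\,\widehat{\!\vect f} \;+\; {\rm i}\chi\int_{\omega\times Y}f_3\widehat{x},
\]
where $B_{11}, B_{12}, C_{12}, E$ are fixed matrices (independent of $\chi$) coming from the block decomposition of ${\mathbb A}^{\rm rod}$ and $\mathfrak{C}^{\rm bend}_\chi$; here I have used that the $\chi$-expansion of $\chi^{-2}{\mathbb A}_\chi^{\rm rod}$ displayed above has the bending block starting at order $\chi^2$ and the bending–stretching mixed block at order $\chi$. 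Rearranging, $\widehat{\vect m} - \int_{\omega\times Y}\,\widehat{\!\vect f}$ equals a sum of terms each of which is either $\chi$ times a fixed matrix applied to $\widehat{\vect m}$ or $\check{\vect m}$, or $\chi$ times $\int f_3\widehat{x}$. I then invoke the a priori bound $|{\vect m}| \le C\|{\vect f}\|_{L^2(\omega\times Y;\C^3)}$ established right after \eqref{leadingordertermgeneralchi2} (a consequence of positivity of $\mathfrak{C}^{\rm rod}$ and non-negativity of ${\mathbb A}_\chi^{\rm rod}$, tested against ${\vect m}$), together with $\|\int f_3\widehat{x}\| \le C\|{\vect f}\|_{L^2}$, to conclude that every term on the right is $O(|\chi|)\|{\vect f}\|_{L^2(\omega\times Y;\C^3)}$, giving the claimed estimate.

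The only genuinely delicate point is bookkeeping: one must be sure that the bending block of $\chi^{-2}{\mathbb A}_\chi^{\rm rod}$ truly has no order-$1$ and no order-$\chi$ contribution other than the mixed $C_{12}\check{\vect m}$ term, and that $\mathfrak{C}_\chi^{\rm rod}$ contributes the identity at leading order in the bending block — both of which follow from the explicit formulae \eqref{cstretchrodbend} and the post-\eqref{arodchi} identity, using $|\omega|=1$ and \eqref{coordinatesymmetries}. Once this is checked, the proof is a one-line rearrangement plus the a priori bound. I would write it as: test nothing further; simply subtract $\int_{\omega\times Y}\,\widehat{\!\vect f}$ from both sides of the bending block of \eqref{leadingordertermgeneralchi2}, collect all remaining terms (each carrying at least one factor of $\chi$), and bound them using $|{\vect m}|\le C\|{\vect f}\|_{L^2(\omega\times Y;\C^3)}$ and the triangle inequality. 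No contour integral or Korn inequality is needed here; this lemma is purely finite-dimensional linear algebra with $\chi$-dependent coefficients.
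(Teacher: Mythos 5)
Your proof is correct and is essentially the paper's argument: the paper tests \eqref{leadingordertermgeneralchi2} against the constant vector $\widetilde{\vect m}:=\bigl(m_1-\int_{\omega\times Y}f_1,\ m_2-\int_{\omega\times Y}f_2,\ 0,\ 0\bigr)^\top$ (which is precisely your ``extract the bending rows and subtract $\int\widehat{\vect f}$'' recast as a quadratic-form identity), producing $|\widetilde{\vect m}|^2$ on the left and only $O(|\chi|)\,|\widetilde{\vect m}|\,\|\vect f\|_{L^2}$ terms on the right, via the a priori bound $|\vect m|\le C\|\vect f\|_{L^2}$ and the fact that $\chi^{-2}{\mathbb A}_\chi^{\rm rod}\vect m\cdot\overline{\widetilde{\vect m}}=O(|\chi|)|\vect m|\,|\widetilde{\vect m}|$. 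One small bookkeeping slip, immaterial to the conclusion: by the identity displayed after \eqref{arodchi} the bending slots of ${\mathbb A}_\chi^{\rm rod}$ carry a factor $({\rm i}\chi)^2$, so the $O(|\chi|)$ contribution of $\chi^{-2}{\mathbb A}_\chi^{\rm rod}$ to the bending rows comes only from the mixed term in $\check{\vect m}$ (order $\chi^3/\chi^2=\chi$); there is no $\chi B_{12}\widehat{\vect m}$ term as you wrote, the bending--bending block already starting at order $\chi^2$.
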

	
	\begin{proof}
		Testing the equation \eqref{leadingordertermgeneralchi2} with a constant vector 
		\[
		\widetilde{\vect m}=(\widetilde{m}_1,\widetilde{m}_2,0,0):= \left(m_1-\int_{\omega\times Y} {f_1},\ m_2 -  \int_{\omega\times Y} {f_2}, 0,0 \right)^\top,
		\]
		one obtains 
			\begin{align*}
				%\begin{split}
				%%%\left(m_1 -  \int_{\omega\times Y} {f_1}\right)^2 &+ \left(m_1 -  \int_{\omega\times Y} {f_2}\right)^2
				%%\bigl\vert(\widetilde{m}_1, \widetilde{m}_2)^\top\bigr\vert^2
				\bigl\vert\widetilde{\vect m}\bigr\vert^2&=
				\biggl(-\chi^2\mathfrak{c}_1(\omega)m_1+{\rm i} \chi \int_{\omega\times Y}{f_3} x_1,\ -\chi^2\mathfrak{c}_2(\omega)m_2+{\rm i} \chi \int_{\omega\times Y}{f_3} x_2 \biggr)^\top\cdot(\widetilde{m}_1, \widetilde{m}_2)^\top
				%%-\chi^2\left({\mathfrak c}_1(\omega)m_1 \left(m_1 -  \int_{\omega\times Y} {f_1}\right) + {\mathfrak c}_2(\omega)m_2 \left(m_2 -  \int_{\omega\times Y} {f_2}\right)\right) \\[0.5em]
				%%&+ \left({\rm i} \chi \int_{\omega\times Y}{f_3} x_1 \right) \left(m_1 -  \int_{\omega\times Y} {f_1}\right) + \left({\rm i} \chi \int_{\omega\times Y}{f_3} x_2 \right) \left(m_2 -  \int_{\omega\times Y} {f_2}\right) 
				%%\\[0.5em]
				%%& 
				-{\mathbb A}_\chi^{\rm rod}{\vect m}\cdot \overline{\widetilde{\vect m}},
				% \end{split}
			\end{align*}
		The proof is completed by noticing that 
	%%	\begin{equation}
	%%		\begin{split}
				${\mathbb A}_\chi^{\rm rod}{\vect m}\cdot \overline{\widetilde{\vect m}}  \leq |\chi|^3 |\vect m|\bigl\vert\widetilde{\vect m}\bigr\vert.$ %%\sqrt{\left(m_1 -  \int_{\omega\times Y} {f_1}\right)^2 + {\color{red}\left(m_1 -  \int_{\omega\times Y} {f_2}\right)^2}}.
		%%	\end{split}
		%%\end{equation}
	\end{proof}
Combining Lemma \ref{aux_lemma} with the estimates for ${\vect m}^{(1)},$ ${\vect u}_1,$ ${\vect u}_2,$ and ${\vect u}_1^{(1)}$, obtained above,
%%% with the problem \eqref{u2chi2restart1} is well posed and its solution satisfies the bound 
yields 
\begin{equation}\label{nak25} 
\bigl\Vert {\vect u_2^{(1)}} \bigr\Vert _{H^1(\omega\times Y;\C^3)} \leq C|\chi|^3 \bigl\Vert  \vect f \bigr\Vert _{L^2(\omega\times Y;\C^3)}.
\end{equation}
%%where we have used \eqref{additionalaccuracychi2}.
%% in order to obtain this estimate. 
\subsubsection*{4) Final approximation}
The final approximation
$$ 
\widetilde{\vect u}_{\rm approx}:=\vect u_0 + \vect u_0^{(1)} + {\vect u_1} + {\vect u_1^{(1)}} +{\vect u_2} + {\vect u_2^{(1)}} 
$$
satisfies the equation
\begin{equation}
\chi^{-2}\left((\simgrad)^* + \left({\rm i}X_{\chi}\right)^*\right){\mathbb A}\left(\simgrad + {\rm i}X_\chi\right) \widetilde{\vect u}_{\rm approx} + \widetilde{\vect u}_{\rm approx} - \vect f = \tilde{\vect R}_{\chi},
\end{equation}
where
%%with the residual $\widetilde{\vect R}_{\chi}$ given with:
\begin{align*}
%\begin{split}
\widetilde{\vect R}_{\chi}&= \chi^{-2}\left( \left({\rm i}X_{\chi}\right)^*{\mathbb A}\simgrad{\vect u_2^{(1)}} + (\simgrad)^* {\mathbb A}{\rm i}X_{\chi}{\vect u_2^{(1)}} + \left({\rm i}X_{\chi}\right)^* {\mathbb A}{\rm i}X_{\chi} {\vect u_2^{(1)}} + \left({\rm i}X_{\chi}\right)^* {\mathbb A}{\rm i}X_{\chi}{\vect u_2} \right)  \\[0.4em]
&+\chi^{-2}\left( \left({\rm i}X_{\chi}\right)^* {\mathbb A}{\rm i}X_{\chi}{\vect u_2} + \left({\rm i}X_{\chi}\right)^* {\mathbb A}{\rm i}X_{\chi} {\vect u_1^{(1)}} \right) +\bigl(({\vect m}^{(1)})_1,({\vect m}^{(1)})_2,0\bigr)^\top+ {\vect u_1^{(1)}} +{\vect u_2} +{\vect u_2^{(1)}} .
%\end{split}
\end{align*}
Clearly, one has
	%% the following estimate on the residual:
\begin{equation}
\bigl\Vert\widetilde{\vect R}_{\chi}\bigr\Vert_{[H^1_\#(Y;H^1(\omega;\C^3))]^*} \leq C {\chi^2} \bigl\Vert  \vect f \bigr\Vert _{L^2(\omega\times Y;\C^3)}.
\end{equation}
The approximation error
$$
\vect u_{\rm error}:={\vect u} - \widetilde{\vect u}_{\rm approx}
$$
can now be estimated by observing that 
$$
\chi^{-2}\left((\simgrad)^* + \left({\rm i}X_\chi\right)^*\right){\mathbb A}\left(\simgrad + {\rm i}X_\chi\right) \vect u_{\rm error} + \vect u_{\rm error} = -\tilde{\vect R}_{\chi}, 
$$
which together with the estimate \eqref{aprioriestimateh-1} immediately implies
%%, we can deduce the following error estimate:
\begin{equation}\label{nak26} 
\Vert\vect u_{\rm error}\Vert_{H^1(\omega\times Y;\C^3)} \leq C{\chi^2} \bigl\Vert  \vect f \bigr\Vert _{L^2(\omega\times Y;\C^3)}.
\end{equation}
Finally, leaving out higher-order terms yields the following result.
%%, we can estimate the error in the approximation by lower-order terms, as stated next.}
\begin{proposition}\label{propfin1} 
Let $\vect u\in H^1_\#(Y;H^1(\omega;\C^3))$ be the solution of \eqref{generaltensorproblemchi2}. Then, there exists $C>0$ such that following estimates are valid for all ${\vect f}\in L^2(\omega\times Y;\C^3),$ $\chi\in[-\pi,\pi)\setminus\{0\}:$  %{ What about $C,\chi$?}
\begin{equation}
\begin{split}\label{generaltensorestimateschi2}
 \left\Vert\vect u - 
\vect u_0 \right\Vert_{H^1(\omega\times Y,\C^3)} \leq C|\chi|\bigl\Vert  \vect f \bigr\Vert _{L^2(\omega\times Y;\C^3)}, \\[0.4em]
\bigl\Vert\vect u-\vect u_0 -\vect u_0^{(1)}-{\vect u_1} \bigr\Vert_{H^1(\omega\times Y,\C^3)} \leq C{\chi^2}\bigl\Vert  \vect f \bigr\Vert _{L^2(\omega\times Y;\C^3)}, 
\end{split}
\end{equation}
\iffalse
\begin{equation}
\begin{split}\label{generaltensorestimateschi2}
 \left\Vert\begin{bmatrix}{\vect u_1} \\ {\vect u_2} \\ {u_3}\end{bmatrix} - 
{\rm e}^{{\rm i}\chi}\begin{bmatrix} m_1 + m_3 x_2  \\ m_2  -m_3 x_1 \\ m_4 \end{bmatrix} \right\Vert_{H^1(\omega\times Y,\C^3)} \leq C|\chi|\bigl\Vert  \vect f \bigr\Vert _{L^2(\omega\times Y;\C^3)}, \\
 \left\Vert\begin{bmatrix}{\vect u_1} \\ {\vect u_2} \\ {u_3}\end{bmatrix} - 
{\rm e}^{{\rm i}\chi y}\begin{bmatrix} m_1 + m_3 x_2   \\ m_2 - m_3 x_1 \\ - {\rm i}\chi(m_1x_1+m_2x_2) + m_4  \end{bmatrix} -{\rm e}^{{\rm i}\chi y}\begin{bmatrix} ({\vect m}^{(1)})_1 + ({\vect m}^{(1)})_3 x_2  \\ ({\vect m}^{(1)})_2  -({\vect m}^{(1)})_3 x_1 \\ ({\vect m}^{(1)})_4 \end{bmatrix}- {\rm e}^{{\rm i}\chi y}\begin{bmatrix}({\vect u_1})_1 \\ ({\vect u_1})_2 \\ ({\vect u_1})_3\end{bmatrix} \right\Vert_{H^1(\omega\times Y,\C^3)} \leq C{\chi^2}\bigl\Vert  \vect f \bigr\Vert _{L^2(\omega\times Y;\C^3)}, 
\end{split}
\end{equation}
\fi
where $\vect u_0,\vect u_0^{(1)},{\vect u_1}$ are defined with the above approximation procedure.
\end{proposition}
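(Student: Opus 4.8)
The plan is to assemble the estimates \eqref{generaltensorestimateschi2} directly from the remainder bound \eqref{nak26} together with the bounds on the individual correctors established along the way. Concretely, recall that $\widetilde{\vect u}_{\rm approx}=\vect u_0 + \vect u_0^{(1)} + {\vect u_1} + {\vect u_1^{(1)}} +{\vect u_2} + {\vect u_2^{(1)}}$ and that $\vect u_{\rm error}={\vect u}-\widetilde{\vect u}_{\rm approx}$ satisfies $\Vert\vect u_{\rm error}\Vert_{H^1}\leq C{\chi^2}\Vert\vect f\Vert_{L^2}$ by \eqref{nak26}. For the second estimate in \eqref{generaltensorestimateschi2}, I would write
\[
\vect u-\vect u_0-\vect u_0^{(1)}-{\vect u_1}=\vect u_{\rm error}+{\vect u_1^{(1)}}+{\vect u_2}+{\vect u_2^{(1)}},
\]
and bound the right-hand side by the triangle inequality using \eqref{nak24}, the second bound in \eqref{nak20}, and \eqref{nak25}: each of these three correctors is $O({\chi^2})$ or smaller in $H^1$, and $\vect u_{\rm error}$ is $O({\chi^2})$, so the sum is $O({\chi^2})$.

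For the first (coarser) estimate, the same bookkeeping gives
\[
\vect u-\vect u_0=\vect u_{\rm error}+\vect u_0^{(1)}+{\vect u_1}+{\vect u_1^{(1)}}+{\vect u_2}+{\vect u_2^{(1)}},
\]
and now the dominant contributions are $\vect u_0^{(1)}$ (of order $|\chi|$ by \eqref{nak23}) and ${\vect u_1}$ (of order $|\chi|$ by the first bound in \eqref{nak20}); everything else is $O({\chi^2})$. Hence the whole expression is $O(|\chi|)$, as claimed. All the constants appearing are independent of $\chi$ and $\vect f$ because each cited estimate carries that uniformity; the only place where $|\chi|$ must be restricted is $|\chi|<\eta$, coming from the ellipticity argument behind \eqref{aprioriestimateh-1}, but for $|\chi|\geq\eta$ the apriori estimate for \eqref{generaltensorproblemchi2} (the analogue of \eqref{aprioriestimatestetchingh1}, obtained by testing with $\vect u$ and using Proposition \ref{leadingordertermproposition}) already gives $\Vert\vect u\Vert_{H^1}\leq C\Vert\vect f\Vert_{L^2}$, and the approximation terms are likewise uniformly bounded, so the estimates hold trivially there after enlarging $C$.

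I do not anticipate a genuine obstacle here: the proposition is the natural summary of Steps 1)--4), and the proof is a one-line triangle-inequality argument once the corrector bounds are in hand. The only point requiring a little care is making sure the estimates are stated as claimed — in particular that the second estimate really needs \emph{both} $\vect u_0^{(1)}$ and ${\vect u_1}$ to be present (since each is individually only $O(|\chi|)$, not $O({\chi^2})$), and that the \emph{sharper} bound \eqref{nak22} on the bending components of ${\vect m}^{(1)}$ is what was used inside Step 3) to get \eqref{nak25}, but is not itself needed for the final statement. So the write-up is simply: expand $\vect u-\vect u_0$ and $\vect u-\vect u_0-\vect u_0^{(1)}-{\vect u_1}$ in terms of $\vect u_{\rm error}$ and the remaining correctors, then invoke \eqref{nak20}, \eqref{nak23}, \eqref{nak24}, \eqref{nak25}, and \eqref{nak26}.

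\begin{proof}
Recall that $\widetilde{\vect u}_{\rm approx}=\vect u_0 + \vect u_0^{(1)} + {\vect u_1} + {\vect u_1^{(1)}} +{\vect u_2} + {\vect u_2^{(1)}}$ and $\vect u_{\rm error}={\vect u}-\widetilde{\vect u}_{\rm approx}$. By \eqref{nak26} one has $\Vert\vect u_{\rm error}\Vert_{H^1(\omega\times Y;\C^3)}\leq C{\chi^2}\Vert\vect f\Vert_{L^2(\omega\times Y;\C^3)}$ for $|\chi|<\eta$. Writing
\[
\vect u - \vect u_0 = \vect u_{\rm error} + \vect u_0^{(1)} + {\vect u_1} + {\vect u_1^{(1)}} + {\vect u_2} + {\vect u_2^{(1)}},
\]
the triangle inequality together with \eqref{nak20}, \eqref{nak23}, \eqref{nak24}, \eqref{nak25}, and \eqref{nak26} yields
\[
\Vert\vect u - \vect u_0\Vert_{H^1(\omega\times Y;\C^3)} \leq C|\chi|\bigl\Vert  \vect f \bigr\Vert _{L^2(\omega\times Y;\C^3)},
\]
since $\Vert\vect u_0^{(1)}\Vert_{H^1}\leq C|\chi|\Vert\vect f\Vert_{L^2}$ and $\Vert{\vect u_1}\Vert_{H^1}\leq C|\chi|\Vert\vect f\Vert_{L^2}$ are the dominant terms, the remaining ones being of order ${\chi^2}$. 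Similarly,
\[
\vect u - \vect u_0 - \vect u_0^{(1)} - {\vect u_1} = \vect u_{\rm error} + {\vect u_1^{(1)}} + {\vect u_2} + {\vect u_2^{(1)}},
\]
and each term on the right-hand side is of order ${\chi^2}$ or smaller in the $H^1(\omega\times Y;\C^3)$ norm by \eqref{nak20}, \eqref{nak24}, \eqref{nak25}, and \eqref{nak26}, so that
\[
\bigl\Vert\vect u-\vect u_0 -\vect u_0^{(1)}-{\vect u_1} \bigr\Vert_{H^1(\omega\times Y;\C^3)} \leq C{\chi^2}\bigl\Vert  \vect f \bigr\Vert _{L^2(\omega\times Y;\C^3)}.
\]
For $|\chi|\geq\eta$, testing \eqref{generaltensorproblemchi2} with $\vect u$ and applying Proposition \ref{leadingordertermproposition} gives $\Vert\vect u\Vert_{H^1(\omega\times Y;\C^3)}\leq C\Vert\vect f\Vert_{L^2(\omega\times Y;\C^3)}$, while $\vect u_0,\vect u_0^{(1)},{\vect u_1}$ are bounded in $H^1(\omega\times Y;\C^3)$ by $C\Vert\vect f\Vert_{L^2(\omega\times Y;\C^3)}$ as well; hence both estimates hold on this range after enlarging $C$, and the constant $C$ may be taken independent of $\chi$ and $\vect f$.
\end{proof}
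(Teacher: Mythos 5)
Your proof is correct and follows the same triangle-inequality assembly that the paper intends (the paper's own proof is a one-line citation of \eqref{nak20}--\eqref{nak26}). You add a useful detail that the paper glosses over: since \eqref{nak26} rests on \eqref{aprioriestimateh-1}, which was only established for $|\chi|<\eta$, one really does need a separate (trivial) argument for $|\chi|\geq\eta$, and your handling of that range is correct.
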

 
\begin{proof} 
The proof is a direct consequence of \eqref{nak26}, by taking into account the estimates \eqref{nak20}, \eqref{nak21}, \eqref{nak22}, \eqref{nak23}, \eqref{nak24}, and \eqref{nak25}. 	
\end{proof}

\subsection{Asymptotics for ${\chi^4}$-scaled resolvent problem}
Here we focus on deriving the asymptotics for the following resolvent problem: find $\vect u\in H_\#^1(Y;H^1(\omega;\C^3))$ such that
\begin{equation}
\label{generaltensorproblemchi4}
\frac{1}{{\chi^4}}\int_{\omega \times Y}{\mathbb A}(\simgrad \vect u + {\rm i}X_\chi \vect u) : \overline{(\simgrad \vect v + {\rm i}X_\chi \vect v)} + \int_{\omega \times Y} \vect u\cdot\overline{\vect v} = \int_{\omega \times Y} S_{\!|\chi|} \vect f\cdot\overline{\vect v} \qquad \forall \vect v \in H_\#^1\bigl(Y;H^1(\omega;\C^3)\bigr),
\end{equation}
%%or, {\color{red}equivalently:} 
which is formally written as
\begin{equation}
\chi^{-4}\left((\simgrad)^* + \left({\rm i}X_\chi\right)^*\right){\mathbb A}\left(\simgrad + {\rm i}X_\chi\right) \vect u + \vect u = S_{\!|\chi|}\vect f.
\end{equation}
\subsubsection*{1) Leading order term, the first and the second corrector}
The leading order term in the asymptotic expansion is defined with the solution to the following homogenised equation:
\begin{equation}
\left(\chi^{-4}{\mathbb A}_\chi^{\rm rod}+\mathfrak{C}^{\rm rod}_{\chi}\right) 
{\vect m}=\widetilde{\mathcal{M}}_\chi^{\rm rod}S_{\!|\chi|} \vect f .
\end{equation}
By using the apriori estimates on ${\mathbb A}_\chi^{\rm rod} $ we derive the following estimates by testing with $m\in \C^4$: 
\begin{align*}
        \chi^{-4}\left({\chi^4} \bigl|(m_1,m_2)^\top\bigr|^2+ {\chi^2} \bigl|(m_3,m_4)^\top\bigr|^2\right) &\leq  \left\Vert \vect f \right\Vert_{L^2(\omega \times Y; \C^3)} \left(\bigl|(m_1,m_2)^\top\bigr| + \bigl|m_3, |\chi|^{-1}m_4\bigr| \right) \\[0.4em]
        &\leq \left\Vert \vect f \right\Vert_{L^2(\omega \times Y; \C^3)} \bigl(\bigl|(m_1,m_2)^\top\bigr| +|\chi|^{-1}\bigl|(m_3,m_4)^\top\bigr| \bigr),
\end{align*}
from where we read: 
\begin{equation}
    \bigl|(m_1,m_2)^\top\bigr|\leq \left\Vert \vect f \right\Vert_{L^2(\omega \times Y; \C^3)}, \quad\bigl|(m_3,m_4)^\top\bigr| \leq |\chi| \left\Vert \vect f \right\Vert_{L^2(\omega \times Y; \C^3)}.
\end{equation}
The leading-order term, defined by $\vect u_0:= \widetilde{\mathcal{I}}^{\rm rod}_{\chi}{\vect m},$ satisfies the bounds 
\begin{equation}
\bigl\Vert ({\vect u}_0)_\alpha\bigr\Vert _{H^1(\omega\times Y ; \C)} \leq C\bigl\Vert  \vect f \bigr\Vert _{L^2(\omega\times Y;\C^3)},\quad \alpha=1,2, \qquad 
\bigl\Vert ({\vect u}_0)_3\bigr\Vert _{H^1(\omega\times Y ; \C)} \leq C|\chi|\bigl\Vert  \vect f \bigr\Vert _{L^2(\omega\times Y;\C^3)}
\end{equation}  
The next two ``corrector" terms ${\vect u_1},{\vect u_2} \in H$ are defined as the solutions to the well-posed problems 
\begin{align}
(\simgrad)^* {\mathbb A} \simgrad {\vect u_1}&= -(\simgrad)^* {\mathbb A} \Lambda_{\chi,{\vect m}}^{\rm rod},
 %%\quad  {\vect u_1} \in H,
 \\[0.7em]
%%\end{equation}
%%\begin{align*}
%%\begin{split}
    \chi^{-4}(\simgrad)^* {\mathbb A}\simgrad {\vect u_2}
    %%\\[0.3em]
     &=-\chi^{-4}\left(\left({\rm i}X_\chi\right)^*{\mathbb A}\simgrad {\vect u_1} + (\simgrad)^* {\mathbb A}{\rm i}X_{\chi}{\vect u_1} + \left({\rm i}X_\chi\right)^* {\mathbb A} \Lambda_{\chi,{\vect m}}^{\rm rod}\right)  \\[0.8em]
  &\quad-\begin{bmatrix}
    m_3 x_2   \\[0.25em]
    -m_3 x_1 \\[0.25em]
     m_4 - {\rm i}\chi(m_1 x_1 + m_2 x_2)
\end{bmatrix}  + \begin{bmatrix}
\,\widehat{\!\vect f} - \int_{\omega \times Y}\,\widehat{\!\vect f}
    %%{f_1} - \int_{\omega \times Y} {f_1} \\[0.6em]    
    %%{f_2} - \int_{\omega \times Y} {f_2}
    \\[0.6em]
    |\chi|^{-1}{f_3}
\end{bmatrix} 
\iffalse - ( 0, 0, m_4-{\rm i}\chi(m_1 x_1 + m_2 x_2))^\top + (0, 0,|\chi|^{-1}{f_3})^\top, 
\fi 
%%\quad {\vect u_2} \in H.
%\end{split}
\end{align}
and clearly satisfy the bounds 
\begin{equation}\label{nak30} 
\Vert{\vect u_1}\Vert_{H^1(\omega\times Y ; \C^3)} \leq C{\chi^2}\bigl\Vert  \vect f \bigr\Vert _{L^2(\omega\times Y;\C^3)}, \qquad \Vert{\vect u_2}\Vert_{H^1(\omega\times Y ; \C^3)} \leq C|\chi|^3\bigl\Vert  \vect f \bigr\Vert _{L^2(\omega\times Y;\C^3)}.
\end{equation}
\subsubsection*{2) First refinement of the approximation}
We proceed by updating the leading-order term. 
Define $\vect u_0^{(1)} = \widetilde{\mathcal{I}}_\chi^{\rm rod}{\vect m}^{(1)}$, where ${\vect m}^{(1)}$ is the solution to
\begin{equation}
	\label{u01_generalchi4}
\begin{split}
\left(\frac{1}{{\chi^4}}{\mathbb A}_\chi^{\rm rod}+\mathfrak{C}^{\rm rod}_{\chi}\right)
    {\vect m}^{(1)}\cdot \overline{\vect d}
=-\frac{1}{{\chi^4}}\int_{\omega \times Y} {\mathbb A} \left(\simgrad {\vect u_2} + {\rm i}X_{\chi}{\vect u_1} \right): \overline{\Lambda_{\chi,{\vect d}}^{\rm rod}} \qquad \forall {\vect d}\in \C^4.
\end{split}
\end{equation}
We have 
\begin{equation}
	\label{u0^1_general}
\begin{aligned}
\Bigl|\bigl(({\vect m}^{(1)})_1,({\vect m}^{(1)})_2\bigr)^\top\Bigr|&\leq C|\chi| \bigl\Vert  \vect f \bigr\Vert _{L^2(\omega\times Y,\C^3)},\qquad \Bigl|\bigl(({\vect m}^{(1)})_3,({\vect m}^{(1)})_4\bigr)^\top\Bigr| \leq C{\chi^2} \left\Vert \vect f \right\Vert_{L^2(\omega \times Y; \C^3)}\\[0.3em]
 \bigl\Vert({\vect u}_0^{(1)})_\alpha\bigr\Vert_{H^1(\omega\times Y ; \C^3)}&\leq C|\chi|\bigl\Vert  \vect f \bigr\Vert _{L^2(\omega\times Y;\C^3)},\quad \alpha=1,2, \qquad 
\bigl\Vert({\vect u}_0^{(1)})_3\bigr\Vert_{H^1(\omega\times Y ; \C^3)} \leq C{\chi^2}\bigl\Vert  \vect f \bigr\Vert _{L^2(\omega\times Y;\C^3)}
\end{aligned}
\end{equation}
Next, we introduce a corrector  ${\vect u_1^{(1)}}$ as the solution to
\begin{equation}
 (\simgrad)^* {\mathbb A} \simgrad {\vect u_1^{(1)}}  = - (\simgrad)^* {\mathbb A} \Lambda_{\chi,{\vect m}^{(1)}}^{\rm rod}, %%\quad  {\vect u_1^{(1)}} \in H.
\end{equation}
which satisfies the bound
%%{\color{red}This yields the estimate} 
\begin{equation}\label{nak31} 
\bigl\Vert{\vect u_1^{(1)}} \bigr\Vert_{H^1(\omega\times Y ; \C^3)} \leq C|\chi|^3\bigl\Vert  \vect f \bigr\Vert _{L^2(\omega\times Y;\C^3)}.
\end{equation}
The next-order corrector ${\vect u_2^{(1)}} \in H$ is defined as the solution to
\begin{align}
%%\begin{split}
\chi^{-4}(\simgrad)^* {\mathbb A} \simgrad {\vect u_2^{(1)}} =& 
-\chi^{-4}\left( \left({\rm i}X_\chi\right)^* {\mathbb A} \simgrad \bigl({\vect u_2} + {\vect u_1^{(1)}}\bigr) + (\simgrad)^* {\mathbb A} {\rm i}X_{\chi}\bigl({\vect u_2} + {\vect u_1^{(1)}}\bigr) \right)
 \\[0.7em]
&-\chi^{-4}\left( \left({\rm i}X_\chi\right)^* {\mathbb A} \Lambda_{\chi,{\vect m}^{(1)}}^{\rm rod} + \left({\rm i}X_\chi\right)^* {\mathbb A}{\rm i}X_{\chi} {\vect u_1} \right) \\[0.7em] 
  &-\begin{bmatrix}
    ({\vect m}^{(1)})_3 x_2   \\[0.3em]
    -({\vect m}^{(1)})_3 x_1 \\[0.3em]
     ({\vect m}^{(1)})_4 - {\rm i}\chi\bigl(({\vect m}^{(1)})_1 x_1 + ({\vect m}^{(1)})_2 x_2\bigr)
\end{bmatrix} + \begin{bmatrix}\int_{\omega \times Y}\,\widehat{\!\vect f}
       %%\int_{\omega \times Y} {f_1} d\widehat{x}dy  \\[0.6em]
     %%\int_{\omega \times Y} {f_2} d\widehat{x}dy
     \\[0.6em]
    0
\end{bmatrix} - \begin{bmatrix}
      m_1   \\[0.2em]
     m_2  \\[0.2em]
    0
\end{bmatrix}
%% \quad  {\vect u_2^{(1)}} \in H. 
%%\end{split}
\end{align}
and satisfies the bound
\begin{equation}
\bigl\Vert{\vect u_2^{(1)}} \bigr\Vert_{H^1(\omega\times Y ; \C^3)} \leq C{\chi^4}\bigl\Vert  \vect f \bigr\Vert _{L^2(\omega\times Y;\C^3)}.
\end{equation}
\subsubsection*{3) Second refinement of the approximation}
Adding the correctors $\vect u_0^{(2)} = \widetilde{\mathcal{I}}^{\rm rod}_\chi{\vect m}^{(2)}, {\vect u_1^{(2)}} , {\vect u_2^{(2)}} \in H$ further decreases the approximation error. They are constructed sequentially using the following relations:
\begin{align*}
%\begin{split}
\left(\frac{1}{{\chi^4}}{\mathbb A}_\chi^{\rm rod}  + \mathfrak{C}^{\rm rod}_\chi\right) 
    {\vect m}^{(2)}\cdot \overline{\vect d}
&=-\frac{1}{{\chi^4}}\int_{\omega \times Y} {\mathbb A} \left(\simgrad {\vect u_2^{(1)}} + {\rm i}X_{\chi}{\vect u_1^{(1)}} + {\rm i}X_{\chi}{\vect u_2} \right): \overline{\Lambda_{\chi,{\vect d}}^{\rm rod}}\qquad \forall {\vect d}\in \C^4,\\[0.6em]
%\end{split}
%\end{equation}
%\begin{equation}
 (\simgrad)^* {\mathbb A} \simgrad {\vect u_1^{(2)}} &= 
 -(\simgrad)^*{\mathbb A} \Lambda_{\chi,{\vect m}^{(2)}}^{\rm rod},\\[0.6em]
 %% \quad  {\vect u_1^{(2)}}  \in H,
%\end{equation}
%\begin{equation}
%\begin{split}
\chi^{-4}(\simgrad)^* {\mathbb A} \simgrad {\vect u_2^{(2)}} &= 
 -\chi^{-4}\left(\left({\rm i}X_\chi\right)^*{\mathbb A}\simgrad\bigl({\vect u_1^{(2)}}  + {\vect u_2^{(1)}}\bigr) + (\simgrad)^* {\mathbb A}{\rm i}X_{\chi}\bigl({\vect u_1^{(2)}}  + {\vect u_2^{(1)}}\bigr) \right) \\[0.6em]
 &\quad-\chi^{-4}\left( \left({\rm i}X_\chi\right)^* {\mathbb A} \Lambda_{\chi,{\vect m}^{(2)}}^{\rm rod} + \left({\rm i}X_\chi\right)^* {\mathbb A}{\rm i}X_{\chi}\bigl({\vect u_2} + {\vect u_1^{(1)}}\bigr) \right)  \\[0.6em]
&\quad-\begin{bmatrix}
      ({\vect m}^{(1)})_1   \\[0.25em]
     ({\vect m}^{(1)})_2  \\[0.25em]
    0
\end{bmatrix}  - \begin{bmatrix}
    ({\vect m}^{(2)})_3 x_2   \\[0.35em]
    -({\vect m}^{(2)})_3 x_1 \\[0.35em]
     ({\vect m}^{(2)})_4 - {\rm i}\chi\bigl(({\vect m}^{(2)})_1 x_1 + ({\vect m}^{(2)})_2 x_2\bigr)
\end{bmatrix}.
%%, \quad  {\vect u_2^{(2)}}  \in H. 
%%\end{split}
\end{align*}
All these problems are well posed, which can be seen by checking that 
%%reviewing the relations throughout the process, thus concluding that 
the right-hand sides vanish when tested against functions in $H$. The correctors satisfy the following estimates:
\begin{align}\label{nak32} 
\bigl\Vert({\vect u}_0^{(2)})_\alpha\bigr\Vert_{H^1(\omega\times Y ; \C^3)} &\leq C{\chi^2}\bigl\Vert  \vect f \bigr\Vert _{L^2(\omega\times Y;\C^3)},\quad \alpha=1,2, \qquad 
\bigl\Vert({\vect u}_0^{(2)})_3\bigr\Vert_{H^1(\omega\times Y ; \C^3)} \leq C|\chi|^3\bigl\Vert  \vect f \bigr\Vert _{L^2(\omega\times Y;\C^3)},\\[0.3em]
%%\end{equation}
%%\begin{equation}
\bigl\Vert{\vect u_1^{(2)}} \bigr\Vert_{H^1(\omega\times Y ; \C^3)}&\leq C{\chi^4}\bigl\Vert  \vect f \bigr\Vert _{L^2(\omega\times Y;\C^3)},\qquad
%%\end{equation}
%%\begin{equation}
\bigl\Vert{\vect u_2^{(2)}} \bigr\Vert_{H^1(\omega\times Y ; \C^3)} \leq C|\chi|^5\bigl\Vert  \vect f \bigr\Vert _{L^2(\omega\times Y;\C^3)}.
\end{align}

\subsubsection*{4) Third refinement of the approximation}

The final approximation cycle consists of defining the corrector terms $\vect u_0^{(3)} = \widetilde{\mathcal{I}}^{\rm rod}_\chi {\vect m}^{(3)}, {\vect u_1^{(3)}}, {\vect u_2^{(3)}}\in H$  using the following relations:
\begin{align*}
%%\begin{split}
\left(\frac{1}{{\chi^4}}{\mathbb A}_\chi^{\rm rod}  + \mathfrak{C}^{\rm rod}_\chi\right) 
    {\vect m}^{(3)}\cdot \overline{\vect d}
&= -\frac{1}{{\chi^4}}\int_{\omega \times Y} {\mathbb A} \left(\simgrad {\vect u_2^{(2)}}  + {\rm i}X_{\chi}{\vect u_1^{(2)}}  + {\rm i}X_{\chi}{\vect u_1^{(1)}} \right): \overline{\Lambda_{\chi,{\vect d}}^{\rm rod}} \\[0.6em]
%&+ \widetilde{\mathcal{M}}_\chi^{\rm rod} {\vect u_1} \cdot \overline{d}, \quad \forall d\in \C^4.
&\quad -{\rm i}\chi\int_{\omega \times Y}\left(x_1u_1, x_2u_2 \right)^\top\cdot \overline{(d_1,d_2)^\top}\qquad \forall {\vect d}=(d_1, d_2, d_3, d_4)\in{\mathbb C}^4,\\[0.6em]
%%\end{split}
%%\end{equation}
%%\begin{equation}
 (\simgrad)^* {\mathbb A} \simgrad {\vect u_1^{(3)}}&=- (\simgrad)^*{\mathbb A} \Lambda_{\chi, {\vect m}^{(3)}}^{\rm rod},\\[0.6em]
 %%%\quad  {\vect u_1^{(3)}} \in H.
%%\end{equation}
%%\begin{equation}
%%\begin{split}
\chi^{-4}(\simgrad)^* {\mathbb A} \simgrad {\vect u_2^{(3)}}&= 
 -\chi^{-4}\left(\left({\rm i}X_\chi\right)^*{\mathbb A}\simgrad\bigl({\vect u_1^{(3)}} + {\vect u_2^{(2)}}\bigr) + (\simgrad)^* {\mathbb A}{\rm i}X_{\chi}({\vect u_1^{(3)}} + {\vect u_2^{(2)}} ) \right) \\[0.6em]
 &\quad-\chi^{-4}\left( \left({\rm i}X_\chi\right)^* {\mathbb A} \Lambda_{\chi, {\vect m}^{(3)}}^{\rm rod} + \left({\rm i}X_\chi\right)^* {\mathbb A}{\rm i}X_{\chi} \bigl({\vect u_2^{(1)}} + {\vect u_1^{(2)}}\bigr) \right)  \\[0.6em]
&\quad-\begin{bmatrix}
      ({\vect m}^{(2)})_1   \\[0.3em]
     ({\vect m}^{(2)})_2  \\[0.3em]
    0
\end{bmatrix}  - \begin{bmatrix}
    ({\vect m}^{(3)})_3 x_2   \\[0.4em]
    -({\vect m}^{(3)})_3 x_1 \\[0.4em]
     ({\vect m}^{(3)})_4 - {\rm i}\chi\bigl(({\vect m}^{(3)})_1 x_1 + ({\vect m}^{(3)})_2 x_2\bigr)
\end{bmatrix} - {\vect u_1}.
%% \quad  {\vect u_2^{(2)}}  \in H. 
%\end{split}
\end{align*}
The above problems define unique correctors, which satisfy the estimates
\begin{align}\label{nak33} 
\bigl\Vert({\vect u}_0^{(3)})_\alpha\bigr\Vert_{H^1(\omega\times Y ; \C)}&\leq C|\chi|^3\bigl\Vert  \vect f \bigr\Vert _{L^2(\omega\times Y;\C^3)},\quad \alpha=1,2, \qquad 
\bigl\Vert({\vect u}_0^{(3)})_3\bigr\Vert_{H^1(\omega\times Y ; \C)} \leq C{\chi^4}\bigl\Vert  \vect f \bigr\Vert _{L^2(\omega\times Y;\C^3)},\\[0.5em]
%%\end{equation}
%%\begin{equation}
\bigl\Vert{\vect u_1^{(3)}}\bigr\Vert_{H^1(\omega\times Y ; \C^3)}&\leq C|\chi|^5\bigl\Vert  \vect f \bigr\Vert _{L^2(\omega\times Y;\C^3)},\qquad
%%\end{equation}
%%\begin{equation}
\bigl\Vert{\vect u_2^{(3)}}\bigr\Vert_{H^1(\omega\times Y ; \C^3)} \leq C\chi^6\bigl\Vert  \vect f \bigr\Vert _{L^2(\omega\times Y;\C^3)}.
\end{align}

\subsubsection*{5) Final approximation}
The function $\widetilde{\vect u}_{\rm approx}$, defined by
$$ 
\widetilde{\vect u}_{\rm approx}:= \vect u_0 + \vect u_0^{(1)} + \vect u_0^{(2)} + \vect u_0^{(3)} + {\vect u_1} + {\vect u_1^{(1)}} + {\vect u_1^{(2)}}  + {\vect u_1^{(3)}} + {\vect u_2} + {\vect u_2^{(1)}} +  {\vect u_2^{(2)}}  + {\vect u_2^{(3)}}, 
$$
is the solution to the following problem:
\begin{equation}
\chi^{-4}\left((\simgrad)^* + \left( {\rm i}X_\chi\right)^*\right){\mathbb A}\left(\simgrad + {\rm i}X_\chi\right) \widetilde{\vect u}_{\rm approx} + \widetilde{\vect u}_{\rm approx} - S_{\!|\chi|}\vect f = \widetilde{\vect R}_{\chi},
\end{equation}
where
%% the residual $\widetilde{\vect R}_{\chi}$ is given with:
%\begin{equation}
\begin{align*}
&\widetilde{\vect R}_{\chi} = \chi^{-4}\left( \left({\rm i}X_{\chi}\right)^*{\mathbb A}(\simgrad{\vect u_2^{(3)}} + (\simgrad)^* {\mathbb A}{\rm i}X_{\chi}{\vect u_2^{(3)}} + \left({\rm i}X_{\chi}\right)^* {\mathbb A}{\rm i}X_{\chi} {\vect u_2^{(3)}} + \left({\rm i}X_{\chi}\right)^* {\mathbb A}{\rm i}X_{\chi}{\vect u_2^{(2)}}  \right)  \\[0.3em]
&+\chi^{-4} \left( \left({\rm i}X_{\chi}\right)^* {\mathbb A}{\rm i}X_{\chi}{\vect u_2^{(2)}}  + \left({\rm i}X_{\chi}\right)^* {\mathbb A}{\rm i}X_{\chi} {\vect u_1^{(3)}} \right) + \begin{bmatrix}
      ({\vect m}^{(3)})_1   \\[0.3em]
     ({\vect m}^{(3)})_2  \\[0.3em]
    0
\end{bmatrix} + {\vect u_2}+ {\vect u_1^{(1)}} + {\vect u_2^{(1)}} + {\vect u_1^{(2)}} +{\vect u_2^{(2)}} +{\vect u_1^{(3)}}  +{\vect u_2^{(3)}}   .
\end{align*}
is bounded as follows:
%\end{equation}
%%The final estimate on the residual is:
\begin{equation}
\bigl\Vert\widetilde{\vect R}_{\chi}\bigr\Vert_{[H^1_\#(Y;H^1(\omega;\C^3))]^*} \leq C |\chi|^3 \bigl\Vert  \vect f \bigr\Vert _{L^2(\omega\times Y,\C^3)}.
\end{equation}
Noting that the approximation error 
$$\vect u_{\rm error}:= \vect u - \widetilde{\vect u}_{\rm approx}. $$
satisfies
$$
\chi^{-4}\left((\simgrad)^* + \left( {\rm i}X_\chi\right)^*\right){\mathbb A}\left(\simgrad + {\rm i}X_\chi\right) \vect u_{\rm error} + \vect u_{\rm error} = -\widetilde{\vect R}_{\chi} 
$$
and using a suitable version of \eqref{aprioriestimateh-1}, we obtain
\begin{equation}\label{nak34} 
\bigl\Vert\vect u_{\rm error}\bigr\Vert_{H^1(\omega\times Y;\C^3)} \leq C|\chi|^3 \bigl\Vert  \vect f \bigr\Vert _{L^2(\omega\times Y;\C^3)}.
\end{equation}
As before, discarding higher-order terms yields the following result.
%%Easily, we obtain the error in the approximation by lower order terms:
\begin{proposition}\label{propfin2} 
Let $\vect u\in H^1_\#(Y;H^1(\omega;\C^3))$ be the solution of \eqref{generaltensorproblemchi4}. Then, the following estimates are valid:
\begin{equation}
\begin{aligned}
	\label{general_tensor_estimates_chi4}
\RRR \left\Vert P_i\bigl(\vect u - \vect u_0 \bigr) \right\Vert_{H^1(\omega\times Y,\C^2)}  \leq \left\{ \begin{array}{ll}
        C|\chi|\bigl\Vert  \vect f \bigr\Vert _{L^2(\omega\times Y;\C^3)}, & \mbox{ $i = 1,2$},\\[0.55em]
         C{\chi^2}\bigl\Vert  \vect f \bigr\Vert _{L^2(\omega\times Y;\C^3)}, & \mbox{ $i = 3$}.\end{array} \right.  
     \\[0.5em]
 \bigl\Vert P_i\bigl(\vect u-\vect u_0 -\vect u_0^{(1)}-{\vect u_1} \bigr)\bigr\Vert_{H^1(\omega\times Y,\C^2)} \leq \left\{ \begin{array}{ll}
        C{\chi^2}\bigl\Vert  \vect f \bigr\Vert _{L^2(\omega\times Y;\C^3)}, & \mbox{ $i = 1,2$},\\[0.55em]
         C|\chi|^3\bigl\Vert  \vect f \bigr\Vert _{L^2(\omega\times Y;\C^3)}, & \mbox{ $i = 3$}.\end{array} \right. 
\end{aligned}
\end{equation}
where ${\vect u_0}, {\vect u_0}^{(1)}, {\vect u_1}$ are defined by the above approximation procedure.
\end{proposition}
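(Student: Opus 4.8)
\textbf{Proof proposal for Proposition \ref{propfin2}.}

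The plan is to follow the template already established in the stretching and bending cases (Propositions \ref{propbending}, \ref{propbend}) and in the $\chi^2$-scaled analysis (Proposition \ref{propfin1}), but now tracking the component-wise structure of the approximation. The estimate \eqref{nak34} provides the master bound $\|\vect u_{\rm error}\|_{H^1(\omega\times Y;\C^3)} \leq C|\chi|^3\|\vect f\|_{L^2}$, where $\vect u_{\rm error} = \vect u - \widetilde{\vect u}_{\rm approx}$ and
\[
\widetilde{\vect u}_{\rm approx} = \vect u_0 + \vect u_0^{(1)} + \vect u_0^{(2)} + \vect u_0^{(3)} + {\vect u_1} + {\vect u_1^{(1)}} + {\vect u_1^{(2)}} + {\vect u_1^{(3)}} + {\vect u_2} + {\vect u_2^{(1)}} + {\vect u_2^{(2)}} + {\vect u_2^{(3)}}.
\]
For the first estimate in \eqref{general_tensor_estimates_chi4}, I would write $\vect u - \vect u_0 = \vect u_{\rm error} + \bigl(\widetilde{\vect u}_{\rm approx} - \vect u_0\bigr)$ and bound each piece. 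The error term contributes $C|\chi|^3$, which is subsumed in $C|\chi|$ (and in $C\chi^2$ for $i=3$). The remaining correctors must be estimated component-wise: for $i=1,2$ one uses the bounds on the first two components, namely $\|(\vect u_0^{(1)})_\alpha\|_{H^1}, \|(\vect u_0^{(2)})_\alpha\|_{H^1} \leq C|\chi|\|\vect f\|_{L^2}$ (resp.\ $C\chi^2$) from \eqref{u0^1_general}, \eqref{nak32}, together with $\|\vect u_1\|_{H^1} \leq C\chi^2\|\vect f\|_{L^2}$ from \eqref{nak30}, and the higher-order bounds \eqref{nak31}, \eqref{nak33}; all are $O(|\chi|)$ or smaller, giving the claim. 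For $i=3$ one invokes instead the sharper third-component bounds $\|(\vect u_0^{(j)})_3\|_{H^1} \leq C\chi^2\|\vect f\|_{L^2}$ (resp.\ $C|\chi|^3$) and the fact that $(\vect u_1)_3$ carries an extra power of $|\chi|$ because of the structure of the elasticity tensor and the momentum scaling; collecting these yields $O(\chi^2)$.

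For the second estimate, one writes
\[
\vect u - \vect u_0 - \vect u_0^{(1)} - {\vect u_1} = \vect u_{\rm error} + \bigl(\vect u_0^{(2)} + \vect u_0^{(3)} + {\vect u_1^{(1)}} + {\vect u_1^{(2)}} + {\vect u_1^{(3)}} + {\vect u_2} + {\vect u_2^{(1)}} + {\vect u_2^{(2)}} + {\vect u_2^{(3)}}\bigr),
\]
and again estimates term by term. The error term is $O(|\chi|^3)$, which is exactly the target order for $i=3$ and better than needed for $i=1,2$. The leftover correctors are all of order $\chi^2$ or smaller (for $i=1,2$), and of order $|\chi|^3$ or smaller (for $i=3$), by the same collection of bounds \eqref{nak30}--\eqref{nak33}: in particular $\|\vect u_2\|_{H^1}\leq C|\chi|^3$, $\|\vect u_0^{(2)}\|$ has first components $O(\chi^2)$ and third component $O(|\chi|^3)$, and everything with a superscript $(2)$ or $(3)$ on $\vect u_1$ or $\vect u_2$ is at worst $O(\chi^4)$. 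Summing finitely many such contributions and applying the triangle inequality gives the stated component-wise bounds.

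The only genuine subtlety — and the step I would highlight as the main obstacle — is that passing from the master bound \eqref{nak34} to the two displayed estimates requires the component-wise bounds on $\vect u_0^{(1)}$, $\vect u_0^{(2)}$, $\vect u_0^{(3)}$, $\vect u_1$ (and its refinements) to be genuinely anisotropic, i.e.\ the third component must be a full power of $|\chi|$ smaller than the first two. This is not automatic; it is forced by the bending/stretching splitting of $\widetilde{\mathcal I}^{\rm rod}_\chi$ together with the $S_{|\chi|}$-scaling of the load and the orthogonality of $H^1_\chi \cap L^2_{\rm bend}$ and $H^1_\chi \cap L^2_{\rm stretch}$ in the $H^1(\omega\times Y;\C^3)$ inner product (Proposition \ref{aprioriestimates}). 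These anisotropic bounds have already been recorded in \eqref{u0^1_general}, \eqref{nak32}, \eqref{nak33}, so the proof reduces to bookkeeping. Consequently the proof itself is short: it is a direct consequence of \eqref{nak34}, using the estimates \eqref{nak30}, \eqref{nak31}, \eqref{nak32}, \eqref{nak33}, exactly parallel to the proofs of Propositions \ref{propbending}, \ref{propbend}, and \ref{propfin1}.
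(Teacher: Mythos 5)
Your proposal is correct and takes essentially the same route as the paper: decompose $\vect u-\vect u_0$ (and $\vect u-\vect u_0-\vect u_0^{(1)}-\vect u_1$) into $\vect u_{\rm error}$ plus the remaining correctors, apply the triangle inequality, and invoke the component-wise bounds \eqref{nak30}, \eqref{u0^1_general}, \eqref{nak31}, \eqref{nak32}, \eqref{nak33} together with the master error estimate \eqref{nak34}. One small inaccuracy: you claim $(\vect u_1)_3$ carries an extra power of $|\chi|$, but the paper records only the isotropic bound $\Vert\vect u_1\Vert_{H^1}\leq C\chi^2\Vert\vect f\Vert_{L^2}$ in \eqref{nak30}, and no sharper third-component bound is needed — the target $O(\chi^2)$ for $i=3$ in the first estimate is already met by the vector-valued bound. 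The anisotropy is genuinely needed only for the $\vect u_0^{(j)}$ correctors, and those bounds are recorded in \eqref{u0^1_general}, \eqref{nak32}, \eqref{nak33} exactly as you use them.
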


\begin{proof} 
 The proof follows from \eqref{nak34}, by using also \eqref{nak30}, \eqref{u0^1_general}, \eqref{nak31}, \eqref{nak32},  and \eqref{nak33}.  
 \end{proof} 
 
\begin{remark}
	In the same fashion as in Remark \ref{replacement_rem}, we argue that throughout this section the matrix ${\mathfrak C}^{\rm rod}_\chi$ can be replaced by ${\mathfrak C}^{\rm rod}$ with no impact on the error estimates. In particular, this allows us to use ${\mathfrak C}^{\rm rod}$ in the statements of Section \ref{gen_tens_sec} below.
\end{remark}

\subsection{Norm-resolvent estimates for the general elasticity tensor}
\label{gen_tens_sec}
\iffalse
\begin{theorem}[$L^2 \to L^2$ norm-resolvent estimate]\label{thm_gen_L2L2_generaltensor}
Let $\gamma>-2$ be the spectral scaling parameter. There exists $C>0$ such that for every $\varepsilon> 0$ we have: 
\begin{equation}
\label{l2l2estimgentensor}
\left\Vert 
	P_i 
		\left( 
			\left( \frac{1}{\varepsilon^\gamma}\mathcal{A}_\varepsilon + I \right)^{-1} 
- (\mathcal{M}^{\rm rod}_\varepsilon)^*\left(\frac{1}{\varepsilon^\gamma} \mathcal{A}^{\rm rod}_{\varepsilon} + \mathfrak{C}^{\rm rod}\right)^{-1}\mathcal{M}^{\rm rod}_\varepsilon \Xi_\varepsilon
		\right) 
 \right\Vert_{L^2 \to L^2} \leq  
 \left\{ \begin{array}{ll}
         C\varepsilon^{\tfrac{\gamma + 2}{4}}, & \mbox{ $i = 1,2$},\\[0.5em]
         C\varepsilon^{\tfrac{\gamma + 2}{2}}, & \mbox{ $i = 3$}.\end{array} \right. 
\end{equation}
\end{theorem}
\fi
\begin{proof}[Proof of the Theorem \ref{THML2L2}, formula \eqref{genres}]
The proof largely follows the approach of the proof of Theorem \ref{THML2L2} under the Assumption \ref{matsym}, see Section \ref{L2toL2}. In what follows, we mainly focus on the differences between the two arguments. 

First, we replace $\mathfrak{C}^{\rm rod}$ by $I$ in the same way as before. 
Here in the case of a general elasticity tensor, the spectrum of the operator $\mathcal{A}_\chi$ consists of two eigenvalues of order ${\chi^4}$, two eigenvalues of order ${\chi^2},$ and the remaining eigenvalues of order one. Thus, by providing estimates of the scaled resolvent problems \eqref{generaltensorproblemchi2} and \eqref{generaltensorproblemchi4}, we have actually estimated the resolvents in the two eigenspaces with eigenvalues of orders $\chi^4$ and $\chi^2$. In order to combine them, we employ the following argument. Using the uniform estimates on the eigenvalues provided by Proposition \ref{Rayleighestim},  we construct a closed contour $\Gamma_{{\chi^2}}$ surrounding the two eigenvalues of order one of the operator $\chi^{-2}\mathcal{A}_{\chi}$, and a closed contour $\Gamma_{{\chi^4}}$ surrounding the two eigenvalues of order one of the operator $\chi^{-4}\mathcal{A}_{\chi}$. (Note that even though the notation suggests that these contours depend on $|\chi|$,  they can actually be chosen independent of $|\chi|$ if it is small enough, i.e. below some apriori estimated constant.)  Next, we use the scaling functions $g_{\varepsilon,\chi}$, $h_{\varepsilon,\chi},$ which are analytic on the neighbourhoods of $\Gamma_{{\chi^4}}$,  $\Gamma_{{\chi^2}}$, defined as follows:
\begin{equation}
    h_{\varepsilon,\chi}(z) := \left(\frac{|\chi|^{4}}{\varepsilon^{\gamma + 2}}z + 1 \right)^{-1}, \quad  g_{\varepsilon,\chi}(z) := \left(\frac{{\chi^2}}{\varepsilon^{\gamma + 2}}z + 1 \right)^{-1},\; \; \Re(z) > 0,
\end{equation}
Clearly, for the orthogonal projections $P_{\Gamma_{{\chi^4}}}, P_{\Gamma_{{\chi^2}}}$ onto the two mentioned eigenspaces, one then has
\begin{equation}
\label{twoprojectors}
    \begin{aligned}
        %%&\left(P_{\Gamma_{{\chi^4}}} + P_{\Gamma_{{\chi^2}}}\right) 
        \left(\frac{1}{\varepsilon^{\gamma + 2}}\mathcal{A}_\chi + I\right)^{-1}&\left(P_{\Gamma_{{\chi^4}}} + P_{\Gamma_{{\chi^2}}}\right)
        %%\\[0.6em]
        %%%&\hspace{5em}=  %%P_{\Gamma_{{\chi^2}}} 
        =\left(\frac{1}{\varepsilon^{\gamma + 2}}\mathcal{A}_\chi + I\right)^{-1} P_{\Gamma_{{\chi^2}}} + %P_{\Gamma_{{\chi^4}}} 
        \left(\frac{1}{\varepsilon^{\gamma + 2}}\mathcal{A}_\chi + I\right)^{-1} P_{\Gamma_{{\chi^4}}}\\[0.6em]
        %&\hspace{5em}
        &=\frac{1}{2\pi{\rm i}} \oint_{\Gamma_{{\chi^2}}} g_{\varepsilon,\chi}(z)\left(zI-\frac{1}{{\chi^2}}\mathcal{A}_\chi\right)^{-1}   dz + \frac{1}{2\pi{\rm i}} \oint_{\Gamma_{{\chi^4}}} h_{\varepsilon,\chi}(z)\left(zI-\frac{1}{{\chi^4}}\mathcal{A}_\chi\right)^{-1}  dz.
    \end{aligned}
\end{equation}
Furthermore, since
\begin{align*}
 %   \begin{split}
        \Bigg\Vert
        \left( \frac{1}{\varepsilon^{\gamma + 2}}\mathcal{A}_\chi + I\right)^{-1}
        %%&-\left(P_{\Gamma_{{\chi^4}}} + P_{\Gamma_{{\chi^2}}}\right) 
        &-\left(\frac{1}{\varepsilon^{\gamma + 2}}\mathcal{A}_\chi + I\right)^{-1}\left(P_{\Gamma_{{\chi^4}}} + P_{\Gamma_{{\chi^2}}}\right)
        \Bigg\Vert_{L^2 \to H^1} \\[0.6em]
        &
        =\left\Vert
        %%\bigl(I-P_{\Gamma_{{\chi^4}}} - P_{\Gamma_{{\chi^2}}}\bigr)
        \left( \frac{1}{\varepsilon^{\gamma + 2}}\mathcal{A}_\chi + I\right)^{-1}
        \bigl(I-P_{\Gamma_{{\chi^4}}} - P_{\Gamma_{{\chi^2}}}\bigr)
        \right\Vert_{L^2 \to H^1} \leq C\varepsilon^{\gamma + 2},
   % \end{split}
\end{align*}
it suffices to estimate 
%%$P_{\Gamma_{{\chi^4}}},$ $P_{\Gamma_{{\chi^2}}}.$ The optimal estimate is obtained by separately estimating 
each of the two terms in \eqref{twoprojectors}. To this end, we combine the first row of fiberwise norm-resolvent estimates in \eqref{general_tensor_estimates_chi4}, \eqref{generaltensorestimateschi2} with straightforwards estimates of the functions $g_{\varepsilon,\chi}$, $h_{\varepsilon,\chi}$, which yields
\begin{align*}
    %%\begin{split}
       \Bigg\Vert P_i&\left(\left( \frac{1}{\varepsilon^{\gamma + 2}}\mathcal{A}_\chi + I \right)^{-1}
-(\mathcal{M}^{\rm rod}_\chi)^*\left(\frac{1}{\varepsilon^{\gamma + 2}}{\mathbb A}^{\rm rod}_\chi + \mathfrak{C}^{\rm rod}\right)^{-1}
\mathcal{M}^{\rm rod}_\chi 
		\right)\Bigg\Vert_{L^2 \to L^2} 
		\\[0.5em] 
 &\hspace{10em} \leq  
 \left\{\begin{array}{ll}
         C|\chi|\left(\max\left\{\dfrac{{\chi^2}}{\varepsilon^{\gamma + 2}}, 1\right\}\right)^{-1} + C|\chi| \left(\max\left\{\dfrac{{\chi^4}}{\varepsilon^{\gamma + 2}}, 1\right\}\right)^{-1}, & \mbox{ $i = 1,2$},\\[0.7em]
         C|\chi| \left(\max\left\{\dfrac{{\chi^2}}{\varepsilon^{\gamma + 2}}, 1\right\}\right)^{-1} + C{\chi^2} \left(\max\left\{\dfrac{{\chi^4}}{\varepsilon^{\gamma + 2}}, 1\right\}\right)^{-1}, & \mbox{ $i = 3.$}
     \end{array}\right. 
     \\[0.6em]
    &\hspace{10em}\leq
   {C}\left\{\begin{array}{ll}
    \varepsilon^{\tfrac{\gamma + 2}{4}}, & \mbox{ $i = 1,2$},\\[0.5em]
    \varepsilon^{\tfrac{\gamma + 2}{2}}, & \mbox{ $i = 3$}.\end{array}\right. 
   %% \end{split}
\end{align*}
The proof is concluded by applying the inverse Gelfand transform.
\end{proof}
\begin{remark}
\label{rem61}
The parts of Theorems \ref{l2h1theorem}, \ref{thm_gen_L2L2high} pertaining to the general case (i.e. the estimates in the $L^2 \to H^1$ norm and the higher-order accuracy estimates in the $L^2 \to L^2$ norm, respectively) are obtained similarly to the case of invariant subspaces, by utilising a version of the argument provided in the proof of Theorem \ref{THML2L2}, formula \eqref{genres}. This, in particular, involves defining a corrector $\mathcal{A}^{\rm corr}_{\rm rod}(\varepsilon)$ by analogy with $\mathcal{A}^{\rm corr}_{\rm bend}(\varepsilon)$ and $\mathcal{A}^{\rm corr}_{\rm stretch}(\varepsilon)$ (see \eqref{trans_back_corr1_str}) 
as well as a corrector $\mathcal{\widetilde{A}}^{\rm corr}_{\rm rod}(\varepsilon)$ by analogy with $\mathcal{\widetilde{A}}^{\rm corr}_{\rm bend}(\varepsilon)$ and $\mathcal{\widetilde{A}}^{\rm corr}_{\rm stretch}(\varepsilon)$ (see \eqref{higherorderl2l2correctors}).
\end{remark}
\begin{remark}
\label{lastremark}
An argument similar to that of Corollary \ref{absenceofforceterms} can be used to demonstrate that one can drop the smoothing operator from the $L^2 \to L^2$ norm-resolvent estimates \eqref{genres} while keeping the same order of accuracy. Namely, there exists $C>0$ such that for every $\varepsilon> 0$ one has
\begin{equation}
\left\Vert 
	P_i 
		\left( 
			\left( \frac{1}{\varepsilon^\gamma}\mathcal{A}_\varepsilon + I \right)^{-1} 
- (\mathcal{M}^{\rm rod}_\varepsilon)^*\left(\frac{1}{\varepsilon^\gamma} \mathcal{A}^{\rm rod}_{\varepsilon} + \mathfrak{C}^{\rm rod} \right)^{-1}\mathcal{M}^{\rm rod}_\varepsilon S_\infty
		\right) 
 \right\Vert_{L^2 \to L^2} \leq  
 {C}\left\{ \begin{array}{ll}
         \varepsilon^{\tfrac{\gamma + 2}{4}}, & \mbox{ $i = 1,2$},\\[0.5em]
         \varepsilon^{\tfrac{\gamma + 2}{2}}, & \mbox{ $i = 3$}.\end{array} \right. 
\end{equation}
\end{remark}
%%%\begin{remark}
%%%Much in the same fashion as in Corollary \ref{corollaryspectralgaps}, one can prove the following bound on the spectral gaps in the general case: Let $\gamma > -2$ be the parameter of spectral scaling. Let $M>0$. Then 
%%%\begin{equation}
%%%    \sup_{\substack{[a,b]\subset [0,M \varepsilon^\gamma] \\ [a,b] \cap \sigma(\mathcal{A}_\varepsilon) = \emptyset }} \bigl\vert[a, b]\bigr\vert \leq C(M+1)^2 \varepsilon^{\tfrac{5\gamma + 2}{4}}.
%%%\end{equation}
%%%\end{remark}

\section{Appendix} 
\subsection{Proof of Proposition \ref{proposition21}}\label{appproof} 
\begin{proof}
	The only non-trivial fact is the coercivity estimate in  \eqref{coerc}. To establish it, we use the pointwise coercivity estimate in \eqref{pointwisecoercivity}:
	\begin{equation}
	\label{combine1}
	\begin{aligned}
	{\mathbb A}^{\rm rod}{\vect m}\cdot{\vect m}= & \int_{\omega \times Y} {\mathbb A}(y) \left( \mathcal{J}^{\rm rod}_{\vect m}(\widehat{x}) + \simgrad {\vect u_{\vect m}}(\widehat{x},y)\right):\left( \mathcal{J}^{\rm rod}_{m}(\widehat{x}) + \simgrad {\vect u_{\vect m}}(\widehat{x},y)\right )  d\widehat{x}dy \\[0.3em]
	\geq & C\left\Vert \mathcal{J}^{\rm rod}_{\vect m} + \simgrad {\vect u_{\vect m}}\right\Vert_{L^2(\omega \times Y;\R^{3 \times 3})}^2 \\[0.3em]
	\geq & C\left(\bigl\Vert x_2 m_3 - \bigl( \partial_1 ({\vect u}_{\vect m})_3 + \partial_3 {({\vect u}_{\vect m})_1} \bigr) \bigr\Vert_{L^2(\omega\times Y)}^2 +\bigl\Vert -x_1 m_3 - \bigl( \partial_2 ({\vect u}_{\vect m})_3 + \partial_3 {({\vect u}_{\vect m})_2} \bigr) \bigr\Vert_{L^2(\omega\times Y)}^2 \right)\\[0.3em] 
	&\qquad\qquad+C\bigl\Vert m_4 - x_1 m_1 - x_2 m_2 - \partial_3 ({\vect u}_{\vect m})_3\bigr\Vert_{L^2(\omega\times Y)}^2.
	\end{aligned}
	\end{equation}
	One the one hand, it is clear that $\partial_3 ({\vect u}_{\vect m})_3 \perp  m_4 - x_1 m_1 - x_2 m_2$ in $L^2(\omega \times Y)$, so one has
	\begin{equation}
	\bigl\Vert m_4 - x_1 m_1 - x_2 m_2 - \partial_3 ({\vect u}_{\vect m})_3  \bigr\Vert_{L^2(\omega\times Y)}^2 \geq  \left\Vert m_4 - x_1 m_1 - x_2 m_2 \right\Vert_{L^2(\omega\times Y)}^2 \geq C\left(|m_1|^2 + |m_2|^2 + |m_4|^2 \right),
	\label{combine2}
	\end{equation}
	by virtue of the third condition in \eqref{coordinatesymmetries}. On the other hand, one has 
	\begin{equation}
	\label{combine3}
	\begin{aligned}
	\bigl\Vert x_2 m_3 - \bigl( \partial_1 ({\vect u}_{\vect m})_3 + \partial_3 {({\vect u}_{\vect m})_1} \bigr)\bigr\Vert_{L^2(\omega\times Y)}^2 +\bigl\Vert -x_1 m_3 - \bigl( \partial_2({\vect u}_{\vect m})_3 + \partial_3 {({\vect u}_{\vect m})_2}\bigr) \bigr\Vert_{L^2(\omega\times Y)}^2\\[0.5em]
	=\left\Vert m_3 \begin{bmatrix}
	x_2 \\ - x_1
	\end{bmatrix} - \partial_3\widehat{{\vect u}}_{\vect m} 
	%%\begin{bmatrix}
	%%    {({\vect u}_{\vect m})_1} \\[0.25em] {({\vect u}_{\vect m})_2}
	%%    \end{bmatrix} 
	- \nabla_{\widehat{x}}({\vect u}_{\vect m})_3 \right\Vert_{L^2(\omega\times Y;\R^2)}^2.
	\end{aligned}
	\end{equation}
	Consider the operator $P_G$ in $L^2(\omega;\R^2)$ of the orthogonal projection onto the set $G:=\left\{\nabla v,  v \in H^1(\omega) \right\}$. The operator $P_G$ is well defined and bounded since $G$ is closed. Furthermore, one has 
	\begin{equation}
	\begin{aligned}
	\left\Vert m_3 \begin{bmatrix}
	x_2 \\ - x_1
	\end{bmatrix} - \partial_3\widehat{{\vect u}}_{\vect m}
	%%\begin{bmatrix}
	%%    {({\vect u}_{\vect m})_1} \\[0.25em] {({\vect u}_{\vect m})_2}
	%%    \end{bmatrix} 
	- \nabla_{\widehat{x}} {u_3} \right\Vert_{L^2(\omega\times Y;\R^2)}^2 & \geq C \left\Vert (I -P_G)\left( m_3 \begin{bmatrix}
	x_2 \\ - x_1
	\end{bmatrix} - \partial_3 \widehat{{\vect u}}_{\vect m}
	%%\begin{bmatrix}
	%%    {({\vect u}_{\vect m})_1} \\[0.25em] {({\vect u}_{\vect m})_2}
	%%    \end{bmatrix} 
	\right) \right\Vert_{L^2(\omega\times Y;\R^2)}^2 \\[0.4em]
	&= C\left(m_3\left\Vert (I -P_G) \begin{bmatrix}
	x_2 \\ - x_1
	\end{bmatrix} \right\Vert_{L^2(\omega\times Y;\R^2)}^2 +  \bigl\Vert (I -P_G) \partial_3 \widehat{{\vect u}}_{\vect m}
	%%\begin{bmatrix}
	%%    {({\vect u}_{\vect m})_1} \\[0.25em] {({\vect u}_{\vect m})_2}
	%%    \end{bmatrix}  
	\bigr\Vert_{L^2(\omega\times Y;\R^2)}^2  \right) \\[0.4em]
	& + C\int_Y \left\langle m_3(I -P_G) \begin{bmatrix}
	x_2 \\ - x_1
	\end{bmatrix},(I -P_G) \partial_3 \widehat{{\vect u}}_{\vect m}
	%%\begin{bmatrix}
	%%    {({\vect u}_{\vect m})_1} \\[0.25em] {({\vect u}_{\vect m})_2}
	%%    \end{bmatrix}   
	\right\rangle_{L^2(\omega;\R^2)},
	\end{aligned}
	\end{equation}
	and noting that
	\begin{equation}
	\begin{aligned}
	\int_Y \left\langle m_3(I -P_G) \begin{bmatrix}
	x_2 \\ - x_1
	\end{bmatrix},(I -P_G) \partial_3 \widehat{{\vect u}}_{\vect m}
	%%\begin{bmatrix}
	%%    {({\vect u}_{\vect m})_1} \\[0.25em] {({\vect u}_{\vect m})_2}
	%%    \end{bmatrix}   
	\right\rangle_{L^2(\omega;\R^2)}  
	& = \int_Y \left\langle m_3(I -P_G) \begin{bmatrix}
	x_2 \\ - x_1
	\end{bmatrix},\partial_3\widehat{{\vect u}}_{\vect m}
	%%\begin{bmatrix}
	%%    {({\vect u}_{\vect m})_1} \\[0.25em] 
	%%    {({\vect u}_{\vect m})_2}
	%%    \end{bmatrix}   
	\right\rangle_{L^2(\omega;\R^2)} \\[0.4em]
	& = m_3\int_\omega \left\langle (I -P_G) \begin{bmatrix}
	x_2 \\ - x_1
	\end{bmatrix},\partial_3 \widehat{{\vect u}}_{\vect m}
	%%\begin{bmatrix}
	%%    {({\vect u}_{\vect m})_1} \\[0.25em] {({\vect u}_{\vect m})_2}
	%%    \end{bmatrix}   
	\right\rangle_{L^2(Y;\R^2)} = 0,
	\end{aligned}
	\end{equation}
	one obtains the bound
	\begin{equation}
	\left\Vert m_3 \begin{bmatrix}
	x_2 \\ - x_1
	\end{bmatrix} - \partial_3\widehat{{\vect u}}_{\vect m}
	%%\begin{bmatrix}
	%%    {({\vect u}_{\vect m})_1} \\[0.25em] {({\vect u}_{\vect m})_2}
	%%    \end{bmatrix} 
	- \nabla_{\widehat{x}}({\vect u}_{\vect m})_3 \right\Vert_{L^2(\omega\times Y;\R^2)}^2  \geq C |m_3|^2.
	\label{combine4}
	\end{equation}
	Combining \eqref{combine1}, \eqref{combine2}, \eqref{combine3}, and \eqref{combine4}, we  obtain  the required estimate \eqref{coerc}.
\end{proof}
\subsection{Resolvent formalism}\label{appRes}
For a closed operator $\mathcal{A}$ on a Banach space $X$, with domain $\mathcal{D}(\mathcal{A})\subset X,$ we consider its resolvent set $$\rho(\mathcal{A}):= \left\{z \in \C: 
{\rm Operator}\ \mathcal{A}-zI:\mathcal{D}(\mathcal{A}) \to X \mbox{ is bijective}\right\}.$$ For every $z \in \rho(\mathcal{A})$, the resolvent of $A$ is defined by
$$
R(z,\mathcal{A}):=(\mathcal{A}-zI)^{-1}:X \to X.
$$
It is known that we have the following two identities: 
\begin{equation}
\begin{aligned}
&\mbox{First resolvent identity: } \quad R(z,\mathcal{A}) - R(w,\mathcal{A}) = (z-w)R(z,\mathcal{A})R(w,\mathcal{A}) \quad \forall z,w \in \rho(\mathcal{A}), \\[0.3em]
&\mbox{Second resolvent identity: } \quad
R(z,\mathcal{A}) -R(z,\mathcal{B}) = R(z,\mathcal{A})(\mathcal{A}-\mathcal{B})R(z,\mathcal{B}) \quad \forall z \in \rho(\mathcal{A})\cap \rho(\mathcal{B}). 
\end{aligned}
\end{equation}

To establish a norm-resolvent estimate is to provide the estimate for the difference of two resolvents in the operator-norm topology. Note that estimates of resolvents that depend on the spectral parameter $z \in \C$ can usually be reduced to a single resolvent estimate where the value of $z$ is fixed, by appropriately modifying the corresponding resolvent problem.
%dependence on the spectral parameter is hidden on the right-hand side.} 
On the basis of this observation, the following lemma is easily shown to hold,using the first and the second resolvent identity. 
%%, whose proof we omit, holds.
\begin{lemma} \label{lemres}
	Let $w,z \in \rho(\mathcal{A})\cap \rho(\mathcal{B})$, where $\mathcal{A}$, $\mathcal{B}$ are closed operators on $X$. Then we have 
	\begin{equation}
	\left\Vert R(z,\mathcal{A})- R(z,\mathcal{B}) \right\Vert_{X \to X} \leq C(z,w)\left\Vert R(w,\mathcal{A})- R(w,\mathcal{B})\right\Vert_{X \to X},
	\end{equation}
	where $$C(z,w):=\max \left\{ 1, \frac{|z - w|}{{\rm dist}(z, \sigma(\mathcal{A}))}\right\}\max \left\{ 1, \frac{|z - w|}{{\rm dist}(z, \sigma(\mathcal{B}))}\right\}.$$
\end{lemma}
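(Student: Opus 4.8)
The final statement to prove is Lemma~\ref{lemres}, which asserts a quantitative comparison between the norm of the difference of two resolvents evaluated at $z$ and the same difference evaluated at $w$. The plan is to use the two resolvent identities stated just above the lemma to express $R(z,\mathcal A)-R(z,\mathcal B)$ in terms of $R(w,\mathcal A)-R(w,\mathcal B)$, with the multiplicative constants controlled by the distances of $z$ to the spectra $\sigma(\mathcal A)$ and $\sigma(\mathcal B)$.

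First I would apply the second resolvent identity at the point $w$ to write $R(w,\mathcal A)-R(w,\mathcal B)=R(w,\mathcal A)(\mathcal B-\mathcal A)R(w,\mathcal B)$, and likewise at $z$: $R(z,\mathcal A)-R(z,\mathcal B)=R(z,\mathcal A)(\mathcal B-\mathcal A)R(z,\mathcal B)$. To relate these, I would use the first resolvent identity to connect $R(z,\mathcal A)$ with $R(w,\mathcal A)$, namely $R(z,\mathcal A)=R(w,\mathcal A)+(z-w)R(z,\mathcal A)R(w,\mathcal A)=\bigl(I+(z-w)R(z,\mathcal A)\bigr)R(w,\mathcal A)$, and analogously $R(z,\mathcal B)=R(w,\mathcal B)\bigl(I+(z-w)R(z,\mathcal B)\bigr)$. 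Substituting these into the expression for $R(z,\mathcal A)-R(z,\mathcal B)$ gives
\begin{equation*}
R(z,\mathcal A)-R(z,\mathcal B)=\bigl(I+(z-w)R(z,\mathcal A)\bigr)\,\bigl(R(w,\mathcal A)-R(w,\mathcal B)\bigr)\,\bigl(I+(z-w)R(z,\mathcal B)\bigr),
\end{equation*}
where one uses that $(\mathcal B-\mathcal A)$ intertwines $R(w,\mathcal A)$ and $R(w,\mathcal B)$ exactly as above, so that the middle factor is indeed $R(w,\mathcal A)-R(w,\mathcal B)$. Taking operator norms and using submultiplicativity yields
\begin{equation*}
\bigl\Vert R(z,\mathcal A)-R(z,\mathcal B)\bigr\Vert_{X\to X}\le \bigl(1+|z-w|\,\Vert R(z,\mathcal A)\Vert\bigr)\,\bigl(1+|z-w|\,\Vert R(z,\mathcal B)\Vert\bigr)\,\bigl\Vert R(w,\mathcal A)-R(w,\mathcal B)\bigr\Vert_{X\to X}.
\end{equation*}

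To finish I would invoke the standard bound $\Vert R(z,\mathcal A)\Vert_{X\to X}\le 1/{\rm dist}(z,\sigma(\mathcal A))$ (valid for normal/self-adjoint operators, which is the setting of interest here; for general closed operators one can keep $\Vert R(z,\mathcal A)\Vert$ in place of the distance or assume this a priori bound). Then $1+|z-w|\,\Vert R(z,\mathcal A)\Vert\le 1+|z-w|/{\rm dist}(z,\sigma(\mathcal A))\le 2\max\{1,|z-w|/{\rm dist}(z,\sigma(\mathcal A))\}$, and similarly for $\mathcal B$; absorbing the factor $4$ into the constant gives exactly the claimed inequality with $C(z,w)=\max\{1,|z-w|/{\rm dist}(z,\sigma(\mathcal A))\}\,\max\{1,|z-w|/{\rm dist}(z,\sigma(\mathcal B))\}$ (up to the harmless constant). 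The only genuinely delicate point — hardly an obstacle in the self-adjoint case relevant to the paper — is the justification of the resolvent norm bound by the reciprocal distance to the spectrum; in the general Banach-space setting one would instead state the lemma with $\Vert R(z,\cdot)\Vert$ directly, or restrict to normal operators, and everything else is a routine manipulation of the two resolvent identities.
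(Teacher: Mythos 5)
Your proof is correct and takes essentially the same route as the (suppressed) proof in the source. You arrive at the identity
\begin{equation*}
R(z,\mathcal A)-R(z,\mathcal B)=\bigl(I+(z-w)R(z,\mathcal A)\bigr)\bigl(R(w,\mathcal A)-R(w,\mathcal B)\bigr)\bigl(I+(z-w)R(z,\mathcal B)\bigr)
\end{equation*}
by substituting the first resolvent identity into the second; the paper's draft proof derives the algebraically equivalent identity with the right factor written as $\bigl(I-(z-w)R(w,\mathcal B)\bigr)^{-1}$, obtained by telescoping and rearranging. (Both forms agree: $\bigl(I-(z-w)R(w,\mathcal B)\bigr)\bigl(I+(z-w)R(z,\mathcal B)\bigr)=I$ follows from the first resolvent identity and commutativity of $R(z,\mathcal B)$ with $R(w,\mathcal B)$.) Both proofs then bound the two outer factors, the draft via the spectral mapping theorem applied to $f_{w,z}(\lambda)=(w-\lambda)/(z-\lambda)$, you via the elementary estimate $\Vert R(z,\cdot)\Vert\le 1/\mathrm{dist}(z,\sigma(\cdot))$; these give the same bound.

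Your two caveats are both well placed. First, the bound $\Vert R(z,\mathcal A)\Vert\le 1/\mathrm{dist}(z,\sigma(\mathcal A))$ (equivalently, $\sup_{\lambda\in\sigma(\mathcal A)}|f_{w,z}(\lambda)|$ controlling $\Vert f_{w,z}(\mathcal A)\Vert$) indeed requires $\mathcal A$ self-adjoint or normal, which is the setting of the paper's applications even though the lemma is phrased for general closed operators. Second, what the argument actually yields for each factor is $1+|z-w|/\mathrm{dist}(z,\sigma(\mathcal A))$, not $\max\{1,|z-w|/\mathrm{dist}(z,\sigma(\mathcal A))\}$; a one-line example (e.g.\ $z=0$, $w=1$, $\sigma(\mathcal A)=\{-1\}$, where $|f_{w,z}(-1)|=2$ but $\max\{1,|z-w|/\mathrm{dist}\}=1$) shows the sharper $\max$-form cannot hold as written, so a factor of at most $4$ must be absorbed into $C(z,w)$ exactly as you indicate. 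This does not affect any use of the lemma in the paper, which only needs the order of the constant.
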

\subsection{Korn's inequality} \label{appKorn} 
We state the so-called second Korn's inequality in the following form, see \cite{Hor}.
%%}
\begin{proposition}
	\label{quasiperiodickorn}
	For every $\vect u \in H^1(\omega \times Y;\C^3),$ the following estimate holds with $C>0$ dependent only on $\omega:$
	\begin{equation}
	\bigl\Vert \vect u(x) - (A x + \vect c) \bigr\Vert_{H^1(\omega\times Y;\C^3)} \leq C \bigl\Vert  \simgrad \vect u \bigr\Vert _{L^2(\omega \times Y;\C^{3\times 3})},
	\label{second_Korn} 
	\end{equation}
 for some "infinitesimal rigid-body motion" $Ax+\vect c,$ $x\in\omega\times Y,$ with
	\begin{align*}
	%\begin{split}
	A&= \begin{bmatrix}
	0 & d & a \\[0.25em]
	-d & 0 & b \\[0.25em]
	-a & -b & 0 \\
	\end{bmatrix} \in \C^{3 \times 3}, 
	\qquad \vect c = \begin{bmatrix}
	c_1 \\[0.2em]
	c_2 \\[0.2em]
	c_3
	\end{bmatrix} \in \C^3. %\quad c_j = \int_{\omega\times Y}u_j, \quad j=1,2,3, \\[0.4em]
	%  a&= \int_{\omega \times Y}(\partial_3 {u_1} - \partial_1 {u_3}), \qquad  b = \int_{\omega \times Y}(\partial_3 { u_2} - \partial_2 {u_3}), \qquad d = \int_{\omega\times Y}(\partial_2 { u_1} - \partial_1 {u_2}),
	%\end{split}
	\end{align*}
	%%The constant $C$ depends only on the domain $\omega \times Y$.
\end{proposition}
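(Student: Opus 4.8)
The plan is to reduce the statement to the classical second Korn inequality on a bounded Lipschitz domain and then to remove the infinitesimal rigid motion by a finite-dimensional projection combined with a compactness argument. Throughout I would work with real-valued fields, splitting $\vect u$ into real and imaginary parts and applying everything componentwise; this is harmless, since the antisymmetric linear part $A$ and the constant $\vect c$ are then simply allowed to be complex.

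First I would establish the ``plain'' second Korn inequality
\begin{equation*}
\Vert\vect u\Vert_{H^1(\omega\times Y;\R^3)}\leq C\Bigl(\Vert\vect u\Vert_{L^2(\omega\times Y;\R^3)}+\bigl\Vert\simgrad\vect u\bigr\Vert_{L^2(\omega\times Y;\R^{3\times3})}\Bigr),\qquad \vect u\in H^1(\omega\times Y;\R^3).
\end{equation*}
Since $\omega$ is a bounded Lipschitz domain and $Y=[-1/2,1/2]$, the cylinder $\omega\times Y$ is again bounded and Lipschitz, so this is precisely the statement recorded in \cite{Hor}. The cleanest self-contained route is through the Ne\v{c}as--Lions regularity lemma: on a bounded Lipschitz $\Omega$, a distribution $g$ with $g\in H^{-1}(\Omega)$ and $\nabla g\in H^{-1}(\Omega;\R^3)$ belongs to $L^2(\Omega)$, with $\Vert g\Vert_{L^2}\leq C(\Vert g\Vert_{H^{-1}}+\Vert\nabla g\Vert_{H^{-1}})$. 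Applying this to $g=\partial_j u_i$ together with the algebraic identity $\partial_k\partial_j u_i=\partial_j(\simgrad\vect u)_{ik}+\partial_k(\simgrad\vect u)_{ij}-\partial_i(\simgrad\vect u)_{jk}$ yields $\partial_j u_i\in L^2$ with the bound above.

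Next, let $\mathcal{R}\subset H^1(\omega\times Y;\R^3)$ be the six-dimensional space of infinitesimal rigid motions $x\mapsto Ax+\vect c$ with $A^\top=-A$, and let $P$ be the $L^2(\omega\times Y;\R^3)$-orthogonal projection onto $\mathcal{R}$ (bounded also on $H^1$ because $\mathcal{R}$ is finite-dimensional). Set $\vect w:=\vect u-P\vect u$. Since $\simgrad$ annihilates $\mathcal{R}$, we have $\simgrad\vect w=\simgrad\vect u$, so it suffices to prove $\Vert\vect w\Vert_{H^1}\leq C\Vert\simgrad\vect w\Vert_{L^2}$ for every $\vect w\perp\mathcal{R}$. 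I would argue by contradiction: if this fails, there are $\vect w_k\perp\mathcal{R}$ with $\Vert\vect w_k\Vert_{H^1}=1$ and $\Vert\simgrad\vect w_k\Vert_{L^2}\to0$; by Rellich--Kondrachov a subsequence converges in $L^2$, and then the plain Korn inequality applied to $\vect w_k-\vect w_m$ shows the subsequence is Cauchy in $H^1$, hence $\vect w_k\to\vect w$ in $H^1$ with $\Vert\vect w\Vert_{H^1}=1$, $\vect w\perp\mathcal{R}$ and $\simgrad\vect w=0$. The last condition forces $\vect w\in\mathcal{R}$, contradicting $\vect w\perp\mathcal{R}$. Taking $Ax+\vect c:=P\vect u$ (which has the required antisymmetric linear part) then gives $\Vert\vect u-(Ax+\vect c)\Vert_{H^1}=\Vert\vect w\Vert_{H^1}\leq C\Vert\simgrad\vect u\Vert_{L^2}$.

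The main obstacle is the first step: the plain second Korn inequality is where all the harmonic analysis sits, and one must check that the Ne\v{c}as--Lions lemma is available on the Lipschitz cylinder $\omega\times Y$ (it is, since a product of bounded Lipschitz domains is Lipschitz). Everything afterwards — the finite-dimensional projection, the compactness/contradiction step, and, if needed later, quantitative control of the coefficients $a,b,d,c_1,c_2,c_3$ via equivalence of norms on $\mathcal{R}$, cf.\ Lemma \ref{lemstartKorn} — is routine.
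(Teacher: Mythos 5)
Your argument is correct, but note that the paper does not actually prove Proposition \ref{quasiperiodickorn}: it is stated as a quoted fact with a reference to \cite{Hor}, so there is no internal proof to compare against. What you have supplied is the standard two-step argument: first the ``plain'' second Korn inequality $\Vert\vect u\Vert_{H^1}\leq C(\Vert\vect u\Vert_{L^2}+\Vert\simgrad\vect u\Vert_{L^2})$ on a bounded Lipschitz domain (via Ne\v{c}as--Lions applied to the algebraic identity expressing $\partial_k\partial_j u_i$ through first derivatives of the strain), and then the removal of the rigid-motion kernel by projecting onto the six-dimensional space $\mathcal{R}=\ker\simgrad$ and running the Peetre--Tartar compactness/contradiction argument. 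Two small points you should make explicit: (i) the conclusion $\simgrad\vect w=0\Rightarrow\vect w\in\mathcal{R}$ uses connectedness of $\omega\times Y$, which holds because $\omega$ is a (connected) Lipschitz domain and $Y$ an interval; and (ii) one should check that $\omega\times Y$ really is a bounded Lipschitz domain (a product of bounded Lipschitz sets is, but it is worth stating, since the edges $\partial\omega\times\{\pm1/2\}$ are precisely where a careless verification could go wrong). With those noted, the proof is complete and is exactly the argument one finds in the references cited by \cite{Hor}; it is also the right input for Lemma~\ref{lemstartKorn}, which is where the paper actually exploits this proposition.
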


\section*{Acknowledgements}

KC is grateful for the support of the Engineering and Physical Sciences Research Council (EPSRC): Grant
EP/L018802/2 “Mathematical foundations of metamaterials: homogenisation, dissipation and operator
theory”. IV and J\v{Z} have been supported by the Croatian Science Foundation under Grant agreement No. 9477
(MAMPITCoStruFl) and Grant agreement No. IP-2018-01-8904 (Homdirestroptcm). J\v{Z} is also supported by the Ministry of Science and Higher Education of the Russian Federation, agreement 075-15-2019-1620 date 08/11/2019.

\section*{Data availability}

Data sharing is not applicable to this article as no datasets were generated or analysed during the current study.


\begin{thebibliography}{9}
	
%	\bibitem{BLP}
%	Bensoussan, A., Lions, J.-L., Papanicolaou, G. C., 1978. {\it Asymptotic Analysis for Periodic Structures,} North-Holland.
	
%	\bibitem{BP} Bakhvalov, N., Panasenko, G., 1989. {\it Homogenisation: averaging processes in periodic media.} 
%	%Mathematical problems in the mechanics of composite materials.} 
%	Mathematics and its Applications (Soviet Series), {\bf 36}. Kluwer Academic Publishers Group, Dordrecht.


\bibitem{ABV} Allaire, G., Briane, M., Vanninathan, M., 2016.  A comparison between two-scale asymptotic expansions and
	Bloch wave expansions for the homogenization of periodic
	structures. {\it SeMA Journal. Boletin de la Sociedad Espan\~{n}ola de Matem\'{a}tica
		Aplicada} {\bf 73}{(3)}, 237--259.  	
 	
	
	\bibitem{BirmanSolomyak}
Birman, M. S., and Solomjak, M. Z., 1987. {\it Spectral theory of selfadjoint operators in {H}ilbert space.} Mathematics and its Applications (Soviet Series). D. Reidel Publishing Co., Dordrecht.
	
	 \bibitem{BirmanSuslina} Birman, M. Sh., Suslina, T. A.,  2004. Second order periodic differential operators. Threshold properties and homogenisation. {\it St.\,Petersburg Math. J.} {\bf 15}(5), 639--714.
	 
	  \bibitem{BirmanSuslina_corrector} Birman, M. Sh., Suslina, T. A., 2006. Averaging of periodic elliptic differential operators taking a corrector into account. {\it St.\,Petersburg Math. J.} {\bf 17}(6), 897--973.
	
	\bibitem{BirmanSuslina_hyperbolic}
	Birman, M. Sh., Suslina, T. A., 2009. Operator error estimates for the averaging of nonstationary periodic equations. {\it St.\,Petersburg Math. J.} {\bf 20}(6), 873--928.
	
%	\bibitem{BFF}
%	 Braides, A., Fonseca, I., Francfort, G., 2000. 3D-2D asymptotic analysis for inhomogeneous thin films. {\it Indiana Univ. Math. J.} {\bf 49} (4), 1367--1404.
	
	\bibitem{BV}
	Bukal, M., Vel\v{c}i\'{c}, I., 2017. On the simultaneous homogenisation and dimension reduction in elasticity and locality of $\Gamma$-closure. {\it Calc. Var. Partial Differential Equations} {\bf 56}(3), Paper No. 59, 41 pp.
	
  %         \bibitem{BuzChVZ} Bu\v{z}an\v{c}i\'{c} M., Cherednichenko K., Vel\v{c}i\'{c} I., and \v{Z}ubrini\'{c} J. Spectral and evolution analysis of elastic plates in high-contrast regime, preprint: \url{https://arxiv.org/pdf/2105.05597.pdf}. 
 
\bibitem{BCVZ}  Bu\v{z}an\v{c}i\'{c}, M., Cherednichenko, K.,   Vel\v{c}i\'{c}, I., \v{Z}ubrini\'{c}, J., 2022.
 Spectral and evolution analysis of composite elastic plates
 		with high contrast.
  {\bf 152} (1-2), 79--177.
 
 
           \bibitem{Caill} Caillerie, D., Nedelec, J.C., 1984. Thin periodic and elastic plates. {\it Math. Method. Appl. Sci.} {\bf 6}(1), 159--191. 


%	\bibitem{CDVZ}
%	Cherednichenko, K., Vel\v{c}i\'{c}, I., and \v{Z}ubrini\'{c}, J., 2020. Sharp operator-norm asymptotics for thin elastic rods with rapidly oscillating periodic properties, {in preparation.}
	
%	\bibitem{CC}  
%	Cherdantsev, M., Cherednichenko, K., 2015. Bending of thin periodic plates. {\it Calc. Var. Partial Differential Equations} {\bf 54} (4), 
	4079--4117.
	
	
	\bibitem{ChCoARMA}
	Cherednichenko, K. D., Cooper, S., 2016. Resolvent estimates for high-contrast elliptic problems with periodic coefficients. {\it Archive for Rational Mechanics and Analysis} {\bf 219}(3), 1061--1086.
	
	\bibitem{CherErshKis} Cherednichenko, K. D., Ershova, Yu. Yu., Kiselev, A. V., 2020. Effective behaviour of critical-contrast PDEs: micro-resonances, frequency conversion, and time dispersive properties. I. {\it Comm. Math. Phys.} {\bf 375}, 1833--1884.
	%%52 pp., DOI: 10.1007/s00220-020-03696-2. 
	%% {\it arXiv:1808.03961}.
	
	\bibitem{ChDO}
	Cherednichenko, K., D'Onofrio, S., 2018. Operator-norm convergence estimates for elliptic homogenisation problems on periodic singular structures. {\it J. Math. Sci.} {\bf 232}(4), 558--572.
	% {\it Submitted, arXiv: 1801.02097.}
	
	\bibitem{cherednichenkovelcic} Cherednichenko, K., Velcic, I., 2022. Sharp operator-norm asymptotics for linearised elastic plates with rapidly oscillating periodic properties. {\it J. Lond. Math. Soc.} {\bf 105}(3), 1634--1680. 
	
	\bibitem{ciarlet}
	Ciarlet, P. G., 1997. {\it Mathematical Elasticity, Volume II: Theory of Plates,} North Holland. 
	
%	\bibitem{Cioranescu_Damlamian_Griso} 
%	Cioranescu, D., Damlamian, A., Griso, G., 2018. {\it The Periodic Unfolding Method,} Springer.
		
%	\bibitem{CD}
%	Cioranescu, D., Donato, P., 1999. {\it An Introduction to homogenisation,} Oxford University Press.	
		
%%\bibitem{Dorodnyi_Suslina}
%%	Dorodnyi, M. A., Suslina, T. A., 2018. Spectral approach to homogenisation of hyperbolic equations with periodic coefficients. {\it J. Differential Equations} {\bf 264}(12), 7463--7522.

\bibitem{Quasi_Cooper}
Cooper, S., 2018. Quasi-periodic two-scale homogenisation and effective spatial dispersion in high-contrast media. {\it Calc. Var. Partial Differential Equations} {\bf 57}: 76.
		
%	\bibitem{Quasi_Cooper}
%	Cooper, S., 2018. Quasi-periodic two-scale homogenisation and effective spatial dispersion in high-contrast media. {\it Calc. Var. Partial Differential Equations} {\bf 57}:76.

            \bibitem{Dam} Damlamian, A.,  Vogelius, M., 1987. Homogenisation limits of the equations of
elasticity in thin domains, {\it SIAM J. Math. Anal.}  {\bf 18}(2), 435--451.
	
	\bibitem{Davies}
	Davies, B., 1995. {\it Spectral Theory and Differential Operators,} Cambridge University Press.
	
	\bibitem{Dauge}  Dauge, M., Djurdjevic, I., Faou, E., R\"{o}ssle, A., 1999. Eigenmode asymptotics in thin elastic plates. {\it J. Math. Pures Appl.} {\bf 78}(9), 925--964.
	
	\bibitem{Gelfand}
	Gelfand, I. M., 1950. Expansion in characteristic functions of an equation with periodic coefficients. (Russian) {\it Doklady Akad. Nauk SSSR (N.S.)} {\bf 57}, 1117--1120.
	
	\bibitem{Griso_2006} Griso, G., 2006. Interior error estimate for periodic homogenisation. {\it Anal. Appl.} {\bf 4}(1), 61--79.
	
	\bibitem{Hor}  Horgan, C.O., 1995. Korn Inequalities and Their Applications in Continuum Mechanics. {\it SIAM Rev.}  {\bf 37}, 491--511.
	
%	\bibitem{Griso}
%	Griso, G., 2008. Decompositions of displacements of thin structures, {\it J. Math. Pures Appl. (9)} 89(2), 199--223.
	
%	\bibitem{HNV}
%	 Hornung, P., Neukamm, S., Vel\v{c}i\'{c}, I., 2014 Derivation of a homogenised nonlinear plate theory from 3d elasticity. {\it Calc. Var. Partial Differential Equations} {\bf 51} (3-4), 677--699.
	
	%\bibitem{Kato}
        %Kato, T., 1966. {\it Perturbation Theory for Linear Operators,} Springer-Verlag, Berlin.
        
        \bibitem{juraktutek}Jurak M. and Tutek Z., 1989. A one-dimensional model of homogenised rod. {\it Glas. Mat. Ser. III} {\bf 24}(44)(2--3), 271–290.

        \bibitem{juraktambaca1}Jurak, M.,  Tamba\v{c}a, J., 1999. Derivation and justification of a curved rod model. {\it Mathematical Models and Methods in Applied Sciences} {\bf 9}(7), 991--1014.

        \bibitem{juraktambaca2} Jurak, M., Tamba\v{c}a, J., 2001. Linear curved rod model. General curve. {\it Mathematical Models and Methods in Applied Sciences} {\bf 11}(7), 1237--1252.


        
        \bibitem{Kenig} Kenig, C. E., Lin, F., Shen, Z., 2012. Convergence rates in $L^2$ for elliptic homogenisation problems. {\it Arch. Rational Mech. Anal.} {\bf 203}(3), 1009--1036.
	
        \bibitem{Kuchment} Kuchment, P., 1993. {\it Floquet Theory for Partial Differential Equations}, Birkh\"{a}user.	
        
        \bibitem{Marohnicvelcic} Marohni{\'c}, M. and Velčić, I., 2014. General homogenisation of bending-torsion theory for inextensible rods from 3D elasticity, 26 pp., {\it arXiv:1402.4514.}


        
        \bibitem{Meshkova_hyperbolic_full} 
        Meshkova, Yu. M., 2018. On operator estimates for homogenisation of hypebolic systems with periodic coefficients. {\it J. Spec. Theory} {\bf 11}(2) 587--660.
        %% 42 pp., arXiv:1705.02531.
        
        \bibitem{Meshkova_hyperbolic_Math_Notes}
        Meshkova, Yu. M., 2019. On the homogenisation of periodic hyperbolic systems. {\it Math. Notes} {\bf 105}(5--6), 929--934.
              
        \bibitem{Meshkova_Suslina}
        Meshkova, Yu. M., Suslina, T. A., 2016. Homogenization of initial boundary value problems for parabolic systems with periodic coefficients. {\it Appl. Anal.} {\bf 95}(8), 1736--1775.
        
%        \bibitem{Miara} 
%        Miara, B., 1994. Justification of the asymptotic analysis of elastic plates. I. The linear case, {\it Asymptotic Anal.} {\bf 8}, 259--276.
 \bibitem{mora1}  Mora, M.G., M\"uller, S., 2004. A nonlinear model for inextensible rods as a low energy $\Gamma$-limit of three-dimensional nonlinear elasticity. {\it Ann. Inst. H. Poincar\'{e} Anal. Non Lin\'{e}aire} {\bf 21}(3), 271--293. 
 
\bibitem{mora2} 
Mora, M. G., M\"uller, S., 2008. Derivation of the nonlinear bending-torsion theory for inextensible rods by $\Gamma$-convergence. {\it  Calc. Var. Partial Differential Equations} {\bf 18}(3), 287--305.       
 
 
        \bibitem{NV_vK}
        Neukamm, S. Vel\v{c}i\'{c}, I., 2013. Derivation of a homogenised von-K\'{a}rm\'{a}n plate theory from 3D nonlinear elasticity. {\it Math. Models Methods Appl. Sci.} {\bf 23}(14), 2701--2748.
        
%	\bibitem{OShY}
%	Oleinik, O. A.,  Shamaev, A. S., Yosifian, G. A., 1992. {\it Mathematical Problems in Elasticity and homogenisation,} Amsterdam: North-Holland.


\bibitem{Raoult}
 Raoult, A., 1985. Construction d'un modele d'\'evolution de plaques avec termes d'inertie de rotation, {\it Annali di Matematica Pura ed Applicata} {\bf139}, 361--400.

 \bibitem{scardia1} Scardia, L., 2006. The nonlinear bending-torsion theory for curved rods as $\Gamma$-limit of
three-dimensional elasticity, {\it Asymptot. Anal.} {\bf 47}, 317–-343.
 
 
 
 \bibitem{scardia2} Scardia, L., 2009. Asymptotic models for curved rods derived from nonlinear elasticity by $\Gamma$-convergence. {\it Proc. Roy. Soc. Edinburgh Sect. A} {\bf 139}(5), 1037--1070.	
	\bibitem{Panasenko_book}
	Panasenko, G., 2005. {\it Multi-Scale Modelling for Structures and Composites.} Springer, Dordrecht.
	
%	\bibitem{Pastukhova}
%	Pastukhova, S. E., 2012. Approximations of the exponential of an operator with periodic coefficients. Problems in mathematical analysis. No. 62. {\it J. Math. Sci. (N.Y.)} {\bf 181}(5), 668--700.
	
	%\bibitem{Raoult}
  %     Raoult, A., 1985. Construction d'un modele d'\'evolution de plaques avec termes d'inertie de rotation, {\it Annali di Matematica Pura ed Applicata} {\bf139}, 361--400.
       
     


	
%	\bibitem{ReedSimon}
%	Reed, M., Simon, B., 1978. {\it Methods of Modern Mathematical Physics IV. Analysis
%of Operators}, Academic Press, New York.	
	
	\bibitem{Sevostianova}
	Sevost’yanova, E. V., 1982. An asymptotic expansion of the solution of a second order elliptic equation with periodic rapidly oscillating coefficients, {\it Math. USSR-Sb.} {\bf 4}(2), 181--198.
	
	
	 \bibitem{Suslina_parabolic}
	 Suslina, T. A., 2007. Homogenisation of a periodic parabolic Cauchy problem. {\it Nonlinear equations and spectral theory,} 201--233, {\it Amer. Math. Soc. Transl. Ser. 2} {\bf 220}, {\it Adv. Math. Sci.} {\bf 59,} Amer. Math. Soc., Providence, RI.
	 
	\bibitem{Suslina_parabolic_corrector}
	 Suslina, T., 2010. Homogenisation of a periodic parabolic Cauchy problem in the Sobolev space $H^1({\mathbb R}^d).$ {\it Math. Model. Nat. Phenom.} {\bf 5}(4), 390--447.
	
	\bibitem{Suslina_Dirichlet}
	Suslina, T. A., 2013. Homogenisation of the Dirichlet problem for elliptic systems: $L^2$-operator error estimates. {\it Mathematika} {\bf 59}(2), 463--476.
	
	\bibitem{Suslina_Neumann}
	 Suslina, T., 2013. Homogenisation of the Neumann problem for elliptic systems with periodic coefficients. {\it SIAM J. Math. Anal.} {\bf 45}(6), 3453--3493.
	
	%%\bibitem{Suslina_systems_corrector}
	%% Suslina, T. A., 2015. homogenisation of elliptic systems with periodic coefficients: operator error estimates in $L^2({\mathbb R}^d)$ with the corrector taken into account. {\it St. Petersburg Math. J.} {\bf 26} (4), 643--693.
	
	\bibitem{Suslina_two_parametric}
	Suslina, T. A., 2014. Approximation of the resolvent of a two-parameter quadratic operator pencil near the lower edge of the spectrum. {\it St. Petersburg Math. J.} {\bf 25}(5), 869--891.
	
	\bibitem{Suslina_perforated}
	Suslina, T. A., 2018. Spectral approach to homogenisation of elliptic operators in a perforated space. {\it Ludwig Faddeev memorial volume,} 481--537, World Sci. Publ., Hackensack, NJ.
	
	%%\bibitem{Suslina_Schroedinder}
	%%Suslina, T., 2017. Spectral approach to homogenisation of nonstationary Schr�dinger-type equations. {\it J. Math. Anal. Appl.} {\bf 446}(2), 1466--1523.
	
	\bibitem{tambaca}  Tamba\v{c}a, J., 2002. Justification of the dynamic model of curved rods. {\it Asymptotic Analysis} {\bf 31}(1), 43--68.
	
	\bibitem{Zhi89}
	Zhikov, V. V., 1989. Spectral approach to asymptotic diffusion problems. {\it Differential Equations} {\bf 25}(1), 33--39.
	
	\bibitem{ZhikovPastukhova}
       Zhikov, V. V., Pastukhova, S. E., 2005. On operator estimates for some problems in homogenisation theory. {\it Russ. J. Math. Phys.} {\bf 12}(4), 515--524.
	
	
	%\bibitem{ReedSimon}
%Reed, M., Simon, B., 1978. {\it Methods of Modern Mathematical Physics IV: Analysis of Operators,} Academic Press.

	
	
	
	%\bibitem{Zhikov1989}
	%Zhikov, V. V., 1989. Spectral approach to asymptotic problems in diffusion. {\it Diff. Equations} {\bf 25}, 33--39.
	
\end{thebibliography}
\end{document}